\documentclass[11pt]{amsart}

\usepackage[mathscr]{euscript}
\usepackage{amssymb,bbm}
\usepackage{graphicx}
\usepackage{pb-diagram,pb-xy} 
\usepackage[all,cmtip,arrow]{xy} 
\usepackage[margin=1in]{geometry}  
\usepackage{hyperref} 

\parindent=0pt
\parskip=10pt
\setlength{\marginparwidth}{90pt}


\numberwithin{equation}{section}

\theoremstyle{plain}
\newtheorem{theorem}[equation]{Theorem}

\newtheorem{lemma}[equation]{Lemma}

\newtheorem{corollary}[equation]{Corollary}
\newtheorem{proposition}[equation]{Proposition}

\newtheorem{prop}[equation]{Proposition}

\newtheorem*{claim*}{Claim}

\theoremstyle{definition}
\newtheorem{definition}[equation]{Definition}

\newtheorem{notation}[equation]{Notation}

\newtheorem{remark}[equation]{Remark}

\newtheorem{example}[equation]{Example}
\newtheorem{examples}[equation]{Examples}


\newcommand{\isom}{\cong}                       
\newcommand{\homeq}{\simeq}                     
\newcommand{\smsh}{\wedge}                      
\newcommand{\binBox}{\mathbin{\Box}}
\newcommand{\homsmsh}{\mathbin{\tilde{\smsh}}}

\newcommand{\Smsh}{\bigwedge}                   
\newcommand{\Wdge}{\bigvee}                     

\newcommand{\N}{\mathsf{N}}                     
\newcommand{\M}{\mathsf{M}}
\newcommand{\A}{\mathsf{A}}
\newcommand{\B}{\mathsf{B}}
\newcommand{\one}{\mathsf{1}}
\newcommand{\F}{\mathbf{F}}
\renewcommand{\P}{\mathsf{P}}

\renewcommand{\S}{\mathsf{S}}
\newcommand{\E}{\mathsf{E}}
\newcommand{\K}{\mathsf{K}}
\renewcommand{\d}{\mathsf{d}}
\newcommand{\Com}{\mathsf{Com}}

\newcommand{\cat}[1]{\mathscr{#1}}              

\newcommand{\C}{\mathbf{C}}
\newcommand{\U}{\mathbf{U}}
\newcommand{\R}{\mathbf{R}}
\renewcommand{\L}{\mathbf{L}}
\renewcommand{\c}{\mathbf{c}}

\newcommand{\colim}{\operatorname{colim}}
\newcommand{\hocolim}{\operatorname{hocolim}}
\newcommand{\holim}{\operatorname{holim}}
\newcommand{\Map}{\operatorname{Map} }
\newcommand{\Nat}{\operatorname{Nat} }

\newcommand{\creff}{\operatorname{cr} }
\newcommand{\Hom}{\operatorname{Hom} }

\newcommand{\Tot}{\operatorname{Tot} }

\newcommand{\Mod}{\operatorname{Mod} }
\newcommand{\Comod}{\operatorname{Comod} }
\newcommand{\Coalg}{\operatorname{Coalg} }

\DeclareMathOperator*{\hofib}{hofib}

\DeclareMathOperator*{\thofib}{thofib}
\DeclareMathOperator*{\thocofib}{thocofib}

\newcommand{\un}[1]{\underline{#1}}

\newcommand{\spaces}{\mathscr{T}op}
\newcommand{\based}{\spaces_*}
\newcommand{\spectra}{{\mathscr{S}p}}              
\newcommand{\finspec}{\mathscr{S}p^{\mathsf{f}}}              
\newcommand{\finbased}{{\mathscr{T}op^{\mathsf{f}}_*}}
\newcommand{\symseq}{[\Sigma,\mathscr{S}p]}

\newcommand{\weq}{\; \tilde{\longrightarrow} \;}      
\newcommand{\lweq}{\; \tilde{\longleftarrow} \;}      
\newcommand{\epi}{\twoheadrightarrow}           
\newcommand{\into}{\hookrightarrow}

\newcommand{\der}{\partial}                     

\newcommand{\ord}[1]{#1\textsuperscript{th}}

\begin{document}

\title{Cross-effects and the classification of Taylor towers}
\author{Greg Arone and Michael Ching}

\begin{abstract}
  Let $F$ be a homotopy functor with values in the category of spectra. We show that partially stabilized cross-effects of $F$ have an action of a certain operad. For functors from based spaces to spectra, it is the Koszul dual of the little discs operad. For functors from spectra to spectra it is a desuspension of the commutative operad. It follows that the Goodwillie derivatives of $F$ are a right module over a certain `pro-operad'. For functors from spaces to spectra, the pro-operad is a resolution of the topological Lie operad. For functors from spectra to spectra, it is a resolution of the trivial operad. We show that the Taylor tower of the functor $F$ can be reconstructed from this structure on the derivatives.
\end{abstract}

\maketitle

Let $\cat{C}$ and $\cat{D}$ each be either the category of based topological spaces, or the category of spectra. Let $F: \cat{C} \to \cat{D}$ be a homotopy functor. Goodwillie's homotopy calculus provides a systematic way to decompose $F$ into homogeneous pieces which are classified by certain spectra with $\Sigma_n$ actions, denoted $\der_nF$ and, by analogy with ordinary calculus, called the \emph{derivatives} or \emph{Taylor coefficients} of $F$.

A key problem in the homotopy calculus is to describe all the relevant structure on the symmetric sequence $\der_*F$, and to reconstruct the original functor $F$ (or at least its Taylor tower) from this structure. In~\cite{arone/ching:2014} we gave a general description of this structure. In this paper we give an alternative description of the structure on $\der_*F$ in cases when $F$ takes values in the category of spectra. We interpret this structure as that of a `module over a pro-operad'. The structure arises from actions of certain operads on the partially stablized cross-effects of $F$. Thus we connect the module structure on $\der_*F$ with Goodwillie's presentation of $\der_*F$ as stablized cross-effects of $F$.

Let us review briefly our previous results on the subject. Let $I_{\based}$ be the identity functor on the category of based spaces. It was shown by the second author in~\cite{ching:2005} that $\der_*I_{\based}$ has an operad structure. In fact, $\der_*I_{\based}$ is a topological realization of the classical Lie operad, and we refer to $\der_*I_{\based}$ informally as the topological Lie operad. In~\cite{arone/ching:2011} we showed that for a functor $F: \finbased \to \spectra$, the symmetric sequence $\der_*F$ is a right module over $\der_*I_{\based}$. We also remarked in~\cite{arone/ching:2011} that the module structure was not sufficient for recovering the Taylor tower of $F$, and therefore there had to be additional structure, that still had to be described.
In \cite{arone/ching:2014} we gave a description of this extra information. Specifically, we constructed a comonad $\C$, defined on the category of right $\der_*I_{\based}$-modules, which acts on the derivatives of a homotopy functor $F$. We showed that the Taylor tower of $F$ can be recovered from this action.

The methods of \cite{arone/ching:2014} were quite general (applying to functors between any combinations of the categories of based spaces and spectra). The comonad $\C$ has  the form $\der_*\Phi$, where $\der_*$ is Goodwillie differentiation, and $\Phi$ is a functor right adjoint to $\der_*$. In this paper we provide a more concrete description of this comonad, for functors that take values in $\spectra$.

To understand our results, recall that the Goodwillie derivatives of $F$ can be recovered from the cross-effects of $F$ via `multilinearization'. Explicitly, for a functor $F: \finbased \to \spectra$ we have
\[ \der_nF \homeq \hocolim_{L \to \infty} \Sigma^{-nL} \creff_nF(S^L,\dots,S^L) \]
where $\creff_nF$ denotes the \ord{$n$} cross-effect of $F$ and $S^L$ is the topological $L$-sphere. We show in Proposition~\ref{prop:dF} that, for fixed $L$, the symmetric sequence of partially-linearized cross-effects
\[ \d^L[F] := \Sigma^{-*L} \creff_*F(S^L,\dots,S^L) \]
naturally forms a right module over an operad $\K\E_L$ that is given by the Koszul dual of the stable little $L$-discs operad $\E_L$. (Strictly speaking, this is true when $F$ is polynomial, or when $F$ is analytic and $L$ is sufficiently large.)

Our construction of the right modules $\d^L[F]$ is quite complicated: see the sequence of steps listed just before Definition~\ref{def:SigmainftyX-Com} for a summary. A key role here is played by a version of Koszul duality that relates comodules over the operad $\E_L$ to modules over the Koszul dual operad $\K\E_L$. This is described further in Propositions~\ref{prop:N-indec} and \ref{prop:indec-triv}.

The Koszul dual operads $\K\E_L$ form an inverse sequence
\begin{equation} \label{eq:intro-KE} \dots \to \K\E_{L+1} \to \K\E_L \to \dots \to \K\E_2 \to \K\E_1 \to \K\E_0. \end{equation}
We refer to such an inverse sequence as a \emph{pro-operad}. We denote the above pro-operad by $\K\E_\bullet$. Note that the homotopy inverse limit of the sequence $\K\E_\bullet$ is $\K\E_\infty$, which is equivalent to the topological Lie operad $\der_*I_{\based}$. Therefore we refer to the sequence $\K\E_\bullet$ as the \emph{Lie pro-operad}.

For polynomial functors $F$, we construct a model for the maps
\[ \d^L[F] \to \d^{L+1}[F] \]
between the partially linearized cross-effects that respects the operad maps
\[ \K\E_{L+1} \to \K\E_L. \]
It follows that the symmetric sequence of derivatives $\der_*F \homeq \d[F] := \hocolim_L \d^L[F]$ inherits a limiting structure. We refer to this structure as a \emph{right module over the Lie pro-operad}. This structure includes, but contains strictly more information than, a right module over the topological Lie operad by itself.

More precisely, we define modules over a pro-operad as coalgebras over a certain comonad, constructed as follows. First note that the category of right modules over any operad $\P$ can be identified with the category of coalgebras over a certain comonad $\C_{\P}$ on the category of symmetric sequences. The sequence (\ref{eq:intro-KE}) of operads induces a directed sequence of comonads
\[ \C_{\K\E_0} \to \C_{\K\E_1} \to \C_{\K\E_2} \to \dots. \]
If we write
\[ \C_{\K\E_\bullet} := \hocolim_L \C_{\K\E_\bullet} \]
then $\C_{\K\E_\bullet}$ has a canonical comonad structure, and we define a module over the pro-operad $\K\E_\bullet$ to be a coalgebra over $\C_{\K\E_\bullet}$.

\begin{remark}
Our notion of a right module over a pro-operad is modeled on that of a discrete set with a continuous action of a profinite group. In particular, a module over a pro-operad is (for us, in this paper) not an object in the pro-category of symmetric sequences. Rather, it often can be presented as a filtered colimit of symmetric sequences. This is analogous to the fact that a discrete set $X$ with a continuous action of a profinite group $G$ is the union (i.e., colimit) of sets with actions of finite quotients of $G$.
\end{remark}

Ultimately, we show (Proposition~\ref{prop:dF}) that $\d[F]$ has a natural $\K\E_\bullet$-module structure for all pointed simplicial functors $F$ from $\based$ to $\spectra$. Our Theorem~\ref{thm:topsp} then says that $\C_{\K\E_\bullet}$ is equivalent to the abstract comonad $\C$ considered in \cite{arone/ching:2014}. It follows that the Taylor tower of a functor $F$ can be recovered from the $\K\E_\bullet$-module structure on $\d[F]$. We show in Theorem~\ref{thm:equiv-topsp-poly} that there is an equivalence between the homotopy categories of polynomial functors and bounded $\K\E_\bullet$-modules. We also show (Theorem~\ref{thm:equiv-topsp-ana}) that there is an equivalence between the homotopy categories of analytic functors (up to Taylor tower equivalence) and $\K\E_\bullet$-modules satisfying suitable connectivity conditions.

There are other approaches to the classification of polynomial functors from based spaces to spectra, most notably by Dwyer and Rezk (unpublished) in terms of functors on the category of finite sets and surjections. Polynomial functors from based spaces to spectra are also equivalent to their left Kan extensions from the full subcategory of finite pointed sets. In Theorem~\ref{thm:classification} we describe how our approach is related to these others.

The story for a functor $F: \spectra \to \spectra$ is quite analogous. The derivatives $\der_*F$ are again the homotopy colimit, over $L$, of partially-linearized cross-effects
\[ \d^L[F] := \Sigma^{-*L}\creff_*F(\Sigma^\infty S^L, \dots, \Sigma^\infty S^L) \]
and again these cross-effects, for fixed $L$, form a right module over a certain operad $\S^{-L}$. The operad $\S^{-L}$ can be thought of as the $L$-fold desuspension of the commutative operad, and all of its terms are sphere spectra of varying dimensions. Again there is an inverse sequence
\begin{equation} \label{eq:intro-S} \dots \to \S^{-(L+1)} \to \S^{-L} \to \dots \to \S^{-2} \to \S^{-1} \to \S^0 \homeq \Com. \end{equation}
We refer to this sequence as the \emph{sphere pro-operad}. Note that the homotopy inverse limit of this sequence is equivalent to the trivial operad.

We show that the derivatives $\der_*(F) \homeq \d[F] := \hocolim_L \d^L[F]$ form a coalgebra over the corresponding comonad
\[ \C_{\S_\bullet} := \hocolim_L \C_{\S^{-L}}. \]
We show in Theorem~\ref{thm:spsp} that the comonad $\C_{\S_\bullet}$ is equivalent to that constructed in \cite{arone/ching:2014}. The Taylor tower of $F: \spectra \to \spectra$ can then be recovered from the $\C_{\S_\bullet}$-coalgebra $\d[F]$ and there is an equivalence between the homotopy categories of polynomial functors and bounded coalgebras, as well as between analytic functors and suitably connected coalgebras.

There is some connection and overlap between our results on functors from $\spectra$ to $\spectra$ and work of Randy McCarthy~\cite{mccarthy:????} which involves a right $\Com$-module structure on the symmetric sequence $\creff_*F(\Sigma^\infty S^L, \ldots, \Sigma^\infty S^L)$. The $\S^{-L}$-module structure that we use is equivalent to a desuspension of McCarthy's structure. Ultimately, McCarthy classifies $n$-excisive endofunctors of $\spectra$ in terms of structure on the spectrum $F(\bigvee_n S^0)$. Namely, he shows that if $F$ is $n$-excisive, then the spectrum $F(\bigvee_n S^0)$ is a module over a certain ring spectrum, and that the functor $F$ can be recovered from this module structure.

Since $F(\bigvee_n S^0)$ is equivalent to a wedge sum of copies of cross-effects $\creff_1 F, \ldots, \creff_n F$ evaluated at $S^0$, it seems likely that McCarthy's result can be rephrased in terms of structure on the sequence of cross-effects of $F$. In this form, it would be analogous to the result of Dwyer and Rezk that classifies polynomial functors from $\based$ to $\spectra$ in terms of structure on the cross-effects.

As far as we know, until now all work on classifying polynomial functors approached the problem via describing the structure on the sequence of cross-effects of a functor. Our approach focuses instead on the derivatives of a functor. For functors from $\based$ to $\spectra$ we have a good understanding of the relationship between the structure on the cross-effects and the structure on derivatives: it is given by a form of Koszul duality between comodules over the commutative operad, and divided power modules over the Lie operad. It would be interesting to find a similar connection in the $\spectra$ to $\spectra$ case and for other classes of functors.

\subsection*{Some open problems, possible directions for future research}
\subsubsection*{Do something similar for space-valued functors.} Consider, for example, functors $F\colon \based\to \based$. By results of~\cite{arone/ching:2011}, $\der_*F$ is a bimodule over $\der_*I_{\based}$, and we seek to describe the additional structure on $\der_*F$. One answer is given in~\cite{arone/ching:2014}, which says that $\der_*I_{\based}$ is a coalgebra over a certain comonad in the category of $\der_*I_{\based}$-bimodules. But it seems desirable to have a more concrete description of the additional structure, perhaps in the form of a compatibility condition between the left and right module structures on $\der_*F$. At the moment we are unable to provide such a description. We also do not know how to relate the module structure on $\der_*F$ with the view of $\der_*F$ as the stabilized cross-effects of $F$.

\subsubsection*{Give an explicit description of $\K\E_L$, and use it to understand the connection between the two classes of functors.} Let $F\colon\based\to \spectra$ be a functor. Then $F\Sigma^\infty$ is a functor from $\spectra$ to $\spectra$. There is an equivalence of symmetric sequences $\der_*F\simeq \der_*F \Sigma^\infty$. This means that there should be a forgetful functor from modules over the sphere pro-operad to modules over the Lie pro-operad. Presumably, the forgetful functor is induced by maps of operads $\K\E_L\to \S^{-L}$. It does not seem obvious how to construct such a map of operads. This is partly because we do not have a geometric model for $\K\E_L$.

This question ties in with another conjecture about the operad $\K\E_L$. Namely, it is conjectured that there is an equivalence of operads $\K\E_L\simeq \Sigma^{-L} \E_L$. Corresponding results on the level of homology are due to Getzler and Jones \cite{getzler/jones:1994}, and on the chain level are due to Fresse \cite{fresse:2011}. Assuming this conjecture, notice that the obvious map of operads $\E_L\to \Com$ induces a map $\K\E_L \simeq \Sigma^{-L}\E_L\to \S^{-L}$. We speculate that the $\K\E_\bullet$-module structure on $\der_*F\Sigma^\infty$ is equivalent to the pullback along this hypothetical map of operads.

One consequence of this would be a new proof of the well-known fact that the Taylor tower of the functor $\Sigma^\infty\Omega^\infty\Sigma^\infty$ splits as a product of its layers. Indeed, we show in Example~\ref{ex:SO} that the action of the pro-sphere operad on $\der_*\Sigma^\infty\Omega^\infty$ is pulled back from an action by the single operad $\Com = \S^0$. By the hypothetical picture above, it would follow that the $\K\E_\bullet$-module structure on $\der_*\Sigma^\infty\Omega^\infty\Sigma^\infty$ is pulled back from a $\K\E_0$-module structure. But $\K\E_0$ is the trivial operad, so it would follow that the Taylor tower of $\Sigma^\infty\Omega^\infty\Sigma^\infty$ splits.

\subsubsection*{Describe the chain rule for functors from $\spectra$ to $\spectra$ on the level of modules over the sphere pro-operad.} Let $F, G$ be functors from spectra to spectra. By the results of~\cite{ching:2010}, there is an equivalence of symmetric sequences, where the right hand side is the composition product of $\der_*F$ and $\der_*G$
$$\der_*(FG)\simeq \der_*F \circ \der_*G.$$
By our present results, $\der_*F$, $\der_*G$ and $\der_*(FG)$ are modules over the sphere pro-operad. This suggests that there should be a composition product in the category of such modules, refining the composition product of symmetric sequences. Moreover, we speculate that the product can be made associative (or at least $A_\infty$-associative), and that the chain rule can be made associative as well.

\subsection*{Outline}
In section~\ref{sec:background} we review basic facts about Taylor towers and derivatives, and also about symmetric sequences, operads and modules. We also review our construction of the homotopy category of coalgebras over a comonad. The construction that we use here is a slight variation of the one in~\cite{arone/ching:2014}. In section~\ref{sec:pro-operads} we introduce pro-operads, and we define the category of modules over a pro-operad as the category of coalgebras of an associated comonad. In the long section~\ref{sec:topsp} we analyze functors from $\based$ to $\spectra$. In sections~\ref{sec:spsp} and~\ref{sec:bar-map} we do the same from functors from $\spectra$ to $\spectra$.

\subsection*{Acknowledgements}
Conversations with Bill Dwyer contributed considerably to the Koszul duality between $\E_L$-comodules and $\K\E_L$-modules that we use in this paper. The first author was supported by National Science Foundation grant DMS-0968221 and the second author by National Science Foundation grant DMS-1144149.

\section{Background}\label{sec:background}

\subsection{Taylor towers and derivatives}

\begin{definition}[Categories] \label{def:categories}
Let $\based$ be the category of based topological spaces and let $\spectra$ be the category of $S$-modules of EKMM \cite{elmendorf/kriz/mandell/may:1997}. We refer to the objects of $\spectra$ as \emph{spectra} to avoid confusion with the other uses of the term `module' in this paper. Each of the categories $\based$ and $\spectra$ is enriched in simplicial sets and we write $\Hom_{\mathscr{C}}(X,Y)$ for the simplicial set of maps from $X$ to $Y$ in the category $\mathscr{C}$. The category $\spectra$ is closed symmetric monoidal and we write $\Map(X,Y)$ for the internal mapping spectrum in $\spectra$.

The suspension spectrum and zeroth space functors give us the standard adjunction
\[ \Sigma^\infty: \based \rightleftarrows \spectra : \Omega^\infty. \]

Let $\finbased$ denote the full subcategory of $\based$ consisting of finite based cell complexes, and let $\finspec$ denote the full subcategory of $\spectra$ consisting of finite cell spectra (with respect to the usual generating cofibrations for the stable model structure on $\spectra$: see \cite[VII]{elmendorf/kriz/mandell/may:1997}).
\end{definition}

\begin{definition}[Functors] \label{def:functors}
Let $\mathscr{C}$ be either $\based$ or $\spectra$, and consider a functor $F: \mathscr{C} \to \spectra$. We say that $F$ is a \emph{homotopy functor} if it preserves weak equivalences, is \emph{simplicial} if it induces maps $\Hom_{\mathscr{C}}(X,Y) \to \Hom_{\spectra}(FX,FY)$ of simplicial sets, is \emph{finitary} if it preserves filtered homotopy colimits, and is \emph{pointed} if $F(*) \isom *$ (where $*$ denotes either a one-point space, in $\based$, or a trivial spectrum, in $\spectra$).
\end{definition}

\begin{definition}[Categories of functors] \label{def:categories-functors}
For $\mathscr{C}$ equal to either $\based$ or $\spectra$, let $[\mathscr{C}^{\mathsf{f}},\spectra]$ denote the category whose objects are the pointed simplicial functors $F: \mathscr{C}^{\mathsf{f}} \to \spectra$, and whose morphisms are the corresponding simplicially enriched natural transformations. Since $\mathscr{C}^{\mathsf{f}}$ is skeletally small, $[\mathscr{C}^{\mathsf{f}},\spectra]$ is a locally small category. Also note that any simplicial functor preserves simplicial homotopy equivalences. Since every object of $\mathscr{C}^{\mathsf{f}}$ is both cofibrant and fibrant (in the standard model structure on either $\based$ or $\spectra$), it follows that any object in $[\mathscr{C}^{\mathsf{f}},\spectra]$ also preserves weak equivalences.

We can extend a pointed simplicial functor $F: \mathscr{C}^{\mathsf{f}} \to \spectra$ to all of $\mathscr{C}$ by (enriched) homotopy left Kan extension along the inclusion $\mathscr{C}^{\mathsf{f}} \to \mathscr{C}$. The result of this construction is a reduced finitary homotopy functor. Moreover, any reduced finitary homotopy functor $F:\mathscr{C} \to \spectra$ arises, up to natural equivalence, in this way. We therefore view $[\mathscr{C}^{\mathsf{f}},\spectra]$ as a model for the collection of reduced finitary homotopy functors $\mathscr{C} \to \spectra$.

For a functor $F \in [\mathscr{C}^{\mathsf{f}},\spectra]$ we say that $F$ is \emph{$n$-excisive} if it takes a strongly-cocartesian $(n+1)$-cube in $\mathscr{C}^{\mathsf{f}}$ to a cartesian cube in $\spectra$. We say $F$ is \emph{polynomial} if it is $n$-excisive for some $n$.

The category $[\mathscr{C}^{\mathsf{f}},\spectra]$ has a projective model structure in which weak equivalences and fibrations are detected objectwise. We denote the associated homotopy category as $[\mathscr{C}^{\mathsf{f}},\spectra]^{\mathsf{h}}$. This homotopy category has subcategories given by restricting to functors satisfying various conditions: for example, we have the homotopy categories $[\mathscr{C}^{\mathsf{f}},\spectra]^{\mathsf{h}}_{n\mathsf{-exc}}$ (of $n$-excisive functors) and $[\mathscr{C}^{\mathsf{f}},\spectra]^{\mathsf{h}}_{\mathsf{poly}}$ (of all polynomial functors).
\end{definition}

\begin{definition}[Taylor tower and derivatives] \label{def:taylor}
For a pointed simplicial functor $F: \mathscr{C}^{\mathsf{f}} \to \spectra$ there is a \emph{Taylor tower} of pointed simplicial functors $P_nF: \mathscr{C}^{\mathsf{f}} \to \spectra$ of the form
\[ F \to \dots \to P_nF \to P_{n-1}F \to \dots \to P_2F \to P_1F \to P_0F = * \]
where $P_nF$ is $n$-excisive in the sense of \cite{goodwillie:1991}. The \emph{layers} of the Taylor tower are the functors $D_nF : \mathscr{C}^{\mathsf{f}} \to \spectra$ given by the objectwise homotopy fibres
\[ D_nF := \hofib(P_nF \to P_{n-1}F). \]
Goodwillie showed that for each $n \geq 1$ there is a spectrum $\der_nF$ with $\Sigma_n$-action such that
\[ D_nF(X) \homeq (\der_nF \smsh X^{\smsh n})_{h\Sigma_n} \]
for $X \in \mathscr{C}^{\mathsf{f}}$. We refer to $\der_nF$ as the \emph{\ord{$n$} derivative} of $F$.
\end{definition}

\begin{definition}[Cross-effects and co-cross-effects] \label{def:creff}
The \ord{$n$} derivative of a functor can be calculated using cross-effects. The \emph{\ord{$n$} cross-effect} of $F: \mathscr{C} \to \spectra$ is the functor of $n$ variables $\creff_nF : \mathscr{C}^{\times n} \to \spectra$ given by
\[ \creff_nF(X_1,\dots,X_n) := \thofib_{S \subset \un{n}} \left\{ F \left( \Wdge_{i \notin S} X_i \right) \right\}. \]
This is the iterated (or total) homotopy fibre of an $n$-cube formed by applying $F$ to the wedge sums of subsets of $X_1,\dots,X_n$, where the morphisms in the cube are given by the relevant collapse maps $X_i \to *$.

For spectrum-valued functors, the cross-effect can also be calculated by taking a total homotopy cofibre. The \emph{\ord{$n$} co-cross-effect} of $F: \mathscr{C}^{\mathsf{f}} \to \spectra$ is the functor $\creff^n F : \mathscr{C}^{\times n} \to \spectra$ given by
\[ \creff^n F(X_1,\dots,X_n) := \thocofib_{S \subset \un{n}} \left\{ F \left( \Wdge_{i \in S} X_i \right) \right\}. \]
The morphisms in this $n$-cube are given by the inclusion maps $* \to X_i$. For any $F: \mathscr{C} \to \spectra$, there is a natural equivalence
\[ \creff_nF(X_1,\dots,X_n) \homeq \creff^nF(X_1,\dots,X_n). \]
In particular, taking cross-effects commutes with both homotopy limits and homotopy colimits.
\end{definition}

One of the Goodwillie's main results is that the derivatives of a functor can be recovered by multilinearizing the cross-effects. Specifically, Theorem~6.1 of \cite{goodwillie:2003} implies that for a functor $F: \mathscr{C} \to \spectra$, we have a natural equivalence
\[ \der_nF \homeq P_{(1,\dots,1)}(\creff_nF)(S^0,\dots,S^0) \]
where the right-hand side is the multilinearization of the \ord{$n$} cross-effect of $F$. The multilinearization $P_{(1,\dots,1)}(\creff_nF)$ is the homotopy colimit of maps
\[ \creff_nF \to T_{(1,\dots,1)}(\creff_nF) \to T^2_{(1,\dots,1)}(\creff_nF) \to T^3_{(1,\dots,1)} \to \dots \]
where, for a functor $G: \mathscr{C}^n \to \spectra$ that is reduced in each variable, there are natural equivalences
\[ T_{(1,\dots,1)}G(X_1,\dots,X_n) \homeq \Sigma^{-n}G(\Sigma X_1,\dots,\Sigma X_n). \]
Thus we can express Goodwillie's result as an equivalence
\begin{equation} \label{eq:creff} \der_nF \homeq \hocolim_L \Sigma^{-nL} \creff_nF(S^L,\dots,S^L). \end{equation}
The maps in this homotopy colimit take the form
\begin{equation} \label{eq:creff-map} \Sigma^{-nL} \creff_nF(S^L,\dots,S^L) \to \Sigma^{-n(L+1)} \creff_nF(S^{L+1},\dots,S^{L+1}). \end{equation}
In building our models for the derivatives of $F$, we need models for these maps.

\begin{definition} \label{def:tensoring-map}
When the functor $H: \mathscr{C} \to \spectra$ is pointed and simplicial, it determines natural maps
\[ t_{X}: X \smsh H(Y) \to H(X \smsh Y) \]
for a based simplicial set $X$ and $Y \in \mathscr{C}$. We refer to this as the \emph{tensoring map} for $H$.
\end{definition}

\begin{lemma} \label{lem:creff-maps}
Let $G: \mathscr{C}^n \to \spectra$ be a model for the \ord{$n$} cross-effect of a functor $F: \mathscr{C} \to \spectra$ that is pointed simplicial in each variable. Then the map (\ref{eq:creff-map}) is, up to equivalence, given by the tensoring map
\[ \dgTEXTARROWLENGTH=4em \Sigma^{-nL}G(S^L,\dots,S^L) \homeq \Sigma^{-n(L+1)}\Sigma^n G(S^L,\dots,S^L) \arrow{e,t}{t_{S^1} \circ \dots \circ t_{S^1}} \Sigma^{-n(L+1)} G(S^{L+1},\dots,S^{L+1}). \]
\end{lemma}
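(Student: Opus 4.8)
The plan is to unwind the definition of the multilinearization map~(\ref{eq:creff-map}) and identify it with the composite of tensoring maps. Recall from the discussion preceding Definition~\ref{def:tensoring-map} that the maps in the homotopy colimit~(\ref{eq:creff}) come from iterating the natural transformation $G \to T_{(1,\dots,1)}G$ and then evaluating at $(S^L,\dots,S^L)$, using the identification $T_{(1,\dots,1)}G(X_1,\dots,X_n) \homeq \Sigma^{-n}G(\Sigma X_1,\dots,\Sigma X_n)$. So the first step is to make this identification explicit: the map $G \to T_{(1,\dots,1)}G$, under the equivalence $T_{(1,\dots,1)}G(X_\bullet) \homeq \Sigma^{-n}G(\Sigma X_\bullet)$, is (up to homotopy) the $n$-fold suspension of the map adjoint to applying the structure maps of $G$ in each variable. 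The point is that for a functor $G$ reduced in each variable, the natural assembly map $\Sigma X_i \smsh G(\dots,Y_i,\dots) \to G(\dots, \Sigma X_i \smsh Y_i, \dots)$ — i.e.\ the tensoring map $t_{\Sigma X_i}$ in the $i$-th slot — is exactly what realizes the cross-effect model's contribution to $T_{(1,\dots,1)}$.

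The second step is to trace through the telescope. Evaluating at $X_i = S^L$ and composing $L \to L+1$, we get a map $\Sigma^{-nL}G(S^L,\dots,S^L) \to \Sigma^{-n(L+1)}G(S^{L+1},\dots,S^{L+1})$. Rewriting the source as $\Sigma^{-n(L+1)}\Sigma^n G(S^L,\dots,S^L)$ (moving an $n$-fold suspension across), the claim becomes that the underlying map $\Sigma^n G(S^L,\dots,S^L) \to G(S^{L+1},\dots,S^{L+1})$ is the composite of the tensoring maps $t_{S^1}$ applied once in each of the $n$ variables; since $S^{L+1} \homeq S^1 \smsh S^L$, each application replaces an $S^L$ by an $S^{L+1}$ and absorbs one suspension coordinate. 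Here one needs that the various $t_{S^1}$'s in different variables commute up to coherent homotopy, which follows from the functoriality of $G$ in each variable separately, together with the fact that $G$ is simplicial in each slot (so the tensoring maps are natural). Because we only need the statement up to equivalence, any coherence issues in the ordering of the $n$ maps $t_{S^1}$ can be absorbed into the choice of homotopy.

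The main obstacle will be the bookkeeping in the first step: pinning down precisely the natural equivalence $T_{(1,\dots,1)}G(X_\bullet)\homeq \Sigma^{-n}G(\Sigma X_\bullet)$ and checking that, under this equivalence, the canonical map $G\to T_{(1,\dots,1)}G$ is given by the tensoring maps rather than, say, differing by a sign or a permutation of suspension coordinates. This is essentially the $n$-variable version of the standard fact that, for a reduced functor $H$ from $\mathscr{C}$ to $\spectra$, the linearization map $H(X)\to \Omega H(\Sigma X) \homeq \Sigma^{-1}H(\Sigma X)$ is adjoint to the tensoring map $\Sigma X \smsh H(X) \to H(\Sigma X)$; we simply apply it one variable at a time. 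I would cite the relevant computation from \cite{goodwillie:2003} for the one-variable statement and then note that the cross-effect $\creff_nF$, being the total homotopy fibre of a cube built from $F$, inherits a pointed simplicial structure in each variable for which the tensoring maps are precisely the restrictions of the tensoring map of $F$; this is what lets us conclude via Lemma-level naturality rather than a fresh computation.
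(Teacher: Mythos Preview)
Your outline follows the same route as the paper: reduce to one variable at a time and identify Goodwillie's map $G \to T_1G$ with the tensoring map $t_{S^1}$. The second step (passing from one variable to $n$) is handled as you say, and this is unproblematic.

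The gap is in the first step. You correctly flag it as the ``main obstacle'' but then propose to dispatch it by citing \cite{goodwillie:2003}. That reference does not contain the statement you need: Goodwillie defines $T_1G(X)$ as a homotopy limit over a cube (equivalently $G(\Omega\Sigma X) \to \Omega G(\Sigma X)$ via his join construction), with no reference to simplicial enrichment or tensoring maps. The identification of his canonical map $G(X) \to T_1G(X)$ with the adjoint of $t_{S^1}: S^1 \smsh G(X) \to G(\Sigma X)$ is precisely the content of the lemma, and it requires a genuine argument.

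The paper supplies this argument explicitly. It writes down two models for $\Omega\Sigma X$, namely $\Omega\Sigma X$ itself and $\holim(CX \to \Sigma X \leftarrow CX)$, with an equivalence between them. Applying $G$ and using the simplicial enrichment gives a square whose left column is Goodwillie's map and whose right column is (adjoint to) the tensoring map. Commutativity of the square reduces, because $G$ is simplicial, to commutativity up to homotopy of the underlying triangle of \emph{spaces}
\[
X \to \Omega\Sigma X \simeq \holim(CX \to \Sigma X \leftarrow CX),
\]
and the paper writes down an explicit homotopy between the two inclusions of $X$. This small geometric check is the actual work; without it your proposal is an outline rather than a proof.
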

\begin{proof}
It is sufficient to do the case $\mathscr{C} = \based$ since the result for $G : \spectra^n \to \spectra$ follows from that for $G(\Sigma^\infty -,\dots,\Sigma^\infty -) : \based^{n} \to \spectra$. We illustrate with the case $n = 1$. In this case, we have the following diagram. For $X \in \based$, with $CX$ denoting the (reduced) cone on $X$:
\[ \begin{diagram}
  \node[2]{G(X)} \arrow{sw} \arrow{se} \\
  \node{G(\Omega\Sigma X)} \arrow[2]{e,t}{\sim} \arrow{s} \node[2]{G(\holim(CX \to \Sigma X \leftarrow CX))} \arrow{s}  \\
  \node{\Omega G(\Sigma X)} \arrow[2]{e,t}{\sim} \node[2]{\holim(G(CX) \to G(\Sigma X) \leftarrow G(CX))}
\end{diagram} \]
The vertical maps come from the fact that $G$ is pointed simplicial, and the bottom square commutes by naturality of that enrichment. The composite of the left-hand two maps is the canonical map $G(X) \to T_1G(X)$, and the composite of the right-hand two is adjoint to the tensoring map $t_{S^1}$ for $G$. It is sufficient then to show that the top triangle commutes up to homotopy. Since $G$ is simplicial, it is therefore sufficient to show that the underlying triangle of spaces
\[ \begin{diagram}
  \node[2]{X} \arrow{sw} \arrow{se} \\
  \node{\Omega\Sigma X} \arrow[2]{e,t}{\sim} \node[2]{\holim(CX \to \Sigma X \leftarrow CX)}
\end{diagram} \]
commutes up to homotopy. A point in this homotopy limit consists of a path in $\Sigma X$ from a point in one cone to a point in the other cone. Writing $\Sigma X = [0,2]/\{0 \sim 2\} \smsh X$ with cones $[0,1] \smsh X$ and $[1,2] \smsh X$ (where $0$ and $2$ are treated as the basepoints), the required homotopy is given by
\[ (t,x) \mapsto (s \mapsto (t+(2-t)s,x)). \]
\end{proof}

\begin{definition}
We refer to the terms $\Sigma^{-nL} \creff_nF(S^L,\dots,S^L)$ in the homotopy colimit (\ref{eq:creff}) as the \emph{partially-stabilized cross-effects} of the functor $F$.
\end{definition}

\subsection{Symmetric sequences, operads, modules and comodules}

\begin{definition}[Symmetric sequences]
We write $\symseq$ for the category of \emph{symmetric sequences} in $\spectra$, that is, the category of functors $\Sigma \to \spectra$ where $\Sigma$ is the category of nonempty finite sets and bijections. Given $\A \in \symseq$ we often write $\A(n) := \A(\un{n})$ where $\un{n} = \{1,\dots,n\}$ and consider $\A$ as the sequence of spectra $\A(n)$, for $n \geq 1$, each with an action of $\Sigma_n$.

We also define a \emph{bisymmetric sequence} to be a functor $\B: \Sigma \times \Sigma \to \spectra$, so that $\B$ consists of spectra $\B(m,n)$ for $m,n \geq 1$ with commuting actions of $\Sigma_m$ and $\Sigma_n$.

The category $\symseq$ has a cofibrantly-generated model structure in which weak equivalences and fibrations are detected termwise. A symmetric sequence is \emph{$\Sigma$-cofibrant} if it is cofibrant in this model structure. The category $\symseq$ is also enriched in $\spectra$: the spectrum of maps between two symmetric sequences $\A$ and $\A'$ is given by
\[ \Map_{\Sigma}(\A,\A') := \prod_{n} \Map(\A(n),\A'(n))^{\Sigma_n}. \]
The model structure respects this enrichment, making $\symseq$ into a $\spectra$-enriched model category.

A symmetric sequence $\A$ is \emph{$n$-truncated} if $\A(m) = *$ for all $m > n$. We say that $\A$ is \emph{bounded} if it is $n$-truncated for some $n$. For any symmetric sequence $\A$ we have its $n$-truncation $\A_{\leq n}$ which is the symmetric sequence given by
\[ \A_{\leq n}(I) := \begin{cases}
  \A(I) & \text{if $|I| \leq n$}; \\
  \; * & \text{otherwise}.
\end{cases} \]
Associated to $\A$ is its \emph{truncation sequence} consisting of maps of symmetric sequences
\[ \A \to \dots \to \A_{\leq n} \to \A_{\leq(n-1)} \to \dots \to \A_{\leq 1}. \]
\end{definition}

\begin{definition}[Operads of spectra]
An \emph{operad of spectra} is a monoid in the category $\symseq$ with respect to the composition product of symmetric sequences (see, for example, \cite[2.11]{ching:2005}). Explicitly, an operad $\P$ consists of a symmetric sequence together with:
\begin{itemize}
  \item a composition map
  \[ \P_\alpha: \P(k) \smsh \Smsh_{j = 1}^{k} \P(n_j) \to \P(n) \]
  for each surjection $\alpha: \un{n} \epi \un{k}$ of nonempty finite sets, where $n_j := |\alpha^{-1}(j)|$; and
  \item a unit map $\eta: S \to \P(1)$;
\end{itemize}
that satisfy associativity, unitivity and equivariance conditions.

An operad of spectra is \emph{reduced} if the unit map $\eta:S \to \P(1)$ is an isomorphism. We consider only reduced operads in this paper.

There is a cofibrantly-generated model structure on the category of reduced operads of spectra, described in \cite[Appendix]{arone/ching:2011}, in which weak equivalences and fibrations are detected on the underlying symmetric sequences, i.e. termwise. An operad is \emph{$\Sigma$-cofibrant}, \emph{$n$-truncated} or \emph{bounded} if its underlying symmetric sequence has the corresponding property. The truncation sequence associated to an operad is a sequence of operads.

Note that by design our operads do not include a term $\P(0)$ so that they describe only non-unital structures.
\end{definition}

\begin{definition}[Modules over operads of spectra]
Given an operad $\P$ of spectra, a \emph{(right) $\P$-module} consists of a symmetric sequence $\M$ together with a right action of $\P$ with respect to the composition product. Explicitly, such an $\M$ has a structure map
\[ \M_\alpha: \M(k) \smsh \Smsh_{j = 1}^{k} \P(n_j) \to \M(n) \]
for each surjection $\alpha: \un{n} \epi \un{k}$, which satisfy appropriate conditions.

Since we only consider right modules over operads in this paper, we refer to these just as $\P$-modules and write $\Mod(\P)$ for the category of $\P$-modules (whose morphisms are maps of the underlying symmetric sequences that commute with the structure maps). The category $\Mod(\P)$ has a cofibrantly-generated stable model structure that is enriched in $\spectra$, in which weak equivalences and fibrations are detected termwise. Note that all homotopy limits and colimits of diagrams of $\P$-modules are also computed termwise. A $\P$-module is \emph{$\Sigma$-cofibrant}, \emph{$n$-truncated} or \emph{bounded} if its underlying symmetric sequence has the corresponding property. The truncation sequence of a $\P$-module is a sequence of $\P$-modules.
\end{definition}

In this paper we consider also what we call a `comodule' over an operad. In order to say what this means, we recall that a module can be interpreted as a spectrally-enriched functor $\un{\P} \to \spectra$ for a particular category $\un{\P}$ associated to the operad $\P$.

\begin{definition} \label{def:prop}
For an operad $\P$ of spectra we define a $\spectra$-enriched category $\un{\P}$ as follows:
\begin{itemize}
  \item the objects of $\un{\P}$ are the nonempty finite sets;
  \item for two nonempty finite sets $I$ and $J$, the morphism spectrum $\un{\P}(I,J)$ is given by
  \[ \un{\P}(I,J) := \Wdge_{I \epi J} \Smsh_{j \in J} P(I_j) \]
  where the wedge product is taken over all surjections $\alpha: I \epi J$ and we write $I_j := \alpha^{-1}(j)$.
  \item the composition and identity maps for the category $\un{\P}$ are determined by the operad multiplication and unit maps, respectively.
\end{itemize}
The category $\un{\P}$ is also known as the PROP associated to the operad $\P$.
\end{definition}

\begin{lemma} \label{lem:module-prop}
Let $\P$ be an operad of spectra. There is an equivalence between the category $\Mod(\P)$ of (right) $\P$-modules and the category of $\spectra$-enriched functors $\un{\P}^{op} \to \spectra$ (with morphisms the $\spectra$-enriched natural transformations).
\end{lemma}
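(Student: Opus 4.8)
The plan is to exhibit mutually inverse functors between $\Mod(\P)$ and $[\un{\P}^{op},\spectra]$ by matching the two collections of structure maps and axioms directly; this is a purely $1$-categorical statement, with the model structures playing no role. First I would define $\Phi: \Mod(\P) \to [\un{\P}^{op},\spectra]$. A right $\P$-module $\M$ is in particular a symmetric sequence, i.e.\ a functor $\Sigma \to \spectra$ defined on every nonempty finite set $I$; set $\Phi(\M)(I) := \M(I)$. For nonempty finite sets $I,J$, using that $\un{\P}^{op}(I,J) = \un{\P}(J,I) = \Wdge_{\beta: J \epi I} \Smsh_{i \in I} \P(J_i)$ with $J_i := \beta^{-1}(i)$, the required enriched-functor structure map
\[ \left( \Wdge_{\beta: J \epi I} \Smsh_{i \in I} \P(J_i) \right) \smsh \M(I) \to \M(J) \]
is defined, on the summand indexed by a surjection $\beta: J \epi I$, to be the module structure map $\M_\beta$ (up to the symmetry isomorphism of $\spectra$ and the identifications $\P(J_i) \isom \P(|J_i|)$), and, on a summand indexed by a bijection, to be the corresponding symmetric-group action on $\M$. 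Then $\Phi(\M)$ is an $\spectra$-enriched functor: preservation of identities is the unit axiom for $\M$ (over the identity surjection $I \epi I$, whose summand has source $\Smsh_{i \in I}\P(1) \smsh \M(I) \isom S \smsh \M(I)$ since $\P$ is reduced), and preservation of composition unwinds --- summand by summand over composable surjections --- into the operad composition maps $\P_\alpha$, the $\Sigma$-equivariance, and the module associativity axiom. A map of $\P$-modules goes to the evident enriched natural transformation.

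Conversely, I would define $\Psi: [\un{\P}^{op},\spectra] \to \Mod(\P)$ by restriction. There is a canonical $\spectra$-enriched functor from $\Sigma$ (viewed as $\spectra$-enriched via $\Sigma(I,J) := \Wdge_{\sigma: I \isom J} S$) into $\un{\P}$, picking out the bijection-indexed summands of the hom-spectra; restricting $G \in [\un{\P}^{op},\spectra]$ along the induced functor $\Sigma \isom \Sigma^{op} \to \un{\P}^{op}$ yields a symmetric sequence with $\Psi(G)(I) := G(I)$, whose module structure maps are the components of the structure maps of $G$ over the non-identity surjections, and whose module axioms are exactly the enriched functoriality of $G$. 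It is then immediate from the constructions that $\Phi$ and $\Psi$ are mutually inverse --- objects and all structure maps correspond bijectively --- so $\Phi$ is in fact an isomorphism of categories, hence an equivalence.

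I expect the only real work to be the bookkeeping in the composition-preservation check for $\Phi$ (equivalently, in verifying the module axioms for $\Psi(G)$): one must match the composition law of the PROP $\un{\P}$ --- which, for composable surjections $K \xrightarrow{\gamma} J \xrightarrow{\beta} I$, uses the composite surjection $\beta\gamma$ together with the operad multiplications $\P(J_i) \smsh \Smsh_{j \in J_i} \P(\gamma^{-1}(j)) \to \P((\beta\gamma)^{-1}(i))$ for $i \in I$ --- against the two-step iterate of the module action along $\gamma$ and then $\beta$. This is the operadic counterpart of the familiar fact that a right module over a monoid $M$ is the same thing as a contravariant functor out of the one-object category $BM$, and it goes through in the same way once the indexing by surjections and the symmetric-group equivariance are tracked carefully.
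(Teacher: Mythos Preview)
Your proposal is correct and follows essentially the same approach as the paper: both arguments amount to unwinding the definition of a $\spectra$-enriched functor $\un{\P}^{op} \to \spectra$ and observing that the resulting data and axioms coincide exactly with those of a right $\P$-module. The paper's proof is simply a terse two-line version of what you have spelled out in detail.
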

\begin{proof}
A $\spectra$-enriched functor $\M: \un{\P}^{op} \to \spectra$ consists of objects $\M(I)$ for each finite set $I$, and (suitably associative and unital) maps
\[ \un{\P}(I,J) \to \Map(\M(J),\M(I)). \]
Equivalently, for each surjection $I \epi J$, there is a map
\[ \M(J) \smsh \Smsh_{j \in J} \P(I_j) \to \M(I). \]
These maps form precisely the data for a $\P$-module.
\end{proof}

Lemma~\ref{lem:module-prop} inspires the following definition.

\begin{definition} \label{def:comodule}
Let $\P$ be an operad of spectra. A \emph{(right) $\P$-comodule} is a $\spectra$-enriched functor $\un{\P} \to \spectra$. More explicitly, a $\P$-comodule consists of objects $\N(I)$, one for each nonempty finite set $I$, and maps
\[ \N(I) \smsh \Smsh_{j \in J} \P(I_j) \to \N(J), \]
one for each surjection $\alpha: I \epi J$. In particular, a $\P$-comodule $\N$ has an underlying symmetric sequence.

We write $\Comod(\P)$ for the category of $\P$-comodules (with morphisms given by the $\spectra$-enriched natural transformations). The category $\Comod(\P)$ is enriched over $\spectra$ and has a cofibrantly-generated model structure in which weak equivalences and fibrations are detected termwise, and homotopy limits and colimits are computed termwise. We say that a $\P$-comodule is \emph{$\Sigma$-cofibrant}, \emph{$n$-truncated} or \emph{bounded} if its underlying symmetric sequence has the corresponding property.
\end{definition}

\begin{example}
A right $\Com$-comodule can be identified with a functor $\Omega \to \spectra$ where $\Omega$ is the category of nonempty finite sets and surjections. A right $\Com$-module can, similarly, be identified with a functor $\Omega^{op} \to \spectra$.
\end{example}

The dual definitions of module and comodule permit a natural `coend' construction between a module and comodule. We require a homotopy-invariant version of this construction which we now describe.

\begin{definition} \label{def:homsmsh}
Let $\P$ be a reduced operad of spectra, $\N$ a $\P$-comodule and $\M$ a $\P$-module. We define the spectrum $\N \homsmsh_{\P} \M$ to be the realization of the simplicial spectrum given by
\[ [r] \mapsto \Wdge_{n_0, \dots, n_r} \N(n_0) \smsh_{\Sigma_{n_0}} \un{\P}(n_0,n_1) \smsh_{\Sigma_{n_1}} \dots \smsh_{\Sigma_{n_{r-1}}} \un{\P}(n_{r-1},n_r) \smsh_{\Sigma_{n_r}} \M(n_r) \]
where the wedge sum is taken over all sequences of positive integers $n_0,\dots,n_r$ (though only the non-increasing sequences contribute non-trivial terms) and with face maps
\begin{itemize}
  \item $d_0$ given by the comodule structure maps $\N(n_0) \smsh_{\Sigma_{n_0}} \un{\P}(n_0,n_1) \to \N(n_1)$;
  \item $d_i$ for $i = 1,\dots,r-1$ given by the operad composition maps $\un{\P}(n_{i-1},n_i) \smsh_{\Sigma_{n_i}} \un{\P}(n_i,n_{i+1}) \to \un{\P}(n_{i-1},n_{i+1})$;
  \item $d_r$ given by the module structure maps $\un{\P}(n_{r-1},n_r) \smsh_{\Sigma_{n_r}} \M(n_r) \to \M(n_{r-1})$.
\end{itemize}
and degeneracy maps
\begin{itemize}
  \item $s_j$ for $j = 0,\dots,r$ given by the operad unit map $S \isom \un{\P}(n_j,n_j)_{\Sigma_{n_j}}$.
\end{itemize}
\end{definition}

\begin{lemma} \label{lem:homsmsh}
Let $\P$ be a $\Sigma$-cofibrant operad of spectra and $\N$ a $\Sigma$-cofibrant $\P$-comodule. Let $\M \weq \M'$ be a weak equivalence of $\P$-modules. Then the induced map
\[ \N \homsmsh_{\P} \M \to \N \homsmsh_{\P} \M' \]
is a weak equivalence of spectra. Similarly, let $\M$ be a $\Sigma$-cofibrant $\P$-module and $\N \weq \N'$ a weak equivalence of $\P$-comodules. Then the induced map
\[ \N \homsmsh_{\P} \M \to \N' \homsmsh_{\P} \M \]
is a weak equivalence of spectra.
\end{lemma}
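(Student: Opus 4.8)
The plan is to prove the statement as a standard homotopy-invariance result for a bar-type (two-sided coend) construction, using the fact that $\N \homsmsh_{\P} \M$ is the realization of an explicit simplicial spectrum. First I would reduce to checking a levelwise equivalence of simplicial spectra. Since geometric realization of simplicial spectra preserves weak equivalences between Reedy-cofibrant (or more generally ``good'', i.e. termwise-cofibrant with cofibrant degeneracies) simplicial objects, it suffices to show two things: (a) the map induced by $\M \weq \M'$ is a weak equivalence in each simplicial degree $r$, and (b) both simplicial spectra are good, so that the realization computes the homotopy colimit. For (b), the degeneracies are built from the operad unit $S \isom \un{\P}(n_j,n_j)_{\Sigma_{n_j}}$, and the cofibrancy hypotheses on $\P$, $\N$ and $\M$ feed into showing the relevant latching maps are cofibrations; this is where the $\Sigma$-cofibrancy assumptions are used, together with the fact that in each degree the term is a wedge of smash products of $\Sigma$-cofibrant spectra, smashed over finite groups $\Sigma_{n_i}$.

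For the main point (a), fix $r$ and consider the degree-$r$ term
\[ \Wdge_{n_0, \dots, n_r} \N(n_0) \smsh_{\Sigma_{n_0}} \un{\P}(n_0,n_1) \smsh_{\Sigma_{n_1}} \dots \smsh_{\Sigma_{n_{r-1}}} \un{\P}(n_{r-1},n_r) \smsh_{\Sigma_{n_r}} \M(n_r). \]
The map replacing $\M$ by $\M'$ only touches the last smash factor. So I would argue that smashing a fixed $\Sigma$-cofibrant spectrum (here $\N(n_0) \smsh_{\Sigma_{n_0}} \un{\P}(n_0,n_1) \smsh \dots \smsh \un{\P}(n_{r-1},n_r)$, with its residual $\Sigma_{n_r}$-action) with a weak equivalence $\M(n_r) \weq \M'(n_r)$ over $\Sigma_{n_r}$ yields a weak equivalence. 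The key input is that $\un{\P}(n_{r-1},n_r)$, being a wedge of smash products of the $\Sigma$-cofibrant spectra $\P(I_j)$, is $\Sigma_{n_r}$-cofibrant (freeness of the $\Sigma_{n_r}$-action on the indexing surjections is what gives this — the surjections $\un{n_{r-1}} \epi \un{n_r}$ carry a free $\Sigma_{n_r}$-action), so the relevant half-smash $(-) \smsh_{\Sigma_{n_r}} \M(n_r)$ is a homotopical (derived) construction and preserves the weak equivalence $\M(n_r) \weq \M'(n_r)$. Since weak equivalences of spectra are closed under arbitrary wedges, summing over all $(n_0,\dots,n_r)$ gives the degree-$r$ equivalence. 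The second statement, with $\N \weq \N'$ and $\M$ fixed $\Sigma$-cofibrant, is entirely symmetric: now the first smash factor is the one being varied, $\un{\P}(n_0,n_1)$ is $\Sigma_{n_0}$-cofibrant for the same reason, and $\M(n_r)$ being $\Sigma$-cofibrant plays the role previously played by $\N$.

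The step I expect to be the main obstacle is the bookkeeping around equivariant cofibrancy: verifying that ``$\Sigma$-cofibrant operad'' and ``$\Sigma$-cofibrant (co)module'' really do imply that each $\un{\P}(m,n)$ is cofibrant as a spectrum with $\Sigma_n$-action (and with $\Sigma_m$-action), and that the iterated half-smash products appearing in each simplicial degree are therefore homotopically well-behaved — in particular that the quotients by the $\Sigma_{n_i}$-actions in the ``interior'' of the bar construction do not introduce higher-homotopy corrections. This rests on the freeness of the $\Sigma_n$-action on the set of surjections $\un{m} \epi \un{n}$, so that $\un{\P}(m,n)$ is (up to reindexing orbits) an induced object $\Sigma_n_+ \smsh_{\Sigma_{m_1} \times \cdots} (\cdots)$; one then invokes the standard fact that in $\spectra$ (the EKMM category of $S$-modules) a half-smash $E \smsh_G X$ with $X$ $G$-cofibrant preserves weak equivalences in $E$, and that such cofibrancy is preserved under the smash products and wedges in sight. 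Once this is in place the realization argument is routine, and the Reedy-cofibrancy of the bar construction follows from the same equivariant-cofibrancy analysis applied to the latching maps.
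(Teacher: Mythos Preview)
Your proposal is correct and follows essentially the same approach as the paper: show the induced map of simplicial spectra is a levelwise weak equivalence (using the $\Sigma$-cofibrancy hypotheses exactly as you outline), and then invoke that realization preserves such equivalences between suitably well-behaved simplicial spectra. The only difference is packaging: where you speak of Reedy-cofibrant or ``good'' simplicial objects and sketch the equivariant-cofibrancy bookkeeping, the paper simply cites the EKMM notion of \emph{proper} simplicial spectrum \cite[X.2.2]{elmendorf/kriz/mandell/may:1997} and the corresponding realization lemma \cite[X.2.4]{elmendorf/kriz/mandell/may:1997}, leaving the verification implicit.
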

\begin{proof}
The simplicial spectra involved here are all proper in the sense of \cite[X.2.2]{elmendorf/kriz/mandell/may:1997}. The conditions imply that the induced maps of simplicial spectra are levelwise weak equivalences. Then \cite[X.2.4]{elmendorf/kriz/mandell/may:1997} implies that the given maps are weak equivalences.
\end{proof}

Lemma~\ref{lem:homsmsh} tells us that, when $\P$ is $\Sigma$-cofibrant, the homotopy coend $\N \homsmsh_{\P} \M$ has the `correct' homotopy type if either $\N$ or $\M$ is $\Sigma$-cofibrant.

\subsection{Coalgebras over comonads and their homotopy theory} \label{sec:comonads}

In this paper we are concerned with comonads on the category $\symseq$ of symmetric sequences of spectra.

Recall that a \emph{comonad} $\C$ on $\symseq$ is an endofunctor $\C: \symseq \to \symseq$ equipped with natural transformations $\nu : \C \to \C\C$ (the \emph{comultiplication}) and $\epsilon: \C \to \mathbf{I}_{\symseq}$ (the \emph{counit}) that make $\C$ into a comonoid with respect to composition of functors. For a comonad $\C: \symseq \to \symseq$, a \emph{$\C$-coalgebra} is a symmetric sequence $\A$ together with a structure map $\theta: \A \to \C\A$ that forms a coaction of the comonoid $\C$. Morphisms of coalgebras are maps in $\symseq$ that commute with the structure maps, and we have a category $\Coalg(\C)$ of coalgebras over $\C$. We say that a $\C$-coalgebra is \emph{$\Sigma$-cofibrant}, \emph{$n$-truncated} or \emph{bounded} if its underlying symmetric sequence has the corresponding property.

Our main examples of comonads arise from operads.

\begin{definition} \label{def:comonad-operad}
For an operad $\P$ of spectra, we define an endofunctor
\[ \C_\P: \symseq \to \symseq \]
by
\[ \C_\P(\A)(k) := \prod_{n} \left[ \prod_{\un{n} \epi \un{k}} \Map(\P(n_1) \smsh \dots \smsh \P(n_k), \A(n)) \right]^{\Sigma_n} \]
where $\Sigma_n$ acts on the set of surjections $\un{n} \epi \un{k}$ by pre-composition, and $\Sigma_k$ acts by post-composition.

The operad structure on $\P$ determines a comonad structure on $\C_\P$ with the operad composition determining the comultiplication, and the operad unit map determining the counit. Note that the functor $\C_\P$ is enriched in $\spectra$.
\end{definition}

\begin{lemma} \label{lem:operad-cofree}
The comonad $\C_\P$ is that associated to an adjunction
\[ \U_\P : \Mod(\P) \rightleftarrows \symseq : \R_\P \]
where $\U_\P$ is the forgetful functor. In particular, for any $\P$-module $\M$, the symmetric sequence $\U_\P(\M)$ has a canonical $\C_\P$-coalgebra structure.
\end{lemma}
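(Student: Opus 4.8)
The plan is to identify $\R_\P$ with an enriched right Kan extension and then read off both the functor and the comonad structure.

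First I would set up the adjunction abstractly. Using Lemma~\ref{lem:module-prop}, identify $\Mod(\P)$ with the category $[\un\P^{op},\spectra]$ of $\spectra$-enriched functors, and recall $\symseq = [\Sigma,\spectra]$. Since $\P$ is reduced, a bijection $I \to J$ determines a canonical wedge summand $S$ of $\un\P(I,J)$ (the summand indexed by a surjection with singleton fibres, with $\P(1)\isom S$), and composition of bijections matches composition in $\un\P$; this gives an identity-on-objects $\spectra$-enriched functor $\iota\colon \Sigma \to \un\P^{op}$ along which restriction is precisely the forgetful functor $\U_\P$. Because $\spectra$ is complete and bitensored, restriction along $\iota$ has a right adjoint $\R_\P := \mathrm{Ran}_\iota$, computed by the end
\[ \R_\P(\A)(I) \;=\; \int_{J \in \Sigma} \Map\bigl( \un\P(J, I),\, \A(J) \bigr). \]
Unwinding this is then routine: writing $|I|=k$ and using $\un\P(J,I) = \Wdge_{J \epi I} \Smsh_{i \in I}\P(J_i)$, the end over the groupoid $\Sigma$ becomes a product over $n \geq k$ of $\Sigma_n$-fixed points of $\Map$ out of a wedge, which is exactly the formula
\[ \R_\P(\A)(k) \;=\; \prod_n \Bigl[ \prod_{\un n \epi \un k} \Map(\P(n_1)\smsh\dots\smsh\P(n_k),\,\A(n)) \Bigr]^{\Sigma_n} \]
of Definition~\ref{def:comonad-operad}. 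Hence $\U_\P\R_\P = \C_\P$ as endofunctors of $\symseq$, and so $\C_\P$ underlies the comonad of the adjunction $\U_\P \dashv \R_\P$ (counit the adjunction counit $\varepsilon\colon \U_\P\R_\P \to \mathrm{Id}$, comultiplication $\U_\P\eta\R_\P$ for $\eta$ the unit).

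What remains is to check that this adjunction-comonad structure coincides with the one singled out in Definition~\ref{def:comonad-operad} (``operad composition determining the comultiplication, operad unit determining the counit''). I would verify this termwise on the end formula: the adjunction counit $\R_\P(\A)(I) \to \A(I)$ is the projection to the $J=I$ factor of the end precomposed with the operad unit $S \to \un\P(I,I)$; and the comultiplication is obtained by dualizing the PROP composition maps $\un\P(n_0,n_1)\smsh_{\Sigma_{n_1}}\un\P(n_1,n_2) \to \un\P(n_0,n_2)$. \textbf{This matching of the formal (co)unit of the enriched Kan-extension adjunction with the explicit PROP structure maps is the only non-formal step, and is where the actual work lies} — it is a bookkeeping argument with ends and the associativity/unitality of $\un\P$, but care is needed with the opposite categories and the identification $\Sigma \cong \Sigma^{op}$.

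Finally, the ``in particular'' clause is formal: for any adjunction $F \dashv G$ with induced comonad $C = FG$, the left adjoint lifts to $\Coalg(C)$ via $\M \mapsto (F\M,\, F\eta_\M)$. Applying this with $F = \U_\P$ and $G = \R_\P$, a $\P$-module $\M$ gives the $\C_\P$-coalgebra $\U_\P(\M)$ with structure map $\U_\P(\eta_\M)\colon \U_\P\M \to \U_\P\R_\P\U_\P\M = \C_\P(\U_\P\M)$.
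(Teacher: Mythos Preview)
Your proposal is correct and reaches the same conclusion as the paper, but the route is somewhat different. The paper's proof is entirely hands-on: it simply defines $\R_\P(\A)$ by the same formula as $\C_\P(\A)$, declares that the operad composition gives it a $\P$-module structure, and asserts that the adjunction is easy to check directly; the coalgebra structure on $\U_\P(\M)$ is then the adjunction unit, exactly as you say. Your approach is more conceptual: you invoke Lemma~\ref{lem:module-prop} to view $\Mod(\P)$ as an enriched functor category, recognize $\U_\P$ as restriction along $\iota\colon \Sigma \to \un\P^{op}$, and obtain $\R_\P$ as an enriched right Kan extension, then compute the end to recover the explicit formula. This buys you an explanation of \emph{why} the formula looks the way it does and makes the $\P$-module structure on $\R_\P(\A)$ automatic, at the cost of invoking the enriched Kan-extension machinery. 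You are also more careful than the paper in flagging that the adjunction-induced comonad structure must be matched with the one specified in Definition~\ref{def:comonad-operad}; the paper simply identifies $\C_\P = \U_\P\R_\P$ and does not comment further on that point.
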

\begin{proof}
The right adjoint $\R_\P$ is defined by the same formula as $\C_\P$. The $\P$-module structure on $\R_\P(\A)$ is determined by the operad composition map for $\P$. It is easy to check that $\R_\P$ so defined is right adjoint to $\U_\P$, and then $\C_P  = \U_\P\R_\P$ is the associated comonad. The $\C_\P$-coalgebra structure on $\U_\P(\M)$ is given by the unit map
\[ \U_\P(\M) \to \U_\P\R_\P\U_\P(\M) \]
for the adjunction $(\U_\P,\R_\P)$.
\end{proof}

\begin{definition}
We refer to the $\P$-module $\R_\P(\A)$ as the \emph{cofree $\P$-module} associated to the symmetric sequence $\A$.
\end{definition}

\begin{lemma}
The functor $\M \mapsto \U_\P\M$ determines an equivalence of categories
\[ \Mod(\P) \simeq \Coalg(\C_\P). \]
\end{lemma}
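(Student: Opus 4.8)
The plan is to exhibit the functor $\U_{\P}: \Mod(\P) \to \Coalg(\C_{\P})$ as an equivalence by explicitly constructing an inverse. The functor in question sends a $\P$-module $\M$ to its underlying symmetric sequence $\U_{\P}\M$, equipped with the $\C_{\P}$-coalgebra structure coming from the unit of the adjunction $(\U_{\P},\R_{\P})$ as described in Lemma~\ref{lem:operad-cofree}. Since that lemma already identifies $\C_{\P} = \U_{\P}\R_{\P}$, the claim is really the statement that this particular comonadic adjunction is \emph{comonadic}; so the cleanest route is to verify the hypotheses of the (dual) Beck monadicity theorem, or else to write down the inverse functor by hand. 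I would do the latter, since all the structure involved is completely explicit.

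First I would construct the candidate inverse $V: \Coalg(\C_{\P}) \to \Mod(\P)$. Given a $\C_{\P}$-coalgebra $(\A,\theta)$, the structure map $\theta: \A \to \C_{\P}\A$, unwound via the formula in Definition~\ref{def:comonad-operad}, is exactly the adjunct of a collection of maps $\M(k) \smsh \P(n_1) \smsh \dots \smsh \P(n_k) \to \M(n)$, one for each surjection $\un{n}\epi\un{k}$ — i.e. precisely the data of a right $\P$-module structure on $\A$ (compare the unwinding already carried out in the proof of Lemma~\ref{lem:module-prop}). The coassociativity axiom $\nu\A \circ \theta = \C_{\P}\theta \circ \theta$ translates, under this adjunction, into the associativity axiom for the module structure maps (using that the comultiplication $\nu$ is built from the operad composition), and the counit axiom $\epsilon\A \circ \theta = \mathrm{id}$ translates into the unit axiom (using that $\epsilon$ is built from the operad unit $\eta: S \to \P(1)$). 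A morphism of $\C_{\P}$-coalgebras is by definition a map of symmetric sequences commuting with $\theta$, which under the same adjunct correspondence is exactly a map of $\P$-modules; so $V$ is well-defined and functorial.

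It then remains to check that $V$ and $\U_{\P}$ are mutually inverse. On underlying symmetric sequences both composites are the identity, so the only thing to see is that the module/coalgebra structures are carried to each other correctly. For $V\U_{\P}$: starting from a $\P$-module $\M$, the coalgebra structure on $\U_{\P}\M$ is the adjunct of the identity on $\R_{\P}\U_{\P}\M$ composed appropriately, which unwinds back to the original module structure maps of $\M$ — this is just the triangle identity for the adjunction $(\U_{\P},\R_{\P})$. For $\U_{\P}V$: starting from a coalgebra $(\A,\theta)$, the module structure $V(\A,\theta)$ has associated coalgebra structure equal to the adjunct of its structure maps, which is $\theta$ again by construction. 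Both checks are formal consequences of the adjunction bijection, and since $\C_{\P}$, $\R_{\P}$ and the relevant (co)units are all $\spectra$-enriched, the bijections are in fact natural isomorphisms of mapping spectra, giving an enriched equivalence of categories.

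The only genuinely non-routine point is the bookkeeping that translates the coassociativity of $\theta$ against $\nu$ into the associativity of the module structure maps — one must track how the iterated products $\P(n_1)\smsh\dots\smsh\P(n_k)$ nest under a pair of composable surjections $\un{m}\epi\un{n}\epi\un{k}$, and confirm that the equivariance and grouping conventions in Definition~\ref{def:comonad-operad} match those in the definition of a $\P$-module. This is the same combinatorics that underlies the identification $\C_{\P} = \U_{\P}\R_{\P}$ in Lemma~\ref{lem:operad-cofree}, so in practice one can simply invoke that lemma together with Beck's theorem: $\U_{\P}$ creates all colimits (since colimits of $\P$-modules are computed on underlying symmetric sequences) and, dually, the forgetful functor reflects isomorphisms, so the comparison functor to $\Coalg(\U_{\P}\R_{\P}) = \Coalg(\C_{\P})$ is an equivalence. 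I expect to present the hand-built inverse $V$ as the main content, relegating the axiom-matching to a one-line remark.
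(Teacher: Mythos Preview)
Your proposal is correct and takes essentially the same approach as the paper: both use the adjunction to translate $\C_{\P}$-coalgebra structure maps into $\P$-module structure maps on the same underlying symmetric sequence. The paper's two-sentence proof phrases this slightly more slickly by observing that $\U_{\P}$ has both a left adjoint $\F_{\P}$ and a right adjoint $\R_{\P}$, so that the comonad $\C_{\P} = \U_{\P}\R_{\P}$ is right adjoint (as an endofunctor) to the monad $\U_{\P}\F_{\P}$ whose algebras are precisely the $\P$-modules; the adjunction then matches algebra axioms with coalgebra axioms in one stroke, whereas you unwind the same correspondence by hand and additionally sketch the Beck comonadicity route as a backup.
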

\begin{proof}
Since $\U_\P$ is right adjoint to the free $\P$-module functor $\F_\P$, the composite $\U_\P \R_\P$ is right adjoint to the functor monad $\U_\P \F_\P$ whose algebras are the $\P$-modules. The structure maps for a right $\P$-module correspond precisely under this adjunction to the structure maps for a $\C_\P$-coalgebra structure on the same symmetric sequence.
\end{proof}

\begin{definition} \label{def:comonad-operad-map}
Suppose that $\phi: \P \to \P'$ is a morphism of operads of spectra. There is an induced map of comonads
\[ \C_\phi: \C_{\P'} \to \C_\P. \]
If $\M$ is a $\C_{\P'}$-coalgebra with structure map $\theta$ then the composite
\[ \dgTEXTARROWLENGTH=2em \M \arrow{e,t}{\theta} \C_{\P'}\M \arrow{e,t}{\C_\phi} \C_\P\M \]
gives $\M$ the structure of a $\C_\P$-module. This corresponds to pulling back the $\P'$-module structure on $\M$ along the map $\phi$. This construction makes $\P \mapsto \C_\P$ into a contravariant functor from the category of operads in $\spectra$ to the category of comonads on $\symseq$.
\end{definition}

\begin{lemma}
Let $\P$ be a cofibrant operad (in the projective model structure on the category of reduced operads of spectra). Then the comonad $\C_\P$ preserves all weak equivalences.
\end{lemma}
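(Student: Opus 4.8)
The plan is to work directly from the formula for $\C_\P$ in Definition~\ref{def:comonad-operad} and reduce the statement to the good homotopical behaviour of mapping spectra out of cofibrant objects. Let $\A \weq \A'$ be a weak equivalence of symmetric sequences. Since weak equivalences in $\symseq$ are detected termwise, it suffices to show that $\C_\P(\A)(k) \to \C_\P(\A')(k)$ is a weak equivalence of spectra for each $k$.

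First I would reorganize the formula defining $\C_\P(\A)(k)$ by decomposing, for each $n$, the $\Sigma_n$-set of surjections $\un n \epi \un k$ into orbits. These orbits are indexed by the compositions $\vec n = (n_1,\dots,n_k)$ of $n$ into $k$ positive parts, and the stabilizer of a representative surjection is the Young subgroup $\Sigma_{\vec n} := \Sigma_{n_1} \times \dots \times \Sigma_{n_k} \leq \Sigma_n$. Using the standard identification $(\prod_{\Sigma_n/\Sigma_{\vec n}} Z)^{\Sigma_n} \isom Z^{\Sigma_{\vec n}}$, this rewrites the defining formula as
\[ \C_\P(\A)(k) \isom \prod_{\vec n} \, \Map\bigl(\P(n_1) \smsh \dots \smsh \P(n_k),\ \A(n)\bigr)^{\Sigma_{n_1} \times \dots \times \Sigma_{n_k}}, \]
a product over all length-$k$ compositions $\vec n$ of positive integers (with $n = n_1 + \dots + n_k$), where $\Sigma_{\vec n}$ acts factorwise on the smash product and on $\A(n)$ through the inclusion $\Sigma_{\vec n} \into \Sigma_n$. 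I would then observe that each factor on the right is precisely the spectrum of morphisms, in the projective model category $\spectra^{B\Sigma_{\vec n}}$ of spectra with $\Sigma_{\vec n}$-action, from $\P(n_1) \smsh \dots \smsh \P(n_k)$ to $\A(n)$.

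The conclusion then follows from two inputs. First, since $\P$ is a cofibrant operad it is $\Sigma$-cofibrant (by the construction of the operad model structure in \cite{arone/ching:2011}), so each $\P(m)$ is cofibrant in $\spectra^{B\Sigma_m}$; as the external smash product of cofibrant objects is cofibrant for the product group, $\P(n_1) \smsh \dots \smsh \P(n_k)$ is cofibrant in $\spectra^{B\Sigma_{\vec n}}$. Second, the map $\A(n) \to \A'(n)$ is an underlying weak equivalence of spectra, hence a weak equivalence in $\spectra^{B\Sigma_{\vec n}}$ after restricting the action, and every object of $\spectra^{B\Sigma_{\vec n}}$ is fibrant. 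By the pushout-product axiom for the $\spectra$-enriched model category $\spectra^{B\Sigma_{\vec n}}$, together with Ken Brown's lemma, mapping out of the cofibrant object $\P(n_1) \smsh \dots \smsh \P(n_k)$ therefore sends $\A(n) \to \A'(n)$ to a weak equivalence of spectra. Finally, a set-indexed product of weak equivalences between (fibrant) spectra is a weak equivalence, so $\C_\P(\A)(k) \to \C_\P(\A')(k)$ is a weak equivalence, as desired.

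The step needing the most care is the cofibrancy claim: that a cofibrant object of the model category of reduced operads of spectra is $\Sigma$-cofibrant, and that external smash products of its terms are cofibrant for the relevant product groups. This is where the particular features of the EKMM category of $S$-modules and of the operad model structure of \cite{arone/ching:2011} enter; the remainder is formal manipulation of the defining formula plus standard enriched-model-category facts. (A more conceptual alternative is to note that $\C_\P = \U_\P \R_\P$ with $\R_\P$ the cofree-module functor; but showing that $\R_\P$ preserves all weak equivalences rests on exactly the same cofibrancy of $\P$, so I would proceed via the direct computation above.)
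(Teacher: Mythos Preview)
Your proof is correct and is essentially identical to the paper's argument: the paper also rewrites $\C_\P(\A)(k)$ as the product over compositions $(n_1,\dots,n_k)$ of $\Map(\P(n_1)\smsh\dots\smsh\P(n_k),\A(n))^{\Sigma_{n_1}\times\dots\times\Sigma_{n_k}}$, invokes that a cofibrant operad has each $\P(n_i)$ cofibrant as a $\Sigma_{n_i}$-spectrum so the smash product is cofibrant for the product group, and concludes using fibrancy of all objects. Your version simply spells out the orbit decomposition and the enriched-model-category justification in more detail.
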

\begin{proof}
We can rewrite $\C_\P(A)(k)$ as
\[ \prod_{n} \left[ \prod_{n = n_1+\dots+n_k} \Map(\P(n_1) \smsh \dots \smsh \P(n_k), \A(n))^{\Sigma_{n_1} \times \dots \times \Sigma_{n_r}} \right] \]
Since $\P$ is cofibrant, each $\P(n_i)$ is a cofibrant $\Sigma_{n_i}$-spectrum, and hence $\P(n_1) \smsh \dots \smsh \P(n_k)$ is a cofibrant $\Sigma_{n_1} \times \dots \times \Sigma_{n_k}$-spectrum. It follows that $\C_\P$ preserves weak equivalences (since all objects in $\symseq$ are fibrant).
\end{proof}

We now turn to the homotopy theory of coalgebras over comonads. This topic was studied in detail in \cite[Section 1]{arone/ching:2014} and we first recall some definitions and results from there, though with one change. In this paper we work with comonads on the category of symmetric sequences that respect the spectral enrichment of that category. In this case we are able to define mapping spectra for coalgebras as well as mapping spaces.

\begin{definition} \label{def:derived-map-coalgebras}
Let $\C: \symseq \to \symseq$ be a $\spectra$-enriched comonad on the category of symmetric sequences. We define the \emph{derived mapping spectrum} for two $\C$-coalgebras $\A,\A'$ to be the spectrum
\[ \widetilde{\Map}_{\C}(\A,\A') := \Tot \left[ \Map_{\Sigma}(\A,\A') \rightrightarrows \Map_{\Sigma}(\A,\C(\A')) \Rrightarrow \dots \right]. \]
This is the (fat) totalization of a simplicial spectrum constructing using the comonad structure on $\C$, the coalgebra structures on $\A$ and $\A'$, and the spectral enrichment of $\C$.

As in \cite{arone/ching:2014} (though with mapping spectra instead of mapping spaces) the derived mapping spectra $\widetilde{\Map}_{\C}(\A,\A')$ determine an $A_\infty$-$\spectra$-enriched category with composition maps parameterized by a certain $A_\infty$-operad $\mathscr{A}$. We obtain the \emph{homotopy category of $\C$-coalgebras}, which we denote $\Coalg(\C)^{\mathsf{h}}$, with objects the $\Sigma$-cofibrant $\C$-coalgebras, and morphism sets
\[ [\A,\A']_{\C} := \pi_0 \; \widetilde{\Map}_{\C}(\A,\A'). \]
A morphism $f: \A \to \A'$ in this homotopy category is determined by a \emph{derived morphism of $\C$-coalgebras} which consists of compatible maps of symmetric sequences
\[ f_k: \Delta^k_+ \smsh \A \to \C^k(\A'). \]
By \cite[1.16]{arone/ching:2014}, such a morphism induces an isomorphism in the homotopy category if and only if the map $f_0 : \A \to \A'$ is an equivalence of symmetric sequences.
\end{definition}

In our classifications of analytic functors, we require a slight modification of the homotopy category constructed in Definition~\ref{def:derived-map-coalgebras}.

\begin{definition} \label{def:truncated-map-coalgebras}
Let $\C: \symseq \to \symseq$ be a $\spectra$-enriched comonad on the category of symmetric sequences. We say that $\C$ \emph{preserves truncations} if whenever $\A$ is $n$-truncated, then $\C(\A)$ is also $n$-truncated. In this case, for every $\C$-coalgebra $\A$ and each $n$, there is a unique $\C$-coalgebra structure on the truncations $\A_{\leq n}$ such that the truncation sequence for $\A$ consists of (strict) maps of $\C$-coalgebras.

We define the \emph{pro-truncated mapping spectrum} for two $\C$-coalgebras $\A,\A'$ to be the spectrum
\[ \widetilde{\Map^{\mathsf{t}}}_{\C}(\A,\A') := \holim_n \widetilde{\Map}_{\C}(\A,\A'_{\leq n}) \]
The terms on the right-hand side are the derived mapping spectra of Definition~\ref{def:derived-map-coalgebras} and the maps in the homotopy limit are induced by the truncation maps $\A'_{\leq n} \to \A'_{\leq (n-1)}$ which are (strict) morphisms of $\C$-coalgebras.
\end{definition}

\begin{proposition} \label{prop:pro-truncated-coalgebras}
Suppose that $\C: \symseq \to \symseq$ is a $\spectra$-enriched comonad that preserves truncations and weak equivalences between $\Sigma$-cofibrant objects. Then the pro-truncated mapping spectra of Definition~\ref{def:truncated-map-coalgebras} are the mapping spectra in an $A_\infty$-$\spectra$-enriched category whose objects are the $\Sigma$-cofibrant $\C$-coalgebras.
\end{proposition}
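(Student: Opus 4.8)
The plan is to reduce this statement to the $A_\infty$-enriched structure already established in Definition~\ref{def:derived-map-coalgebras} (i.e., in \cite[Section 1]{arone/ching:2014}, adapted to mapping spectra), using the fact that the pro-truncated mapping spectra are built as homotopy limits of the derived mapping spectra. First I would observe that, under the hypothesis that $\C$ preserves truncations, each truncation $\A'_{\leq n}$ of a $\C$-coalgebra $\A'$ carries a canonical $\C$-coalgebra structure making the truncation tower $\A' \to \dots \to \A'_{\leq n} \to \A'_{\leq(n-1)} \to \dots$ a tower of strict coalgebra maps; this is exactly the content recalled in Definition~\ref{def:truncated-map-coalgebras}. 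Consequently, for any $\Sigma$-cofibrant $\C$-coalgebras $\A, \A'$, the tower $\{\widetilde{\Map}_{\C}(\A,\A'_{\leq n})\}_n$ is a tower of spectra, and $\widetilde{\Map^{\mathsf{t}}}_{\C}(\A,\A') = \holim_n \widetilde{\Map}_{\C}(\A,\A'_{\leq n})$.

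Next I would produce the composition maps. Given three $\Sigma$-cofibrant coalgebras $\A, \A', \A''$, the $A_\infty$-operad $\mathscr{A}$ acts on the derived mapping spectra of Definition~\ref{def:derived-map-coalgebras}, so in particular we have compatible maps
\[ \mathscr{A}(j) \smsh \widetilde{\Map}_{\C}(\A,\A') \smsh \dots \to \widetilde{\Map}_{\C}(\A,\A') \]
and for composition the key map
\[ \widetilde{\Map}_{\C}(\A',\A'') \smsh \widetilde{\Map}_{\C}(\A,\A') \to \widetilde{\Map}_{\C}(\A,\A''). \]
Since the truncation functors are compatible with the comonad, for each $n$ the truncation $\A''_{\leq n}$ gives
\[ \widetilde{\Map}_{\C}(\A',\A''_{\leq n}) \smsh \widetilde{\Map}_{\C}(\A,\A') \to \widetilde{\Map}_{\C}(\A,\A''_{\leq n}), \]
and I would check that these are compatible as $n$ varies, using that the structure maps $f_k$ defining derived morphisms (into $\C^k(\A'')$) are natural in $\A''$ and hence commute with the strict truncation maps. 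The first factor $\widetilde{\Map}_{\C}(\A',\A''_{\leq n})$ only depends on $\A''$ through $\A''_{\leq n}$, but we need to feed in $\widetilde{\Map^{\mathsf{t}}}_{\C}(\A',\A'') = \holim_m \widetilde{\Map}_{\C}(\A',\A''_{\leq m})$; projecting to the $m = n$ term gives a compatible family, and then passing to $\holim_n$ on the target produces
\[ \widetilde{\Map^{\mathsf{t}}}_{\C}(\A',\A'') \smsh \widetilde{\Map^{\mathsf{t}}}_{\C}(\A,\A') \to \widetilde{\Map^{\mathsf{t}}}_{\C}(\A,\A''). \]
The same argument applies to the full $\mathscr{A}(j)$-parameterized operations, so the $A_\infty$-structure is inherited. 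The hypothesis that $\C$ preserves weak equivalences between $\Sigma$-cofibrant objects is what guarantees that $\widetilde{\Map}_{\C}(\A,\A'_{\leq n})$ (each term a totalization of mapping spectra between $\Sigma$-cofibrant objects through which $\C$ is applied) has the correct homotopy type, so that the homotopy limit defining $\widetilde{\Map^{\mathsf{t}}}_{\C}$ is homotopically meaningful.

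I expect the main obstacle to be the bookkeeping needed to check that the composition (and higher $\mathscr{A}$-operations) on the homotopy limits is well-defined and $A_\infty$-associative — specifically, verifying that one can commute the smash product past the homotopy limit over $n$ in a way that is coherently compatible with the $\mathscr{A}$-operad action. Since smash product with a fixed cofibrant spectrum does not in general commute with homotopy limits, one has to use the structure of the specific tower: the maps $\A'_{\leq n} \to \A'_{\leq(n-1)}$ are strict maps of coalgebras, so the induced maps on derived mapping spectra are strict maps of $\mathscr{A}$-modules, and composition is defined at the level of the tower before taking $\holim$. Thus the composition maps land in $\holim_n(\widetilde{\Map}_{\C}(\A,\A''_{\leq n}))$ directly, with no interchange of limits needed on the target side — only a projection is used on the first-factor side. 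Once this is set up carefully, associativity and unitality of the $A_\infty$-structure follow termwise in $n$ from the corresponding statements in Definition~\ref{def:derived-map-coalgebras}, and then pass to the homotopy limit. This is precisely the analogue, in the pro-truncated setting, of \cite[1.13--1.16]{arone/ching:2014}, and I would indicate that the proof is a routine adaptation of that argument.
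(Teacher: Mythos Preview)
There is a genuine gap in your argument concerning the second factor in the composition. You correctly observe that the ordinary $A_\infty$-composition gives maps
\[ \widetilde{\Map}_{\C}(\A',\A''_{\leq n}) \smsh \widetilde{\Map}_{\C}(\A,\A') \to \widetilde{\Map}_{\C}(\A,\A''_{\leq n}), \]
and you handle the first factor by projecting $\widetilde{\Map^{\mathsf{t}}}_{\C}(\A',\A'')$ to its level-$n$ term. But the second factor in your displayed composition is the \emph{non-truncated} derived mapping spectrum $\widetilde{\Map}_{\C}(\A,\A')$, whereas the source of the composition you are trying to build is $\widetilde{\Map^{\mathsf{t}}}_{\C}(\A,\A') = \holim_m \widetilde{\Map}_{\C}(\A,\A'_{\leq m})$. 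There is a canonical map $\widetilde{\Map}_{\C}(\A,\A') \to \widetilde{\Map^{\mathsf{t}}}_{\C}(\A,\A')$, but not one in the direction you need. If instead you also project the second factor to level $n$, obtaining $\widetilde{\Map}_{\C}(\A,\A'_{\leq n})$, the two factors no longer compose: the target of the second is $\A'_{\leq n}$ while the source of the first is $\A'$.

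The paper closes this gap with a single observation you have not used: because $\C$ preserves truncations, $\C^k(\A''_{\leq n})$ is itself $n$-truncated, and therefore
\[ \Map_{\Sigma}(\A', \C^k(\A''_{\leq n})) \isom \Map_{\Sigma}(\A'_{\leq n}, \C^k(\A''_{\leq n})). \]
At the level of cosimplicial objects this identifies $\widetilde{\Map}_{\C}(\A',\A''_{\leq n})$ with $\widetilde{\Map}_{\C}(\A'_{\leq n},\A''_{\leq n})$, so the composition can be carried out \emph{entirely at level $n$}: one composes $\widetilde{\Map}_{\C}(\A'_{\leq n},\A''_{\leq n})$ with $\widetilde{\Map}_{\C}(\A,\A'_{\leq n})$ via the construction of \cite[1.14]{arone/ching:2014}. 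Both inputs are now terms of the respective towers, the resulting maps are compatible in $n$, and passing to $\holim_n$ gives the desired $A_\infty$-composition on pro-truncated mapping spectra. The ``main obstacle'' you anticipate about commuting smash products past homotopy limits is thus not the real issue; the missing ingredient is this truncation isomorphism.
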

\begin{proof}
The construction is very similar to that of \cite[1.14]{arone/ching:2014}. Since $\C$ preserves truncations, we have isomorphisms
\[ \Map_{\Sigma}(\A,\C^k(\A'_{\leq n})) \isom \Map_{\Sigma}(\A_{\leq n}, \C^k(\A'_{\leq n})). \]
We therefore have, in the notation of \cite{arone/ching:2014}, maps
\[ \Map_{\Sigma}(\A,\C^\bullet(\A'_{\leq n})) \binBox \Map_{\Sigma}(\A',\C^\bullet(\A''_{\leq n})) \to \Map_{\Sigma}(\A,\C^\bullet(\A''_{\leq n})) \]
which, on taking totalizations, give the required composition maps for an $A_\infty$-category.
\end{proof}

\begin{definition} \label{def:truncated-category-coalgebras}
With $\C$ as above, the \emph{pro-truncated homotopy category of $\C$-coalgebras}, which we denote as $\Coalg^{\mathsf{t}}(\C)^{\mathsf{h}}$, has objects the $\Sigma$-cofibrant $\C$-coalgebras and morphism sets
\[ [\A,\A']^{\mathsf{t}}_{\C} := \pi_0 \; \widetilde{\Map^{\mathsf{t}}}_{\C}(\A,\A'). \]
A morphism $f:\A \to \A'$ in this homotopy category is determined by what we call a \emph{pro-truncated derived morphism of $\C$-coalgebras}. This consists of maps of symmetric sequences
\[ f_{k,n} : \Delta^k_+ \smsh \A \to \C^k(\A'_{\leq n}) \]
that are compatible both with the truncation sequence for $\A'$, and with various cosimplicial structure maps.

In particular, the maps $f_{0,n}$ together make up a morphism of symmetric sequences
\[ f_0: \A \to \A' \]
and the maps $f_{1,n}$ ensure that the large rectangle in the following diagram commutes up to homotopy.
\[ \begin{diagram}
  \node{\A} \arrow{e,t}{f_0} \arrow{s} \node{\A'} \arrow{s,..} \arrow{e,t}{\sim} \node{\holim_n \A'_{\leq n}} \arrow{s} \\
  \node{\C(\A)} \arrow{e,t}{\C(f_0)} \node{\C(\A')} \arrow{e} \node{\holim_n \C(\A'_{\leq n})}
\end{diagram} \]
though this does not in general imply that the left-hand square commutes up to homotopy.
\end{definition}

\begin{remark}
The pro-truncated homotopy category is in general coarser than the homotopy category of $\C$-coalgebras defined in \cite{arone/ching:2014}. The subcategories of bounded coalgebras are equivalent in both homotopy categories, but, when $\C$ does not commute with homotopy limits, the pro-truncated category can have more equivalences between unbounded coalgebras.
\end{remark}

\begin{proposition} \label{prop:pro-truncated-equivalence}
A pro-truncated derived morphism of $\Sigma$-cofibrant $\C$-coalgebras $f: \A \to \A'$, in the sense of Definition~\ref{def:truncated-category-coalgebras} induces an isomorphism in the pro-truncated homotopy category if and only if the underlying map
\[ f_0 : \A \to \A' \]
is a weak equivalence of symmetric sequences.
\end{proposition}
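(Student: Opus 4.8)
The plan is to mirror the proof of the analogous statement in the non-truncated setting, namely \cite[1.16]{arone/ching:2014}, adapting it to the pro-truncated mapping spectra. The statement has two directions, and the forward direction (an isomorphism in the pro-truncated homotopy category has underlying weak equivalence $f_0$) is immediate: applying the functor $\pi_0 \widetilde{\Map^{\mathsf{t}}}_{\C}(Z,-)$ for varying $\Sigma$-cofibrant $\C$-coalgebras $Z$ and using the counit $\epsilon$ to extract the underlying map of symmetric sequences shows that an isomorphism $f$ induces a bijection on $[Z, -]_0$-level data compatibly with the cosimplicial structure, which forces $f_0$ to be an isomorphism in the homotopy category of symmetric sequences, i.e. a weak equivalence. (Alternatively: the underlying-symmetric-sequence functor $\Coalg^{\mathsf{t}}(\C)^{\mathsf{h}} \to \Ho(\symseq)$ is well-defined, and an isomorphism goes to an isomorphism.) So the content is the converse.

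For the converse, suppose $f_0 : \A \to \A'$ is a weak equivalence of symmetric sequences; I want to produce a pro-truncated derived morphism $g : \A' \to \A$ that is inverse to $f$ up to the $A_\infty$-structure. The key input is that $\C$ preserves truncations and preserves weak equivalences between $\Sigma$-cofibrant objects, so for each $n$ the map $\C^k(f_{0})_{\leq n} : \C^k(\A_{\leq n}) \to \C^k(\A'_{\leq n})$ is a weak equivalence (here one uses that truncation and $\C$ commute, and that $\A_{\leq n}, \A'_{\leq n}$ are again $\Sigma$-cofibrant — a point that may need a small lemma, or can be arranged by first replacing $\A,\A'$ by $\Sigma$-cofibrant truncation towers). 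Consequently each structure map $\Map_{\Sigma}(\A, \C^k(\A'_{\leq n})) \to \Map_{\Sigma}(\A, \C^k(\A_{\leq n}))$ induced by $f_0$ and then $\Map_{\Sigma}(\A,-)$-functoriality fits into a weak equivalence of cosimplicial spectra after also using $f_0$ on the source slot; taking $\Tot$ and then $\holim_n$ shows that $f$ induces a weak equivalence $\widetilde{\Map^{\mathsf{t}}}_{\C}(Z,\A) \to \widetilde{\Map^{\mathsf{t}}}_{\C}(Z,\A')$ for every $\Sigma$-cofibrant $Z$. The same argument on the other variable, using that $\Map_{\Sigma}(-,\C^k(\A'_{\leq n}))$ sends the weak equivalence $f_0$ between $\Sigma$-cofibrant objects to a weak equivalence (all objects of $\symseq$ being fibrant, and $\C^k(\A'_{\leq n})$ being suitably cofibrant-enough for the enriched hom to be homotopical — again exactly as in \cite{arone/ching:2014}), shows $f$ induces a weak equivalence $\widetilde{\Map^{\mathsf{t}}}_{\C}(\A',Z) \to \widetilde{\Map^{\mathsf{t}}}_{\C}(\A,Z)$. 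By the Yoneda-type argument in the $A_\infty$-category $\Coalg^{\mathsf{t}}(\C)^{\mathsf{h}}$ (a morphism inducing isomorphisms on all $[-,Z]^{\mathsf{t}}_{\C}$ and all $[Z,-]^{\mathsf{t}}_{\C}$ is an isomorphism), $f$ is an isomorphism in $\Coalg^{\mathsf{t}}(\C)^{\mathsf{h}}$.

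The step I expect to be the main obstacle is the interchange of $\holim_n$ with $\Tot$ and with the formation of the $A_\infty$-composition, i.e. checking that the pro-truncated mapping spectra genuinely assemble into an $A_\infty$-category in which this Yoneda argument is valid, and that "weak equivalence on each $\widetilde{\Map}_{\C}(-, \A'_{\leq n})$" survives passage to the homotopy limit over $n$ — this is where the hypothesis that $\C$ preserves truncations is doing real work, since it is what makes the tower $\{\widetilde{\Map}_{\C}(\A, \A'_{\leq n})\}_n$ a tower of weak equivalences rather than merely a tower with compatible maps. A secondary technical point is ensuring that $\Sigma$-cofibrancy is inherited by truncations (so that $\A_{\leq n}$, $\A'_{\leq n}$ are legitimate objects of the relevant homotopy categories and the homotopy-invariance lemmas apply); this is straightforward from the termwise nature of the model structure on $\symseq$ but should be remarked on. Modulo these points the argument is a routine adaptation of \cite[1.16]{arone/ching:2014}, and I would present it as such, citing that proof for the combinatorics of the $A_\infty$-composition and only spelling out the new ingredient, namely the compatibility with the truncation tower.
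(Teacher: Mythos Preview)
Your proposal is correct and follows essentially the same approach as the paper, which simply defers to \cite[1.16]{arone/ching:2014} and notes that the one new ingredient in the pro-truncated setting is that a map $f_0$ of $\Sigma$-cofibrant symmetric sequences is a weak equivalence if and only if $f_0^* : \Map_\Sigma(\A', X) \to \Map_\Sigma(\A, X)$ is one for every \emph{bounded} $X$; this is exactly what your observation about $\C$ preserving truncations buys, since the targets $\C^k(Z_{\leq n})$ appearing in the cosimplicial mapping spectra are then bounded.
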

\begin{proof}
The argument is almost identical to that of \cite[1.16]{arone/ching:2014} using the fact that a map $f_0: A \to A'$ of $\Sigma$-cofibrant symmetric sequences is a weak equivalence if and only if, for every bounded symmetric sequence $X$, the induced map
\[ f_0^*: \Map_{\Sigma}(A',X) \to \Map_{\Sigma}(A,X) \]
is a weak equivalence.
\end{proof}

\begin{lemma}
Let $\P$ be a $\Sigma$-cofibrant operad of spectra. Then the comonad $\C_\P$ preserves truncations and the following categories are equivalent:
\begin{enumerate}
  \item the homotopy category associated to the projective model structure on $\Mod(\P)$;
  \item the homotopy category of $\C_\P$-coalgebras of Definition~\ref{def:derived-map-coalgebras};
  \item the pro-truncated homotopy category of $\C_\P$-coalgebras of Definition~\ref{def:truncated-category-coalgebras}.
\end{enumerate}
\end{lemma}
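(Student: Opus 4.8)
The plan is to establish the three equivalences by connecting each to the statement about $\C_\P$-coalgebras, using the general machinery for homotopy theory of coalgebras recalled above. First I would check that $\C_\P$ preserves truncations: for a $\Sigma$-cofibrant operad $\P$, the formula for $\C_\P(\A)(k)$ involves a product over $n$ of mapping spectra out of $\P(n_1) \smsh \dots \smsh \P(n_k)$ into $\A(n)$, and each surjection $\un{n} \epi \un{k}$ forces $n \geq k$; so if $\A(m) = *$ for $m > n$ then the only potentially nontrivial factors in $\C_\P(\A)(k)$ come from $n \leq n_0$, giving $\C_\P(\A)(k) = *$ for $k > n_0$. Combined with the earlier lemma that a $\Sigma$-cofibrant $\P$ makes $\C_\P$ preserve weak equivalences, the hypotheses of Proposition~\ref{prop:pro-truncated-coalgebras} (and hence Proposition~\ref{prop:pro-truncated-equivalence}) are satisfied.

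Next I would address the equivalence of (2) and (3). By Proposition~\ref{prop:pro-truncated-equivalence}, a pro-truncated derived morphism of $\Sigma$-cofibrant $\C_\P$-coalgebras is an isomorphism in $\Coalg^{\mathsf{t}}(\C_\P)^{\mathsf{h}}$ exactly when its underlying map $f_0$ is a weak equivalence of symmetric sequences; by \cite[1.16]{arone/ching:2014} the same is true in $\Coalg(\C_\P)^{\mathsf{h}}$. The natural functor $\Coalg(\C_\P)^{\mathsf{h}} \to \Coalg^{\mathsf{t}}(\C_\P)^{\mathsf{h}}$ (induced by the maps $\widetilde{\Map}_{\C_\P}(\A,\A') \to \widetilde{\Map^{\mathsf{t}}}_{\C_\P}(\A,\A')$ coming from $\A' \to \holim_n \A'_{\leq n}$) is the identity on objects and reflects isomorphisms by the two statements just quoted. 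To see it is fully faithful — equivalently, that this map of derived mapping spectra is a weak equivalence — I would use that $\C_\P$ preserves truncations to rewrite $\widetilde{\Map}_{\C_\P}(\A, \A'_{\leq n})$ as a totalization whose input only involves $\A_{\leq n}$, and then observe that the relevant towers $\{\A'_{\leq n}\}$ and $\{\C_\P^k(\A'_{\leq n})\}$ already present $\A'$ and $\C_\P^k(\A')$ up to weak equivalence (using that $\C_\P$ commutes with the relevant homotopy limits when $\P$ is bounded on each truncation, or more carefully, that $\Map_\Sigma(\A, -)$ sends the tower to a tower computing $\Map_\Sigma(\A,\C_\P^k \A')$); so totalizing and taking $\holim_n$ gives back $\widetilde{\Map}_{\C_\P}(\A,\A')$. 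This identifies (2) with (3).

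Finally, for the equivalence of (1) with (2): the earlier lemma already gives an equivalence of categories $\Mod(\P) \simeq \Coalg(\C_\P)$, so it remains to see this passes to the correct homotopy categories, i.e. that the derived mapping spectra $\widetilde{\Map}_{\C_\P}(\U_\P\M, \U_\P\M')$ compute the derived mapping spectra in the projective model structure on $\Mod(\P)$. Here I would use Lemma~\ref{lem:operad-cofree}: $\C_\P = \U_\P\R_\P$ arises from the adjunction $(\U_\P,\R_\P)$, and $\R_\P(\A)$ is the cofree $\P$-module, which is fibrant. The simplicial spectrum $\Map_\Sigma(\A, \C_\P^\bullet \A')$ defining $\widetilde{\Map}_{\C_\P}$ is, under the adjunction, the cobar resolution of $\M'$ by cofree $\P$-modules, i.e. a fibrant replacement of $\M'$ in a resolution model structure sense; taking $\Tot$ then computes $\Map_{\Mod(\P)}(\M, \M'^{\mathrm{fib}})$ provided $\M$ is $\Sigma$-cofibrant (which by the termwise-detected model structure on $\Mod(\P)$ is a cofibrancy condition). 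I expect the main obstacle to be precisely this last point — verifying carefully that the cobar construction against the cofree-module comonad is a genuine fibrant replacement computing derived maps in $\Mod(\P)$, rather than merely a formal cosimplicial object — which is essentially the comparison between the model-categorical and comonadic homotopy theories; this is where I would lean most heavily on the detailed analysis of \cite[Section 1]{arone/ching:2014} and the properness of the simplicial spectra involved, as in Lemma~\ref{lem:homsmsh}.
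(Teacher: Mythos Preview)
Your outline is correct in spirit and would ultimately work, but it takes a more circuitous route than the paper and hedges at exactly the two places where the paper's argument is direct.

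For the equivalence of (2) and (3), the key fact is simply that $\C_\P$ commutes with \emph{all} homotopy limits: it is defined termwise as a product of mapping spectra $\Map(-,\A(n))^{\Sigma_n}$, and both products and mapping-out-of constructions preserve homotopy limits in the target variable. Once you observe this, the map
\[ \widetilde{\Map}_{\C_\P}(\A,\A') \to \holim_n \widetilde{\Map}_{\C_\P}(\A,\A'_{\leq n}) = \widetilde{\Map^{\mathsf{t}}}_{\C_\P}(\A,\A') \]
is an equivalence because $\Tot$ and $\C_\P^k$ both commute with the homotopy limit over $n$. Your discussion of towers and the parenthetical about ``when $\P$ is bounded on each truncation'' is unnecessary caution; no boundedness hypothesis is needed here.

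For the equivalence of (1) and (2), the paper avoids the issue you flag as ``the main obstacle'' by working from the \emph{other} side: rather than showing the cobar construction on $\M'$ is a fibrant replacement, one computes the derived mapping spectrum in $\Mod(\P)$ using the monadic bar resolution $(\U_\P\F_\P)^\bullet(\M)$ of $\M$ by free $\P$-modules. This gives
\[ \Map_{\P}(\M,\M') \simeq \Tot \Map_{\Sigma}((\U_\P\F_\P)^\bullet(\M),\M'), \]
and then the adjunctions $(\F_\P,\U_\P)$ and $(\U_\P,\R_\P)$ let you move the resolution across to obtain $\Tot \Map_{\Sigma}(\M,(\U_\P\R_\P)^\bullet\M') = \widetilde{\Map}_{\C_\P}(\M,\M')$. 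This is a purely formal adjunction manipulation, so the ``resolution model structure'' concern you raise never arises. Your approach via cofree resolutions is dual and can be made to work, but the free-module side is where the model-categorical input (that the bar resolution computes derived maps) is standard and immediate.
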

\begin{proof}
Let $\M,\M'$ be $\Sigma$-cofibrant $\P$-modules (i.e. $\C_\P$-coalgebras). Then the homotopy category associated to the projective model structure on $\Mod(\P)$ is determined by the mapping spectra
\[ \Map_{P}(\M,\M') \homeq \Tot\Map_{\Sigma}((\U_\P\F_P)^\bullet(\M),\M') \]
where $\F_P: \symseq \to \Mod(\P)$ is the free $\P$-module functor. Applying the adjunctions $(\U_\P,\R_\P)$ and $(\F_\P,\U_\P)$, this is equivalent to the derived mapping spectra of \cite[1.10]{arone/ching:2014}:
\[ \widetilde{\Map}_{\C_{\P}}(\M,\M') = \Tot\Map_{\Sigma}(\M,(\U_\P\R_\P)^\bullet\M'). \]
Finally, notice that $\C_\P$ commutes with homotopy limits, from which it follows that
\[ \widetilde{\Map}_{\C_{\P}}(\M,\M') \homeq \widetilde{\Map^{\mathsf{t}}}_{\C_{\P}}(\M,\M'). \]
The homotopy categories associated to these three mapping spectrum constructions are therefore equivalent.
\end{proof}

\section{Pro-operads and their modules} \label{sec:pro-operads}

We now consider inverse sequences of operads and sequences of modules over them. In this section we develop some theory for these objects and see how they are related to coalgebras over associated comonads.

\begin{definition}[Modules over pro-operads] \label{def:module-pro}
A \emph{pro-operad} is a cofiltered diagram in the category of operads of spectra. The only pro-operads that we consider in this paper are straightforward inverse sequences of operads so we focus on these. Let $\P_\bullet$ denote a sequence of operads of the form
\[ \dots \to \P_2 \to \P_1 \to \P_0. \]

A \emph{module over the pro-operad $\P_\bullet$} is a direct sequence $\M^\bullet$ of symmetric sequences
\[ \M^0 \to \M^1 \to \M^2 \to \dots \]
together with a $\P_L$-module structure on $\M^L$ for each $L$, such that the map
\[ \M^L \to \M^{L+1} \]
is a morphism of $\P_{L+1}$-modules. Here, $\M^L$ inherits a $\P_{L+1}$-module structure via the operad morphism $\P_{L+1} \to \P_L$.

A \emph{morphism} of $\P_\bullet$-modules $f: \M_1^\bullet \to \M_2^\bullet$ consists of maps $f^L: \M_1^L \to \M_2^L$ for each $L$ such that each diagram
\[ \begin{diagram}
  \node{\M_1^L} \arrow{e} \arrow{s} \node{\M_1^{L+1}} \arrow{s} \\
  \node{\M_2^L} \arrow{e} \node{\M_2^{L+1}}
\end{diagram} \]
commutes. There is a category $\Mod(\P_\bullet)$ whose objects are the $\P_\bullet$-modules, with morphisms as defined above. By recent work of Greenlees and Shipley~\cite{greenlees/shipley:2013}, the category $\Mod(\P_\bullet)$ can be given a \emph{strict projective model structure} arising from the projective model structures on the categories $\Mod(\P_L)$, in which a morphism $f$ is a weak equivalence (or a fibration) if and only if each map $f^L$ is a weak equivalence (or, respectively, a fibration) of $\P_L$-modules.
\end{definition}

\begin{remark} \label{rem:module-pro-operad}
More generally, given a pro-operad $\P_\bullet$ indexed by an arbitrary (small) cofiltered category $\mathbb{I}$, we can define a $\P_\bullet$-module to be a diagram of symmetric sequences $\M^\bullet$ indexed by $\mathbb{I}^{op}$ such that each $\M^i$ is a $\P_i$-module, and, for each morphism $i \to i'$ in $\mathbb{I}$, the map $\M^{i'} \to \M^i$ is a morphism of $\P_i$-modules.
\end{remark}

We now consider the structure that is inherited by the homotopy colimit of a module over a pro-operad. The main result we need for this is that the homotopy colimit of a diagram of comonads (calculated objectwise) inherits a canonical comonad structure.

\begin{definition}[Colimits of comonads] \label{def:colim-comonad}
Let $\C_\bullet$ be a diagram in the category of comonads on $\symseq$ indexed by a small category $\mathbb{I}$. We define the colimit $\colim_i \C_i$ objectwise, that is
\[ (\colim_i \C_i)(\A) := \colim_i (\C_i(\A)). \]
Then $\colim_i \C_i$ has a canonical comonad structure with comultiplication given by the composite
\[ \colim_i \C_i(\A) \to \colim_i \C_i(\C_i(\A)) \to \colim_i \C_i(\colim_{i'} \C_{i'}(\A)) \]
in which the first map is built from the comultiplication maps for the comonads $\C_i$ and the second is induced by the natural maps $\C_i \to \colim_{i'} \C_{i'}$; and counit given by
\[ \colim_i \C_i(\A) \to \A \]
built from the counit maps $\C_i(\A) \to \A$ for the individual comonads $\C_i$.
\end{definition}

\begin{definition}[Tensoring of comonads] \label{def:tensor-comonad}
Let $\C$ be a simplicially-enriched comonad on the category $\symseq$ and let $X$ be a simplicial set. We define the objectwise tensoring of $\C$ by $X$ to be the functor $X \otimes \C$ given by
\[ (X \otimes \C)(\A) := X_+ \smsh \C(\A). \]
Then $X \otimes \C$ inherits a canonical comonad structure with comultiplication given by the composite
\[ X_+ \smsh \C(\A) \to X_+ \smsh X_+ \smsh \C(\C(\A)) \to X_+ \smsh \C(X_+ \smsh \C(\A)) \]
in which the first map is built from the diagonal on $X$ and the comultiplication for $\C$, and the second is given by the simplicial enrichment of $\C$, and counit map given by
\[ X_+ \smsh \C(\A) \to \C(\A) \to \A \]
where the first map is induced by the collapse map $X \to *$ and the second is the counit for $\C$.
\end{definition}

\begin{definition}[Homotopy colimits of comonads] \label{def:hocolim-comonad}
Let $\C_\bullet$ be a diagram of simplicially-enriched comonads on $\symseq$ indexed by a small category $\mathbb{I}$. We define the homotopy colimit $\hocolim_i \C_i$ objectwise by the coend
\[ (\hocolim_i \C_i)(\A) := \hocolim_i (\C_i(\A)) = \Delta^r \otimes_{\un{r} \in \mathbf{\Delta}} \left[ \Wdge_{i_0 \to \dots \to i_r} \C_{i_r}(\A) \right] \]
where we use the standard model for the homotopy colimit of a diagram of spectra as the geometric realization of a simplicial object: $\mathbf{\Delta}$ is the simplicial indexing category and this is a coend over $\mathbf{\Delta}$. Notice that the homotopy colimit is defined by a combination of tensoring with the simplicial sets $\Delta^r$ and taking colimits. Using the constructions of Definitions~\ref{def:colim-comonad} and \ref{def:tensor-comonad} we thus obtain a canonical comonad structure on the functor $\hocolim_i \C_i$.
\end{definition}

\begin{definition} \label{def:inverse-operad-comonad}
Let $\P_\bullet$ be an inverse sequence of operads in $\spectra$ as before. By the construction in Definition~\ref{def:comonad-operad-map} we obtain a corresponding sequence of comonads
\[ \C_{\P_0} \to \C_{\P_1} \to \C_{\P_2} \to \dots \]
We define a new comonad
\[ \C_{\P_\bullet} : \symseq \to \symseq \]
by
\[ \C_{\P_\bullet}(\A) := \hocolim_L \C_{\P_L}(\A) \]
with comonad structure given by the construction of Definition~\ref{def:hocolim-comonad}. Note that the functor $\C_{\P_\bullet}$ is enriched in $\spectra$ since each individual $\C_{\P_L}$ is.
\end{definition}

We now see that when $\M^\bullet$ is a module over the pro-operad $\P_\bullet$, the homotopy colimit of the sequence $\M^\bullet$ becomes a coalgebra over the comonad $\C_{\P_\bullet}$ of Definition~\ref{def:inverse-operad-comonad}. The coalgebra structure map can be built from corresponding maps for tensors and strict colimits, just as is the comonad structure on $\C_{\P_\bullet}$.

\begin{definition} [Colimits of coalgebras] \label{def:colim-coalgebra}
Let $\C_\bullet$ be a diagram of comonads on $\symseq$ indexed by a small category $\mathbb{I}$, and let $\M^\bullet$ be a diagram of symmetric sequences indexed by $\mathbb{I}^{op}$ such that, for each morphism $i \to i'$ in $\mathbb{I}$, the map
\[ \M^{i'} \to \M^i \]
is a morphism of $\C_{i'}$-coalgebras (where the coalgebra structure on $\M^i$ is given by the composite $\M^i \to \C_i(\M^i) \to \C_{i'}(\M^i)$). So $\M^\bullet$ is a `module' over the diagram $\C_\bullet$ in the sense of Remark~\ref{rem:module-pro-operad}.

Then we define a $(\colim_i \C_i)$-coalgebra structure on the symmetric sequence $\colim_i \M^i$ with structure map
\[ \colim_i \M^i \to \colim_i \C_i(\M^i) \to \colim_i \C_i(\colim_{i'} \M^{i'}). \]
\end{definition}

\begin{definition}[Tensoring of coalgebras] \label{def:tensor-coalgebra}
Let $\C$ be a simplicially-enriched comonad on $\symseq$, $\M$ a $\C$-coalgebra, and $X$ a simplicial set. Then we make $X \otimes \M := X_+ \smsh \M$ into a $(X \otimes \C)$-coalgebra via the structure map
\[ X_+ \smsh \M \to X_+ \smsh X_+ \smsh \C(\M) \to X_+ \smsh \C(X_+ \smsh \M). \]
\end{definition}

\begin{definition}[Homotopy colimits of coalgebras] \label{def:hocolim-coalgebra}
Let $\C_\bullet$ and $\M^\bullet$ be as in Definition~\ref{def:colim-coalgebra}. We then get a $(\hocolim_i \C_i)$-coalgebra structure on the homotopy colimit
\[ \hocolim_i \M^i = \Delta^r_+ \smsh_{\un{r} \in \mathbf{\Delta}} \left[ \Wdge_{i_0 \to \dots \to i_r} \M^{i_r} \right] \]
by combining the constructions of Definitions~\ref{def:colim-coalgebra} and \ref{def:tensor-coalgebra}, in the manner of Definition~\ref{def:hocolim-comonad}.
\end{definition}

\begin{definition} \label{def:hocolim-module-coalgebra}
Let $\P_\bullet$ be an inverse sequence of operads and $\M^\bullet$ a $\P_\bullet$-module in the sense of Definition~\ref{def:module-pro}. Let $\M$ be the homotopy colimit symmetric sequence
\[ \M := \hocolim_L \M^L. \]
By the construction of Definition~\ref{def:hocolim-coalgebra}, $\M$ inherits the structure of a $\C_{\P_\bullet}$-coalgebra.
\end{definition}

\begin{remark} \label{rem:holim-operad}
If $\M^\bullet$ is a $\P_\bullet$-module, then each term $\M^L$ is a module over the operad $\lim \P_\bullet$ given by the inverse limit (in the category of operads) of the sequence $\P_\bullet$. It follows that the homotopy colimit $\M$ also has a canonical structure of a module over $\lim \P_\bullet$. In general, the structure of a $\C_{\P_\bullet}$-coalgebra includes and extends that of a $\lim \P_\bullet$ module. Another way to see this is via the existence of a canonical map of comonads
\[ \C_{\P_\bullet} \to \C_{\lim \P_\bullet} \]
which is typically not an equivalence.
\end{remark}

There is one technical construction that we need for modules over inverse sequences of operads. That is, we need to be able to construct a replacement for a given module in which all terms are $\Sigma$-cofibrant. We show how to do this now.

\begin{definition} \label{def:cofcoalg}
Let $\P_\bullet$ be an inverse sequence of reduced operads in $\spectra$ such that each $\P_L$ is cofibrant in the projective model structure on reduced operads of spectra. Let
\[ \un{\M}^\bullet: \quad \un{\M}^0 \to \un{\M}^1 \to \un{\M}^2 \to \dots \]
be a $\P_\bullet$-module. We recursively construct a commutative diagram of the form
\begin{equation} \label{eq:eqM} \begin{diagram}
  \node{\M^0} \arrow{s,l}{\sim} \arrow{e} \node{\M^1} \arrow{s,l}{\sim} \arrow{e} \node{\M^2} \arrow{s,l}{\sim} \arrow{e} \node{\dots} \\
  \node{\un{\M}^0} \arrow{e} \node{\un{\M}^1} \arrow{e} \node{\un{\M}^2} \arrow{e} \node{\dots}
\end{diagram} \end{equation}
in which $\M^L \weq \un{\M}^L$ is a weak equivalence of $\P_L$-modules, and each $\M^L$ is $\Sigma$-cofibrant.

First, let $\M^0 \weq \un{\M}^0$ be a cofibrant replacement for $\un{\M}^0$ as a $\P_0$-module. Note that $\M^0$ is then $\Sigma$-cofibrant because a cofibrant module over the cofibrant operad $\P_0$ is $\Sigma$-cofibrant by \cite[2.3.14]{arone/ching:2011}. Now, suppose that we have built the diagram as far as the weak equivalence $\M^L \weq \un{\M}^L$. We can factor the composite
\[ \M^L \weq \un{\M}^L \arrow{e} \un{\M}^{L+1} \]
in the category of $\P_{L+1}$-modules as a cofibration followed by a trivial fibration which we write as
\[ \M^L \; \arrow{e,V} \; \M^{L+1} \; \arrow{e,t,A}{\sim} \; \un{\M}^{L+1}. \]
Then $\M^L \; \arrow{e,V} \; \M^{L+1}$ is a cofibration of $\P_{L+1}$-modules with $\Sigma$-cofibrant domain, hence is a $\Sigma$-cofibration by \cite[A.0.11]{arone/ching:2011}. It follows that $\M^{L+1}$ is $\Sigma$-cofibrant as required. Recursively this defines the diagram (\ref{eq:eqM}).

We refer to the $\P_\bullet$-module $\M^\bullet$ as a \emph{$\Sigma$-cofibrant replacement} for $\un{\M}^\bullet$. Notice that because each $\M^L$ is $\Sigma$-cofibrant, it follows that the homotopy colimit
\[ \M := \hocolim_L \M^L \]
is a $\Sigma$-cofibrant $\C_{\P_\bullet}$-coalgebra. In particular $\M$ determines an object in the homotopy category of $\C_{\P_\bullet}$-coalgebras described in Definition~\ref{def:derived-map-coalgebras}, as well as in the pro-truncated homotopy category of Definition~\ref{def:truncated-category-coalgebras}.
\end{definition}

\section{Functors from based spaces to spectra} \label{sec:topsp}

We now take up the study of homotopy functors from based spaces to spectra. Recall that we are writing $\finbased$ for the full subcategory of $\based$ consisting of the finite cell complexes, and that $[\finbased,\spectra]$ is the category of pointed simplicially-enriched functors $F: \finbased \to \spectra$.

In \cite{arone/ching:2014} we proved that the Taylor tower (expanded at the one-point space $*$) of a functor $F \in [\finbased,\spectra]$ is determined by the action of a certain comonad $\C$ on the symmetric sequence $\der_*F$ formed by the derivatives of $F$ (at $*$). The comonad $\C = \der_*\Phi$ arises from an adjunction
\begin{equation} \label{eq:adj} \der_* : [\finbased,\spectra] \rightleftarrows \symseq : \Phi \end{equation}
in which the left adjoint $\der_*$ is a model for the functor taking a (cofibrant) $F$ to its symmetric sequence of derivatives. The main result of this section is a separate description of $\C$ as the comonad associated to an inverse sequence of operads via the constructions of the previous section. We start by describing that sequence.

\subsection{Koszul duals of the stable little disc operads} \label{sec:little-discs}

The \emph{little disc operads} of Boardman and Vogt \cite{boardman/vogt:1973} were introduced to classify iterated loop spaces, but have since been seen to arise in a variety of other contexts in algebraic topology: for example, Francis \cite{francis:2012} shows that algebras over the little $n$-discs operad classify certain types of homology theory on $n$-dimensional manifolds; the Deligne Conjecture (proved by McClure-Smith \cite{mcclure/smith:2002} and others) shows that the Hochschild complex on an associative algebra forms an algebra over the chain model for the little $2$-discs operad.

In this paper we show that the sequence of operads of spectra given by the suspension spectra of the topological little disc operads captures the information needed to recover the Taylor tower of a functor $F: \finbased \to \spectra$ from its derivatives. More precisely, the Koszul duals of this direct sequence form an inverse sequence $\K\E_\bullet$ whose associated comonad $\C_{\K\E_\bullet}$ coacts on the derivatives of any such $F$.

Throughout this paper we use the Fulton-MacPherson models for the little disc operads, as described by Getzler and Jones \cite{getzler/jones:1994}. The only place where the precise model matters is in our proof of Lemma~\ref{lem:unstable-bar-equiv}.

\begin{definition} \label{def:EL-operad}
For a fixed non-negative integer $L$, we write $\mathbb{E}_L$ for the operad of unbased spaces in which
\[ \mathbb{E}_L(r) \]
is given by the Fulton-MacPherson compactified configuration space of $r$ points in $\mathbb{R}^L$. Recall that a point in $\mathbb{E}_L(r)$ includes an $r$-tuple $y = [y_1,\dots,y_r]$ of points in $\mathbb{R}^L$ defined up to translation and positive scalar multiplication. When two or more of the points $y_i$ are equal, the point $y$ also includes information about the relative directions and distances between these `equal' points. More details on the definition of $\mathbb{E}_L$ can be found in \cite{sinha:2004}.

There is a sequence of operads of the form
\[ \mathbb{E}_0 \to \mathbb{E}_1 \to \dots \to \mathbb{E}_L \to \mathbb{E}_{L+1} \to \dots \]
where the map
\[ \mathbb{E}_L(r) \to \mathbb{E}_{L+1}(r) \]
extends points in $\mathbb{R}^L$ to $\mathbb{R}^{L+1}$ via the standard inclusion. Note that $\mathbb{E}_0$ is the trivial operad of unbased spaces which we also denote by $\mathbbm{1}$.

We write $\mathbbm{Com}$ for the commutative operad of unbased spaces with $\mathbbm{Com}(r) = *$ for all $r \geq 1$. Since $\mathbbm{Com}$ is terminal among operads of unbased spaces, there are operad maps $\mathbb{E}_L \to \mathbbm{Com}$ that are compatible with the maps in the above sequence.
\end{definition}

\begin{definition} \label{def:stable-EL-operad}
The \emph{stable little $L$-discs operad} is the reduced operad $\E_L$ of spectra given by
\[ \E_L(n) := \Sigma^\infty \mathbb{E}_L(n)_+ \]
with operad structure maps induced by those of $\mathbb{E}_L$. There is corresponding sequence of operads of spectra of the form
\begin{equation} \label{eq:EL-seq} \E_0 \to \E_1 \to \dots \to \E_L \to \E_{L+1} \to \dots \end{equation}
Note that $\E_0$ is the trivial operad of spectra which we also denote by $\one$.

Let $\Com$ be the commutative operad of spectra, given by
\[ \Com(n) := \Sigma^\infty \mathbbm{Com}(n)_+ \isom S, \text{ the sphere spectrum}. \]
There are then operad maps $\E_L \to \Com$ that are compatible with the maps in the sequence (\ref{eq:EL-seq}). The induced map
\[ \E_\infty := \colim \E_L \to \Com \]
is a weak equivalence of operads.
\end{definition}

\begin{definition}[Koszul dual]
Let $\P$ be a reduced operad of spectra. Then the \emph{Koszul dual} of $\P$ is a cofibrant replacement (in the projective model category of reduced operads of spectra) for the Spanier-Whitehead dual of the reduced bar construction on $\P$. We denote such a replacement by $\K\P$. The construction $\K$ is a contravariant functor from the category of reduced operads of spectra to itself, and for operads formed from finite spectra, there is an equivalence of operads $\K\K\P \homeq \P$ described in \cite{ching:2012}.
\end{definition}

\begin{definition} \label{def:KE}
Applying the Koszul duality functor to the sequence (\ref{eq:EL-seq}), we obtain an inverse sequence of operads of spectra
\begin{equation} \label{eq:KEL-seq} \K\E_\bullet: \quad \dots \to \K\E_2 \to \K\E_1 \to \K\E_0. \end{equation}
Associated to the pro-operad $\K\E_\bullet$ we have a comonad $\C_{\K\E_\bullet}$ given as in Definition~\ref{def:inverse-operad-comonad}.

Explicitly, for a symmetric sequence $\A$ and positive integer $k$, we have an equivalence
\begin{equation} \label{eq:KE} \C_{\K\E_\bullet}(\A)(k) \homeq \hocolim_{L} \prod_{n \geq k} \left[ \prod_{\un{n} \epi \un{k}} \Map(\K\E_L(n_1) \smsh \dots \smsh \K\E_L(n_k), \A(n)) \right]^{h\Sigma_n}. \end{equation}
\end{definition}

\begin{remark}
It is conjectured that the operad $\K\E_L$ is equivalent to an $L$-fold desuspension of $\E_L$ itself. Corresponding results on the level of homology are due to Getzler and Jones \cite{getzler/jones:1994}, and on the chain level are due to Fresse \cite{fresse:2011}.
\end{remark}

\begin{remark}
Recall from Remark~\ref{rem:holim-operad} that a $\C_{\K\E_\bullet}$-coalgebra forms, in particular, a module over the operad $\lim \K\E_L$ which is equivalent to $\K\Com$, and hence to $\der_*I_{\based}$, the operad formed by the derivatives of the identity on the category of based spaces, described in \cite{ching:2005}. We therefore have a forgetful functor
\[ \Coalg(\C_{\K\E_\bullet}) \to \Mod(\der_*I_{\based}) \]
associated to the comonad map $\C_{\K\E_\bullet} \to \C_{\K\Com}$.
\end{remark}

The main goal of the rest of this section is to prove that the comonad $\C_{\K\E_\bullet}$ acts on (a suitable choice of model of) the derivatives of a functor $F: \based \to \spectra$, that $\C_{\K\E_\bullet}$ is equivalent to the comonad $\C$ constructed from the adjunction (\ref{eq:adj}), and hence that the Taylor tower of $F$ can be recovered from the derivatives of $F$ together with their structure as a $\C_{\K\E_\bullet}$-coalgebra.

We should remark here that the construction of models for the derivatives of a functor $F: \finbased \to \spectra$ on which $\C_{\K\E_\bullet}$ acts is rather involved (see steps (1)-(5) described below), as is the proof of Theorem~\ref{thm:topsp} that $\C_{\K\E_\bullet}$ is equivalent to $\C$. The reader who wishes to skip some of the technical constructions is advised to turn Theorem~\ref{thm:equiv-topsp-poly} and following, where the main results of this section are laid out.

\subsection{Models for the derivatives of functors from based spaces to spectra}

Let $F: \finbased \to \spectra$ be a pointed simplicial functor. In this section we construct a module $\d^\bullet[F]$ over the sequence of operads $\K\E_\bullet$, such that the homotopy colimit $\d[F] = \hocolim \d^L[F]$ is equivalent to the symmetric sequence $\der_*F$ of Goodwillie derivatives of $F$. It then follows that these derivatives form a $\C_{\K\E_\bullet}$-coalgebra by the construction of Definition~\ref{def:hocolim-module-coalgebra}.

When $F$ is a polynomial functor, the term $\d^L[F]$ in the module $\d^\bullet[F]$ (which is a $\K\E_L$-module) is equivalent to the symmetric sequence of partially-stabilized cross-effects of $F$, that is,
\[ \d^L[F](n) \homeq \Sigma^{-nL}\creff_nF(S^L,\dots,S^L). \]
More generally, when $F$ is $\rho$-analytic in the sense of \cite{goodwillie:1991}, this equivalence holds for $L \geq \rho$.

Here is an outline of the construction of $\d[F]$ for a given $F: \finbased \to \spectra$:
\begin{enumerate}
  \item construct a $\Com$-comodule $\N[F]$ (and hence, by pulling back, a $\E_L$-comodule for each $L$) that models the cross-effects of $F$ evaluated at $S^0$, and show that a polynomial functor $F$ can be recovered from $\N[F]$ (Proposition~\ref{prop:F-eq});
  \item show that the `derived indecomposables' of $\N[F]$ as a $\E_L$-comodule (which can be described as a homotopy coend of the form $\N[F] \homsmsh_{\E_L} \un{\one}$) recover the partially-stabilized cross-effects of a polynomial functor $F$ (Proposition~\ref{prop:topsp-creff});
  \item construct a model for the derived indecomposables of an $\E_L$-comodule that has a $\K\E_L$-module structure (Proposition~\ref{prop:N-indec}) and thus obtain the required construction for polynomial functors (Proposition~\ref{prop:dF-polynomial});
  \item generalize the construction to analytic functors $F$ using the Taylor tower (Lemma~\ref{lem:dF-analytic});
  \item produce the necessary models for any functor by left Kan extension from representables (Proposition~\ref{prop:dF}).
\end{enumerate}

We start then with (1).

\begin{definition} \label{def:SigmainftyX-Com}
For each positive integer $n$, the construction $Y \mapsto \Sigma^\infty Y^{\smsh n}$ defines a pointed simplicial functor $\finbased \to \spectra$. Let $\widetilde{\Sigma^\infty Y^{\smsh n}}$ denote a (functorial) cofibrant replacement of this in the category $[\finbased,\spectra]$. A surjection $\un{n} \epi \un{k}$ determines a natural transformation
\[ \Sigma^\infty Y^{\smsh k} \to \Sigma^\infty Y^{\smsh n} \]
by way of the diagonal on a based space $Y$, and hence a natural map
\[ \widetilde{\Sigma^\infty Y^{\smsh k}} \to \widetilde{\Sigma^\infty Y^{\smsh n}}. \]
For each $Y$, these maps make $\widetilde{\Sigma^\infty Y^{\smsh *}}$ into a $\Com$-module, naturally in $Y \in \finbased$.
\end{definition}

\begin{definition} \label{def:F-comodule}
For $F \in [\finbased,\spectra]$ and $n \in \mathbb{N}$ we define
\[ \N[F](n) := \Nat_{Y \in \finbased}(\widetilde{\Sigma^\infty Y^{\smsh n}},FY). \]
This is the spectrum of natural transformations between two functors in $[\finbased,\spectra]$, that is, the enrichment of $[\finbased,\spectra]$ over $\spectra$. The $\Com$-module structure maps on $\widetilde{\Sigma^\infty Y^{\smsh *}}$ make the symmetric sequence $\N[F]$ into a $\Com$-comodule (naturally in $F$).
\end{definition}

\begin{lemma} \label{lem:creff}
For any $F \in [\finbased,\spectra]$, there is a natural equivalence
\[ \N[F](n) \homeq \creff_nF(S^0,\dots,S^0) \]
where the right-hand side is the \ord{n} cross-effect of $F$, evaluated at the zero-sphere in each variable.
\end{lemma}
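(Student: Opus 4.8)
\textbf{Proof proposal for Lemma~\ref{lem:creff}.}

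The plan is to identify the spectrum $\N[F](n) = \Nat_{Y}(\widetilde{\Sigma^\infty Y^{\smsh n}}, FY)$ with the $\ord{n}$ cross-effect of $F$ by recognizing the latter as a mapping object out of a representable-type functor. First I would observe that, since cofibrant replacement in $[\finbased,\spectra]$ does not change the homotopy type of the enriched $\Nat$-spectrum (because $FY$ is fibrant for every $Y$, all objects of $\finbased$ being fibrant), we may replace $\widetilde{\Sigma^\infty Y^{\smsh n}}$ by the strict functor $Y \mapsto \Sigma^\infty Y^{\smsh n}$ up to equivalence. So it suffices to compute $\Nat_{Y}(\Sigma^\infty Y^{\smsh n}, FY)$.

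The key step is to express the functor $Y \mapsto \Sigma^\infty Y^{\smsh n}$ itself in terms of cross-effects, or dually, to recognize $\Nat$ out of it as extracting a cross-effect. Concretely, the $n$-fold smash power functor $Y \mapsto \Sigma^\infty Y^{\smsh n}$ is the $n$\textsuperscript{th} co-cross-effect $\creff^n(\Sigma^\infty)(Y,\dots,Y)$ of the suspension spectrum functor restricted to the diagonal — more precisely, $\Sigma^\infty(Y_1 \wdge \dots \wdge Y_n)$ is the total homotopy cofibre of the cube $S \mapsto \Sigma^\infty(\Wdge_{i\in S} Y_i)$, which is exactly $\creff^n(\Sigma^\infty)$. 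Dualizing, I would argue that $\Nat_Y(\creff^n(\Sigma^\infty)(Y,\dots,Y), FY)$ computes $\thofib$ of the cube $S \mapsto \Nat_Y(\Sigma^\infty(\Wdge_{i\notin S} Y_i \text{-type}), FY)$; but $\Nat_Y(\Sigma^\infty Y^{\smsh k}, FY)$ for the various $k$ reassemble the cube whose total homotopy fibre is $\creff_n F(S^0,\dots,S^0)$. The cleanest route is probably: $\Sigma^\infty Y^{\smsh k} = \Sigma^\infty \Hom_{\based}(S^0, Y)^{\smsh k}$-flavored object, so by enriched Yoneda / the co-Yoneda lemma, $\Nat_Y(\Sigma^\infty Y^{\smsh n}, FY)$ is the value at $(S^0,\dots,S^0)$ of the multilinear-ish cross-effect construction. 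In other words, $\Sigma^\infty Y^{\wedge n}$ corepresents the operation $G \mapsto$ "$n$\textsuperscript{th} cross-effect diagonal evaluated against $G$", since $\creff_n$ is defined by a finite homotopy limit of the cube built from wedges, and wedges of $S^0$'s corepresent (via $\Sigma^\infty$) evaluation at those wedges.

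Concretely I would carry this out in the following steps. (a) Reduce to the strict functor $Y \mapsto \Sigma^\infty Y^{\smsh n}$ as above. (b) Use that the cube $S \mapsto \Sigma^\infty(\Wdge_{i \in S} Y_i)$, evaluated on the diagonal $Y_i = Y$, is equivalent to a cube whose total homotopy cofibre is $\Sigma^\infty Y^{\smsh n}$ (this is the $\creff^n = \creff_n$ identification of Definition~\ref{def:creff} applied to $\Sigma^\infty$). Since $\Sigma^\infty$ preserves homotopy colimits, and the cube of wedges is strongly cocartesian, its total cofibre is exactly $\Sigma^\infty(Y^{\smsh n})$ when $Y_i = Y$. (c) Apply $\Nat_Y(-, FY)$: this is a $\spectra$-enriched limit, so it turns the total homotopy cofibre into a total homotopy fibre, giving $\Nat_Y(\Sigma^\infty Y^{\smsh n}, FY) \homeq \thofib_{S \subset \un{n}} \Nat_Y(\Sigma^\infty(\Wdge_{i \in S} Y_i), FY)|_{\mathrm{diag}}$. (d) Finally, by the enriched Yoneda lemma, $\Nat_Y(\Sigma^\infty(\Wdge_{i\in S} Y), FY) \homeq \Nat_Y(\Sigma^\infty Y^{\smsh |S|}\text{-corep}, -)$ — more carefully, $\Sigma^\infty$ is left adjoint to $\Omega^\infty$ so $\Nat_Y(\Sigma^\infty(\text{something}), FY)$ reduces to a based-space-level natural transformation spectrum, and evaluating at the wedges of $S^0$ recovers the cube defining $\creff_n F(S^0,\dots,S^0)$.

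I expect the main obstacle to be step (d): making precise the identification of $\Nat_Y(\Sigma^\infty(\text{wedge power of }Y), FY)$ with the appropriate entry of the cross-effect cube of $F$ at $S^0$. The subtlety is that $\Sigma^\infty Y^{\smsh n}$ is not literally corepresentable in $[\finbased,\spectra]$, but it is built from corepresentables $\Sigma^\infty\Hom_{\based}(S^{k}, Y)$-type objects via finite (co)limits and the diagonal; one must track how the Yoneda lemma interacts with the total fibre. An honest way to finesse this is to note that $\Nat_Y(\Sigma^\infty Y^{\smsh n}, FY) \isom \Nat_Y(Y^{\smsh n}, \Omega^\infty FY)$ by adjunction (applied objectwise and then to natural transformations), and that the functor $Y \mapsto Y^{\smsh n}$ on $\based$ together with the co-Yoneda identification $Y^{\wedge n} = \colim$ over a suitable category of the corepresentables makes $\Nat_Y(Y^{\smsh n}, G)$ compute precisely the $n$\textsuperscript{th} cross-effect of $G$ evaluated at $(S^0,\dots,S^0)$ — this is the standard fact (going back to Goodwillie, and used in \cite{arone/ching:2011}) that the $n$\textsuperscript{th} cross-effect is the derived $n$\textsuperscript{th} "co-diagonal" and is corepresented appropriately. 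Combining this with (c) yields the claimed equivalence $\N[F](n) \homeq \creff_n F(S^0,\dots,S^0)$, naturally in $F$. The naturality in $F$ is automatic since every step is an enriched natural construction in the variable $F$.
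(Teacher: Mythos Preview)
Your overall strategy is right and matches the paper: write $\Sigma^\infty Y^{\smsh n}$ as the total homotopy cofibre of an $n$-cube of functors of $Y$, apply $\Nat_Y(-,FY)$ to convert it to a total homotopy fibre, and then identify the entries via the enriched Yoneda lemma. The problem is that you chose the wrong cube in step (b), and this is a genuine error, not just a gap.

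The cube $S \mapsto \Sigma^\infty(\Wdge_{i \in S} Y)$ is strongly cocartesian (it is $\Sigma^\infty$ applied to a coproduct cube), so its total homotopy cofibre in spectra is contractible, not $\Sigma^\infty Y^{\smsh n}$. Equivalently: $\Sigma^\infty$ is linear, so $\creff^n(\Sigma^\infty) \simeq *$ for $n \geq 2$. Your assertion that ``its total cofibre is exactly $\Sigma^\infty(Y^{\smsh n})$'' is therefore false, and this is precisely why step (d) cannot be completed: the entries $\Sigma^\infty(\Wdge_S Y)$ are not corepresentable in $[\finbased,\spectra]$, so there is no Yoneda identification available for them.

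The paper's fix is to use the \emph{product} cube instead: write $\Sigma^\infty Y^{\smsh n}$ as the total homotopy cofibre of the cube
\[
J \;\longmapsto\; \Sigma^\infty \Hom_{\finbased}(J_+,Y) \;\cong\; \Sigma^\infty Y^{J},
\]
for $J \subseteq \un{n}$, with edges induced by the collapse maps $J_+ \to I_+$. The total cofibre of $J \mapsto Y^J$ really is $Y^{\smsh n}$ (for $n=2$: $(Y \times Y)/(Y \vee Y) = Y \smsh Y$). Crucially, each entry is now a genuine representable functor $R_{J_+}$, so after applying $\Nat_Y(-,FY)$ the Yoneda lemma gives $\Nat_Y(\Sigma^\infty\Hom(J_+,Y),FY) \simeq F(J_+) = F(\Wdge_{J} S^0)$. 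The resulting total homotopy fibre is then exactly the cube defining $\creff_n F(S^0,\dots,S^0)$, and your step (d) obstacle disappears entirely.
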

\begin{proof}
We can write $\Sigma^\infty Y^{\smsh n}$ as the total homotopy cofibre of the $n$-cube of spectra given by
\[ J \mapsto \Sigma^\infty \Hom_{\finbased}(J_+,Y) \]
for subsets $J \subseteq \un{n}$, and whose edges are determined by the maps $J_+ \to I_+$ for $I \subseteq J$ that collapse $J-I$ to the basepoint. Applying $\Nat(-,F)$ to this total homotopy cofibre $n$-cube, and using the Yoneda Lemma we get the total homotopy fibre of the $n$-cube that defines the given cross-effect.
\end{proof}

\begin{corollary} \label{cor:N-hocolim}
The construction $F \mapsto \N[F]$ preserves homotopy colimits (defined objectwise on both sides).
\end{corollary}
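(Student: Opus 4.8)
The plan is to deduce this from Lemma~\ref{lem:creff}, which identifies $\N[F](n)$ with a cross-effect, together with the fact (recorded after Definition~\ref{def:creff}) that cross-effects commute with homotopy colimits. Let $\{F_i\}$ be a diagram in $[\finbased,\spectra]$. Since the projective model structure has homotopy colimits computed objectwise, we have $(\hocolim_i F_i)(Y) \homeq \hocolim_i F_i(Y)$ for each $Y \in \finbased$, and what we must show is that for every $n \geq 1$ the natural map
\[ \hocolim_i \N[F_i](n) \longrightarrow \N[\hocolim_i F_i](n) \]
is a weak equivalence of spectra. First I would note that this map is indeed defined: $\N[-](n) = \Nat(\widetilde{\Sigma^\infty Y^{\smsh n}},-)$ is, because $\widetilde{\Sigma^\infty Y^{\smsh n}}$ is cofibrant in $[\finbased,\spectra]$, a model for the derived mapping spectrum out of a fixed object, and such a functor always receives a comparison map from the homotopy colimit.

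By Lemma~\ref{lem:creff} there is a natural equivalence $\N[G](n) \homeq \creff_n G(S^0,\dots,S^0)$ for every $G \in [\finbased,\spectra]$, so it suffices to prove
\[ \creff_n\!\left(\hocolim_i F_i\right)(S^0,\dots,S^0) \homeq \hocolim_i \creff_n F_i(S^0,\dots,S^0). \]
By Definition~\ref{def:creff} the left-hand side is the total homotopy cofibre of the $n$-cube $S \mapsto (\hocolim_i F_i)\bigl(\Wdge_{j \in S} S^0\bigr)$, which, homotopy colimits being objectwise, is the objectwise homotopy colimit over $i$ of the $n$-cubes $S \mapsto F_i\bigl(\Wdge_{j \in S} S^0\bigr)$. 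The total homotopy cofibre is an iterated (hence finite) homotopy colimit, and in the stable category $\spectra$ finite homotopy colimits commute with arbitrary homotopy colimits; interchanging them yields the displayed equivalence, and combining with Lemma~\ref{lem:creff} proves the corollary.

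Alternatively, one can give a direct argument not passing through cross-effects: by the proof of Lemma~\ref{lem:creff}, $\widetilde{\Sigma^\infty Y^{\smsh n}}$ is weakly equivalent, as an object of $[\finbased,\spectra]$, to the total homotopy cofibre of the $n$-cube of corepresentable functors $Y \mapsto \Sigma^\infty \Hom_{\finbased}(J_+,Y)$; the enriched Yoneda lemma gives $\Nat(\Sigma^\infty\Hom_{\finbased}(J_+,-),G) \homeq G(J_+)$, and $G \mapsto G(J_+)$ visibly preserves objectwise homotopy colimits, so one again finishes by interchanging a finite homotopy limit with a homotopy colimit in $\spectra$. The only point requiring care in either version is the bookkeeping that $\Nat$ out of a cofibrant functor computes the derived mapping spectrum and that the objectwise homotopy colimit is the correct homotopy colimit for the projective model structure; granting this standard fact, the essential content is just the stability of $\spectra$, so there is no serious obstacle.
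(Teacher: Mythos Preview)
Your proposal is correct and follows essentially the same route as the paper: identify $\N[F](n)$ with a cross-effect via Lemma~\ref{lem:creff}, then use the fact (stated immediately after Definition~\ref{def:creff}) that cross-effects of spectrum-valued functors, being equivalent to co-cross-effects, commute with homotopy colimits. One small slip worth cleaning up: where you write ``By Definition~\ref{def:creff} the left-hand side is the total homotopy cofibre of the $n$-cube $S \mapsto (\hocolim_i F_i)(\Wdge_{j \in S} S^0)$'', you are actually describing the \emph{co}-cross-effect $\creff^n$; the cross-effect $\creff_n$ is defined as a total homotopy \emph{fibre} over the complementary cube. The substance is unaffected since $\creff_n \homeq \creff^n$ for spectrum-valued functors, but the justification should point to that equivalence rather than to the definition itself.
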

\begin{proof}
The cross-effects in Lemma~\ref{lem:creff} are equivalent to the corresponding co-cross-effects. It is easy to see that taking co-cross-effects preserves homotopy colimits.
\end{proof}

The next result is unpublished work of Bill Dwyer and Charles Rezk. It says that the value $F(X)$ of a polynomial functor $F$ at a based space $X$ can be recovered from the $\Com$-comodule $\N(F)$ by way of a homotopy coend with the $\Com$-module $\Sigma^\infty X^{\smsh *}$.

\begin{prop} \label{prop:F-eq}
Let $F: \finbased \to \spectra$ be a polynomial functor and let $\tilde{\N}[F]$ denote a $\Sigma$-cofibrant replacement for the $\Com$-comodule $\N[F]$. Then the canonical evaluation map
\[ \epsilon: \tilde{\N}[F] \homsmsh_{\Com} \widetilde{\Sigma^\infty X^{\smsh *}} \longrightarrow F(X) \]
is a weak equivalence for each $X \in \finbased$.
\end{prop}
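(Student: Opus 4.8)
The plan is to reduce the statement to the case of representable functors and then to a bar-resolution argument. First I would observe that both sides of the map $\epsilon$ are homotopy functors of $F$ that preserve homotopy colimits in $F$: the left-hand side does so because $\N[F]$ preserves homotopy colimits by Corollary~\ref{cor:N-hocolim} and $\homsmsh_{\Com}$ is built from wedges and geometric realization (hence commutes with homotopy colimits in the comodule variable, using Lemma~\ref{lem:homsmsh} together with properness), while the right-hand side $F(X)$ is a homotopy colimit in $F$ by evaluation. Next, since $F$ is polynomial—say $n$-excisive—it is determined by its restriction to the finite subcategory of wedges of spheres (more precisely, $F$ is equivalent to the homotopy left Kan extension of its restriction to the full subcategory of $\finbased$ spanned by the finite wedges $\Wdge_k S^0$, $k \leq$ something, or by a polynomial approximation thereof). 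Thus it suffices to verify the claim when $F$ is a representable functor of the form $Y \mapsto \widetilde{\Sigma^\infty Y^{\smsh m}}$, or a retract/colimit built from these.

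For such a representable $F$, the computation is direct. By the Yoneda-type identification underlying Lemma~\ref{lem:creff}, $\N[\widetilde{\Sigma^\infty Y^{\smsh m}}]$ is the cofree $\Com$-comodule on the generator in arity $m$ (its underlying symmetric sequence has $\N[\cdot](n) \homeq \Wdge_{\un{n}\epi\un{m}} S$, reflecting the surjections), and this is precisely the $\Com$-comodule $\un{\Com}(-,\un{m})$ representing arity-$m$ evaluation. For a cofree (or corepresentable) comodule, the homotopy coend with any $\Com$-module $\M$ computes $\M(m)$—this is the co-Yoneda lemma for the $\homsmsh_{\Com}$ construction, and the simplicial bar resolution of Definition~\ref{def:homsmsh} collapses via the extra degeneracy coming from the corepresentability. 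Applying this with $\M = \widetilde{\Sigma^\infty X^{\smsh *}}$ gives $\widetilde{\Sigma^\infty X^{\smsh m}}$, which is exactly $F(X)$; one then checks that the identification agrees with the evaluation map $\epsilon$, which is a matter of unwinding the Yoneda isomorphisms. Passing back along homotopy colimits and retracts, the result follows for all polynomial $F$.

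The main obstacle I anticipate is the cofibrancy bookkeeping: to invoke Lemma~\ref{lem:homsmsh} and conclude that $\tilde{\N}[F] \homsmsh_{\Com} \widetilde{\Sigma^\infty X^{\smsh *}}$ has the correct homotopy type, one needs $\widetilde{\Sigma^\infty X^{\smsh *}}$ to be a $\Sigma$-cofibrant $\Com$-module (hence the functorial cofibrant replacement in Definition~\ref{def:SigmainftyX-Com}) and one needs the homotopy-colimit-preservation statements to be compatible with the $\Sigma$-cofibrant replacement $\tilde{\N}[F]$—i.e.\ that a $\Sigma$-cofibrant replacement can be chosen functorially and compatibly with homotopy colimits in $F$. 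The second potential subtlety is justifying the reduction to representables: one must know that $n$-excisive functors $\finbased \to \spectra$ are generated under homotopy colimits (and retracts) by the corepresentables $\widetilde{\Sigma^\infty Y^{\smsh m}}$ with $m \leq n$, which is where the hypothesis that $F$ is polynomial is essential and where the unpublished Dwyer--Rezk analysis really enters. Everything else—the collapse of the bar construction for a cofree comodule and the identification of the resulting map with $\epsilon$—should be formal.
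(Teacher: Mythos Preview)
Your endgame is correct and coincides with the paper's: once reduced to $F(X)=\Sigma^\infty X^{\smsh n}$, one identifies $\N[F]$ with the corepresentable $\Com$-comodule $\un{\Com}(n,*)$ (this is the paper's equation~(\ref{eq:creff-X^*})) and then observes that the simplicial object defining $\un{\Com}(n,*)\homsmsh_{\Com}\widetilde{\Sigma^\infty X^{\smsh *}}$ has extra degeneracies, so the augmentation is an equivalence.

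The difference lies in the reduction step, and here your approach has a gap. You propose reducing by writing a polynomial $F$ as a homotopy colimit of the functors $\widetilde{\Sigma^\infty Y^{\smsh m}}$, invoking that ``polynomial functors are generated under homotopy colimits by these.'' You correctly flag this as the subtle point ``where the unpublished Dwyer--Rezk analysis really enters''---but Proposition~\ref{prop:F-eq} \emph{is} (one half of) that Dwyer--Rezk analysis, so appealing to it here is circular. The generation statement you want is essentially equivalent to what you are trying to prove (compare Theorem~\ref{thm:classification}).

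The paper's reduction avoids this circularity by using the Taylor tower instead. Both sides of $\epsilon$ preserve (co)fibre sequences in $F$, so one reduces to an $n$-homogeneous layer $F(X)\simeq(E\smsh X^{\smsh n})_{h\Sigma_n}$; both sides also commute with homotopy orbits and with smashing by a fixed spectrum (the latter via Corollary~\ref{cor:N-hocolim}), so one reduces further to $F(X)=\Sigma^\infty X^{\smsh n}$. This uses only Goodwillie's classification of homogeneous functors, which is already in hand, and requires no prior knowledge of how polynomial functors are generated.
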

\begin{proof}
Since both sides preserve objectwise (co)fibration sequences in $F$, we can use the Taylor tower to reduce to the case that $F$ is $n$-homogeneous, that is, of the form
\[ F(X) = (E \smsh X^{\smsh n})_{h\Sigma_n} \]
for some spectrum $E$ with $\Sigma_n$-action. Both sides also commute with the homotopy orbit construction, the left-hand side by Corollary~\ref{cor:N-hocolim}, and with smashing with a fixed spectrum so we can reduce to the case that
\[ F(X) = \Sigma^\infty X^{\smsh n}. \]
For this case, we first claim that there is an equivalence of $\Com$-comodules
\begin{equation} \label{eq:creff-X^*} \phi: \un{\Com}(n,*) \weq \Nat_{Y \in \finbased}(\widetilde{\Sigma^\infty Y^{\smsh *}}, \Sigma^\infty Y^{\smsh n}). \end{equation}
The map $\phi$ is constructed as follows. Notice that $\un{\Com}(n,k) = \Wdge_{\un{n} \epi \un{k}} S$. Then for each surjection $\alpha: \un{n} \epi \un{k}$, we have a corresponding natural transformation
\[ \dgTEXTARROWLENGTH=3em \widetilde{\Sigma^\infty Y^{\smsh k}} \arrow{e,t}{\sim} \Sigma^\infty Y^{\smsh k} \arrow{e,t}{\Delta_\alpha} \Sigma^\infty Y^{\smsh n} \]
where $\Delta_{\alpha}$ is the diagonal map $X^{\smsh k} \to X^{\smsh n}$ associated to $\alpha$. It is clear from the definition that $\phi$ is a morphism of $\Com$-comodules, where $\un{\Com}(n,*)$ has the free comodule structure built from the composition maps in the category $\un{\Com}$.

To see that $\phi$ is an equivalence, we consider the following commutative diagram
\[ \begin{diagram}
  \node{\Wdge_{\un{n} \epi \un{k}} S} \arrow{s,l}{\sim} \arrow{e,t}{\phi}
    \node{\Nat_{Y \in \finbased}(\widetilde{\Sigma^\infty Y^{\smsh k}}, \Sigma^\infty Y^{\smsh n})} \arrow{s,r}{\sim} \\
  \node{\thofib_{J \subseteq \un{k}} \Sigma^\infty J^n_+} \arrow{e,t}{\sim}
    \node{\thofib_{J \subseteq \un{k}} \Nat_{Y \in \finbased}(\Sigma^\infty \Hom_{\based}(J_+,Y), \Sigma^\infty Y^{\smsh n})}
\end{diagram} \]
The right-hand vertical map is the equivalence described in the proof of Lemma~\ref{lem:creff}. The left-hand vertical map is given as follows: a surjection $\un{n} \epi \un{k}$ determines an element of $\un{k}^n_+$ that maps to the basepoint in $J^n_+$ for any proper subset $J \subset \un{k}$. There is therefore an induced map into the total homotopy fibre. It is easy to check that this map is an equivalence. The bottom horizontal map is induced by a levelwise equivalence of $k$-cubes by the enriched Yoneda Lemma, so is an equivalence. It follows that $\phi$ is an equivalence.

Now consider the following diagram
\begin{equation} \label{eq:eval-eq} \begin{diagram}
  \node{\un{\Com}(n,*) \homsmsh_{\Com} \widetilde{\Sigma^\infty X^{\smsh *}}} \arrow{s,lr}{\phi}{\sim} \arrow{e}
    \node{\widetilde{\Sigma^\infty X^{\smsh n}}} \arrow{s,r}{\sim} \\
  \node{\Nat_{Y \in \finbased}(\widetilde{\Sigma^\infty Y^{\smsh *}}, \Sigma^\infty Y^{\smsh n}) \homsmsh_{\Com} \widetilde{\Sigma^\infty X^{\smsh *}}} \arrow{e,t}{\epsilon}
    \node{\Sigma^\infty X^{\smsh n}}
\end{diagram} \end{equation}
where the bottom map is the evaluation map that we want to show is an equivalence, the left vertical map is induced by $\phi$ and so is an equivalence, the right-hand map is the fixed equivalence used in the construction of $\phi$, and the top horizontal map is induced by the augmentation of the simplicial object underlying $\un{\Com}(n,*) \homsmsh_{\Com} \widetilde{\Sigma^\infty X^{\smsh *}}$, that is built from the composition maps
\[ \Wdge_{n_0,\dots,n_r} \un{\Com}(n,n_0) \smsh_{\Sigma_{n_0}} \un{\Com}(n_0,n_1) \smsh_{\Sigma_{n_1}} \dots \smsh_{\Sigma_{n_{r-1}}} \un{\Com}(n_{r-1},n_r) \smsh_{\Sigma_{n_r}} \widetilde{\Sigma^\infty X^{\smsh n_r}} \to \widetilde{\Sigma^\infty X^{\smsh n}}. \]
This simplicial object has extra degeneracies and so the augmentation map is an equivalence. It can be checked that the diagram (\ref{eq:eval-eq}) is commutative and so it follows that the evaluation map is an equivalence as required.
\end{proof}

\begin{remark}
The equivalence of Proposition~\ref{prop:F-eq} is part of a classification of reduced polynomial functors $\finbased \to \spectra$ in terms of $\Com$-comodules (also from unpublished work of Dwyer and Rezk) to which we return in Theorem~\ref{thm:classification}.
\end{remark}

We now turn to part (2) of our construction of $\d[F]$. We start by taking derivatives of each side of the equivalence in Proposition~\ref{prop:F-eq} to get models for the derivatives of a polynomial functor $F$. To state this result we use the following definition.

\begin{definition} \label{def:one}
Let $\un{\one}$ be the bisymmetric sequence given by
\[ \un{\one}(I,J) := \Wdge_{I \isom J} S \]
where $S$ is the sphere spectrum and the wedge sum is taken over the set of bijections from $I$ to $J$. Thus $\un{\one}(I,J)$ is trivial if $|I| \neq |J|$. Notice also that this notation is consistent with that of Definition~\ref{def:prop} applied to the trivial operad $\one$.
\end{definition}

\begin{corollary} \label{cor:dF-eq}
Let $F: \finbased \to \spectra$ be a polynomial functor. Then there are natural $\Sigma_n$-equivariant equivalences
\[ \N[F] \homsmsh_{\Com} \un{\one}(*,n) \weq \der_n(F) \]
where, for each $n$, the symmetric sequence $\un{\one}(*,n)$ is given a trivial $\Com$-module structure.
\end{corollary}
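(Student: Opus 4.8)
The plan is to obtain Corollary~\ref{cor:dF-eq} by differentiating the equivalence of Proposition~\ref{prop:F-eq}, together with an identification of what differentiation does to the homotopy coend $\tilde{\N}[F] \homsmsh_{\Com} \widetilde{\Sigma^\infty X^{\smsh *}}$ regarded as a functor of $X \in \finbased$. Recall from (\ref{eq:creff}) that $\der_n(F) \homeq \hocolim_L \Sigma^{-nL}\creff_nF(S^L,\dots,S^L)$, and by Lemma~\ref{lem:creff-maps} this homotopy colimit is built from the tensoring maps of $F$. Since the equivalence of Proposition~\ref{prop:F-eq} is natural in $X$, and both sides are pointed simplicial functors of $X$, applying Goodwillie's multilinearization (and the fact that differentiation commutes with the homotopy colimit over $L$) it suffices to compute the $n$\textsuperscript{th} derivative of the functor $X \mapsto \tilde{\N}[F] \homsmsh_{\Com} \widetilde{\Sigma^\infty X^{\smsh *}}$.

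First I would analyze the $X$-dependence of the homotopy coend. By Definition~\ref{def:homsmsh}, $\tilde{\N}[F] \homsmsh_{\Com} \widetilde{\Sigma^\infty X^{\smsh *}}$ is the realization of a simplicial spectrum whose $r$-simplices wedge together terms of the form $\tilde{\N}[F](n_0) \smsh_{\Sigma_{n_0}} \un{\Com}(n_0,n_1) \smsh \dots \smsh \un{\Com}(n_{r-1},n_r) \smsh_{\Sigma_{n_r}} \widetilde{\Sigma^\infty X^{\smsh n_r}}$; the only place $X$ enters is through the right-hand factor $\widetilde{\Sigma^\infty X^{\smsh n_r}} \homeq \Sigma^\infty X^{\smsh n_r}$. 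Taking the $n$\textsuperscript{th} derivative commutes with realization of simplicial spectra and with the wedge sum, and the $n$\textsuperscript{th} derivative of $X \mapsto \Sigma^\infty X^{\smsh m}$ is $\un{\one}(n,m)$ with its evident $\Sigma_n \times \Sigma_m$-action (it is the sphere spectrum when $m = n$, indexed by bijections, and trivial otherwise, since $X \mapsto X^{\smsh m}$ is $m$-homogeneous with $m$\textsuperscript{th} derivative $\Sigma_m$ wedges of $S^0$). Substituting this into the simplicial object, the realization that computes $\der_n$ of the coend is exactly the simplicial spectrum defining $\tilde{\N}[F] \homsmsh_{\Com} \un{\one}(*,n)$, where $\un{\one}(*,n)$ carries the trivial $\Com$-module structure (the $\Com$-action on $\widetilde{\Sigma^\infty X^{\smsh *}}$ came from the diagonal, which becomes trivial after differentiating because the diagonal $X^{\smsh k} \to X^{\smsh n}$ for a \emph{proper} surjection is not $n$-homogeneous-detecting — on derivatives it records only the $k = n$ bijection component). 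Since $\tilde{\N}[F]$ is $\Sigma$-cofibrant and $\un{\one}(*,n)$ is $\Sigma$-cofibrant, Lemma~\ref{lem:homsmsh} guarantees this homotopy coend has the correct homotopy type and that the identification is an equivalence; naturality and $\Sigma_n$-equivariance are immediate from the construction.

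The main obstacle I anticipate is the bookkeeping of the $\Sigma_n$-action and, more subtly, verifying that differentiation genuinely kills the nontrivial part of the $\Com$-module structure on $\widetilde{\Sigma^\infty X^{\smsh *}}$, leaving precisely the trivial $\Com$-module $\un{\one}(*,n)$: one must check that the face map $d_r$ (the module structure map $\un{\Com}(n_{r-1},n_r) \smsh_{\Sigma_{n_r}} \widetilde{\Sigma^\infty X^{\smsh n_r}} \to \widetilde{\Sigma^\infty X^{\smsh n_{r-1}}}$) induces, on $n$\textsuperscript{th} derivatives, the map $\un{\Com}(n_{r-1},n_r) \smsh_{\Sigma_{n_r}} \un{\one}(n,n_r) \to \un{\one}(n,n_{r-1})$ which is nonzero only on the summand where $n = n_r = n_{r-1}$ and the surjection in $\un{\Com}$ is a bijection. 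This is where one uses that $\der_n$ of a sum of $m$-homogeneous pieces with $m > n$ vanishes, together with the explicit form of the diagonal maps $\Delta_\alpha$. Once this is in hand, combining with Proposition~\ref{prop:F-eq} and commuting $\der_*$ past the equivalence gives the natural $\Sigma_n$-equivariant equivalence $\N[F] \homsmsh_{\Com} \un{\one}(*,n) \weq \der_n(F)$ as stated.
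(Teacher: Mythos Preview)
Your proposal is correct and follows essentially the same route as the paper: apply $\der_n$ to both sides of the equivalence in Proposition~\ref{prop:F-eq}, use that taking derivatives of $\spectra$-valued functors commutes with the homotopy colimits forming the coend, and identify $\der_n(\Sigma^\infty X^{\smsh *}) \homeq \un{\one}(*,n)$ as $\Com$-modules. The paper states this last identification without elaboration, whereas you unpack why the $\Com$-module structure becomes trivial (the diagonal $\Sigma^\infty X^{\smsh k} \to \Sigma^\infty X^{\smsh n}$ for $k < n$ has vanishing $n$\textsuperscript{th} derivative since the source is $k$-homogeneous), which is exactly the content behind the paper's one-line assertion.
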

\begin{proof}
This follows from Proposition~\ref{prop:F-eq} by applying $\der_*$ to each side, using the fact that taking derivatives commutes with arbitrary homotopy colimits (for $\spectra$-valued functors) and that there are equivalences of $\Com$-modules:
\[ \der_n(\Sigma^\infty X^{\smsh *}) \homeq \un{\one}(*,n). \]
\end{proof}

\begin{notation}
Let $\P$ be an operad, $\N$ a $\P$-comodule, and $\un{\B}$ a bisymmetric sequence that has a $\P$-module structure on its first variable. We then write $\N \homsmsh_{\P} \un{\B}$ for the symmetric sequence given by
\[ (\N \homsmsh_{\P} \un{\B})(n) := \N(*) \homsmsh_{\P} \un{\B}(*,n). \]
\end{notation}

\begin{remark}
For a $\P$-comodule $\N$, the symmetric sequence
\[ \N \homsmsh_{\P} \un{\one} \]
is a model for the `derived indecomposables' of $\N$, that is, for the left derived functor of the adjoint to the trivial comodule structure functor $\symseq \to \Comod(\Com)$. We can rephrase Corollary~\ref{cor:dF-eq} as saying that, for a polynomial functor $F: \finbased \to \spectra$, the derived indecomposables of the $\Com$-comodule $\N[F]$ recover the derivatives of $F$.
\end{remark}

Now recall from Definition~\ref{def:EL-operad} that we have a sequence of operads
\[ \E_0 \to \E_1 \to \E_2 \to \dots \]
and an equivalence of operads $\hocolim_L \E_L \weq \Com$. It can be shown using Definition~\ref{def:homsmsh} (and the fact that homotopy colimits commute with each other) that there is a corresponding equivalence
\begin{equation} \label{eq:hocolimEL} \hocolim_L \N[F] \homsmsh_{\E_L} \un{\one} \weq \N[F] \homsmsh_{\Com} \un{\one} \homeq \der_*F \end{equation}
where $\N[F]$ is given an $\E_L$-module structure by pulling back the $\Com$-module structure along the operad map $\E_L \to \Com$. This expresses the derivatives of $F$ as a homotopy colimit of the derived indecomposables of $\N[F]$ as an $\E_L$-comodule, as $L \to \infty$.

We now prove that these derived indecomposables recover the partially-stabilized cross-effects of $F$, thus completing part (2) of our construction.

\begin{proposition} \label{prop:topsp-creff}
Let $F$ be a polynomial functor. Then there are equivalences of symmetric sequences
\[ \N[F] \homsmsh_{\E_L} \un{\one} \homeq \Sigma^{-*L}\creff_*F(S^L,\dots,S^L) \]
that are natural in $F$ and make the following diagrams commute in the homotopy category of symmetric sequences:
\begin{equation} \label{eq:topsp-creff} \begin{diagram}
  \node{\N[F] \homsmsh_{\E_L} \un{\one}} \arrow{s} \arrow{e,t}{\sim} \node{\Sigma^{-*L}\creff_*F(S^L,\dots,S^L)} \arrow{s} \\
  \node{\N[F] \homsmsh_{\E_{L+1}} \un{\one}} \arrow{e,t}{\sim} \node{\Sigma^{-*(L+1)} \creff_*F(S^{L+1},\dots,S^{L+1})}
\end{diagram} \end{equation}
Here the left-hand vertical map is induced by the operad map $\E_L \to \E_{L+1}$, and the right-hand map is that of (\ref{eq:creff-map}).
\end{proposition}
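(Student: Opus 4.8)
# Proof Proposal for Proposition~\ref{prop:topsp-creff}

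The plan is to identify the homotopy coend $\N[F] \homsmsh_{\E_L} \un{\one}$ with a more computable object by exploiting the geometry of the Fulton--MacPherson models $\mathbb{E}_L$. The key observation is that the bar construction underlying $\N[F] \homsmsh_{\E_L} \un{\one}$ is, by Definition~\ref{def:homsmsh} applied with the trivial comodule target, a homotopy colimit whose $n$th term is built from wedges of smash products $\N[F](n_0) \smsh_{\Sigma_{n_0}} \un{\E_L}(n_0,n_1) \smsh \dots$, and that the trivial operad $\one$ at the end forces the bar construction to compute an \emph{indecomposables}-type construction. Concretely, I expect $\N[F] \homsmsh_{\E_L} \un{\one}(n)$ to be equivalent to a homotopy colimit of $\N[F](m)$ smashed over $\Sigma_m$ with (suspension spectra of) spaces of configurations of $m$ points in $\mathbb{R}^L$ that are labeled by a partition into $n$ nonempty blocks, where points within a block are allowed to collide. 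This is precisely the kind of partition-poset / configuration-space model that, after taking Spanier--Whitehead duals, becomes the partial stabilization; compare the analysis in \cite{arone/ching:2011} and \cite{ching:2005}.

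First I would use Lemma~\ref{lem:creff} to replace $\N[F]$ by $\creff_*F(S^0,\dots,S^0)$ (equivalently, by the co-cross-effects, which commute with the relevant homotopy colimits by Corollary~\ref{cor:N-hocolim}), and reduce as in Proposition~\ref{prop:F-eq} — via the Taylor tower and the fact that both sides preserve objectwise (co)fibration sequences — to the case $F(X) = \Sigma^\infty X^{\smsh r}$ for fixed $r$, and further (both sides commuting with homotopy orbits and smashing with a fixed spectrum) to that representable case with no extra spectrum. For that case, $\N[F]$ is the free-type $\Com$-comodule computed in (\ref{eq:creff-X^*}), pulled back to an $\E_L$-comodule, and the coend $\N[F] \homsmsh_{\E_L} \un{\one}$ becomes an explicit configuration-space spectrum. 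Second, I would identify $\Sigma^{-rL}\creff_r(\Sigma^\infty X^{\smsh r})(S^L,\dots,S^L)$ directly: $\creff_r$ of an $r$-homogeneous functor is (up to the $\Sigma_r$-action) the multilinearization, so $\creff_r F(S^L,\dots,S^L) \homeq \Sigma^\infty (S^L)^{\smsh r} \smsh (\text{something})$, and the $\Sigma^{-rL}$ exactly cancels the $L$-fold suspensions. Matching these two computations — the coend side and the cross-effect side — gives the equivalence of symmetric sequences for representables, and naturality in $F$ propagates it to all polynomial $F$ by the reduction above.

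Third, for the compatibility square (\ref{eq:topsp-creff}) I would trace through where the two vertical maps come from. The left vertical map is induced by the operad inclusion $\E_L \to \E_{L+1}$, which at the level of Fulton--MacPherson models is the stabilization $\mathbb{R}^L \hookrightarrow \mathbb{R}^{L+1}$; on the configuration-space model for the coend this is literally the inclusion of configurations. The right vertical map is the map (\ref{eq:creff-map}), which by Lemma~\ref{lem:creff-maps} is the tensoring map $t_{S^1} \circ \dots \circ t_{S^1}$ (applied $r$ times to the $r$th cross-effect). So the content of the square is that, under the identification established in the previous step, the coordinate inclusion $\mathbb{E}_L \to \mathbb{E}_{L+1}$ corresponds to smashing with $S^1$ in each of the $r$ variables of the cross-effect — which is geometrically plausible since adding a new coordinate direction $\mathbb{R}^{L+1}$ to a configuration is, at the level of the one-point-compactified parameter sphere, exactly a one-fold suspension in each block. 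I would make this precise by writing both maps as maps of bar constructions and checking they agree after realization; it suffices to check on the representable case, where both are explicit, and then invoke naturality.

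The main obstacle is the geometric identification in the representable case: making rigorous the claim that the homotopy coend $\Nat_Y(\widetilde{\Sigma^\infty Y^{\smsh *}}, \Sigma^\infty Y^{\smsh r}) \homsmsh_{\E_L} \un{\one}$ is equivalent to the $L$-fold desuspended cross-effect, \emph{compatibly} with the stabilization maps. This requires a careful analysis of the Fulton--MacPherson compactification — in particular, understanding the coend over $\un{\E_L}$ as a configuration space with collisions controlled by a partition, and matching its stratification with the cubical diagram defining $\creff_r$. I anticipate this is where the specific model for $\E_L$ genuinely matters (as the authors flag for Lemma~\ref{lem:unstable-bar-equiv}), and where one may need an auxiliary lemma comparing the bar construction $B(\un{\one}, \E_L, -)$ with a more hands-on configuration-space model; the rest of the argument is then formal manipulation of homotopy colimits and the reductions already available from Proposition~\ref{prop:F-eq}.
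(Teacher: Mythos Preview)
Your proposal is correct and follows essentially the same route as the paper: reduce via Proposition~\ref{prop:F-eq} (both sides preserve (co)fibre sequences, homotopy orbits, and smashing with a fixed spectrum) to the case $F = \Sigma^\infty X^{\smsh n}$, make an explicit geometric identification there, and verify the compatibility square on these representables. The paper's sharpening of your ``configuration space with collisions'' step is to rewrite the coend in the representable case as a wedge of smash products of the two-sided bar construction $\B(\one,\E_L,\Com)$ (equation~(\ref{eq:creff4})); your anticipated auxiliary lemma is exactly Lemma~\ref{lem:unstable-bar-equiv}, which gives an explicit equivalence $\B(\mathbbm{1}_+,{\mathbb{E}_L}_+,\mathbbm{Com}_+)(n) \simeq S^{L(n-1)}$ of $\mathbbm{Com}_+$-modules compatible with the inclusions $\mathbb{R}^L \hookrightarrow \mathbb{R}^{L+1}$. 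One point to tighten: after reducing to $F = \Sigma^\infty X^{\smsh n}$ you still need $\creff_k$ for \emph{all} $k \leq n$, not just $k=n$; the paper computes $\creff_k(\Sigma^\infty X^{\smsh n})(X_1,\dots,X_k) \simeq \bigvee_{\un{n}\twoheadrightarrow\un{k}} \Sigma^\infty X_1^{\smsh n_1}\smsh\cdots\smsh X_k^{\smsh n_k}$ and identifies the stabilization map~(\ref{eq:creff-map}) with the diagonals $\Delta_\alpha$ on $S^1$, which is what matches the map induced by $\E_L \to \E_{L+1}$ under the bar-construction identification.
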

\begin{proof}
Our strategy is to use Proposition~\ref{prop:F-eq} to reduce to the case that $F$ is one of the functors $\Sigma^\infty X^{\smsh n}$. Proving the result in those cases seems to require some significant geometric input about the relationship of the operad $\E_L$ to the sphere $S^L$. For us, this input is contained in the construction of the equivalence in Lemma~\ref{lem:unstable-bar-equiv} below.

By Proposition~\ref{prop:F-eq}, we can write
\[ F(X) \homeq \N[F] \homsmsh_{\Com} \widetilde{\Sigma^\infty X^{\smsh *}}. \]
Since taking cross-effects commutes with homotopy colimits, this gives us
\begin{equation} \label{eq:creff1} \Sigma^{-kL} \creff_kF(S^L,\dots,S^L) \homeq \N[F] \homsmsh_{\Com} \Sigma^{-kL}\creff_k(\Sigma^\infty X^{\smsh *})(S^L,\dots,S^L). \end{equation}
The cross-effects of the homogeneous functors $\Sigma^\infty X^{\smsh *}$ are easily calculated: we have
\[ \creff_k(\Sigma^\infty X^{\smsh n})(X_1, \dots, X_k) \homeq \Wdge_{\alpha: \un{n} \epi \un{k}} \Sigma^\infty X_1^{n_1} \smsh \dots \smsh \Sigma^\infty X_k^{n_k}. \]
where, as usual, we write $n_i := |\alpha^{-1}(i)|$ for a surjection $\alpha: \un{n} \epi \un{k}$.

This model for $\creff_k(\Sigma^\infty X^{\smsh n})$ is pointed simplicial in each variable with tensoring maps
\[ S^k \smsh \Wdge_{\un{n} \epi \un{k}} \Sigma^\infty X_1^{n_1} \smsh \dots \smsh \Sigma^\infty X_k^{n_k} \to \Wdge_{\un{n} \epi \un{k}} \Sigma^\infty (\Sigma X_1)^{\smsh n_1} \smsh \dots \smsh \Sigma^\infty (\Sigma X_k)^{\smsh n_k} \]
induced by the diagonal maps $S^k = (S^1)^{\smsh k} \arrow{e,t}{\Delta_\alpha} (S^1)^{\smsh n}$ associated to the surjections $\un{n} \epi \un{k}$. By Lemma~\ref{lem:creff-maps}, the map (\ref{eq:creff-map}) for the functor $\Sigma^\infty X^{\smsh n}$ is modeled by the map
\[ \Wdge_{\un{n} \epi \un{k}} \Sigma^{-kL} \Sigma^\infty (S^L)^{\smsh n} \to \Wdge_{\un{n} \epi \un{k}} \Sigma^{-k(L+1)} \Sigma^\infty (S^{L+1})^{\smsh n} \]
induced also by the diagonal maps $\Delta_{\alpha}$. For each fixed $k$, we therefore have a commutative diagram in the homotopy category of $\Com$-modules (with variable $n$) of the form
\begin{equation} \label{eq:creff2}
\begin{diagram}
  \node{\Wdge_{\un{n} \epi \un{k}} \Sigma^{-kL} (\Sigma^\infty S^L)^{\smsh n}} \arrow{e,t}{\sim} \arrow{s}
     \node{\Sigma^{-kL} \creff_k(\Sigma^\infty X^{\smsh n})(S^L,\dots,S^L)} \arrow{s} \\
  \node{\Wdge_{\un{n} \epi \un{k}} \Sigma^{-k(L+1)} (\Sigma^\infty S^{L+1})^{\smsh n}} \arrow{e,t}{\sim}
    \node{\Sigma^{-k(L+1)} \creff_k(\Sigma^\infty X^{\smsh n})(S^{L+1},\dots,S^{L+1}).}
\end{diagram}
\end{equation}
The $\Com$-module structure on the left-hand side terms in \ref{eq:creff2} is given by composition of surjections combined with the diagonal maps for $S^L$ and $S^{L+1}$.

The next step is to construct a diagram (in the homotopy category of $\Com$-modules) of the form
\begin{equation} \label{eq:creff3}
\begin{diagram}
    \node{\un{\Com}(n,*) \homsmsh_{\E_L} \un{\one}(*,k)} \arrow{e,t}{\sim} \arrow{s}
        \node{\Wdge_{\un{n} \epi \un{k}} \Sigma^{-kL} (\Sigma^\infty S^L)^{\smsh n}} \arrow{s} \\
    \node{\un{\Com}(n,*) \homsmsh_{\E_{L+1}} \un{\one}(*,k)} \arrow{e,t}{\sim}
        \node{\Wdge_{\un{n} \epi \un{k}} \Sigma^{-k(L+1)} (\Sigma^\infty S^{L+1})^{\smsh n}}
\end{diagram}
\end{equation}
Combining (\ref{eq:creff3}) with (\ref{eq:creff2}) and applying $\N[F] \homsmsh_{\Com} -$ to the resulting diagram, we get, by (\ref{eq:creff1}), the required diagram (\ref{eq:topsp-creff}). Note that there are equivalences $\N[\Sigma^\infty X^{\smsh n}] \homeq \un{\Com}(n,*)$ (which preserve both the $\Com$-comodule structure and the $\Com$-module structure on the variable $n$) so the construction of diagram (\ref{eq:creff3}) amounts to proving the Proposition in the case $F = \Sigma^\infty X^{\smsh n}$.

To get (\ref{eq:creff3}) we first note that there are natural isomorphisms of $\Com$-modules
\begin{equation} \label{eq:creff4} \un{\Com}(n,*) \homsmsh_{\E_L} \un{\one}(*,k) \isom \Wdge_{\un{n} \epi \un{k}} \Smsh_{i = 1}^{k} \B(\one,\E_L,\Com)(n_i). \end{equation}
where the right-hand side is the two-sided operadic bar construction, as described in \cite{ching:2005}. The above isomorphism can be identified by comparing the structure of the simplicial spectra underlying the homotopy coend and the bar construction.

The most significant part of the proof now concerns the construction of equivalences of $\Com$-modules
\begin{equation} \label{eq:EL-eq} \B(\one,\E_L,\Com) \arrow{e,t}{\sim} \Sigma^{-L} (\Sigma^\infty S^L)^{\smsh *}. \end{equation}
We make this construction at the unstable level. For an operad $\mathbb{P}$ of unbased spaces, let us write $\mathbb{P}_+$ for the corresponding operad of based spaces given by adding a disjoint basepoint to each term. The following lemma is the key calculation underlying the proof of Proposition~\ref{prop:topsp-creff}.

\begin{lemma} \label{lem:unstable-bar-equiv}
There is a weak equivalences of $\mathbbm{Com}_+$-modules (in the category of based spaces)
\[ f_L: \B(\mathbbm{1}_+,{\mathbb{E}_L}_+,\mathbbm{Com}_+) \arrow{e,t}{\sim} S^{L(*-1)} \]
where we identify $S^{L(n-1)}$ with the one-point compactification of $(\mathbb{R}^L)^n/\mathbb{R}^L$ (the space of $n$-tuples $[y_1,\dots,y_n]$ of vectors in $\mathbb{R}^L$ defined up to translation) and give the collection $S^{L(*-1)}$ a $\mathbbm{Com}_+$-module structure via the diagonal map $\mathbb{R}^L \to \mathbb{R}^L \times \mathbb{R}^L$. These maps make the following diagrams commute:
\[ \begin{diagram}
  \node{\B(\mathbbm{1}_+,{\mathbb{E}_L}_+,\mathbbm{Com}_+)} \arrow{e,t}{f_L} \arrow{s}
    \node{S^{L(*-1)}} \arrow{s} \\
  \node{\B(\mathbbm{1}_+,{\mathbb{E}_{L+1}}_+,\mathbbm{Com}_+)} \arrow{e,t}{f_{L+1}}
    \node{S^{(L+1)(*-1)}}
\end{diagram} \]
where the right-hand vertical map is induced by the fixed inclusion $\mathbb{R}^L \to \mathbb{R}^{L+1}$ which induces the operad map $\mathbb{E}_L \to \mathbb{E}_{L+1}$.
\end{lemma}
\begin{proof}
Write $\breve{\mathbb{E}}_L$ for the symmetric sequence (of unbased spaces) given by
\[ \breve{\mathbb{E}}_L(r) := \begin{cases}
  \mathbb{E}_L(r) & \text{if $r \geq 2$}; \\
  \; \; \emptyset & \text{if $r = 1$}.
\end{cases} \]
Note that $\breve{\mathbb{E}}_L$ has a right $\mathbb{E}_L$-module structure coming from the operad structure on $\mathbb{E}_L$ and that there is a homotopy cofibre sequence
\[ \breve{\mathbb{E}}_L{}_+ \to {\mathbb{E}_L}_+ \to \mathbbm{1}_+ \]
of right ${\mathbb{E}_L}_+$-modules. Applying the bar construction, we obtain a homotopy cofibre sequence
\begin{equation} \label{eq:barseq1} \B(\breve{\mathbb{E}}_L{}_+, {\mathbb{E}_L}_+, \mathbbm{Com}_+) \to \B({\mathbb{E}_L}_+, {\mathbb{E}_L}_+, \mathbbm{Com}_+) \to \B(\mathbbm{1}_+, {\mathbb{E}_L}_+, \mathbbm{Com}_+) \end{equation}
of right $\mathbbm{Com}_+$-modules.

Now let $S^{L(n-1)-1}$ denote the sphere in the vector space $(\mathbb{R}^L)^n/\mathbb{R}^L$, or equivalently, the space of $n$-tuples $[y_1,\dots,y_n]$ of vectors in $\mathbb{R}^L$, not all equal, defined up to translation and positive scalar multiplication. Notice that $S^{L(n-1)}$ is the unreduced suspension of $S^{L(n-1)-1}$ so that we have a homotopy cofibre sequence of right $\mathbbm{Com}_+$-modules
\begin{equation} \label{eq:barseq2} S^{L(*-1)-1}_+ \to \mathbbm{Com}_+ \to S^{L(*-1)}. \end{equation}
The second map here sends the non-basepoint of $\mathbbm{Com}_+(n)$ to the point in $S^{L(n-1)}$ that represents the $n$-tuple $[0,\dots,0]$.

Our goal now is to construct an equivalence between the sequences (\ref{eq:barseq1}) and (\ref{eq:barseq2}) of which the required equivalence $f_L$ will be part.

Notice first that
\[ \B(\breve{\mathbb{E}}_L{}_+, {\mathbb{E}_L}_+, \mathbbm{Com}_+) = \B(\breve{\mathbb{E}}_L, {\mathbb{E}_L}, \mathbbm{Com})_+. \]
We now claim that $\breve{\mathbb{E}}_L$ is cofibrant as an $\mathbb{E}_L$-module (in the projective model structure on right modules over an operad in the category of compactly generated spaces). To see this, suppose given a diagram of $\mathbb{E}_L$-modules
\begin{equation} \label{eq:lift1} \begin{diagram}
  \node[2]{\mathbb{M}} \arrow{s,r,A}{\sim} \\
  \node{\breve{\mathbb{E}}_L} \arrow{e} \node{\mathbb{M}'}
\end{diagram} \end{equation}
where the right-hand vertical map is a trivial fibration in the projective model structure on modules over the operad $\mathbb{E}_L$ (i.e. each map $\mathbb{M}(n) \to \mathbb{M}'(n)$ is a Serre fibration and weak homotopy equivalence). We recursively construct a lifting $l: \breve{\mathbb{E}}_L \to \mathbb{M}$ as follows. Suppose we have constructed maps
\[ l_r: \breve{\mathbb{E}}_L(r) \to \mathbb{M}(r), \]
for each $r < k$, that commute with the relevant $\mathbb{E}_L$-module structure maps. Together the $l_r$ determine the top horizontal map in a commutative diagram
\begin{equation} \label{eq:lift} \begin{diagram}
  \node{\partial \breve{\mathbb{E}}_L(k)} \arrow{s,V} \arrow{e} \node{\mathbb{M}(k)} \arrow{s,r,A}{\sim} \\
  \node{\breve{\mathbb{E}}_L(k)} \arrow{e} \node{\mathbb{M}'(k)}
\end{diagram} \end{equation}
where $\partial \breve{\mathbb{E}}_L(k)$ denotes the boundary of the manifold with corners $\breve{\mathbb{E}}_L(k)$. (Recall that this boundary is identical to the union of the images of the non-trivial module structure maps
\[ \breve{\mathbb{E}}_L(r) \times \mathbb{E}_L(k_1) \times \dots \times \mathbb{E}_L(k_r) \to \breve{\mathbb{E}}_L(k).) \]
Since the inclusion $\partial \breve{\mathbb{E}}_L(k) \to \breve{\mathbb{E}}_L(k)$ is a relative cell complex, we can choose a lift
\[ l_k: \breve{\mathbb{E}}_L(k) \to \mathbb{M}(k) \]
for the diagram (\ref{eq:lift}) which continues the recursion. Together the maps $l_k$ determine the required lift for the diagram (\ref{eq:lift1}). It follows that $\breve{\mathbb{E}}_L$ is a cofibrant $\mathbb{E}_L$-module, as claimed.

It then follows that there is an equivalence of $\mathbbm{Com}$-modules
\[ \gamma_L: \B(\breve{\mathbb{E}}_L, {\mathbb{E}_L}, \mathbbm{Com}) \weq \breve{\mathbb{E}}_L \circ_{\mathbb{E}_L} \mathbbm{Com}. \]
We claim that there is then an isomorphism of $\mathbbm{Com}$-modules
\begin{equation} \label{eq:bar-eq1} \breve{\mathbb{E}}_L \circ_{\mathbb{E}_L} \mathbbm{Com} \isom S^{L(*-1)-1}. \end{equation}
To see this we first define a map of $\mathbbm{Com}$-modules
\[ g: \breve{\mathbb{E}}_L \circ \mathbbm{Com} \to S^{L(*-1)-1}. \]
The \ord{$n$} term on the left-hand side can be written as
\[ \coprod_{k \geq 2} \left[ \coprod_{\un{n} \epi \un{k}} \mathbb{E}_L(k) \right]_{\Sigma_k}. \]
Given an integer $k$ and surjection $\alpha: \un{n} \epi \un{k}$, we can define a map
\[ g_\alpha: \breve{\mathbb{E}}_L(k) \to S^{L(n-1)-1} \]
as follows. A point $x$ in $\breve{\mathbb{E}}_L(k)$ consists of a $k$-tuple $[x_1,\dots,x_k]$ of vectors in $\mathbb{R}^L$ defined up to translation and scalar multiplication, together with extra information if any of the $x_i$ are equal. We set
\[ g_\alpha(x) := [x_{\alpha(1)}, \dots, x_{\alpha(n)}] \in S^{L(n-1)-1}. \]
Together the maps $g_\alpha$ determine the required map $g$. We now show that $g$ induces an isomorphism of the form (\ref{eq:bar-eq1}).

First note that the two composites in the following diagram are equal:
\[ \breve{\mathbb{E}}_L \circ \mathbb{E}_L \circ \mathbbm{Com} \rightrightarrows \breve{\mathbb{E}}_L \circ \mathbbm{Com} \to S^{L(*-1)-1}, \]
so that $g$ determines a map
\[ \bar{g}: \breve{\mathbb{E}}_L \circ_{\mathbb{E}_L} \mathbbm{Com} \to S^{L(*-1)-1}. \]
To see that each $\bar{g}_n$ is a bijection, we construct its inverse which we denote $\bar{h}_n$.

A point $y \in S^{L(n-1)-1}$ is represented by an $n$-tuple $[y_1,\dots,y_n]$ of vectors in $\mathbb{R}^L$, not all equal, defined up to translation and positive scalar multiplication. We can write $y_i = x_{\alpha(i)}$ for a $k$-tuple $x = [x_1,\dots,x_k]$ of \emph{distinct} vectors, also defined up to translation and positive scalar multiplication, where $k \geq 2$ is a positive integer and $\alpha: \un{n} \epi \un{k}$ is a surjection. Note that $k$ and $\alpha$ are uniquely determined, up to action of $\Sigma_k$, by the condition that all $x_i$ are distinct. This data determines a point in $\breve{\mathbb{E}}_L \circ \mathbbm{Com}$ (where, in particular, the $k$-tuple $x$ represents a point in the \emph{interior} of the space $\breve{\mathbbm{E}}_L(k)$), and hence a point in $(\breve{\mathbb{E}}_L \circ_{\mathbb{E}_L} \mathbbm{Com})(n)$ which we take to be $\bar{h}_n(y)$.

It is now relatively easy to see that $\bar{h}_n$ is inverse to $\bar{g}_n$ and so that $\bar{g}_n$ is a continuous bijection. The space $(\breve{\mathbb{E}}_L \circ_{\mathbb{E}_L} \mathbbm{Com})(n)$ is compact (since each $\mathbb{E}_L(k)$ is for $2 \leq k \leq n$) and $S^{L(n-1)-1}$ is Hausdorff. Therefore $\bar{g}_n$ is a homeomorphism, and we obtain the required isomorphism (\ref{eq:bar-eq1}). We then have a commutative diagram of $\mathbbm{Com}_+$-modules
\[ \begin{diagram}
  \node{\B(\breve{\mathbb{E}}_L{}_+, {\mathbb{E}_L}_+, \mathbbm{Com}_+)} \arrow{s,lr}{\gamma_L}{\sim} \arrow{e}
    \node{\B({\mathbb{E}_L}_+, {\mathbb{E}_L}_+, \mathbbm{Com}_+)} \arrow{s,r}{\sim} \\
  \node{S^{L(*-1)-1}_+} \arrow{e}
    \node{\mathbbm{Com}_+}
\end{diagram} \]
where the right-hand map is the standard bar resolution map. We therefore get an induced equivalence between the homotopy cofibres which provides the required equivalence
\[ f_L: \B(\mathbbm{1}_+,{\mathbb{E}_L}_+,\mathbbm{Com}_+) \weq S^{L(*-1)}. \]
It is easy to see that the maps $g_L$ commute with the maps induced by the inclusion $\mathbb{R}^L \to \mathbb{R}^{L+1}$, and hence the maps $f_L$ do too. This completes the proof of Lemma~\ref{lem:unstable-bar-equiv}.
\end{proof}

\begin{remark}
Low-dimensional calculations suggest that the based space $\B(\mathbbm{1}_+,\mathbb{E}_L{}_+,\mathbbm{Com}_+)(n)$ is actually homeomorphic to $S^{L(n-1)}$ though we do not need that claim here.
\end{remark}

{\it Proof of \ref{prop:topsp-creff} (continued)}.
We now get the required stable equivalences (\ref{eq:EL-eq}) from the maps $f_L$ of Lemma~\ref{lem:unstable-bar-equiv} by the composite
\[ \B(\one,\E_L,\Com) \isom \Sigma^\infty \B(\mathbbm{1}_+,\mathbb{E}_L{}_+,\mathbbm{Com}_+) \weq \Sigma^\infty S^{L(*-1)} \weq \Sigma^{-L} (\Sigma^\infty S^L)^{\smsh *}. \]
The final equivalence is adjoint to the map
\[ S^{L(k-1)} \smsh S^L \to (S^L)^{\smsh k} \]
that is the one-point compactification of the map of $\mathbbm{Com}$-modules (in the $k$ variables)
\[ (\mathbb{R}^L)^k/\mathbb{R}^L \times \mathbb{R}^L \to (\mathbb{R}^L)^k \]
given by
\[ ([y_1,\dots,y_k], u) \mapsto (y_1-m+u, \dots, y_k-m+u) \]
where $m$ is the coordinate-wise `minimum' of the vectors $y_1,\dots,y_k$. (That is, $m_i := \min_{1 \leq j \leq k} (y_j)_i$.)

Using (\ref{eq:creff4}) we then get the required diagram (\ref{eq:creff3}). This completes the proof of Proposition~\ref{prop:topsp-creff}.
\end{proof}

We now turn to part (3) of the construction of $\d[F]$: showing that the derived indecomposables $\N[F] \homsmsh_{\E_L} \un{\one}$ can be given the structure of a $\K\E_L$-module. Behind this construction is a form of derived Koszul duality for comodules and modules over an operad $\P$ and its Koszul dual $\K\P$. For any operad $\P$ of spectra and $\P$-comodule $\N$, we construct a model for the symmetric sequence
\[ \N \homsmsh_{\P} \un{\one} \]
that is a module over the Koszul dual operad $\K\P$. At the heart of our construction is a particular bisymmetric sequence $\un{\B}_\P$, equivalent to $\un{\one}$, that has a $\P$-module structure in one variable, and a $\K\P$-module structure in the other. We define this now.

\begin{definition} \label{def:B}
For an operad $\P$ of spectra, let $\un{\B}_\P$ be the bisymmetric sequence given on finite nonempty sets $I,J$ by
\[ \un{\B}_\P(I,J) := \prod_{I \epi J} \Smsh_{j \in J} \B(\one,\P,\P)(I_j) \]
where the product is indexed by the set of surjections from $I$ to $J$. Recall from \cite{ching:2005} that the symmetric sequence $\B(\one,\P,\P)$ has both a right $\P$-module structure and a left $\B\P$-comodule structure (where $\B\P = \B(\one,\P,\one)$ is the cooperad given by the reduced bar construction on $\P$). We use these structures to produce the required operad actions on $\un{\B}_\P$.

We make $\un{\B}_\P$ into a right $\P$-module in its first variable as follows. For each surjection $\alpha: I \epi I'$ we have to describe a map
\[ r_\alpha: \un{\B}_\P(I',J) \smsh \Smsh_{i' \in I'} \P(I_{i'}) \to \un{\B}_\P(I,J). \]
The target here is a product indexed by surjections $\beta: I \epi J$ and we define each component individually. There are two cases. If $\beta$ does not factor as $\gamma \alpha$ for some surjection $\gamma: I' \epi J$ then we take the relevant component of $r_\alpha$ to be the trivial map. If $\beta$ does factor in this way then $\gamma$ is uniquely determined. We then build the relevant component of $r_\alpha$ from the projection map
\[ p_{\gamma}: \un{\B}_\P(I',J) \to \Smsh_{j \in J} \B(\one,\P,\P)(I'_j) \]
and by taking the smash product, over $j \in J$, of the maps
\[ \B(\one,\P,\P)(I'_j) \smsh \Smsh_{i' \in I'_j} \P(I_{i'}) \to \B(\one,\P,\P)(I_j) \]
associated to the right $\P$-action on $\B(\one,\P,\P)$ and the surjections $I_j \epi I'_j$ given by restricting $\alpha$.

We also make $\un{\B}_\P$ into a right $\K\P$-module in its second variable as follows. For each surjection $\alpha: J \epi J'$ we have to describe a map
\begin{equation} \label{eq:s-alpha} s_\alpha: \un{\B}_\P(I,J') \smsh \Smsh_{j' \in J'} \K\P(J_{j'}) \to \un{\B}_\P(I,J). \end{equation}
The target is still indexed by surjections $\beta: I \epi J$ and again we define each component individually. Let $\gamma = \alpha \beta: I \epi J'$. Then we build the relevant component of $s_\alpha$ from the projection
\[ p_{\gamma}: \un{\B}_\P(I,J') \to \Smsh_{j' \in J'} \B(\one,\P,\P)(I_{j'}) \]
and by taking the smash product, over $j' \in J'$, of maps
\[ \B(\one,\P,\P)(I_{j'}) \smsh \K\P(J_{j'}) \to \Smsh_{j \in J_{j'}} \B(\one,\P,\P)(I_j) \]
associated to the \emph{left} coaction by the cooperad $\B\P$ on $\B(\one,\P,\P)$. Recall that the Koszul dual $\K\P$ is the Spanier-Whitehead dual of $\B\P$ and the above map is adjoint to the coaction map associated to the surjection $I_{j'} \epi J_{j'}$ given by restricting $\beta$.
\end{definition}

\begin{lemma} \label{lem:B}
The constructions of Definition~\ref{def:B} make $\un{\B}_\P$ into a spectrally-enriched functor $\un{\P}^{op} \times \un{\K\P}^{op} \to \spectra$, that is, a bisymmetric sequence with commuting right module structures for the operads $\P$ and $\K\P$ on the first and second variables respectively.
\end{lemma}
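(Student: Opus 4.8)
The plan is to verify separately the three pieces of structure implicit in the statement: (a) that the maps $r_\alpha$ of Definition~\ref{def:B} assemble into a right $\P$-module structure in the first variable; (b) that the maps $s_\alpha$ assemble into a right $\K\P$-module structure in the second variable; and (c) that these two structures commute. By Lemma~\ref{lem:module-prop} (applied to $\P$ and to $\K\P$), (a) and (b) are exactly the data of a functor $\un{\P}^{op}\to\spectra$ and a functor $\un{\K\P}^{op}\to\spectra$ in the respective variables, and (c) is precisely what is needed to promote these to a single spectrally-enriched functor on the product $\un{\P}^{op}\times\un{\K\P}^{op}$. Throughout we rely on the facts from \cite{ching:2005} that $\B(\one,\P,\P)$ carries a right $\P$-module structure (from the right-hand copy of $\P$) and a left $\B\P$-comodule structure (from the trivial coefficient $\one$ on the left), and that these two structures are compatible, i.e. $\B(\one,\P,\P)$ is a $(\B\P,\P)$-bicomodule. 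Note also that since there are only finitely many surjections $I\epi J$, the product defining $\un{\B}_\P(I,J)$ is also a finite wedge sum, so $\un{\B}_\P$ has the same shape as the term $\un{\P}(I,J)=\Wdge_{I\epi J}\Smsh_j\P(I_j)$ of a PROP, with $\B(\one,\P,\P)$ in place of $\P$.

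For (a): the assignment $J\mapsto \un{\B}_\P(I,J)$ reorganizes $\Wdge_{I\epi J}\Smsh_j\B(\one,\P,\P)(I_j)$ in exactly the way the PROP $\un{\P}$ reorganizes $\Wdge_{I\epi J}\Smsh_j\P(I_j)$. Since the component of $r_\alpha$ indexed by $\beta:I\epi J$ is nontrivial only when $\beta$ factors through $\alpha:I\epi I'$ (and then through a unique $\gamma$), the unit, associativity and equivariance axioms for $r_\bullet$ reduce componentwise to (i) the corresponding axioms for the right $\P$-action on each factor $\B(\one,\P,\P)(I_j)$, and (ii) elementary bookkeeping about composites of surjections, namely that a composite of surjections factors through a given surjection if and only if each does, and that such factorizations are unique. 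This is a routine diagram chase.

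For (b): via Spanier--Whitehead duality the left $\B\P$-coaction on $\B(\one,\P,\P)$ gives right-module-type structure maps over $\dual(\B\P)$; the map in Definition~\ref{def:B} is precisely the adjoint of the relevant component of this coaction for the surjection $I_{j'}\epi J_{j'}$. Coassociativity and counitality of the $\B\P$-coaction therefore translate into associativity and unitality of $s_\bullet$ as a $\dual(\B\P)$-action, after unwinding the surjection combinatorics governed by the factorization $\gamma=\alpha\beta$; equivariance is automatic from naturality in the finite sets. Pulling back along the cofibrant-replacement operad map $\K\P\to\dual(\B\P)$ then produces the required $\K\P$-module structure.

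Finally, for (c): given surjections $\alpha:I\epi I'$ and $\delta:J\epi J'$, the maps $r_\alpha$ and $s_\delta$ act on independent data — $r_\alpha$ modifies the $I$-indexing using the right $\P$-action on the factors $\B(\one,\P,\P)$, while $s_\delta$ modifies the $J$-indexing using the dualized left $\B\P$-coaction — so their commutation is exactly the assertion that the right $\P$-action and left $\B\P$-coaction on $\B(\one,\P,\P)$ commute, which is the bicomodule property recalled above. I expect the main obstacle to be purely organizational rather than conceptual: one must simultaneously track a single surjection $\beta:I\epi J$ together with its induced factorizations on the source side (through $I'$) and on the target side (through $J'$), and check that the identifications matching these two bookkeeping schemes are precisely those witnessing compatibility of the two structures on $\B(\one,\P,\P)$. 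Making this matching precise is where the work lies.
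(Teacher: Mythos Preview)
Your outline for parts (a) and (b) is fine and matches the paper. The gap is in part (c): the claim that commutation of $r_\alpha$ and $s_\delta$ reduces \emph{entirely} to the bicomodule compatibility of the right $\P$-action and the left $\B\P$-coaction on $\B(\one,\P,\P)$ is not correct, and the difficulty is not purely organizational.

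Fix $\alpha:I\epi I'$ and $\delta:J\epi J'$, and analyze the commuting square componentwise over $\beta:I\epi J$. There are three cases, depending on whether $\beta$ and $\delta\beta$ factor through $\alpha$. If $\beta$ factors through $\alpha$ (so also $\delta\beta$ does), both paths are computed by the $\P$-action and $\B\P$-coaction on $\B(\one,\P,\P)$, and the bicomodule property gives commutativity exactly as you say. If neither $\beta$ nor $\delta\beta$ factors, both paths are trivially zero. The subtle case is when $\beta$ does \emph{not} factor through $\alpha$ but $\delta\beta$ \emph{does}. Then the path that applies $s_\delta$ first (landing in $\un{\B}_\P(I',J)$) and then $r_\alpha$ has trivial $\beta$-component by definition of $r_\alpha$. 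But the other path applies $r_\alpha$ first, landing in the $\gamma'=\delta\beta$ component of $\un{\B}_\P(I,J')$, which is nontrivial, and then applies $s_\delta$. You must show this composite is zero, and there is no bicomodule square to appeal to: the restricted surjection $I'_{j'}\epi J_{j'}$ simply does not exist, so you cannot even draw the comparison diagram.

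The paper handles this case by a concrete argument using the tree stratification of the bar construction: the image of the $\P$-action map $\B(\one,\P,\P)(I'_{j'})\smsh\Smsh_{i'}\P(I_{i'})\to\B(\one,\P,\P)(I_{j'})$ lies in strata where any two elements $i_1,i_2$ with $\alpha(i_1)=\alpha(i_2)$ label the same leaf; but if $\beta(i_1)\neq\beta(i_2)$ then the $\B\P$-coaction associated to the restricted $\beta$ sends such a stratum to the basepoint. This is a genuine geometric fact about $\B(\one,\P,\P)$, not a formal consequence of bicomodule compatibility.
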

\begin{proof}
We have to check associativity and unit conditions for each of the actions, and then a commutativity condition between them. For the $\P$-action on the first variable, the unit condition is that the map $r_\alpha$ associated to the identity $\alpha: I \epi I$ be the identity on $\un{\B}_\P(I,J)$, which it is.

The associativity condition concerns the maps $r_\alpha$, $r_{\alpha'}$ and $r_{\alpha' \alpha}$ for surjections $\alpha: I \epi I'$ and $\alpha': I' \epi I''$. The key point is then the following: a surjection $\beta: I \epi J$ factors via $\alpha' \alpha$ if and only if $\beta$ factors as $\gamma \alpha$, \emph{and} then $\gamma$ factors via $\alpha'$. If these factorizations do not exist, then the components of $r_{\alpha'\alpha}$ and of $r_{\alpha'}r_\alpha$ corresponding to $\beta$ are both trivial. If the factorizations do exist, then those components are equal by the associativity of the right $\P$-action on $\B(\one,\P,\P)$.

For the $\K\P$-action on the second variable, the corresponding checks are similar, but they are easier since there are no separate cases in the construction of the maps $s_\alpha$. The associativity condition follows from the coassociativity of the coaction of $\B\P$ on $\B(\one,\P,\P)$.

To see that the two actions commute, suppose we have surjections $\alpha: I \epi I'$ and $\delta: J \epi J'$. We have to show that the following square commutes:
\begin{equation} \label{eq:commute} \begin{diagram}
  \node{\Smsh_{J' \in J'} \K\P(J_{j'}) \smsh \un{\B}_\P(I',J') \smsh \Smsh_{i' \in I'}^{\phantom{I'}} \P(I_{i'})} \arrow{e,t}{r_\alpha} \arrow{s,l}{s_\delta}
    \node{\Smsh_{j' \in J'}^{\phantom{I'}} \K\P(J_{j'}) \smsh \un{\B}_\P(I,J')} \arrow{s,r}{s_{\delta}} \\
  \node{\un{\B}_\P(I',J) \smsh \Smsh_{i' \in I'}^{\phantom{I'}} \P(I_{i'})} \arrow{e,t}{r_\alpha}
    \node{\un{\B}_\P(I,J)^{\phantom{I'}}}
\end{diagram} \end{equation}
Since the target is a product, we can check commutativity by considering the components of the composite maps corresponding to each surjection $\beta: I \epi J$ in turn. There are three cases.

If $\beta = \gamma \alpha$ for some $\gamma: I' \epi J$ then also $\delta \beta = (\delta \gamma) \alpha$. In this case, the relevant components of the horizontal maps in (\ref{eq:commute}) are given by the $\P$-action on $\B(\one,\P,\P)$. The required commutativity then follow from the commutativity of the corresponding diagram
\[ \begin{diagram}
  \node{\B(\one,\P,\P)(I'_{j'}) \smsh \Smsh_{i' \in I'_{j'}}^{\phantom{I'_{j'}}} \P(I_{i'})} \arrow{e} \arrow{s}
    \node{\B(\one,\P,\P)(I_{j'})}_{\phantom{I'}}^{\phantom{I'}} \arrow{s} \\
  \node{\B\P(J_{j'}) \smsh \Smsh_{j \in J_{j'}}^{\phantom{I'_{j'}}} \B(\one,\P,\P)(I'_j) \smsh \Smsh_{i' \in I'_{j'}}^{\phantom{I'_{j'}}} \P(I_{i'})} \arrow{e}
    \node{\B\P(J_{j'}) \smsh \Smsh_{j \in J_{j'}}^{\phantom{I'_{j'}}} \B(\one,\P,\P)(I_j)}
\end{diagram} \]
where the vertical maps are given by the $\B\P$-coaction maps on $\B(\one,\P,\P)$ associated to the restrictions of $\beta$ to $I_{j'}$ for $j' \in J'$.

Secondly, suppose that $\beta$ does not factor via $\alpha$, but that $\delta \beta = \gamma' \alpha$ for some $\gamma': I' \epi J'$. Then the relevant component of the bottom horizontal map in (\ref{eq:commute}) is trivial. We need to show that the other composite in (\ref{eq:commute}) is trivial. This composite is given by the smash product, over $j' \in J'$, of the following part of the previous diagram:
\[ \begin{diagram}
  \node{\B(\one,\P,\P)(I'_{j'}) \smsh \Smsh_{i' \in I'_{j'}}^{\phantom{I'_{j'}}} \P(I_{i'})} \arrow{e}
    \node{\B(\one,\P,\P)(I_{j'})} \arrow{s} \\
  \node[2]{\B\P(J_{j'}) \smsh \Smsh_{j \in J_{j'}} \B(\one,\P,\P)(I_j)}
\end{diagram} \]
The only obstruction to factoring $\beta$ via $\alpha$ is that there must be some two elements $i_1,i_2 \in I$ with $\alpha(i_1) = \alpha(i_2)$ but $\beta(i_1) \neq \beta(i_2)$. In this case, let $j' = \gamma'\alpha(i_1) = \gamma'\alpha(i_2)$ and consider the above diagram for this particular $j'$. To see that this composite is trivial, we recall the structure of the bar construction $\B(\one,\P,\P)$. In particular, $\B(\one,\P,\P)(I'_{j'})$ is stratified by trees with leaves labelled surjectively by the set $I'_{j'}$. Since $\alpha(i_1) = \alpha(i_2)$, these two elements must label the same leaf of such a tree. It follows that the image of the top horizontal map is contained in the strata corresponding to trees (with leaves labelled by $I_{j'}$) where $i_1$ and $i_2$ label the same leaf. But then according to the definition of the vertical map, since $\beta(i_1) \neq \beta(i_2)$, each such stratum is mapped by to the basepoint in the bottom-right corner. It follows that the above composite is trivial as
required.

Finally, suppose that neither $\beta$ not $\delta \beta$ factors via $\alpha$. In this case, the relevant components of both horizontal maps in (\ref{eq:commute}) are trivial and the square commutes.
\end{proof}

\begin{lemma} \label{lem:B-1}
There is an equivalence of bisymmetric sequences
\[ \un{\B}_\P \homeq \un{\one} \]
that preserves the $\P$-module structure on the first variable.
\end{lemma}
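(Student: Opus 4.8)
The plan is to obtain the required equivalence by applying a `PROP-ification' functor to the augmentation of the one-sided bar construction $\B(\one,\P,\P)$. First I would record the general construction: for any right $\P$-module $\M$, the formula $\un{\M}(I,J) := \prod_{I \epi J} \Smsh_{j \in J} \M(I_j)$ defines a bisymmetric sequence carrying a right $\P$-action on its first variable, built by exactly the recipe used for the maps $r_\alpha$ in Definition~\ref{def:B}: the $\beta$-component of $r_\alpha$, for $\beta \colon I \epi J$, is trivial unless $\beta = \gamma\alpha$, in which case it is the projection $p_\gamma$ followed by a smash, over $j \in J$, of the module action maps of $\M$. This assignment $\M \mapsto \un{\M}$ is functorial in morphisms of right $\P$-modules and, being built only from smash products, finite products, and their projections, preserves termwise homotopy equivalences. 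By construction $\un{\B}_\P = \un{\M}$ with $\M = \B(\one,\P,\P)$, and I claim the desired map is $\un{\epsilon}$, where $\epsilon \colon \B(\one,\P,\P) \to \one$ is the bar augmentation and $\one$ carries its unique right $\P$-module structure (the action map $\one \circ \P \to \one$ being the canonical projection $\P \to \one$, the identity in arity~$1$ and trivial above).

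Two standard facts about $\epsilon$ (cf.\ \cite{ching:2005}) are needed. First, $\epsilon$ is a morphism of right $\P$-modules. Second, $\epsilon$ is a termwise weak equivalence: in each arity the simplicial spectrum defining $\B(\one,\P,\P)$ admits extra degeneracies, obtained by inserting the operad unit immediately before the rightmost $\P$-factor, so that its realization is simplicially homotopy equivalent to $\one$; in particular no cofibrancy hypothesis on $\P$ is required here. Granting these, functoriality of $\un{(-)}$ shows that $\un{\epsilon} \colon \un{\B}_\P \to \un{\one}$ is a map of bisymmetric sequences compatible with the first-variable $\P$-action, and the fact that $\un{(-)}$ preserves termwise homotopy equivalences --- each $\epsilon(I_j)$ is a homotopy equivalence of spectra, hence so are the finite smash products $\Smsh_{j \in J}\epsilon(I_j)$ and the finite products over the surjections $I \epi J$ --- shows that $\un{\epsilon}$ is a termwise weak equivalence. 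Since $\one(n) = *$ for $n \geq 2$, the target here is the bisymmetric sequence $(I,J) \mapsto \prod_{I \isom J} S$, which agrees up to the canonical (finite wedge to finite product) equivalence with the description $\Wdge_{I \isom J}S$ of Definition~\ref{def:one}; composing with that natural equivalence yields a zigzag of termwise weak equivalences, hence the stated $\un{\B}_\P \homeq \un{\one}$.

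The one genuinely mechanical point --- which I expect to be the most tedious, though entirely routine --- is checking that $\un{\epsilon}$ intertwines the structure maps $r_\alpha$ on $\un{\B}_\P$ and on $\un{\one}$. This can be verified one $\beta$-component at a time: when $\beta$ does not factor through $\alpha$ both components are the trivial map, and when $\beta = \gamma\alpha$, after applying the projection $p_\gamma$ the claim reduces to the assertion that $\epsilon$ carries the right $\P$-action on $\B(\one,\P,\P)$ to the projection-induced action on $\one$ --- precisely the statement that $\epsilon$ is a map of right $\P$-modules. I would note, finally, that no compatibility of $\un{\epsilon}$ with the right $\K\P$-action on the second variable is asserted, and none is needed in what follows; indeed $\un{\epsilon}$ need not respect that structure.
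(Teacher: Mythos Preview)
Your proposal is correct and follows essentially the same approach as the paper: both apply the assignment $\M \mapsto \bigl((I,J) \mapsto \prod_{I \epi J}\Smsh_{j \in J}\M(I_j)\bigr)$ to the right $\P$-module augmentation $\B(\one,\P,\P)\weq\one$, then invoke the finite wedge-to-product equivalence to identify the target with $\un{\one}$. Your write-up is more explicit about functoriality and the component-by-component verification of the $r_\alpha$-compatibility, but the argument is the same.
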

\begin{proof}
For finite sets $I,J$, the equivalence of right $\P$-modules $\B(\one,\P,\P) \weq \one$ determines a natural equivalence
\[ \prod_{I \epi J} \Smsh_{j \in J} \B(\one,\P,\P)(I_j) \to \prod_{I \epi J} \Smsh_{j \in J} \one(I_j) \]
and this preserves the right $\P$-module structures in the $I$ variable (with that on the right-hand side the trivial structure). The right-hand side is the same as $\un{\one}(I,J)$ but with $\Wdge_{\Sigma_n} S$ replaced by the equivalent $\prod_{\Sigma_n} S$ (when $n = |I| = |J|$).
\end{proof}

\begin{remark} \label{rem:B}
Lemma~\ref{lem:B-1} tells us that the $\P$-module structure on each symmetric sequence $\un{\B}_\P(-,J)$ is equivalent to the trivial structure. The same is true about the $\K\P$-module structure on each $\un{\B}_\P(I,-)$. This is proved via equivalences
\[ \un{\one} \homeq \un{\B}_\P \]
that preserve the $\K\P$-module structure on the second variables. The essential point, however, is that these two structures are \emph{not} equivalent when taken together. That is, the bisymmetric sequence $\un{\B}_\P$ is \emph{not} equivalent to $\un{\one}$ in the category of functors $\un{\P}^{op} \times \un{\K\P}^{op} \to \spectra$. There is no zigzag of equivalences between $\un{\B}_\P$ and $\un{\one}$ that preserves both structures simultaneously.
\end{remark}

\begin{prop} \label{prop:N-indec}
Let $\P$ be an operad of spectra and $\tilde{\N}$ a $\Sigma$-cofibrant $\P$-comodule. Then the symmetric sequence
\[ \tilde{\N} \homsmsh_{\P} \un{\B}_\P \]
is equivalent to $\tilde{\N} \homsmsh_{\P} \un{\one}$, and has a canonical $\K\P$-module structure.
\end{prop}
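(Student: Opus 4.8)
The plan is to deduce both assertions from the structure already placed on the bisymmetric sequence $\un{\B}_\P$ in Lemmas~\ref{lem:B} and~\ref{lem:B-1}; given those, the proposition is essentially formal. Throughout I would assume that $\P$ is $\Sigma$-cofibrant (which is the case for the operads $\E_L$ to which we apply this, and is what makes the homotopy coend $\tilde{\N} \homsmsh_{\P} -$ homotopically meaningful by Lemma~\ref{lem:homsmsh}); if necessary one first replaces $\P$ by a $\Sigma$-cofibrant resolution. Recall also that by the Notation preceding this proposition, $\tilde{\N} \homsmsh_{\P} \un{\B}_\P$ is defined termwise, $(\tilde{\N} \homsmsh_{\P} \un{\B}_\P)(J) := \tilde{\N}(*) \homsmsh_{\P} \un{\B}_\P(*,J)$.

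For the equivalence I would argue one term at a time. Fix a finite set $J$. By Lemma~\ref{lem:B-1} there is an equivalence of right $\P$-modules $\un{\B}_\P(*,J) \weq \un{\one}(*,J)$ (using the $\P$-action on the first variable). Since $\tilde{\N}$ is a $\Sigma$-cofibrant $\P$-comodule and $\P$ is $\Sigma$-cofibrant, Lemma~\ref{lem:homsmsh} implies that applying $\tilde{\N}(*) \homsmsh_{\P} -$ turns this into a weak equivalence of spectra $(\tilde{\N} \homsmsh_{\P} \un{\B}_\P)(J) \weq (\tilde{\N} \homsmsh_{\P} \un{\one})(J)$. These are natural in $J$, yielding the asserted equivalence of symmetric sequences. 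I would stress, as in Remark~\ref{rem:B}, that this equivalence is not expected to carry any $\K\P$-module structure: it merely identifies underlying symmetric sequences.

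For the $\K\P$-module structure, the point is that $\tilde{\N} \homsmsh_{\P} -$ is a $\spectra$-enriched functor $\Mod(\P) \to \spectra$, and that it commutes with smashing by a fixed spectrum, since it is built from geometric realization and wedge sums (so $\tilde{\N}(*) \homsmsh_{\P} (\M \smsh E) \isom (\tilde{\N}(*) \homsmsh_{\P} \M) \smsh E$ for a fixed spectrum $E$). By Lemma~\ref{lem:B}, $\un{\B}_\P$ is a $\spectra$-enriched functor $\un{\P}^{op} \times \un{\K\P}^{op} \to \spectra$, which we may curry as a $\spectra$-enriched functor $\un{\K\P}^{op} \to \Mod(\P)$. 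Composing with $\tilde{\N} \homsmsh_{\P} -$ gives a $\spectra$-enriched functor $\un{\K\P}^{op} \to \spectra$, which by Lemma~\ref{lem:module-prop} is precisely a right $\K\P$-module whose underlying symmetric sequence is $\tilde{\N} \homsmsh_{\P} \un{\B}_\P$. Concretely: for each surjection $\alpha : J \epi J'$ the structure map~(\ref{eq:s-alpha}), being a morphism of $\P$-modules in the first variable (and with $\Smsh_{j' \in J'} \K\P(J_{j'})$ smashed on as a fixed spectrum), induces after applying $\tilde{\N}(*) \homsmsh_{\P} -$ a map
\[ (\tilde{\N} \homsmsh_{\P} \un{\B}_\P)(J') \smsh \Smsh_{j' \in J'} \K\P(J_{j'}) \to (\tilde{\N} \homsmsh_{\P} \un{\B}_\P)(J), \]
and the associativity, unit, and equivariance axioms follow from the corresponding identities for $\un{\B}_\P$ established in Lemma~\ref{lem:B}, by functoriality of $\tilde{\N} \homsmsh_{\P} -$. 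Since no choices enter, this structure is canonical.

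The genuinely hard content is already behind us in Lemmas~\ref{lem:B} and~\ref{lem:B-1}; the only points in this proof requiring a little care are the verification that $\tilde{\N}(*) \homsmsh_{\P} -$ really does commute with smashing by the fixed spectra $\Smsh_{j'} \K\P(J_{j'})$ (so that the target of the induced map is as claimed), and bookkeeping to confirm that $\un{\B}_\P$ is enriched functorial in the $\K\P$-variable in the correct (opposite) variance so that we obtain a right, rather than left, module.
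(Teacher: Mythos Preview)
Your proof is correct and follows essentially the same approach as the paper: the equivalence comes from Lemma~\ref{lem:B-1} together with Lemma~\ref{lem:homsmsh}, and the $\K\P$-module structure is inherited from the $\K\P$-action on the second variable of $\un{\B}_\P$ established in Lemma~\ref{lem:B}. The paper's proof is a one-liner that treats the $\K\P$-module structure as immediate from the definitions, whereas you spell out the functoriality argument more carefully; your observation that one implicitly needs $\P$ to be $\Sigma$-cofibrant (to invoke Lemma~\ref{lem:homsmsh}) is a fair point that the paper leaves tacit.
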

\begin{proof}
The only part to prove is the equivalence to $\tilde{\N} \homsmsh_{\P} \un{\one}$ which follows from Lemma~\ref{lem:B-1} and Lemma~\ref{lem:homsmsh}.
\end{proof}

We now consider the naturality of the construction of $\un{\B}_\P$ in the operad $\P$.

\begin{lemma} \label{lem:Bphi}
Let $\phi: \P \to \P'$ be a morphism of operads in $\spectra$. The induced natural transformation of bisymmetric sequences
\[ \un{\B}_\phi: \un{\B}_\P \to \un{\B}_{\P'} \]
is a morphism of $\P$-modules (in the first variables, with the $\P$-action on $\un{\B}_{\P'}(-,J)$ given by pulling back along $\phi$), and of $\K\P'$-modules (in the second variables, with the $\K\P'$-action on $\un{\B}_\P(I,-)$ given by pulling back the $\K\P$-action along the operad map $\K\phi: \K\P' \to \K\P$).
\end{lemma}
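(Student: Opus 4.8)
The plan is to reduce both claims to the naturality of the two-sided bar construction $\B(\one,-,-)$ with respect to the operad morphism $\phi$, together with the behaviour of Spanier--Whitehead duality. The point is that the two module structures on $\un{\B}_\P$ in Definition~\ref{def:B} are assembled entirely out of (i) the right $\P$-action on $\B(\one,\P,\P)$, (ii) the left $\B\P$-coaction on $\B(\one,\P,\P)$, and (iii) projection and smash-product maps between products indexed by sets of surjections; these last maps involve no operad at all.

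First I would record that $\phi$ induces a map of symmetric sequences $\B(\one,\phi,\phi) \colon \B(\one,\P,\P) \to \B(\one,\P',\P')$, obtained by applying $\phi$ levelwise to the simplicial bar construction, with two properties: (a) it is a morphism of right $\P$-modules, where $\B(\one,\P',\P')$ carries the $\P$-action pulled back along $\phi$; and (b) it intertwines the left $\B\P$-coaction on $\B(\one,\P,\P)$ with the left $\B\P'$-coaction on $\B(\one,\P',\P')$ along the cooperad map $\B\phi \colon \B\P \to \B\P'$. Both are immediate from the functoriality of $\B$. Dualizing (b), and using $\K\P = \dual\B\P$, $\K\P' = \dual\B\P'$ and $\K\phi = \dual(\B\phi) \colon \K\P' \to \K\P$, I obtain that $\B(\one,\phi,\phi)$ is compatible with the adjoint coaction maps $\B(\one,\P,\P)(I_{j'}) \smsh \K\P(J_{j'}) \to \Smsh_{j \in J_{j'}} \B(\one,\P,\P)(I_j)$ that appear in Definition~\ref{def:B}, now with $\K\P'(J_{j'})$ mapped to $\K\P(J_{j'})$ via $\K\phi$ --- which is precisely the pulled-back $\K\P'$-structure on $\un{\B}_\P(I,-)$.

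With this in hand both claims follow by a componentwise check. On each factor $\Smsh_{j \in J} \B(\one,\P,\P)(I_j)$ of the product over surjections $I \epi J$, the map $\un{\B}_\phi$ is simply the smash power of $\B(\one,\phi,\phi)$; in particular it preserves the indexing by surjections and commutes with the projections $p_\gamma$. The structure map $r_\alpha$ --- for $\un{\B}_\P$, or for $\un{\B}_{\P'}$ with the pulled-back action --- is built from a projection $p_\gamma$ followed by a smash product of right-action maps, with the case division (whether $\beta$ factors through $\alpha$) depending only on $\alpha$ and $\beta$; hence $r_\alpha \circ \un{\B}_\phi = \un{\B}_\phi \circ r_\alpha$ reduces, component by component, to property (a). The argument for $s_\alpha$ is identical, using $\gamma = \alpha\beta$ and property (b) in its dualized form. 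Since the maps $r_\alpha$, resp.\ $s_\alpha$, generate the $\P$-, resp.\ $\K\P'$-, module structure, $\un{\B}_\phi$ is a morphism of modules in each variable; no unit or associativity axioms need rechecking, as those were settled for the structure maps in Lemma~\ref{lem:B}.

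The step that requires genuine care --- and which I expect to be the main obstacle --- is the dualization in (b): one must verify that Spanier--Whitehead duality converts $\B\phi \colon \B\P \to \B\P'$ into $\K\phi \colon \K\P' \to \K\P$ running the opposite way, and that under this the adjoint coaction maps transform so that it is exactly the \emph{pulled-back} $\K\P'$-action on $\un{\B}_\P(I,-)$ --- rather than the intrinsic $\K\P'$-action on $\un{\B}_{\P'}(I,-)$ --- that is matched by $\un{\B}_\phi$. This is a routine adjunction manipulation, but it is the place where the contravariance of $\K$ could trip one up if handled carelessly.
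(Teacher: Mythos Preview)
Your proposal is correct and is precisely the diagram-chase the paper has in mind; the paper's own proof consists of the single sentence ``This is an easy but lengthy diagram-chase.'' You have simply made explicit the two ingredients---naturality of the right $\P$-action and of the left $\B\P$-coaction on $\B(\one,\P,\P)$, together with the contravariant behaviour of $\K$ under $\phi$---that drive the check, and correctly identified the only subtle point (the reversal of direction under Spanier--Whitehead duality).
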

\begin{proof}
This is an easy but lengthy diagram-chase.
\end{proof}

We are now in position to describe models for the partially-stabilized cross-effects of a polynomial functor that have the desired $\K\E_L$-module structure.

\begin{definition} \label{def:dF-polynomial}
For a polynomial functor $F \in [\finbased,\spectra]$, write $\tilde{\N}[F]$ for a (functorial) $\Sigma$-cofibrant replacement of the $\Com$-comodule $\N[F]$ of Definition~\ref{def:F-comodule}. We then define a $\K\E_L$-module $\un{\d}^L[F]$ by
\[ \un{\d}^L[F] := \tilde{\N}[F] \homsmsh_{\E_L} \un{\B}_{\E_L} \]
with $\K\E_L$-module structure arising as in Proposition~\ref{prop:N-indec}. We also have, for each $L$, a map of symmetric sequences
\[ \un{m}^L: \un{\d}^L[F] \to \un{\d}^{L+1}[F] \]
given by the composite
\[ \tilde{\N}[F] \homsmsh_{\E_L} \un{\B}_{\E_L} \to \tilde{\N}[F] \homsmsh_{\E_L} \un{\B}_{\E_{L+1}} \to \tilde{\N}[F] \homsmsh_{\E_{L+1}} \un{\B}_{\E_{L+1}}. \]
in which the second map is induced by the operad map $\E_L \to \E_{L+1}$ and the first is the corresponding map of $\E_L$-modules $\un{\B}_{\E_L} \to \un{\B}_{\E_{L+1}}$ from Lemma~\ref{lem:Bphi}. The map $\un{m}^L$ is also a morphism of $\K\E_{L+1}$-modules by Lemma~\ref{lem:Bphi}.

The sequence
\[ \un{\d}^\bullet[F] : \un{\d}^0[F] \to \un{\d}^1[F] \to \un{\d}^2[F] \to \dots \]
is therefore a module over the pro-operad $\K\E_\bullet$, in the sense of Definition~\ref{def:module-pro}. It follows that the homotopy colimit
\[ \un{\d}[F] := \hocolim_L \un{\d}^L[F] \]
is a coalgebra over the comonad $\C_{\K\E_\bullet}$.
\end{definition}

\begin{proposition} \label{prop:dF-polynomial}
Let $F \in [\finbased,\spectra]$ be polynomial. Then we have natural equivalences of symmetric sequences
\[ \Sigma^{-*L}\creff_*F(S^L,\dots,S^L) \homeq \un{\d}^L[F] \]
such that the following diagrams commute in the homotopy category of symmetric sequences:
\[ \begin{diagram}
  \node{\Sigma^{-*L} \creff_*F(S^L,\dots,S^L)} \arrow{e,t}{\sim} \arrow{s} \node{\un{\d}^L[F]} \arrow{s,r}{\un{m}^L} \\
  \node{\Sigma^{-*(L+1)} \creff_*F(S^{L+1},\dots,S^{L+1})} \arrow{e,t}{\sim} \node{\un{\d}^{L+1}[F]}
\end{diagram} \]
where the left-hand maps are those of (\ref{eq:creff-map}). Taking homotopy colimits as $L \to \infty$ we get an equivalence
\[ \der_*(F) \homeq \un{\d}[F]. \]
\end{proposition}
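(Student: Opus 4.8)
The plan is to obtain Proposition~\ref{prop:dF-polynomial} by combining Proposition~\ref{prop:topsp-creff}, which already identifies the derived $\E_L$-indecomposables of $\N[F]$ with the partially-stabilized cross-effects, with the Koszul-duality construction of Proposition~\ref{prop:N-indec}, which rewrites those indecomposables in the $\K\E_L$-module-carrying form $\tilde{\N}[F]\homsmsh_{\E_L}\un{\B}_{\E_L}$ that defines $\un{\d}^L[F]$. Concretely, for fixed $L$ I would start from $\un{\d}^L[F] = \tilde{\N}[F]\homsmsh_{\E_L}\un{\B}_{\E_L}$, apply Proposition~\ref{prop:N-indec} with $\P = \E_L$ and $\tilde{\N} = \tilde{\N}[F]$ (which is $\Sigma$-cofibrant as a $\Com$-comodule, hence as an $\E_L$-comodule after pulling back along $\E_L \to \Com$) to get a natural equivalence $\un{\d}^L[F] \homeq \tilde{\N}[F]\homsmsh_{\E_L}\un{\one}$, and then invoke Proposition~\ref{prop:topsp-creff} (using Lemma~\ref{lem:homsmsh} to pass freely between $\N[F]$ and its $\Sigma$-cofibrant replacement $\tilde{\N}[F]$) to conclude $\un{\d}^L[F] \homeq \Sigma^{-*L}\creff_*F(S^L,\dots,S^L)$, naturally in the polynomial functor $F$.

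The second task is to check that under these equivalences the map $\un{m}^L$ of Definition~\ref{def:dF-polynomial} corresponds to the map (\ref{eq:creff-map}). Unwinding the definition, $\un{m}^L$ is the composite of the map induced by $\un{\B}_{\E_L} \to \un{\B}_{\E_{L+1}}$ (Lemma~\ref{lem:Bphi} applied to the operad map $\E_L \to \E_{L+1}$) with the change-of-operad map $\tilde{\N}[F]\homsmsh_{\E_L}(-) \to \tilde{\N}[F]\homsmsh_{\E_{L+1}}(-)$. The key observation is that the equivalence $\un{\B}_\P \homeq \un{\one}$ of Lemma~\ref{lem:B-1} is natural in $\P$ and compatible with the bar-construction augmentations $\B(\one,\P,\P)\weq\one$; hence the square relating $\un{\B}_{\E_L}\to\un{\B}_{\E_{L+1}}$ to the identity of $\un{\one}$ commutes, and $\un{m}^L$ is identified with the change-of-operad map $\tilde{\N}[F]\homsmsh_{\E_L}\un{\one} \to \tilde{\N}[F]\homsmsh_{\E_{L+1}}\un{\one}$. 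After passing to $\N[F]$ via Lemma~\ref{lem:homsmsh}, this is exactly the left-hand vertical map of diagram (\ref{eq:topsp-creff}), which Proposition~\ref{prop:topsp-creff} tells us matches (\ref{eq:creff-map}) on the cross-effect side; this gives the commuting square in the statement.

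Finally, to take homotopy colimits as $L\to\infty$ I would route around the (only up-to-homotopy) identification with cross-effects and instead use the zigzag
\[ \un{\d}^L[F] = \tilde{\N}[F]\homsmsh_{\E_L}\un{\B}_{\E_L} \homeq \tilde{\N}[F]\homsmsh_{\E_L}\un{\one} \homeq \N[F]\homsmsh_{\E_L}\un{\one}, \]
whose legs (from Lemma~\ref{lem:B-1} and Lemma~\ref{lem:homsmsh}) are natural in $F$ and in $\E_L$, hence strictly compatible with the structure maps of the two directed systems. Passing to $\hocolim_L$ gives $\un{\d}[F] \homeq \hocolim_L\bigl(\N[F]\homsmsh_{\E_L}\un{\one}\bigr)$, and by the equivalence (\ref{eq:hocolimEL}) the right-hand side is $\N[F]\homsmsh_{\Com}\un{\one} \homeq \der_*F$, which yields $\der_*(F)\homeq\un{\d}[F]$. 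I expect the genuine friction to be purely organizational: making the identification of $\un{m}^L$ with (\ref{eq:creff-map}) honest by carefully tracing the change-of-operad map for the trivial module $\un{\one}$ through Lemma~\ref{lem:Bphi} and matching it against diagram (\ref{eq:topsp-creff}), and arranging enough coherence among the homotopy-commutative squares for the colimit of the level-$L$ equivalences to be well-defined and to agree with the equivalence coming from (\ref{eq:hocolimEL}). The substantive geometric content, by contrast, is already isolated in Lemma~\ref{lem:unstable-bar-equiv} and may simply be cited.
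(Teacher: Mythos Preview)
Your proposal is correct and follows essentially the same approach as the paper: the paper's proof is the one-line ``This follows from Propositions~\ref{prop:N-indec} and \ref{prop:topsp-creff},'' and your argument simply unpacks that citation, invoking Lemma~\ref{lem:B-1}, Lemma~\ref{lem:Bphi}, Lemma~\ref{lem:homsmsh}, and (\ref{eq:hocolimEL}) to make the compatibility and colimit steps explicit. Your additional care with the coherence of $\un{m}^L$ versus (\ref{eq:creff-map}) and with the passage to the homotopy colimit is exactly the bookkeeping the paper leaves implicit.
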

\begin{proof}
This follows from Propositions~\ref{prop:N-indec} and \ref{prop:topsp-creff}.
\end{proof}

We have therefore constructed models for the partially-stabilized cross-effects of a polynomial functor as a module over the sequence of operads $\K\E_\bullet$, and for the Goodwillie derivatives as a coalgebra over the associated comonad $\C_{\K\E_\bullet}$. We now extend these constructions to analytic functors by taking the inverse limit over the Taylor tower (part (4) of our overall construction).

Recall that a functor $F: \finbased \to \spectra$ is \emph{$\rho$-analytic} if it satisfies Goodwillie's condition for being stably $n$-excisive in a way that depends in a controlled manner on $\rho$. Specifically, there should exist a number $q$ such that $F$ satisfies condition $E_n(n\rho - q, \rho + 1)$ of \cite[4.1]{goodwillie:1991}. We say that $F$ is \emph{analytic} if it is $\rho$-analytic for some $\rho$.

\begin{definition} \label{def:dF-analytic}
For an analytic functor $F \in [\finbased,\spectra]$, we define
\[ \un{\d}^L[F] := \holim_n \un{\d}^L[P_nF] \]
where the homotopy inverse limit is formed in the category of $\K\E_L$-modules (or, equivalently, in the underlying category of symmetric sequences). The maps in the inverse diagram for the homotopy limit are those induced by the maps in the Taylor tower of $F$. The maps $\un{m}^L$ for each $P_nF$ induce corresponding maps (of $\K\E_{L+1}$-modules):
\[ \un{m}^L: \un{\d}^L[F] \to \un{\d}^{L+1}[F] \]
and hence we obtain a $\K\E_\bullet$-module $\un{\d}^\bullet[F]$ and hence a $\C_{\K\E_\bullet}$-coalgebra
\[ \un{\d}[F] := \hocolim_L \un{\d}^L[F]. \]
(If $F$ is polynomial, this new definition of $\un{\d}^\bullet[F]$ is termwise-equivalent to that of Definition~\ref{def:dF-polynomial}.)
\end{definition}

\begin{lemma} \label{lem:dF-analytic}
For a $\rho$-analytic functor $F: \finbased \to \spectra$ and positive integer $L$, there are natural maps in the homotopy category of symmetric sequences of the form
\[ \Sigma^{-*L}\creff_*F(S^L,\dots,S^L) \to \un{\d}^L[F] \]
that are equivalences when $L \geq \rho$, and such that the following diagrams commute
\[ \begin{diagram}
  \node{\Sigma^{-*L} \creff_*F(S^L,\dots,S^L)} \arrow{e} \arrow{s} \node{\un{\d}^L[F]} \arrow{s,r}{\un{m}^L} \\
  \node{\Sigma^{-*(L+1)} \creff_*F(S^{L+1},\dots,S^{L+1})} \arrow{e} \node{\un{\d}^{L+1}[F]}
\end{diagram} \]
where the left-hand maps are those of (\ref{eq:creff-map}). Taking the homotopy colimit as $L \to \infty$, we obtain an equivalence of symmetric sequences
\[ \der_*(F) \homeq \un{\d}[F]. \]
\end{lemma}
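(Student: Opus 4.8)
The plan is to reduce the analytic case to the polynomial case already handled in Proposition~\ref{prop:dF-polynomial}, by comparing both sides term-by-term along the Taylor tower and then passing to homotopy inverse limits. First I would recall that for a $\rho$-analytic functor $F$, the map $\creff_*F \to \creff_*(P_nF)$ induces, after partial stabilization, a map
\[ \Sigma^{-*L}\creff_*F(S^L,\dots,S^L) \to \Sigma^{-*L}\creff_*(P_nF)(S^L,\dots,S^L) \]
which becomes highly connected as $n \to \infty$ (using Goodwillie's connectivity estimates for $\rho$-analytic functors, together with the fact that the functor $\creff_k$ and the finite desuspension $\Sigma^{-kL}$ shift connectivity in a controlled way). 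More precisely, for $L \geq \rho$ the tower $\{\Sigma^{-*L}\creff_*(P_nF)(S^L,\dots,S^L)\}_n$ is pro-equivalent to the constant tower on $\Sigma^{-*L}\creff_*F(S^L,\dots,S^L)$, so the canonical map
\[ \Sigma^{-*L}\creff_*F(S^L,\dots,S^L) \to \holim_n \Sigma^{-*L}\creff_*(P_nF)(S^L,\dots,S^L) \]
is an equivalence. I should note here that $\creff_*$ commutes with the relevant homotopy limits by the co-cross-effect description in Definition~\ref{def:creff}, and that these connectivity bounds are exactly why we need $L \geq \rho$ rather than arbitrary $L$.

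Next I would combine this with the polynomial case. By Proposition~\ref{prop:dF-polynomial} applied to each $P_nF$, there are natural equivalences
\[ \Sigma^{-*L}\creff_*(P_nF)(S^L,\dots,S^L) \homeq \un{\d}^L[P_nF] \]
compatible with the maps down the Taylor tower (naturality in the functor) and with the stabilization maps $\un{m}^L$. Taking $\holim_n$ of both sides and using the previous paragraph gives, for $L \geq \rho$,
\[ \Sigma^{-*L}\creff_*F(S^L,\dots,S^L) \homeq \holim_n \un{\d}^L[P_nF] =: \un{\d}^L[F], \]
which is the claimed equivalence; for $L < \rho$ one still gets a natural map (not necessarily an equivalence) from the same construction. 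The commutativity of the squares with the maps (\ref{eq:creff-map}) and $\un{m}^L$ follows by passing to $\holim_n$ in the corresponding squares of Proposition~\ref{prop:dF-polynomial}, since all the maps involved are natural in the polynomial functor. Finally, taking $\hocolim_L$ and using the equivalence $\der_*F \homeq \hocolim_L \Sigma^{-*L}\creff_*F(S^L,\dots,S^L)$ of (\ref{eq:creff}) — noting that the $\hocolim$ over $L$ only "sees" the terms with $L \geq \rho$, since a filtered homotopy colimit is unchanged by a cofinal tail — yields $\der_*F \homeq \un{\d}[F]$.

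The main obstacle I expect is the connectivity bookkeeping in the first paragraph: one must verify carefully that $\rho$-analyticity of $F$ propagates to the right quantitative statement about the tower of partially-stabilized cross-effects, keeping track of how $\creff_k$ and $\Sigma^{-kL}$ interact with Goodwillie's condition $E_n(n\rho-q,\rho+1)$, and in particular confirming that the threshold is exactly $L \geq \rho$ uniformly in $k$. A secondary subtlety is ensuring that forming $\holim_n$ in the category of $\K\E_L$-modules agrees with forming it in symmetric sequences (so that the $\K\E_L$-module structure on $\un{\d}^L[F]$ is well-defined and the $\un{m}^L$ are module maps) — but this is handled by the remark in Definition~\ref{def:dF-analytic} that homotopy limits of modules are computed termwise, together with Lemma~\ref{lem:homsmsh} and Proposition~\ref{prop:N-indec} guaranteeing the homotopical robustness of the construction.
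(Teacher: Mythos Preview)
Your proposal is correct and follows essentially the same route as the paper: reduce to the polynomial case via Proposition~\ref{prop:dF-polynomial}, pass to $\holim_n$ along the Taylor tower, and identify the result with the partially-stabilized cross-effects of $F$. The only noteworthy difference is in how you justify the equivalence for $L \geq \rho$. You do explicit connectivity bookkeeping on the tower $\{\Sigma^{-*L}\creff_*(P_nF)(S^L,\dots,S^L)\}_n$, whereas the paper observes more directly that cross-effects commute with homotopy limits, so that
\[
\un{\d}^L[F] \;\homeq\; \Sigma^{-*L}\creff_*\bigl(\holim_n P_nF\bigr)(S^L,\dots,S^L),
\]
and then invokes the standard fact that for a $\rho$-analytic $F$ the map $F \to \holim_n P_nF$ is an equivalence on $\rho$-connected spaces, hence on $S^L$ when $L \geq \rho$. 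This avoids the connectivity arithmetic you flagged as the ``main obstacle.''

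One small correction: you write that $\creff_*$ commutes with homotopy limits ``by the co-cross-effect description.'' It is the other way around---the cross-effect is a total homotopy \emph{fibre}, which is what makes it commute with homotopy limits; the co-cross-effect description is what gives commutation with homotopy colimits. This does not affect your argument, since you do not actually rely on that remark.
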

\begin{proof}
By Proposition~\ref{prop:dF-polynomial} we have
\[ \un{\d}[P_nF] \homeq \Sigma^{-*L}\creff_*(P_nF)(S^L,\dots,S^L). \]
Therefore
\[ \un{\d}^L[F] \homeq \holim \Sigma^{-*L}\creff_*(P_nF)(S^L,\dots,S^L) \homeq \Sigma^{-*L}\creff_*(\holim P_nF)(S^L,\dots,S^L) \]
since taking cross-effects commutes with taking homotopy limits. The required map is then induced by the natural transformation
\[ F \to \holim P_nF. \]
Since $F$ is $\rho$-analytic, the natural map $F \to \holim P_nF$ is an equivalence on $\rho$-connected spaces. It follows that we have an equivalence
\[ \Sigma^{-*L}\creff_*F(S^L,\dots,S^L) \weq \Sigma^{-*L}\creff_*(\holim P_nF)(S^L,\dots,S^L) \]
when $L \geq \rho$, which yields the first claim. Taking homotopy colimits over the $L$ variable on each side then gives the second claim.
\end{proof}

The final goal of this section (part (5)) is to extend our constructions to arbitrary (i.e. non-analytic) functors from based spaces to spectra. For this, we recall from \cite[4.3]{arone/ching:2014} that the derivatives of an arbitrary functor $F$ can be obtained by forming the coend between the values $F(X)$ and the derivatives of the representable functors $\Sigma^\infty \Hom_{\based}(X,-)$. Since these representable functors are analytic, we can use the models for their derivatives from Definition~\ref{def:dF-analytic}, and since the comonad $\C_{\K\E_\bullet}$ is enriched in spectra, its coaction on those derivatives extends to the coend.

\begin{definition} \label{def:rep}
For $X \in \finbased$, we write
\[ R_X : \finbased \to \spectra \]
for the representable functor given by
\[ R_X(Y) := \Sigma^\infty \Hom_{\based}(X,Y). \]
\end{definition}

\begin{definition} \label{def:dRX}
For $X \in \finbased$, we now apply the construction of Definition~\ref{def:cofcoalg} to the module $\un{\d}^\bullet[R_X]$ of Definition~\ref{def:dF-analytic} to get a $\K\E_\bullet$-module $\d^\bullet[R_X]$ and a morphism of $\K\E_\bullet$-modules
\[ \d^\bullet[R_X] \to \un{\d}^\bullet[R_X] \]
formed from weak equivalences
\[ \d^L[R_X] \weq \un{\d}^L[R_X] \]
and such that each $\d^L[R_X]$ is $\Sigma$-cofibrant. In particular, the homotopy colimit
\[ \d[R_X] := \hocolim \d^L[R_X] \]
is a $\Sigma$-cofibrant $\C_{\K\E_\bullet}$-coalgebra that is equivalent, as a symmetric sequence, to $\der_*(R_X)$.
\end{definition}

\begin{remark}
It is important that the functor
\[ (\finbased)^{op} \to \symseq; \quad X \mapsto \d[R_X] \]
be simplicially-enriched in order to make Definition~\ref{def:dF} below. The reader can check that we have built $\d[R_X]$ from a succession of simplicially-enriched constructions, including: applying Goodwillie's construction $P_n$; forming the natural transformation objects $\N[F]$; taking cofibrant replacements (which can be done simplicially by work of Rezk-Schwede-Shipley~\cite{rezk/schwede/shipley:2001}; forming homotopy coends over $\E_L$; taking homotopy limits; applying functorial factorizations as in the construction of Definition~\ref{def:cofcoalg}; and taking homotopy colimits.
\end{remark}

Finally, then, we obtain models for the derivatives of any pointed simplicial functor $F: \finbased \to \spectra$ by enriched left Kan extension from our models for the representable functors.

\begin{definition} \label{def:dF}
For any $F \in [\finbased,\spectra]$ and each $L$, define a $\K\E_L$-module $\d^L[F]$ by taking the enriched coend
\[ \d^L[F] := F(X) \smsh_{X \in \finbased} \d^L[R_X] \]
over the simplicial category $\finbased$. We then obtain a $\C_{\K\E_\bullet}$-coalgebra $\d[F]$ given by
\[ \d[F] := \hocolim_L \d^L[F] \isom F(X) \smsh_{X \in \finbased} \d[R_X]. \]
Notice that, by the dual Yoneda Lemma, we have $R_Y(X) \smsh_{X} \d[R_X] \isom \d[R_Y]$ so that this definition of $\d[F]$ is consistent with that already made for the functors $R_X$ themselves.
\end{definition}

Note that in Definition~\ref{def:dF} we use a strict (not homotopy) coend which typically only has the correct homotopy type when $F$ is a cofibrant object in the projective model structure on $[\finbased,\spectra]$.

\begin{proposition} \label{prop:dF}
Let $F: \finbased \to \spectra$ be pointed simplicial and let $\c$ denote an arbitrary cofibrant replacement functor for the projective model structure on $[\finbased,\spectra]$. There are natural maps (in the homotopy category of symmetric sequences)
\[ \eta_L(F): \Sigma^{-*L}\creff_*F(S^L,\dots,S^L) \to \d^L[\c F] \]
such that the following diagrams commute
\[ \begin{diagram}
  \node{\Sigma^{-*L} \creff_*F(S^L,\dots,S^L)} \arrow{e,t}{\eta_L(F)} \arrow{s} \node{\d^L[\c F]} \arrow{s} \\
  \node{\Sigma^{-*(L+1)} \creff_*F(S^{L+1},\dots,S^{L+1})} \arrow{e,t}{\eta_{L+1}(F)} \node{\d^{L+1}[\c F]}
\end{diagram} \]
where the left-hand maps are those of (\ref{eq:creff-map}) and the right-hand maps are those that form the $\K\E_\bullet$-module $\d^\bullet[\c F]$. Taking the homotopy colimit as $L \to \infty$ we therefore get a map
\[ \eta(F): \der_*(F) \weq \d[\c F] \]
and this is an equivalence for all $F$.
\end{proposition}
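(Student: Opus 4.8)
The plan is to reduce the general statement to the case of the representable functors $R_X$ --- for which everything has already been established in Lemma~\ref{lem:dF-analytic} and Definition~\ref{def:dRX} --- by writing a cofibrant $F$ as a coend of representables and checking that all the constructions in sight commute with that coend. First I would replace $F$ by $\c F$, so that we may assume $F$ is cofibrant in the projective model structure on $[\finbased,\spectra]$; this is harmless since both the partially-stabilized cross-effects $\Sigma^{-*L}\creff_*(-)(S^L,\dots,S^L)$ (via the identification $\creff_n\homeq\creff^n$ of Definition~\ref{def:creff}) and the derivatives $\der_*(-)$ are homotopy functors of the input. Next I would invoke the enriched (pointed) co-Yoneda lemma, as in the discussion of representables in \cite[4.3]{arone/ching:2014}, to identify a cofibrant $F$ with the strict coend $F(X)\smsh_{X\in\finbased}R_X$; as noted after Definition~\ref{def:dF}, for cofibrant $F$ this strict coend is derived, hence homotopy-invariant in the functor being coend-ed.

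The second ingredient is that both $\Sigma^{-*L}\creff_*(-)(S^L,\dots,S^L)$ and $\der_*(-)$ commute with homotopy colimits in the functor variable: the former because, via $\creff_n\homeq\creff^n$, it is a total homotopy cofibre of input-values, exactly as in the proof of Corollary~\ref{cor:N-hocolim}; the latter by the fact, already used in Corollary~\ref{cor:dF-eq}, that differentiation of $\spectra$-valued functors commutes with arbitrary homotopy colimits (equivalently, $\der_*$ is a left adjoint, cf.\ the adjunction (\ref{eq:adj})). Applying these to $F\homeq F(X)\smsh_X R_X$ produces natural equivalences
\[ \Sigma^{-*L}\creff_*F(S^L,\dots,S^L)\;\homeq\; F(X)\smsh_X\Sigma^{-*L}\creff_*R_X(S^L,\dots,S^L),\qquad \der_*(F)\;\homeq\; F(X)\smsh_X\der_*(R_X), \]
compatible with the structure maps in the variable $L$.

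I would then feed in the representable case. Each $R_X$ is analytic, so Lemma~\ref{lem:dF-analytic} supplies, naturally in $X$, maps $\Sigma^{-*L}\creff_*R_X(S^L,\dots,S^L)\to\un{\d}^L[R_X]$ that respect the $\K\E_\bullet$-module structure maps, are equivalences once $L$ exceeds the analyticity radius of $R_X$, and whose homotopy colimit over $L$ is the equivalence $\der_*(R_X)\homeq\un{\d}[R_X]$. Composing with the $\Sigma$-cofibrant-replacement equivalences $\d^L[R_X]\weq\un{\d}^L[R_X]$ of Definition~\ref{def:dRX} --- natural in $X$ because the assignment $X\mapsto\d^L[R_X]$ is simplicially enriched --- gives maps $\Sigma^{-*L}\creff_*R_X(S^L,\dots,S^L)\to\d^L[R_X]$ with the same features. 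I would then define $\eta_L(F)$ to be the composite of the first equivalence above, the coend $F(X)\smsh_X(-)$ of this map, and the identification $F(X)\smsh_X\d^L[R_X]=\d^L[\c F]$ of Definition~\ref{def:dF}. Naturality of $\eta_L(F)$ in $F$ and commutativity of the squares relating $\eta_L(F)$ and $\eta_{L+1}(F)$ with the maps (\ref{eq:creff-map}) and the $\K\E_\bullet$-module maps then follow from the corresponding statements for the $R_X$ together with functoriality of the coend. Passing to the homotopy colimit over $L$, $\eta(F)=\hocolim_L\eta_L(F)$ becomes the composite $\der_*(F)\homeq F(X)\smsh_X\der_*(R_X)\to F(X)\smsh_X\d[R_X]\homeq\d[\c F]$, in which the middle arrow is the derived coend of the natural transformation $\der_*(R_X)\to\d[R_X]$; that transformation is an equivalence since it is the homotopy colimit over $L$ of maps which become equivalences for $L$ large, so --- a derived coend being homotopy-invariant in the functor being coend-ed --- $\eta(F)$ is an equivalence for every $F$.

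The step I expect to be the main obstacle is the coend bookkeeping: establishing the co-Yoneda identification $F\homeq F(X)\smsh_X R_X$ for cofibrant $F$, verifying that the strict coend of Definition~\ref{def:dF} is genuinely derived (both requiring care with cofibrancy of $F$, pointwise $\Sigma$-cofibrancy of $X\mapsto\d^L[R_X]$, and the simplicial enrichment that makes the family natural in $X$), and confirming that $\der_*$ and the partially-stabilized cross-effects commute with exactly the homotopy colimits out of which the coend is built. Each point is routine in spirit, but this is where the real work lies; the rest of the argument is formal once these inputs are in place.
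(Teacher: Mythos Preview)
Your proposal is correct and follows essentially the same route as the paper: reduce to the representables $R_X$ via the enriched co-Yoneda coend, use that cross-effects (as co-cross-effects) commute with homotopy colimits, and apply the analytic case already established for $R_X$. The only point the paper makes slightly more explicit is that the cross-effect side of the coend must be $\Sigma$-cofibrantly replaced (it writes $\widetilde{\creff}_*(R_X)$) so that applying $\c F(X)\smsh_X -$ preserves the relevant weak equivalences; you identify exactly this kind of cofibrancy bookkeeping as the main obstacle, so you are aware of the issue.
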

\begin{proof}
From Lemma~\ref{lem:dF-analytic} we have a zigzag of maps (in which all the backwards maps are equivalences)
\begin{equation} \label{eq:prop:dF1} \Sigma^{-*L}\creff_*(R_X)(S^L,\dots,S^L) \arrow{e,..} \d^L[R_X]. \end{equation}
Since $\c F$ is a cofibrant functor, the coend construction $\c F(X) \smsh_{X} -$ preserves weak equivalences between objectwise-$\Sigma$-cofibrant diagrams $(\finbased)^{op} \to \symseq$. So applying this coend construction to the zigzag of maps (\ref{eq:prop:dF1}), we obtain a corresponding zigzag
\begin{equation} \label{eq:prop:dF2} \c F(X) \smsh_X \Sigma^{-*L} \widetilde{\creff}_*(R_X)(S^L,\dots,S^L) \arrow{e,..} \d^L[F] \end{equation}
where $\widetilde{\creff}_*(R_X)$ denotes a $\Sigma$-cofibrant replacement of the cross-effects of $R_X$. The strict coend on the left-hand side is equivalent to the corresponding homotopy coend since $\c F$ is cofibrant. Therefore, because taking cross-effects commutes with homotopy colimits, the left-hand term in (\ref{eq:prop:dF2}) is equivalent to
\[ \Sigma^{-*L}\creff_*(\c F(X) \smsh_{X} R_X)(S^L,\dots,S^L) \]
which is equivalent to
\[ \Sigma^{-*L}\creff_*(F)(S^L,\dots,S^L) \]
by the dual Yoneda Lemma. It follows that the zigzag (\ref{eq:prop:dF2}) represents the required map $\eta_L(F)$ in the homotopy category of symmetric sequences.

It remains to show that $\eta(F) = \hocolim \eta_L(F)$ is an equivalence. We know from Lemma~\ref{lem:dF-analytic} that the homotopy colimit of the maps (\ref{eq:prop:dF1}), as $L \to \infty$, is an equivalence for each $X$. Since this homotopy colimit commutes with the homotopy coend, we deduce that $\eta(F)$ is an equivalence as required.
\end{proof}

This completes the construction (started in Definition~\ref{def:SigmainftyX-Com}) of a model for the derivatives of a pointed simplicial functor $F: \finbased \to \spectra$ that form a $\C_{\K\E_\bullet}$-coalgebra.

Note that for an analytic functor $F \in [\finbased,\spectra]$ we have now defined two different $\C_{\K\E_\bullet}$-coalgebra models for the derivatives of $F$: the object $\un{\d}[F]$ of Definition~\ref{def:dF-analytic} and the object $\d[F]$ of Definition~\ref{def:dF}. It is important to know that these objects are actually equivalent as $\C_{\K\E_\bullet}$-coalgebras.

There is a small technical obstacle to constructing an equivalence of $\C_{\K\E_\bullet}$-coalgebras between $\d[F]$ and $\un{\d}[F]$, which is that the construction $F \mapsto \un{\d}[F]$ is not enriched in spectra. (In particular, we do not have a natural map (of $\C_{\K\E_\bullet}$-coalgebras) of the form $X \smsh \un{\d}[F] \to \un{\d}[X \smsh F]$ for a spectrum $X$.) The underlying reason for this is that the cofibrant replacement in the category of $\Com$-comodules used to form the object $\tilde{\N}[F]$ is not enriched in spectra. The following lemma shows that there is at least a zigzag of coalgebra maps (which are, in fact, equivalences).

\begin{lemma} \label{lem:d-enriched}
For $\rho$-analytic $F \in [\finbased,\spectra]$, cofibrant $X \in \spectra$, and $L \geq \rho$, there is a zigzag of equivalences of $\K\E_L$-modules
\[ \un{\d}^L[X \smsh F] \homeq X \smsh \un{\d}^L[F], \]
and a zigzag of equivalences of $\C_{\K\E_\bullet}$-coalgebras
\[ \un{\d}[X \smsh F] \homeq X \smsh \un{\d}[F]. \]
The $\C_{\K\E_\bullet}$-coalgebra structure on the right-hand side relies on the fact that the comonad $\C_{\K\E_\bullet}$ itself \emph{is} enriched over spectra.
\end{lemma}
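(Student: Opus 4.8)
The plan is to trace the construction of $\un{\d}^L[-]$ from Definition~\ref{def:dF-analytic} and check at each stage how it interacts with smashing by a cofibrant spectrum $X$, producing a natural zigzag of equivalences. Since $\un{\d}^L[F] := \holim_n \un{\d}^L[P_nF]$ and smashing with a cofibrant $X$ commutes with the relevant homotopy limits (here the tower is eventually constant on $\rho$-connected inputs, so for $L \geq \rho$ the homotopy limit is a finite limit up to equivalence — or one uses that $X\smsh -$ preserves homotopy limits of towers of spectra, which it does), it suffices to handle the polynomial case and then pass to the Taylor tower. For polynomial $F$ we have $\un{\d}^L[F] = \tilde{\N}[F] \homsmsh_{\E_L} \un{\B}_{\E_L}$, so the key point is a comparison between $\tilde{\N}[X\smsh F]$ and $X \smsh \tilde{\N}[F]$ as $\E_L$-comodules (equivalently, $\Com$-comodules, pulled back).

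First I would establish that $\N[X \smsh F] \homeq X \smsh \N[F]$ as $\Com$-comodules. Unwinding Definition~\ref{def:F-comodule}, $\N[X\smsh F](n) = \Nat_{Y}(\widetilde{\Sigma^\infty Y^{\smsh n}}, X \smsh FY)$, and since $\widetilde{\Sigma^\infty Y^{\smsh n}}$ takes values in finite cell spectra (being a cofibrant replacement of $\Sigma^\infty Y^{\smsh n}$ for $Y \in \finbased$), the natural transformation spectrum commutes with smashing by $X$ up to equivalence; this uses that $X$ is cofibrant and that mapping out of a finite cell spectrum commutes with smashing with $X$. This gives a natural equivalence of $\Com$-comodules $X \smsh \N[F] \weq \N[X\smsh F]$. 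Now $X \smsh \tilde{\N}[F]$ is a $\Sigma$-cofibrant $\Com$-comodule (smashing a $\Sigma$-cofibrant object with a cofibrant spectrum preserves $\Sigma$-cofibrancy) mapping by an equivalence to $\N[X\smsh F]$, and $\tilde{\N}[X\smsh F]$ is the chosen functorial $\Sigma$-cofibrant replacement; comparing the two via the zigzag $X \smsh \tilde{\N}[F] \weq X \smsh \N[F] \weq \N[X \smsh F] \lweq \tilde{\N}[X\smsh F]$ and lifting through $\Sigma$-cofibrant replacement gives a zigzag of equivalences of $\Sigma$-cofibrant $\E_L$-comodules. Applying $- \homsmsh_{\E_L} \un{\B}_{\E_L}$, which by Lemma~\ref{lem:homsmsh} preserves equivalences between $\Sigma$-cofibrant comodules and (being defined by smash products and geometric realization) commutes with smashing by the cofibrant spectrum $X$, yields the zigzag of $\K\E_L$-module equivalences $\un{\d}^L[X\smsh F] \homeq X \smsh \un{\d}^L[F]$ for polynomial $F$; the general analytic case follows by applying $\holim_n$ and invoking compatibility of $X\smsh-$ with the tower as above.

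For the second statement, I would take the homotopy colimit over $L$. The zigzags $\un{\d}^L[X\smsh F] \homeq X\smsh \un{\d}^L[F]$ are natural in $L$ (compatible with the maps $\un{m}^L$, which is where Lemma~\ref{lem:Bphi} and naturality of $\un{\B}_{\phi}$ in the operad map are used), so they assemble into a zigzag of maps of $\K\E_\bullet$-modules; taking $\hocolim_L$ and using Definition~\ref{def:hocolim-module-coalgebra} together with the fact that $\C_{\K\E_\bullet}$ is $\spectra$-enriched (so that $X \smsh \un{\d}[F]$ carries a $\C_{\K\E_\bullet}$-coalgebra structure via the tensoring-of-coalgebras construction, Definition~\ref{def:tensor-coalgebra}, applied levelwise) produces the asserted zigzag of $\C_{\K\E_\bullet}$-coalgebra equivalences $\un{\d}[X\smsh F]\homeq X\smsh\un{\d}[F]$. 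The one subtlety to be careful about — and the main obstacle — is that none of the intermediate objects ($\tilde{\N}$, in particular) is produced by a spectrally enriched functor, so one cannot simply write down a single natural transformation $X\smsh\un{\d}[F]\to\un{\d}[X\smsh F]$ of coalgebras; one must instead produce the comparison as an honest zigzag and check that each backward-pointing arrow in the zigzag is a levelwise equivalence of $\Sigma$-cofibrant coalgebras, so that it becomes invertible in the homotopy category of $\C_{\K\E_\bullet}$-coalgebras by Proposition~\ref{prop:pro-truncated-equivalence}. Verifying that each leg of the zigzag respects the coalgebra structure (not merely the underlying symmetric sequences) is the bookkeeping-heavy part of the argument; it reduces to the naturality, in the comodule and in the operad, of the homotopy coend $\homsmsh$ and of the bisymmetric sequence $\un{\B}_{\E_L}$, all of which is available from the earlier results cited.
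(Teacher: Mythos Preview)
Your overall architecture matches the paper's, and the polynomial step is essentially right (modulo one imprecision noted below). The genuine gap is in the passage from polynomial to analytic $F$.

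You assert that smashing with a cofibrant spectrum $X$ commutes with the homotopy limit $\holim_n \un{\d}^L[P_nF]$, and you offer two reasons: that the tower is ``eventually constant on $\rho$-connected inputs, so for $L \geq \rho$ the homotopy limit is a finite limit up to equivalence,'' or alternatively that ``$X\smsh -$ preserves homotopy limits of towers of spectra.'' Neither is correct. The second is simply false for general $X$. The first is also wrong as stated: for fixed arity $k$ the tower $n \mapsto \Sigma^{-kL}\creff_k(P_nF)(S^L,\dots,S^L)$ is \emph{not} eventually constant, since $\creff_k(D_nF)(S^L,\dots,S^L)$ is typically nontrivial for all $n \geq k$. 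The paper does not attempt to justify this step directly. Instead it writes down the canonical map $X \smsh \holim_n \un{\d}^L[P_nF] \to \holim_n X \smsh \un{\d}^L[P_nF]$, composes with the polynomial-case equivalences, and then uses Lemma~\ref{lem:dF-analytic} (which needs exactly the hypothesis $L \geq \rho$) to identify the \emph{composite} in the homotopy category with the map
\[
X \smsh \Sigma^{-*L}\creff_*F(S^L,\dots,S^L) \;\longrightarrow\; \Sigma^{-*L}\creff_*(X \smsh F)(S^L,\dots,S^L),
\]
which is an equivalence because cross-effects agree with co-cross-effects and hence commute with smashing by $X$. This is where the condition $L \geq \rho$ actually enters, and it is the step your argument is missing.

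One smaller point: your justification that $X \smsh \N[F] \weq \N[X \smsh F]$ (``mapping out of a finite cell spectrum commutes with smashing with $X$'') is not the right reason, since $\Nat$ here is an end over all of $\finbased$, not a mapping spectrum out of a single finite object. The correct argument, which the paper uses, is Lemma~\ref{lem:creff}: $\N[F](n) \homeq \creff_nF(S^0,\dots,S^0)$ is a \emph{finite} total homotopy fibre, and finite homotopy limits of spectra commute with smashing. Also, be careful that your zigzag passes through $X \smsh \N[F]$, which need not be $\Sigma$-cofibrant; the paper's zigzag $X \smsh \tilde{\N}[F] \lweq \widetilde{[X \smsh \tilde{\N}[F]]} \weq \widetilde{[X \smsh \N[F]]} \weq \tilde{\N}[X \smsh F]$ is arranged so that every term is $\Sigma$-cofibrant, which is needed before applying $- \homsmsh_{\E_L} \un{\B}_{\E_L}$.
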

\begin{proof}
Consider first the case that $F$ is polynomial. Then there is a zigzag of equivalences of $\Sigma$-cofibrant $\Com$-comodules
\begin{equation} \label{eq:N-enriched}
    X \smsh \tilde{\N}[F] \lweq \widetilde{[X \smsh \tilde{\N}[F]]} \weq \widetilde{[X \smsh \N[F]]} \weq \tilde{\N}[X \smsh F]
\end{equation}
where the tildes denote a fixed cofibrant replacement functor in the category of $\Com$-comodules. The final equivalence is induced by the natural map of $\Com$-comodules
\[ X \smsh \Nat_{Y \in \finbased}(\widetilde{\Sigma^\infty Y^{\smsh *}},FY) \to \Nat_{Y \in \finbased}(\widetilde{\Sigma^\infty Y^{\smsh *}}, X \smsh FY), \]
and is an equivalence by Lemma~\ref{lem:creff}.

We get from (\ref{eq:N-enriched}) the required chain of equivalences of $\K\E_L$-modules
\[ X \smsh \un{\d}^L[F] := X \smsh \tilde{\N}[F] \homsmsh_{\E_L} \un{\B}_{\E_L} \homeq \tilde{\N}[X \smsh F] \homsmsh_{\E_L} \un{\B}_{\E_L} = \un{\d}^L[X \smsh F]. \]
It is important for this that each term in the chain (\ref{eq:N-enriched}) is $\Sigma$-cofibrant. Note that these equivalences are compatible with the operad maps $\K\E_{L+1} \to \K\E_L$ and thus induce a corresponding chain of equivalences of $\C_{\K\E_\bullet}$-coalgebras
\[ X \smsh \un{\d}[F] \homeq \un{\d}[X \smsh F]. \]

Now suppose that $F$ is $\rho$-analytic with $L \geq \rho$. Then we have a sequence of maps/equivalences of $\K\E_L$-modules of the form
\begin{equation} \label{eq:ana-tensor} \begin{split}
  X \smsh \un{\d}^L[F] = X \smsh \holim_n \un{\d}^L[P_nF] &\to \holim_n X \smsh \un{\d}^L[P_nF] \\
    &\homeq \holim_n \un{\d}^L[X \smsh P_nF] \\
    &\homeq \holim_n \un{\d}^L[P_n(X \smsh F)] = \un{\d}^L[X \smsh F].
\end{split} \end{equation}
By Lemma~\ref{lem:dF-analytic} the composite can be identified (in the homotopy category of symmetric sequences) with the map
\[ X \smsh \Sigma^{-*L}\creff_*F(S^L,\dots,S^L) \to \Sigma^{-*L}\creff_*(X \smsh F)(S^L,\dots,S^L) \]
which is an equivalence since cross-effects are equivalent to co-cross-effects, and hence commute with smashing by $X$. Taking the homotopy colimit of the equivalences (\ref{eq:ana-tensor}) as $L \to \infty$ we get the required equivalence
\[ X \smsh \un{\d}[F] \homeq \un{\d}[X \smsh F]. \]
\end{proof}

\begin{lemma} \label{lem:d-d-eq}
For a cofibrant analytic functor $F \in [\finbased,\spectra]$, there are zigzags of equivalences of $\K\E_L$-modules
\[ \un{\d}^L[F] \homeq \d^L[F] \]
and of equivalences of $\C_{\K\E_\bullet}$-coalgebras
\[ \un{\d}[F] \homeq \d[F]. \]
\end{lemma}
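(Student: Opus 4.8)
The plan is to reduce, via the coend that defines $\d^\bullet[F]$, to the representable functors $R_X$, where Definition~\ref{def:dRX} already supplies a comparison, and then to propagate that comparison through the coend. First I would observe that $\d^L[F] = F(X) \smsh_X \d^L[R_X]$ by Definition~\ref{def:dF}, while Definition~\ref{def:dRX} provides natural, simplicially-enriched, level-wise weak equivalences of $\K\E_\bullet$-modules $\d^\bullet[R_X] \weq \un{\d}^\bullet[R_X]$ through objects that are termwise $\Sigma$-cofibrant. Since $F$ is cofibrant, the enriched coend $F(X) \smsh_X (-)$ sends objectwise weak equivalences between objectwise $\Sigma$-cofibrant diagrams $(\finbased)^{op} \to \symseq$ to weak equivalences (this is the property exploited in the proof of Proposition~\ref{prop:dF}), so applying it yields natural $\K\E_\bullet$-module equivalences $\d^\bullet[F] \weq F(X) \smsh_X \un{\d}^\bullet[R_X]$ and — using that $\C_{\K\E_\bullet}$ is spectrally enriched — a $\C_{\K\E_\bullet}$-coalgebra equivalence $\d[F] \weq F(X) \smsh_X \un{\d}[R_X]$. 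It then suffices to produce natural zigzags of $\K\E_\bullet$-module equivalences $F(X) \smsh_X \un{\d}^\bullet[R_X] \homeq \un{\d}^\bullet[F]$, compatible across $L$; passing to $\hocolim_L$ as in Definition~\ref{def:hocolim-module-coalgebra}, and using that an equivalence of underlying symmetric sequences of $\C_{\K\E_\bullet}$-coalgebras is automatically an equivalence in the pro-truncated homotopy category (Proposition~\ref{prop:pro-truncated-equivalence}), then promotes these to the asserted coalgebra equivalence. So the real work is entirely at the level of $\K\E_L$-modules.

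The core comparison I would establish first for polynomial $F$, say $n$-excisive. On polynomial inputs one has $\un{\d}^L[G] = \tilde{\N}[G] \homsmsh_{\E_L} \un{\B}_{\E_L}$, and since $\un{\B}_{\E_L}$ is $\Sigma$-cofibrant (as $\E_L$ is), Lemma~\ref{lem:homsmsh} lets me replace $\tilde{\N}[G]$ by $\N[G]$. The resulting functor $G \mapsto \N[G] \homsmsh_{\E_L} \un{\B}_{\E_L}$ is spectrally enriched — because $\N[-]$ is, as in the proof of Lemma~\ref{lem:creff} and as used in Lemma~\ref{lem:d-enriched}, and $(-) \homsmsh_{\E_L} \un{\B}_{\E_L}$ commutes with smashing by a fixed spectrum — and it preserves homotopy colimits, because $\N[-]$ does by Corollary~\ref{cor:N-hocolim} and so does $(-) \homsmsh_{\E_L} \un{\B}_{\E_L}$. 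I would then resolve $F \homeq P_nF$ by the canonical simplicial object whose terms are wedges of suspension-spectrum-smashed representables, apply $P_n$ termwise (it commutes with geometric realisation, with wedges, and with smashing by spectra and simplicial sets), and so present $F$ as a realisation of terms of the form $\Wdge_i A_i \smsh P_nR_{X_i}$, each $n$-excisive (an arbitrary wedge of $n$-excisive functors to $\spectra$ is again $n$-excisive, since in the stable category $\spectra$ finite homotopy limits commute with arbitrary wedges). Applying the enriched, homotopy-colimit-preserving functor above termwise, and comparing with the parallel computation for $\d^L[-]$ — which commutes with the same realisation and wedges, being a coend, and sends $\Wdge_i A_i \smsh R_{X_i}$ to $\Wdge_i A_i \smsh \d^L[R_{X_i}]$ by the dual Yoneda Lemma — reduces the polynomial case to the comparison on representables of Definition~\ref{def:dRX}, together with an analysis of how $\un{\d}^L$ and $\d^L$ each interact with the Taylor tower in a fixed symmetric-sequence degree (which is what controls the passage between the $n$-truncated functors $P_nR_X$ and $R_X$ itself).

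For $\rho$-analytic $F$ I would pass to the homotopy limit over the Taylor tower: by Definition~\ref{def:dF-analytic}, $\un{\d}^L[F] = \holim_n \un{\d}^L[P_nF]$, so it remains to check that the coend construction of $\d^L$ is likewise compatible with this limit, i.e.\ that $\d^L[F] \homeq \holim_n \d^L[P_nF]$. This is where analyticity does the work, just as in Lemma~\ref{lem:dF-analytic}: in symmetric-sequence degree $k$ the Taylor tower stabilises after stage $n = k$, and the relevant homotopy-limit fibres become highly connected, uniformly, so that $\holim_n$ may be computed degreewise and commuted past the wedges appearing in the coend; one then matches $\d^L[F]$ and $\holim_n \d^L[P_nF]$ degreewise using the coend formula $F(X) \smsh_X \der_k R_X \homeq \der_k F$ of \cite[4.3]{arone/ching:2014} together with the fact that $F \to P_kF$ is a $\der_k$-equivalence. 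Combining this with the polynomial case applied to each $P_nF$ gives the zigzag $\d^\bullet[F] \homeq \un{\d}^\bullet[F]$ of $\K\E_\bullet$-modules, and hence $\d[F] \homeq \un{\d}[F]$ as $\C_{\K\E_\bullet}$-coalgebras.

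The step I expect to be the main obstacle is precisely this reconciliation in the analytic case: matching the \emph{strict} coend of Definition~\ref{def:dF} against the \emph{homotopy} limit over the Taylor tower of Definition~\ref{def:dF-analytic}, given that $\un{\d}$ is not spectrally enriched and thus commutes with neither construction on the nose — Lemma~\ref{lem:d-enriched} only repairs the enrichment up to a zigzag, and the enriched model $\N[-] \homsmsh_{\E_L} \un{\B}_{\E_L}$ agrees with $\un{\d}^L$ on polynomial inputs but not on representables. Turning the degreewise identifications into genuine equivalences of $\K\E_L$-modules that are moreover compatible with the operad maps $\K\E_{L+1} \to \K\E_L$ (so that they descend to a $\C_{\K\E_\bullet}$-coalgebra equivalence) is where the careful simplicial-enrichment bookkeeping built into Definitions~\ref{def:dRX} and~\ref{def:dF} is essential.
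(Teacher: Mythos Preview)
Your opening moves match the paper's: write $\d^L[F] = F(X) \smsh_X \d^L[R_X]$, invoke Definition~\ref{def:dRX} for the representable case, and push the resulting equivalence through the coend (legitimate since $F$ is cofibrant and the diagrams are objectwise $\Sigma$-cofibrant) to reach $F(X) \smsh_X \un{\d}^L[R_X]$. The divergence is in how you then get to $\un{\d}^L[F]$.

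The paper does this in one step: it invokes Lemma~\ref{lem:d-enriched} to obtain
\[ F(X) \smsh_X \un{\d}[R_X] \;\homeq\; \un{\d}\bigl[F(X) \smsh_X R_X\bigr] \;\isom\; \un{\d}[F], \]
the last identification being co-Yoneda (and similarly with $\d^L$ in place of $\d$). You cite Lemma~\ref{lem:d-enriched} but set it aside as ``only repair[ing] the enrichment up to a zigzag''; since the statement you are proving \emph{asks} for a zigzag, this is precisely what is required. The zigzag produced there is natural in both the spectrum and the functor, and the mechanism behind it --- that cross-effects, being equivalent to co-cross-effects, commute with homotopy colimits --- applies just as well to the homotopy colimit computing the coend as to a single smash product.

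Your alternative route --- resolving polynomial $F$ by a bar construction of $P_n$-truncated representables, then for analytic $F$ establishing $\d^L[F] \homeq \holim_n \d^L[P_nF]$ by hand --- is effectively an attempt to re-derive what Lemma~\ref{lem:d-enriched} already packages, and the steps you yourself flag as delicate (the Taylor-tower commutation for $\d^L$, the passage between $P_nR_X$ and $R_X$) remain sketches rather than arguments. In particular, there is no need to commute $\d^L$ with $\holim_n$ over the Taylor tower: the paper never does this, because it moves $\un{\d}$, not $\d$, across the coend.
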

\begin{proof}
In the case $F = R_X$, the required equivalence follows immediately from the construction of $\d^\bullet[R_X]$ from $\un{\d}^\bullet[R_X]$ in Definition~\ref{def:dRX}. For arbitrary $F$, we have
\[ \d[F] = F(X) \smsh_{X} \d[R_X] \homeq F(X) \smsh_{X} \un{\d}[R_X] \homeq \un{\d}[F(X) \smsh_{X} R_X] \isom \un{\d}[F] \]
(and similarly for $\d^L$ instead of $\d$) with the penultimate equivalence arising from Lemma~\ref{lem:d-enriched}.
\end{proof}

\begin{corollary} \label{cor:eta}
If $F$ is $\rho$-analytic and $L \geq \rho$, then the map
\[ \eta_L(F) : \Sigma^{-*L} \creff_*F(S^L,\dots,S^L) \to \d^L[\c F] \]
of Proposition~\ref{prop:dF} is an equivalence.
\end{corollary}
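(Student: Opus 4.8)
The plan is to deduce the Corollary directly from Lemmas~\ref{lem:dF-analytic} and~\ref{lem:d-d-eq}, by checking that, under the equivalences those lemmas provide, the map $\eta_L(F)$ of Proposition~\ref{prop:dF} is carried to the map of Lemma~\ref{lem:dF-analytic}.

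First I would record the identification of the targets. Applying Lemma~\ref{lem:d-d-eq} to $\c F$ (which is cofibrant and, being weakly equivalent to $F$, is still $\rho$-analytic) yields a zigzag of equivalences of $\K\E_L$-modules $\un{\d}^L[\c F] \homeq \d^L[\c F]$. Since $F \mapsto \N[F]$, and hence $F \mapsto \un{\d}^L[F]$, sends weak equivalences to weak equivalences — Lemma~\ref{lem:creff} identifies $\N[F](n)$ with a cross-effect of $F$, which is a homotopy functor of $F$ — there is also a natural equivalence $\un{\d}^L[\c F] \homeq \un{\d}^L[F]$. Composing gives a natural zigzag of equivalences $\d^L[\c F] \homeq \un{\d}^L[F]$.

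The heart of the argument is then to show that the triangle
\[ \Sigma^{-*L}\creff_*F(S^L,\dots,S^L) \to \d^L[\c F] \homeq \un{\d}^L[F] \]
commutes in the homotopy category of symmetric sequences, where the first arrow is $\eta_L(F)$ and the composite is the map of Lemma~\ref{lem:dF-analytic}. Both of these maps are natural in $F$ and, by construction, agree on the representable functors $R_X$: the zigzag~(\ref{eq:prop:dF1}) used to build $\eta_L$ in the proof of Proposition~\ref{prop:dF} is precisely the map of Lemma~\ref{lem:dF-analytic} for $R_X$ followed by the equivalence $\un{\d}^L[R_X]\weq \d^L[R_X]$ of Definition~\ref{def:dRX}. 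I would then argue that the two maps agree for general $F$ by writing $F \homeq \c F(X)\smsh_X R_X$ and observing that both sides commute with this coend — the left-hand side because cross-effects commute with homotopy colimits (Definition~\ref{def:creff}), and the right-hand side because $\d^L[\c F] = \c F(X)\smsh_X \d^L[R_X]$ by Definition~\ref{def:dF} while, for $L\geq\rho$, Lemma~\ref{lem:dF-analytic} gives $\un{\d}^L[F]\homeq \Sigma^{-*L}\creff_*F(S^L,\dots,S^L)$, which again commutes with the coend. Granting the triangle, the Corollary is immediate: the map of Lemma~\ref{lem:dF-analytic} is an equivalence whenever $L\geq\rho$.

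The step I expect to be the main obstacle is the compatibility check in the previous paragraph: matching the coend-over-representables construction of $\eta_L(F)$ with the homotopy-inverse-limit-over-the-Taylor-tower construction of the map in Lemma~\ref{lem:dF-analytic}. The subtlety is that $\un{\d}^L$ involves a $\holim$ over the Taylor tower, which does not commute with the defining coend $\c F(X)\smsh_X(-)$ for general $L$. The way around this is to use that for $L\geq\rho$ both constructions compute $\Sigma^{-*L}\creff_*F(S^L,\dots,S^L)$, where the coend presentation (via cross-effects commuting with homotopy colimits, and the dual Yoneda lemma) is transparent — so the identification only needs to be made in the range $L\geq\rho$, which is exactly the range in which the Corollary is asserted.
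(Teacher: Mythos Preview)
Your proposal is correct and follows essentially the same approach as the paper: identify $\eta_L(F)$ with the composite of the equivalences from Lemma~\ref{lem:dF-analytic} and Lemma~\ref{lem:d-d-eq}, both of which are equivalences when $L\geq\rho$. The paper's proof is far terser---it simply asserts this identification in one line---whereas you spell out the compatibility check via the coend-over-representables description of $\eta_L$ and the construction in the proof of Lemma~\ref{lem:d-d-eq}; your extra care is appropriate, though note that the identification $\c F(X)\smsh_X \un{\d}^L[R_X]\homeq \un{\d}^L[\c F]$ and its compatibility with the maps is precisely what Lemma~\ref{lem:d-enriched} and the proof of Lemma~\ref{lem:d-d-eq} already package, so you can cite those directly rather than re-deriving the coend compatibility.
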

\begin{proof}
The map $\eta_L(F)$ is equivalent to the composite
\[ \Sigma^{-*L}\creff_*(\c F)(S^L,\dots,S^L) \weq \un{\d}^L[\c F] \weq \d^L[\c F] \]
given by the equivalences of Lemmas~\ref{lem:dF-analytic} and \ref{lem:d-d-eq}.
\end{proof}

\subsection{Classification of polynomial functors from based spaces to spectra}

The main goal of this section is to prove Theorem~\ref{thm:equiv-topsp-poly} which provides an equivalence between the homotopy theory of polynomial functors from based spaces to spectra and the homotopy theory of bounded $\C_{\K\E_\bullet}$-coalgebras. Our approach is to make use of the general theory of \cite{arone/ching:2014} in which we have already constructed another comonad $\C$ whose coalgebras classify polynomial functors.

\begin{definition} \label{def:dF-adjunction}
The functor
\[ \d: [\finbased,\spectra] \to \symseq \]
constructed in Definition~\ref{def:dF} has a right adjoint $\Phi$ given by
\[ \Phi(\A)(X) := \Map_{\Sigma}(\d[R_X],\A). \]
Since $\d[R_X]$ is a $\Sigma$-cofibrant symmetric sequence, the right adjoint $\Phi$ preserves fibrations and acyclic fibrations, so that $(\d,\Phi)$ is a Quillen adjunction (with respect to the projective model structures on both sides).
\end{definition}

We can now apply the work of \cite{arone/ching:2014} to the adjunction $(\d,\Phi)$. This gives us a comonad $\C$ on the category of symmetric sequences that acts on $\d[F]$ for any $F \in [\finbased,\spectra]$. According to \cite[3.14]{arone/ching:2014}, the Taylor tower of a functor $F$ can then be recovered from $\d[F]$ by a cobar construction, and the homotopy theory of pointed simplicial polynomial functors $\finbased \to \spectra$ is equivalent to the homotopy theory of bounded $\C$-coalgebras. Here we show that $\C$ is equivalent, as a comonad, to $\C_{\K\E_\bullet}$, and we deduce that the the Taylor tower of $F$ can be recovered from the $\C_{\K\E_\bullet}$-coalgebra structure on $\d[F]$ constructed in Definition~\ref{def:dF}.

\begin{definition} \label{def:theta}
The comonad $\C: \symseq \to \symseq$ can be written as
\[ \C := \d \c \Phi \]
where $\c$ is a cofibrant replacement comonad for the category $[\finbased,\spectra]$. See \cite[Sec. 3]{arone/ching:2014} for more details.\footnote{More precisely, we constructed $\C$ in \cite{arone/ching:2014} using a Quillen equivalence $u: [\finbased,\spectra]_c \rightleftarrows [\finbased,\spectra]: c$ where $[\finbased,\spectra]_c$ is a model category in which every object is cofibrant. The comonad $\C$ is then given by $\der_* u c \Phi$. In this case the composite $uc$ provides a cofibrant replacement comonad for the category $[\finbased,\spectra]$ which we denote by $\c$.}

The functor $\C$ takes values in $\C_{\K\E_\bullet}$-coalgebras (since $\d$ does) and so we get a morphism of comonads
\[ \theta: \C \to \C_{\K\E_\bullet} \]
given by the composite
\[ \C(\A) \to \C_{\K\E_\bullet}(\C(\A)) \arrow{e} \C_{\K\E_\bullet}(\A) \]
where the first map is given by the $\C_{\K\E_\bullet}$-coalgebra structure on $\C(\A)$ and the second by the counit for the comonad $\C$.
\end{definition}

\begin{theorem} \label{thm:topsp}
The comonad map
\[ \theta_{\A}: \C(\A) \to \C_{\K\E_\bullet}(\A) \]
is an equivalence for any symmetric sequence $\A$.
\end{theorem}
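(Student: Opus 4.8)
The plan is to compute $\C(\A) = \d\c\Phi(\A)$ and match it, compatibly with comonad structures, to the formula (\ref{eq:KE}) for $\C_{\K\E_\bullet}(\A)$. Since $\d$ factors (Definition~\ref{def:dF}) as $[\finbased,\spectra] \xrightarrow{\d_{\mathsf{co}}} \Coalg(\C_{\K\E_\bullet}) \xrightarrow{\U} \symseq$, writing $\R$ for the cofree-coalgebra functor right adjoint to $\U$ and $\Phi_{\mathsf{co}}$ for a right adjoint to $\d_{\mathsf{co}}$, one has $\Phi \homeq \Phi_{\mathsf{co}}\R$, $\C \homeq \U\d_{\mathsf{co}}\c\Phi_{\mathsf{co}}\R$ and $\C_{\K\E_\bullet} = \U\R$; unwinding Definition~\ref{def:theta}, $\theta_{\A}$ is (up to the cofibrant replacement $\c$) the image under $\U$ of the adjunction counit $\varepsilon_{\R\A} \colon \d_{\mathsf{co}}\c\Phi_{\mathsf{co}}(\R\A) \to \R\A$. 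So it is enough to show this counit is an equivalence on the cofree coalgebras $\R(\A)$. Working one operad at a time, with $\d^L = \U_{\K\E_L}\circ\d^L_{\mathsf{mod}}$ and $\Phi^L_{\mathsf{mod}}$ a right adjoint to $\d^L_{\mathsf{mod}}$ (so $\Phi^L_{\mathsf{mod}}(\M)(X) = \Map_{\K\E_L}(\d^L[R_X],\M)$ and hence $\Phi^L_{\mathsf{mod}}(\R_{\K\E_L}\A)(X) \homeq \Map_{\Sigma}(\d^L[R_X],\A)$), this reduces to the claim that the derived counit
\[ \d^L_{\mathsf{mod}}\Phi^L_{\mathsf{mod}}(\R_{\K\E_L}\A) \;=\; \Map_{\Sigma}(\d^L[R_X],\A)\smsh_X\d^L[R_X] \;\longrightarrow\; \R_{\K\E_L}\A \]
is a weak equivalence of $\K\E_L$-modules, naturally in $\A$ and compatibly with the operad maps $\K\E_{L+1}\to\K\E_L$.

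To prove the counit claim I would feed in the model $\d^L[R_X] \homeq \tilde\N[R_X]\homsmsh_{\E_L}\un\B_{\E_L}$ of the partially-stabilized cross-effects of the representable functor (Propositions~\ref{prop:dF-polynomial} and \ref{prop:N-indec}, extended to the analytic functor $R_X$ through its Taylor tower as in Definition~\ref{def:dF-analytic} and Lemma~\ref{lem:dF-analytic}), and evaluate the coend over $\finbased$ by the following chain. First, by the derived Koszul duality of Proposition~\ref{prop:N-indec}, $\Map_{\Sigma}(\d^L[R_X],\A) \homeq \Map_{\Sigma}(\tilde\N[R_X]\homsmsh_{\E_L}\un\B_{\E_L},\A)$ is the derived mapping spectrum from the $\Com$-comodule $\N[R_X]$ (pulled back to $\E_L$) into the trivial $\E_L$-comodule on $\A$. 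Then, combining this with the second occurrence of $\d^L[R_X] = \tilde\N[R_X]\homsmsh_{\E_L}\un\B_{\E_L}$ in the coend and using the explicit bar-construction equivalence $\B(\one,\E_L,\Com) \homeq \Sigma^{-L}(\Sigma^\infty S^L)^{\smsh *}$ of Lemma~\ref{lem:unstable-bar-equiv} (which is precisely what ties $\un\B_{\E_L}$ to the spheres $S^L$), the two operadic coends should "cancel'' against the Dwyer--Rezk recovery statement $\tilde\N[F]\homsmsh_{\Com}\widetilde{\Sigma^\infty X^{\smsh *}} \homeq F(X)$ of Proposition~\ref{prop:F-eq} — applied to $F =$ the polynomial layers of $R_X$, whose density (``$F(X)\smsh_X R_X \cong F$'') is what collapses the integral over $X$ — leaving exactly the cofree $\K\E_L$-module, i.e.\ the product formula (\ref{eq:KE}) at level $L$.

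Granting the counit claim at each $L$, the theorem follows by passing to homotopy colimits: the equivalences $\d^L_{\mathsf{mod}}\Phi^L_{\mathsf{mod}}(\R_{\K\E_L}\A) \homeq \R_{\K\E_L}\A$ are natural in $L$, so on underlying symmetric sequences $\C(\A) = \hocolim_L \d^L[\c\Phi\A] \homeq \hocolim_L \C_{\K\E_L}(\A) = \C_{\K\E_\bullet}(\A)$, and one checks this identification is $\theta_{\A}$. The point needing care is that a priori $\d^L[\c\Phi\A] = \bigl(\holim_M \Map_{\Sigma}(\d^M[R_X],\A)\bigr)\smsh_X \d^L[R_X]$, and one must see that this is computed up to equivalence by its diagonal term $\Map_{\Sigma}(\d^L[R_X],\A)\smsh_X\d^L[R_X] = \d^L_{\mathsf{mod}}\Phi^L_{\mathsf{mod}}(\R_{\K\E_L}\A)$; this is immediate for bounded $\A$, since then $\Map_{\Sigma}(-,\A)$ depends only on a truncation and the tower $\{\d^M[R_X]\}_M$ is constant in that range for $M \geq \dim X$ (Lemma~\ref{lem:dF-analytic}), and for general $\A$ one reduces to the truncations $\A_{\leq n}$, using that $\C$, $\C_{\K\E_\bullet}$ and $\theta$ are all compatible with truncation, inside the pro-truncated framework of Definitions~\ref{def:truncated-map-coalgebras}--\ref{def:truncated-category-coalgebras}.

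The main obstacle is the counit claim itself — the ``cancellation'' of the second paragraph — which is exactly where the substantive content of this section is used: the Koszul-duality bimodule $\un\B_{\E_L}$ (and the fact emphasized in Remark~\ref{rem:B} that it is genuinely \emph{not} equivalent to $\un\one$ as a bimodule, so that naive term-by-term or dimension arguments fail — in particular one must not invoke the conjectural $\K\E_L \homeq \Sigma^{-L}\E_L$), the geometric computation of Lemma~\ref{lem:unstable-bar-equiv}, and the recovery of polynomial functors from their $\Com$-comodules, Proposition~\ref{prop:F-eq}. A secondary difficulty is the bookkeeping with the various homotopy (co)limits — the colimit over $L$ against the homotopy limits over the Taylor tower and inside $\Phi$ — together with the fact that the statement is claimed for \emph{all} symmetric sequences $\A$, which forces one to pass through the pro-truncated homotopy category for unbounded coalgebras rather than argue directly.
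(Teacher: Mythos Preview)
Your overall architecture is right --- reduce to bounded $\A$, compute $\d^L[\c\Phi\A]$ as the cofree $\K\E_L$-module $\C_{\K\E_L}(\A)$, then pass to general $\A$ via truncations --- and this is exactly the paper's route. But you miss the key simplification that makes the bounded case tractable, and your proposed substitute (the ``cancellation'' paragraph) is not a proof.

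The paper does \emph{not} feed the model for $\d^L[R_X]$ into the coend and try to collapse it. Instead it computes the $\Com$-comodule $\N[\Phi\A]$ directly: by the adjunction $(\d,\Phi)$,
\[
\N[\Phi\A] = \Nat_Y(\widetilde{\Sigma^\infty Y^{\smsh *}},\Phi\A) \;\isom\; \Map_{\Sigma}(\d[\widetilde{\Sigma^\infty Y^{\smsh *}}],\A) \;\homeq\; \Map_{\Sigma}(\un{\one}(-,*),\A) \;\homeq\; \A,
\]
and crucially this is $\A$ with the \emph{trivial} $\Com$-comodule structure. Hence $\d^L[\c\Phi\A] \homeq \A \homsmsh_{\E_L} \un{\B}_{\E_L}$ with $\A$ trivial, and the whole bounded case reduces to one clean statement (Proposition~\ref{prop:indec-triv}): the derived $\E_L$-indecomposables of a bounded trivial comodule are the cofree $\K\E_L$-module. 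That proposition is proved by reducing to $\A = \un{\one}(n,*)$ and then checking directly, using the explicit $\K\E_L$-module structure on $\un{\B}_{\E_L}$, that the relevant map is a Spanier--Whitehead duality evaluation.

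Your route instead asks for $\Map_{\Sigma}(\tilde\N[R_X]\homsmsh_{\E_L}\un\B_{\E_L},\A)$ to be identified with a derived comodule mapping spectrum into the trivial comodule $\A$, i.e.\ for a Koszul-duality \emph{adjunction} between $\E_L$-comodules and $\K\E_L$-modules. The paper only proves one half of this (trivial comodules $\mapsto$ cofree modules); the converse and the adjunction itself are not established, so your ``cancellation'' has no footing. Even granting that, your collapse of the coend over $X$ via Proposition~\ref{prop:F-eq} is only sketched: that proposition recovers $F(X)$ from $\N[F]$, not the other way round, and you would still need to justify why the two $\un\B_{\E_L}$ factors interact to give exactly the cofree-module formula.

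A secondary gap: your handling of $\d^L$ versus the full $\d$ inside $\Phi$ is not correct. You write that the tower $\{\d^M[R_X]\}_M$ is ``constant in that range for $M \geq \dim X$,'' but the coend is over all $X \in \finbased$, so no single $M$ suffices. The paper avoids this entirely because the computation $\N[\Phi\A] \homeq \A$ already uses the full $\d$ (via the adjunction), and only afterward does one pass to the level-$L$ model $\A \homsmsh_{\E_L}\un\B_{\E_L}$. For unbounded $\A$ the paper then argues, as you suggest, via $\holim_n$ over truncations together with the fact that cross-effects commute with homotopy limits.

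In short: keep your reduction-to-bounded-then-truncate scaffolding, but replace the vague cancellation by the one-line computation of $\N[\Phi\A]$ and the explicit Proposition~\ref{prop:indec-triv}.
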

\begin{proof}
Our proof is based on the construction, for bounded symmetric sequences $\A$, of natural zigzags of equivalences of $\K\E_L$-modules
\begin{equation} \label{eq:zeta^L} \d^L[\c\Phi\A] \homeq \C_{\K\E_L}(\A) \end{equation}
that commute with the $\K\E_\bullet$-module structure maps
\[ \d^L[\c\Phi\A] \to \d^{L+1}[\c\Phi\A] \]
on the left-hand side, and the comonad maps
\[ \C_{\K\E_L}(\A) \to \C_{\K\E_{L+1}}(\A) \]
on the right-hand side.

We start by noting that there are equivalences of $\Com$-comodules
\[ \begin{split} \N[\Phi\A] &= \Nat_{Y \in \finbased}(\widetilde{\Sigma^\infty Y^{\smsh *}}, \Phi\A(Y)) \\
    &\isom \Map_{\Sigma}(\d[\widetilde{\Sigma^\infty Y^{\smsh *}}],\A) \\
    &\homeq \Map_{\Sigma}(\un{\one}(-,*),\A(-)) \\
    &\homeq \A
\end{split} \]
where $\A$ has the trivial $\Com$-comodule structure. We therefore get, using Lemma~\ref{lem:d-d-eq}, an equivalence of $\K\E_L$-modules
\begin{equation} \label{eq:dL-calc} \d^L[\c\Phi\A] \homeq \tilde{\N}[\Phi\A] \homsmsh_{\E_L} \un{\B}_{\E_L} \homeq \A \homsmsh_{\E_L} \un{\B}_{\E_L} \end{equation}
where $\A$ is given the trivial $\E_L$-comodule structure.

We next claim that, when $\A$ is bounded, $\A \homsmsh_{\E_L} \un{\B}_{\E_L}$ is equivalent to the cofree $\K\E_L$-module generated by $\A$. This is part of a version of Koszul duality between $\E_L$-comodules and $\K\E_L$-modules under which trivial $\E_L$-comodules correspond to cofree $\K\E_L$-modules. We record this claim as a separate proposition.

\begin{prop} \label{prop:indec-triv}
Let $\A$ be a bounded trivial $\Sigma$-cofibrant $\E_L$-comodule. Then there is a zigzag of equivalences of $\K\E_L$-modules
\[ \A \homsmsh_{\E_L} \un{\B}_{\E_L} \homeq \C_{\K\E_L}(\A). \]
In other words, the derived indecomposables of a bounded trivial $\E_L$-comodule form a cofree $\K\E_L$-module.
\end{prop}
\begin{proof}
To define the required zigzag, we introduce a symmetric sequence equivalent to $\A$, but that accepts a natural map from $\A \homsmsh_{\E_L} \un{\B}_{\E_L}$. We define
\[ \hat{\A}(n) := \A(n) \smsh_{\Sigma_n} \prod_{\Sigma_n} S. \]
Note that there is an isomorphism $\A(n) \isom \A(n) \smsh_{\Sigma_n} \Wdge_{\Sigma_n} S$ and that, since $\A$ is $\Sigma$-cofibrant, the map
\[ \A \to \hat{\A}, \]
induced by the map from coproduct to product, is an equivalence.

We can now define a map of symmetric sequences $\A \homsmsh_{\E_L} \un{\B}_{\E_L} \to \hat{\A}$ as follows. For given $n$, we have a map
\begin{equation} \label{eq:ABEL} [\A \homsmsh_{\E_L} \un{\B}_{\E_L}](n) \to [\A_{=n} \homsmsh_{\E_L} \un{\B}_{\E_L}](n) \end{equation}
induced by the projection $\A \to \A_{=n}$ where $\A_{=n}$ is the symmetric sequence with $\A(n)$ in the \ord{$n$} term and trivial otherwise. The right-hand side in \ref{eq:ABEL} is the geometric realization of a simplicial spectrum that is constant with terms
\[ \A(n) \smsh_{\Sigma_n} \un{\B}_{\E_L}(n,n) \isom \hat{\A}(n) \]
since $\un{\B}_{\E_L}(n,n) \isom \prod_{\Sigma_n} S$ by Definition~\ref{def:B}. It follows that we have isomorphisms
\begin{equation} \label{eq:ABEL2} [\A_{=n} \homsmsh_{\E_L} \un{\B}_{\E_L}](n) \isom \hat{\A}(n). \end{equation}
Composing (\ref{eq:ABEL}) and (\ref{eq:ABEL2}) we get the required map of symmetric sequences
\[ \epsilon: \A \homsmsh_{\E_L} \un{\B}_{\E_L} \to \hat{\A}. \]

We can now define the zigzag of maps required by the Proposition. We have maps of $\K\E_L$-modules
\begin{equation} \label{eq:ABEL3} \A \homsmsh_{\E_L} \un{\B}_{\E_L} \to \C_{\K\E_L}(\A \homsmsh_{\E_L} \un{\B}_{\E_L}) \arrow{e,t}{\epsilon} \C_{\K\E_L}(\hat{\A}) \lweq \C_{\K\E_L}(\A) \end{equation}
where the first map is given by the $\K\E_L$-module structure on $\un{\B}_{\E_L}$. It is now sufficient to check that this composite is an equivalence.

We start by reducing to the case that $\A = \un{\one}(n,*)$, i.e. $\A$ consists of only a free $\Sigma_n$-spectrum concentrated in arity $n$. Since $\A$ is a trivial $\E_L$-comodule, we have an equivalence of $\E_L$-comodules
\[ \A \homeq \Wdge_n \A(n) \smsh_{h\Sigma_n} \un{\one}(n,*). \]
To reduce our claim to the case $\A = \un{\one}(n,*)$ it is sufficient to check that the functors $- \homsmsh_{\E_L} \un{\B}_{\E_L}$ and $\C_{\K\E_L}$ commute with: (1) smashing with a cofibrant spectrum; (2) taking homotopy orbits with respect to $\Sigma_n$-action; and (3) finite coproducts. For $-\homsmsh_{\E_L} \un{\B}_{\E_L}$ these are clear and for $\C_{\K\E_L}$ they follow from the next lemma.

\begin{lemma} \label{lem:CKE-eq}
For bounded symmetric sequences $\A$, there is a natural equivalence of spectra
\[ \C_{\K\E_L}(\A)(k) \homeq \Wdge_{m = m_1+\dots+m_k} \left[ \B\E_L(m_1) \smsh \dots \smsh \B\E_L(m_k) \smsh \A(m) \right]_{h\Sigma_{m_1} \times \dots \times \Sigma_{m_k}} \]
where the coproduct is taken over all ordered partitions of a positive integer $m$ as a sum of positive integers $m_1,\dots,m_k$.
\end{lemma}
\begin{proof}
From Definition~\ref{def:comonad-operad} we have
\[ \C_{\K\E_L}(\A)(k) = \prod_{m} \left[ \prod_{\un{m} \epi \un{k}} \Map(\K\E_L(m_1) \smsh \dots \smsh \K\E_L(m_k), \A(m)) \right]^{\Sigma_m}. \]
First note that the terms in the outer product can be written as
\[ \prod_{m = m_1 + \dots + m_k} \left[\Map(\K\E_L(m_1) \smsh \dots \smsh \K\E_L(m_k), \A(m)) \right]^{\Sigma_{m_1} \times \dots \times \Sigma_{m_k}} \]
where the product is taken over all ordered partitions as in the statement of the lemma. Since each $\K\E_L(m_j)$ is cofibrant as a $\Sigma_{m_j}$-spectrum, the strict fixed points here are equivalent to the corresponding homotopy fixed points. Furthermore, we have equivariant equivalences
\begin{equation} \label{eq:dual-KE} \Map(\K\E_L(m_1) \smsh \dots \smsh \K\E_L(m_k), \A(m)) \homeq \B\E_L(m_1) \smsh \dots \smsh \B\E_L(m_k) \smsh \A(m) \end{equation}
and each of the products involved here is finite (since $\A$ is bounded), so altogether we can write
\[ \C_{\K\E_L}(\A)(k) \homeq \Wdge_{m = m_1+\dots+m_k} \left[ \B\E_L(m_1) \smsh \dots \smsh \B\E_L(m_k) \smsh \A(m) \right]^{h\Sigma_{m_1} \times \dots \times \Sigma_{m_k}}. \]
All that remains is to show that the homotopy fixed points here are equivalent to the corresponding homotopy orbits. To see this we note that the $\Sigma_{m}$-spectrum $\B\E_L(m)$ can be built from finitely many $\Sigma_{m}$-free cell spectra (because each space $\mathbb{E}_L(k)$ is a finite free $\Sigma_k$-cell complex). Therefore the norm map from homotopy orbits to homotopy fixed points is an equivalence.
\end{proof}

Continuing with the proof of \ref{prop:indec-triv}, this completes the reduction to the case $\A = \un{\one}(n,*)$ so we now consider this case. Recall that we have to prove that the composite
\begin{equation} \label{eq:KE-1-map} \un{\one}(n,*) \homsmsh_{\E_L} \un{\B}_{\E_L} \to \C_{\K\E_L}(\un{\one}(n,*) \homsmsh_{\E_L} \un{\B}_{\E_L}) \to \C_{\K\E_L}(\hat{\un{\one}}(n,*)) \end{equation}
is an equivalence. For this we really need to get our hands on the definition of $\un{\B}_{\E_L}$ in \ref{def:B}, including the $\E_L$ and $\K\E_L$-module structures described there.

The \ord{$k$} term in this map of symmetric sequences takes the form
\[ \begin{split}  \un{\one}(n,*) \homsmsh_{\E_L} \un{\B}_{\E_L}(*,k)
    &\to \left[ \prod_{\un{n}\epi\un{k}}\Map(\K\E_L(n_1)\smsh\dots \smsh \K\E_L(n_k), \un{\one}(n,*) \homsmsh_{\E_L} \un{\B}_{\E_L}(*,n)) \right]^{\Sigma_n} \\
    &\isom \left[ \prod_{\un{n} \epi \un{k}} \Map(\K\E_L(n_1) \smsh \dots \smsh \K\E_L(n_k), \hat{\un{\one}}(n,n)) \right]^{\Sigma_n}
\end{split} \]
where the first map comes from the right $\K\E_L$-module structure on $\un{\B}_{\E_L}$, as constructed in (\ref{eq:s-alpha}). The relevant structure map is given by smashing together maps
\[ \B(\one,\E_L,\E_L)(n_i) \smsh \K\E_L(n_i) \to \B(\one,\E_L,\E_L)(1) \smsh \dots \smsh \B(\one,\E_L,\E_L)(1) \isom S \]
that are dual to the left $\B\E_L$-comodule structure on $\B(\one,\E_L,\E_L)$ associated to the identity surjection $\un{n}_i \arrow{e,t}{=} \un{n}_i$.

It follows that the right $\K\E_L$-module structure on $\un{\one}(n,*) \homsmsh_{\E_L} \un{\B}_{\E_L}$ can be described, in the adjoint form, by the smash products of the maps
\[ \B(\one,\E_L,\one)(n_i) \to \Map(\K\E_L(n_i),\B(\one,\E_L,\one)(1) \smsh \B(\one,\E_L,\one)(1)) \isom \Map(\K\E_L(n_i),S). \]
These are just evaluation maps for the Spanier-Whitehead duality of the finite spectra $\B\E_L(n_i)$ and hence are equivalences. It follows that (\ref{eq:KE-1-map}) is an equivalence as required. This completes the proof of Proposition~\ref{prop:indec-triv}.
\end{proof}

We return now to the rest of the proof of Theorem~\ref{thm:topsp}. Together with (\ref{eq:dL-calc}), the equivalence of Proposition~\ref{prop:indec-triv} forms the required equivalences (\ref{eq:zeta^L}) of the form
\[ \zeta^L: \d^L[\c\Phi\A] \homeq \C_{\K\E_L}(\A) \]
for bounded symmetric sequences $\A$. Note that taking homotopy colimits over $L$ gives us an equivalence of $\C_{\K\E_\bullet}$-coalgebras $\C(\A) \homeq \C_{\K\E_\bullet}(\A)$.

Suppose now that $\A$ is an arbitrary symmetric sequence. We then have a zigzag of equivalences of $\C_{\K\E_\bullet}$-coalgebras
\[ \zeta: \hocolim_L \holim_n \d^L[\c\Phi\A_{\leq n}] \homeq \hocolim_L \holim_n \C_{\K\E_L}(\A_{\leq n}) \]
where we taken the homotopy limit along the truncation maps $\A_{\leq n} \to \A_{\leq (n-1)}$ and the homotopy colimit as $L \to \infty$. Now the right-hand side above is equivalent to $\C_{\K\E_\bullet}(\A)$ (because the homotopy limit commutes with $\C_{\K\E_L}$ and $\holim_n \A_{\leq n} \homeq \A$). For the left-hand side, we have a diagram (in the homotopy category of symmetric sequences)
\[ \begin{diagram}
  \node{\hocolim_L \Sigma^{-*L} \creff_*(\Phi\A)(S^L,\dots,S^L)} \arrow{s,l}{\sim} \arrow{e,t}{\sim}
    \node{\hocolim_L \holim_n \Sigma^{-*L} \creff_*(\Phi\A_{\leq n})(S^L,\dots,S^L)} \arrow{s,r}{\sim} \\
  \node{\hocolim_L \d^L[\c\Phi\A]} \arrow{e}
    \node{\hocolim_L \holim_n \d^L[\c\Phi\A_{\leq n}]}
\end{diagram} \]
in which the vertical maps are the equivalences of Proposition~\ref{prop:dF}. The top horizontal map is an equivalence because cross-effects commute with homotopy limits, so we deduce that the bottom horizontal map is an equivalence. It therefore follows that $\zeta$ is a zigzag of equivalences of $\C_{\K\E_\bullet}$-coalgebras
\[ \zeta: \C(\A) \homeq \C_{\K\E_\bullet}(\A). \]

Finally, we consider the following diagram (in the homotopy category)
\[ \begin{diagram}
  \node{\C(\A)} \arrow{s,lr}{\zeta}{\sim} \arrow{e}
    \node{\C_{\K\E_\bullet}(\C(\A))} \arrow{s,lr}{\zeta}{\sim} \arrow{e,t}{\epsilon_{\C}}
    \node{\C_{\K\E_\bullet}(\A)} \arrow{s,lr}{\zeta}{\sim} \\
  \node{\C_{\K\E_\bullet}(\A)} \arrow{e}
    \node{\C_{\K\E_\bullet}(\C_{\K\E_\bullet}(\A))} \arrow{e,t}{\epsilon_{\C_{\K\E_\bullet}}}
    \node{\C_{\K\E_\bullet}(\A)}
\end{diagram} \]
where the left-hand horizontal maps are given by the $\C_{\K\E_\bullet}$-coalgebra structure on $\C$ and $\C_{\K\E_\bullet}$ respectively.

The composite of the bottom row is the identity, and the composite of the top row is $\theta$. To show that $\theta$ is an equivalence, it is sufficient then to show that this diagram commutes in the homotopy category. The left-hand square commutes because $\zeta$ is a zigzag of equivalences of $\C_{\K\E_\bullet}$-coalgebras. To see that the right-hand square commutes (in the homotopy category) it is sufficient to show that
\[ \begin{diagram}
  \node{\C(\A)} \arrow[2]{s,l}{\zeta} \arrow{se,t}{\epsilon_{\C}} \\
    \node[2]{\A} \\
  \node{\C_{\K\E_\bullet}(\A)} \arrow{ne,b}{\epsilon_{\C_{\K\E_\bullet}}}
\end{diagram} \]
commutes up to homotopy. This follows by comparing the description of $\epsilon_{\C}$ given in \cite[4.11]{arone/ching:2014} with that of the counit for $\C_{\K\E_\bullet}$ in Definition~\ref{def:comonad-operad}. This completes the proof of Theorem~\ref{thm:topsp}.
\end{proof}

\begin{remark} \label{rem:thm-topsp}
Theorem~\ref{thm:topsp} provides an alternative proof of the result of \cite[6.1]{arone/ching:2014} that describes the comonad $\C$ up to homotopy in terms of `divided power' module structures for the operad $\der_*I$ formed from the derivatives of the identity on based spaces. For a bounded symmetric sequence $\A$ we have, using Lemma~\ref{lem:CKE-eq}:
\[ \begin{split} \C(\A)(k) &\homeq \C_{\K\E_\bullet}(\A)(k) \\
    &\homeq \hocolim_{L} \Wdge_{m = m_1+\dots+m_k} \left[ \B\E_L(m_1) \smsh \dots \smsh \B\E_L(m_k) \smsh \A(m) \right]_{h\Sigma_{m_1} \times \dots \Sigma_{m_k}} \\
    &\homeq  \Wdge_{m = m_1+\dots+m_k} \left[ \hocolim_L (\B\E_L(m_1) \smsh \dots \smsh \B\E_L(m_k)) \smsh \A(m) \right]_{h\Sigma_{m_1} \times \dots \Sigma_{m_k}} \\
    &\homeq  \Wdge_{m = m_1+\dots+m_k} \left[ \B\Com(m_1) \smsh \dots \smsh \B\Com(m_k) \smsh \A(m) \right]_{h\Sigma_{m_1} \times \dots \Sigma_{m_k}} \\
    &\homeq \prod_{m = m_1+\dots+m_k} \left[ \Map(\der_{m_1}I \smsh \dots \smsh \der_{m_k}I, \A(m)) \right]_{h\Sigma_{m_1} \times \dots \Sigma_{m_k}}
\end{split} \]
where the last equivalence is given by the equivalence $\der_*I \homeq \K\Com$ of \cite{ching:2005}.
\end{remark}

We showed in \cite{arone/ching:2014} that the functor $\d$ induces an equivalence between the homotopy theory of polynomial functors in $[\finbased,\spectra]$ and the homotopy theory of bounded $\C$-coalgebras. Using Theorem~\ref{thm:topsp} we can now replace $\C$ with $\C_{\K\E_\bullet}$ in that statement.

\begin{theorem} \label{thm:equiv-topsp-poly}
The functor $\d$ induces an equivalence
\[ [\finbased,\spectra]_{\mathsf{poly}}^{\mathsf{h}} \weq \Coalg_{\mathsf{b}}(\C_{\K\E_\bullet})^{\mathsf{h}} \]
between the homotopy category of polynomial pointed simplicial functors $\finbased \to \spectra$ and the homotopy category of bounded $\K\E_\bullet$-coalgebras of Definition~\ref{def:derived-map-coalgebras}. Moreover, for cofibrant polynomial functors $F,G \in [\finbased,\spectra]$, we have equivalences of mapping spectra
\begin{equation} \label{eq:nat-map} \Nat_{\finbased}(F,G) \homeq \widetilde{\Map}_{\C_{\K\E_\bullet}}(\d[F],\d[G]). \end{equation}
\end{theorem}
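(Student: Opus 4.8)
\textit{Proof plan.} The plan is to deduce this from the corresponding result of \cite{arone/ching:2014} for the comonad $\C$, transported across the comonad equivalence $\theta \colon \C \to \C_{\K\E_\bullet}$ of Theorem~\ref{thm:topsp}. The abstract machinery of \cite[Sec.~3]{arone/ching:2014}, applied to the Quillen adjunction $(\d,\Phi)$ of Definition~\ref{def:dF-adjunction}, already produces an equivalence
\[ [\finbased,\spectra]^{\mathsf{h}}_{\mathsf{poly}} \weq \Coalg_{\mathsf{b}}(\C)^{\mathsf{h}} \]
sending $F$ to $\d[F]$ with its canonical $\C$-coalgebra structure, together with the identification $\Nat_{\finbased}(F,G) \homeq \widetilde{\Map}_{\C}(\d[F],\d[G])$ for cofibrant polynomial $F,G$. (Here one uses that $\d[F]$ is bounded when $F$ is polynomial, since then $\d[F] \homeq \der_*F$ by Proposition~\ref{prop:dF}.) It thus remains to (i) replace $\Coalg_{\mathsf{b}}(\C)^{\mathsf{h}}$ by $\Coalg_{\mathsf{b}}(\C_{\K\E_\bullet})^{\mathsf{h}}$ and $\widetilde{\Map}_{\C}$ by $\widetilde{\Map}_{\C_{\K\E_\bullet}}$, and (ii) check that, under this replacement, the $\C$-coalgebra structure on $\d[F]$ corresponds to the $\C_{\K\E_\bullet}$-coalgebra structure of Definition~\ref{def:dF}.

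For (i), the key observation is that a levelwise weak equivalence of $\spectra$-enriched comonads on $\symseq$, each preserving weak equivalences between $\Sigma$-cofibrant objects (and preserving truncations), induces an equivalence of the associated $A_\infty$-$\spectra$-enriched homotopy categories of $\Sigma$-cofibrant coalgebras. This follows directly from the cosimplicial description of the derived mapping spectra in Definition~\ref{def:derived-map-coalgebras}: $\theta$ induces a levelwise equivalence of the cosimplicial spectra $[r] \mapsto \Map_{\Sigma}(\A,\C^r(\A'))$ --- using that both $\C$ and $\C_{\K\E_\bullet}$ preserve all weak equivalences, the former because each of $\d$, $\c$, $\Phi$ does, the latter because each $\K\E_L$ is a cofibrant operad --- and (fat) totalization preserves levelwise equivalences. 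The hypotheses hold for $\theta$: it is a levelwise equivalence by Theorem~\ref{thm:topsp}, and $\C_{\K\E_\bullet}$ preserves truncations since each $\C_{\K\E_L}$ does and homotopy colimits of $n$-truncated symmetric sequences are $n$-truncated. The change-of-comonad functor $\Coalg(\C) \to \Coalg(\C_{\K\E_\bullet})$ (post-composing a $\C$-coalgebra structure map with $\theta$) is the identity on underlying symmetric sequences, so it restricts to bounded coalgebras, and, combined with the above, it yields both the equivalence $\Coalg_{\mathsf{b}}(\C)^{\mathsf{h}} \weq \Coalg_{\mathsf{b}}(\C_{\K\E_\bullet})^{\mathsf{h}}$ and equivalences $\widetilde{\Map}_{\C}(\d[F],\d[G]) \weq \widetilde{\Map}_{\C_{\K\E_\bullet}}(\d[F],\d[G])$.

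For (ii), one must identify the image of the $\C$-coalgebra $\d[F]$ under change-of-comonad with the $\C_{\K\E_\bullet}$-coalgebra $\d[F]$ of Definition~\ref{def:dF}; equivalently, that the composite of the $\C$-coaction $\d[F] \to \C(\d[F])$ with $\theta_{\d[F]}$ agrees (in the homotopy category) with the $\C_{\K\E_\bullet}$-coaction $\d[F] \to \C_{\K\E_\bullet}(\d[F])$. This is essentially forced by the construction of $\theta$ in Definition~\ref{def:theta}, which is built from the $\C_{\K\E_\bullet}$-coaction on $\C = \d\c\Phi$ and the counit $\epsilon_{\C}$; the verification is a diagram chase using naturality of the $\C_{\K\E_\bullet}$-coaction on $\d[-]$ together with the triangle identity for $\epsilon_{\C}$, of the same kind as the one carried out at the end of the proof of Theorem~\ref{thm:topsp} (comparing $\epsilon_{\C}$ with the $\C_{\K\E_\bullet}$-counit). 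Combining (i) and (ii), the composite $[\finbased,\spectra]^{\mathsf{h}}_{\mathsf{poly}} \weq \Coalg_{\mathsf{b}}(\C)^{\mathsf{h}} \weq \Coalg_{\mathsf{b}}(\C_{\K\E_\bullet})^{\mathsf{h}}$ is precisely the functor induced by $\d$, and chaining the two mapping-spectrum equivalences gives (\ref{eq:nat-map}).

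The main obstacle is step (ii): keeping the two a priori different coalgebra structures on $\d[F]$ straight and verifying they match under $\theta$. Step (i) is formal once the cosimplicial model for the derived mapping spectra is in hand, and all statements about polynomial functors, boundedness, and the cobar reconstruction of the Taylor tower are imported directly from \cite{arone/ching:2014}.
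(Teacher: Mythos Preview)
Your overall strategy is sound, but there is a genuine gap in step (i): the comonad $\C = \d\c\Phi$ is \emph{not} $\spectra$-enriched. The cofibrant replacement $\c$ used in its construction (coming from \cite{arone/ching:2014}) is only simplicially enriched, so one cannot form the derived mapping \emph{spectra} $\widetilde{\Map}_{\C}(-,-)$ of Definition~\ref{def:derived-map-coalgebras}; only the mapping \emph{spaces} $\widetilde{\Hom}_{\C}(-,-)$ of \cite[1.10]{arone/ching:2014} are available. Consequently, the identification $\Nat_{\finbased}(F,G) \homeq \widetilde{\Map}_{\C}(\d[F],\d[G])$ you import from \cite{arone/ching:2014} does not exist as stated; what one has is the equivalence of \emph{spaces} $\Omega^\infty \Nat_{\finbased}(F,G) \homeq \widetilde{\Hom}_{\C}(\d[F],\d[G])$. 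The paper's own proof explicitly flags this issue and works around it by establishing the chain
\[ \Omega^\infty \Nat_{\finbased}(F,G) \weq \widetilde{\Hom}_{\C}(\d[F],\d[G]) \weq \widetilde{\Hom}_{\C_{\K\E_\bullet}}(\d[F],\d[G]) = \Omega^\infty \widetilde{\Map}_{\C_{\K\E_\bullet}}(\d[F],\d[G]) \]
to get $\pi_k$ for $k \geq 0$, and then applying the same argument to $\Sigma^{-k}G$ (using that $\d[\Sigma^{-k}G] \homeq \Sigma^{-k}\d[G]$, which relies on the $\spectra$-enrichment of $\C_{\K\E_\bullet}$) to handle negative $k$. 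Your argument can be repaired along exactly these lines, but as written it silently assumes a spectral enrichment that is not there.

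A second, smaller gap: your step (i) asserts that $\theta_*$ induces an \emph{equivalence} $\Coalg_{\mathsf{b}}(\C)^{\mathsf{h}} \weq \Coalg_{\mathsf{b}}(\C_{\K\E_\bullet})^{\mathsf{h}}$, but your cosimplicial argument only gives fully-faithfulness. Essential surjectivity of $\theta_*$ is not automatic: given an arbitrary bounded $\C_{\K\E_\bullet}$-coalgebra $\A$, you must produce a $\C$-coalgebra structure on (something equivalent to) $\A$ whose image under $\theta_*$ recovers the given one. The paper avoids this by instead proving essential surjectivity of $\d$ onto $\Coalg_{\mathsf{b}}(\C_{\K\E_\bullet})^{\mathsf{h}}$ directly: it constructs $F_{\A}(X) := \Tot(\Phi\C_{\K\E_\bullet}^{\bullet}\A)$, checks it is polynomial, and produces a derived $\C_{\K\E_\bullet}$-coalgebra equivalence $\d[\c F_{\A}] \to \A$ using Theorem~\ref{thm:topsp} and the extra-codegeneracy argument. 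You could either supply the missing essential surjectivity argument for $\theta_*$ or, more simply, follow the paper and argue directly on the $\C_{\K\E_\bullet}$ side.
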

\begin{proof}
We prove (\ref{eq:nat-map}) first. This is made more difficult by the fact that the comonad $\C$ is not enriched in $\spectra$ and so we are unable to define mapping spectra for $\C$-coalgebras. Instead we have only mapping \emph{spaces} $\widetilde{\Hom}_{\C}(-,-)$ defined as in \cite[1.10]{arone/ching:2014}. We do then have a sequence
\[ \Omega^\infty \Nat_{\finbased}(F,G) \weq \widetilde{\Hom}_{\C}(\d[F],\d[G]) \weq \widetilde{\Hom}_{\C_{\K\E_\bullet}}(\d[F],\d[G]) = \Omega^\infty \widetilde{\Map}_{\C_{\K\E_\bullet}}(\d[F],\d[G]) \]
where the first equivalence is \cite[Corollary~3.15]{arone/ching:2014} and the second follows from Theorem~\ref{thm:topsp}. This implies that the map
\[ \Nat_{\finbased}(F,G) \to \widetilde{\Map}_{\C_{\K\E_\bullet}}(\d[F],\d[G]) \]
induces an isomorphism on homotopy groups $\pi_k$ for $k \geq 0$. For $k < 0$, we can apply the result just proved to see that
\[ \Sigma^{-k}\Nat_{\finbased}(F,G) \homeq \Nat_{\finbased}(F,\Sigma^{-k}G) \to \widetilde{\Map}_{\C_{\K\E_\bullet}}(\d[F],\d[\Sigma^{-k}G]) \homeq \Sigma^{-k} \widetilde{\Map}_{\C_{\K\E_\bullet}}(\d[F],\d[G]) \]
is an equivalence on $\pi_0$. Hence the map (\ref{eq:nat-map}) induces an isomorphism on $\pi_k$ for all $k$, so is an equivalence of spectra.

It now follows that $\d$ determines a fully faithful embedding of the homotopy theory of polynomial functors in $[\finbased,\spectra]$ into the homotopy theory of bounded $\C_{\K\E_\bullet}$-coalgebras. To complete the proof, we must show that every bounded $\C_{\K\E_\bullet}$-coalgebra $\A$ is in the image of this embedding.

Given a bounded $\C_{\K\E_\bullet}$-coalgebra $\A$, we define a functor $F_{\A} : \finbased \to \spectra$ by
\[ F_{\A}(X) := \widetilde{\Map}_{\C_{\K\E_\bullet}}(\d[R_X],\A) \isom \Tot(\Phi\C_{\K\E_\bullet}^{\bullet}\A). \]
We then claim that $F_{\A}$ is polynomial, and that $\d[\c F_{\A}]$ is equivalent to $\A$ in the homotopy category of $\C_{\K\E_\bullet}$-coalgebras.

If $\A$ is $N$-truncated, then so is $\C_{\K\E_\bullet}^r(\A)$ for any $r$. Therefore, each term in the cosimplicial object $\Phi\C_{\K\E_\bullet}^\bullet\A$ is $N$-excisive, by \cite[3.10]{arone/ching:2014}. It follows that the (fat) totalization $\Tot(\Phi\C_{\K\E_\bullet}^\bullet\A) = F_{\A}$ is also $N$-excisive.

Since $\d$ is a simplicial functor we have natural maps
\[ f_r: \Delta^r_+ \smsh \d[\c\Tot(\Phi\C_{\K\E_\bullet}^{\bullet}A)] \to \d\c\Phi \C_{\K\E_\bullet}^r\A \to \C_{\K\E_\bullet}^r\A \]
where the first map is the projection from the totalization, and the second is the counit associated to the comonad $\C = \d\c\Phi$. These commute with coface maps in the relevant way and determine a derived $\C_{\K\E_\bullet}$-coalgebra map $f: \d[\c F_{\A}] \to \A$ (in the sense of Definition~\ref{def:derived-map-coalgebras}).

By \cite[1.16]{arone/ching:2014}, it is sufficient to show that $f_0$ is an equivalence of symmetric sequences. We can write $f_0$, up to equivalence, as a composite
\[ \d[\c\Tot(\Phi\C_{\K\E_\bullet}^{\bullet}\A)] \to \Tot(\d\c\Phi\C_{\K\E_\bullet}^{\bullet}\A) \weq \Tot(\C_{\K\E_\bullet}^{\bullet+1}\A) \weq \A \]
where the second map is induced by the equivalence of Theorem~\ref{thm:topsp} and the third is the coaugmentation equivalence associated to the extra codegeneracies in the cosimplicial object $\C_{\K\E_\bullet}^{\bullet+1}\A$. It remains then to show that the first map is an equivalence, that is: $\d$ commutes with the totalization of the cosimplicial object $\Phi\C_{\K\E_\bullet}^{\bullet}\A$. The proof of this is virtually identical to that of \cite[3.16]{arone/ching:2014}, using the equivalence $\theta: \d\c\Phi \weq \C_{\K\E_\bullet}$ of Theorem~\ref{thm:topsp}.
\end{proof}

\subsection{Comparison of classifications of polynomial functors}

We take this opportunity to describe how the result of Theorem~\ref{thm:equiv-topsp-poly} is related to other approaches to the classification of polynomial functors from based spaces to spectra. We have already mentioned the work of Dwyer and Rezk in which a polynomial functor $F: \finbased \to \spectra$ corresponds to the bounded $\Com$-comodule $\N[F]$. One half of this equivalence was observed in Proposition~\ref{prop:F-eq} where we showed that $F$ can be recovered from $\N[F]$ via a homotopy coend with $\Sigma^\infty X^{\smsh *}$. In Theorem~\ref{thm:classification} below we show the other half: that any bounded $\Com$-comodule is equivalent to $\N[F]$ for some polynomial functor $F$.

Another classification of polynomial functors from based spaces to spectra follows from the observation that such a functor $F$ is determined, by left Kan extension, by its values on the full subcategory of finite pointed sets. In fact, an $n$-excisive $F$ is determined by its values on sets of cardinality at most $n$ (not including the basepoint). This fact can be related to the classification described in the previous paragraph via a homotopical version of an equivalence of Pirashvili \cite{pirashvili:2000}.

\begin{definition}
Let $\Omega_{\leq n}$ denote the category whose objects are the non-empty finite sets of cardinality at most $n$, and whose morphisms are the surjections. Notice that an $n$-truncated $\Com$-comodule can be identified with a functor $\Omega_{\leq n} \to \spectra$. Also, let $\Gamma_{\leq n}$ denote the category whose objects are the pointed finite sets of cardinality at most $n$ (not including the basepoint), and whose morphisms are functions that preserve the basepoint.
\end{definition}

In \cite{pirashvili:2000} Pirashvili showed, among other things, that there is an equivalence of categories
\[ [\Omega_{\leq n},\mathscr{A}b]_{\mathsf{unptd}} \homeq [\Gamma_{\leq n},\mathscr{A}b] \]
between the category of all functors from $\Omega_{\leq n}$ to the category of abelian groups $\mathscr{A}b$, and the category of \emph{pointed} functors from $\Gamma_{\leq n}$ to $\mathscr{A}b$ (i.e. those that send the one-point set to the zero group). In fact, if one includes the empty set as an object of $\Omega_{\leq n}$, then one can remove the pointedness restriction on the right-hand side.

A homotopical version of this equivalence for \emph{contravariant} functors was constructed by Helmstutler in \cite{helmstutler:2008} but the covariant version does not appear to be in the literature, so we prove it now. This result includes the case $n = \infty$ where there is no restriction, beyond finiteness, on the cardinality of the sets involved.

\begin{theorem} \label{thm:pirashvili}
For each $1 \leq n \leq \infty$, there is a Quillen equivalence of the form
\begin{equation} \label{eq:pirashvili} \L: \Comod_{\leq n}(\Com) = [\Omega_{\leq n},\spectra]_{\mathsf{unptd}} \rightleftarrows [\Gamma_{\leq n},\spectra] : \R \end{equation}
where $[\Omega_{\leq n},\spectra]_{\mathsf{unptd}}$ is the category of all functors $\Omega_{\leq n} \to \spectra$ (the subscript is there to emphasize that these functors need not be pointed), and $[\Gamma_{\leq n},\spectra]$ is the category of \emph{pointed} functors $\Gamma_{\leq n} \to \spectra$ (i.e. those which take the one-element set to the trivial spectrum. Each of these functor categories has the projective model structure in which weak equivalences and fibrations are detected objectwise, and the Quillen functors are given by
\[ \L(\N)(J_+) := \N \smsh_{\Com} \widetilde{\Sigma^\infty (J_+)^{\smsh *}} \]
and
\[ \R(G) := \Map_{J_+ \in \Gamma_{\leq n}}(\widetilde{\Sigma^\infty (J_+)^{\smsh *}}, G(J_+)). \]
Here $\widetilde{\Sigma^\infty (J_+)^{\smsh *}}$ is as in Definition~\ref{def:SigmainftyX-Com}.
\end{theorem}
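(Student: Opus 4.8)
The plan is to verify directly that $(\L,\R)$ is a Quillen adjunction, compute both derived functors on a generating set, and then bootstrap using that both derived functors preserve homotopy colimits. I treat $1\le n<\infty$ first; the case $n=\infty$ follows by passing to the (co)limit. The adjunction itself is the usual enriched coend/end formalism: both $\L$ and $\R$ are assembled from the $\spectra$-enriched bifunctor $(I,J_+)\mapsto\widetilde{\Sigma^\infty(J_+)^{\smsh I}}$ on $\Omega_{\leq n}\times\Gamma_{\leq n}$, which carries the $\Com$-comodule structure of Definition~\ref{def:SigmainftyX-Com} in the first variable. Since each $\widetilde{\Sigma^\infty(J_+)^{\smsh I}}$ is $\Sigma$-cofibrant and finite, one checks that $\R$ preserves objectwise fibrations and objectwise acyclic fibrations (it may be convenient here to further replace $\widetilde{\Sigma^\infty(-)^{\smsh *}}$ by a projectively cofibrant model over $\Gamma_{\leq n}$, which affects nothing below up to equivalence); hence $(\L,\R)$ is a Quillen adjunction for the projective model structures.

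\textbf{The core computation} is the value of the two composites on the corepresentable $\Com$-comodules $\un{\Com}(I,-)=\Wdge_{I\epi -}S$ for $1\le|I|\le n$, which are cofibrant and generate $\Comod_{\leq n}(\Com)=[\Omega_{\leq n},\spectra]$ under homotopy colimits. By the calculation inside the proof of Proposition~\ref{prop:F-eq}, $\un{\Com}(I,-)$ is equivalent, as a $\Com$-comodule, to $\N[\Sigma^\infty(-)^{\smsh I}]$; applying Proposition~\ref{prop:F-eq} to the polynomial functor $\Sigma^\infty(-)^{\smsh I}$ then shows that the derived functor of $\L$ sends $\un{\Com}(I,-)$ to a model of $\Sigma^\infty(-)^{\smsh I}|_{\Gamma_{\leq n}}$, i.e. $\L(\un{\Com}(I,-))(J_+)\homeq\Sigma^\infty(J_+)^{\smsh I}$ naturally in $J_+\in\Gamma_{\leq n}$. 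Consequently
\[ \R\L(\un{\Com}(I,-))(K)\homeq\Map_{J_+\in\Gamma_{\leq n}}\bigl(\widetilde{\Sigma^\infty(J_+)^{\smsh K}},\,\Sigma^\infty(J_+)^{\smsh I}\bigr). \]
Both $\Sigma^\infty(-)^{\smsh K}$ and $\Sigma^\infty(-)^{\smsh I}$ are polynomial of degree at most $n$, hence are left Kan extended from $\Gamma_{\leq n}$ along $\Gamma_{\leq n}\hookrightarrow\finbased$ (the standard fact that an $n$-excisive functor is determined by its restriction to finite pointed sets of cardinality $\le n$). So the right-hand side agrees with $\Nat_{Y\in\finbased}(\widetilde{\Sigma^\infty Y^{\smsh K}},\Sigma^\infty Y^{\smsh I})=\N[\Sigma^\infty(-)^{\smsh I}](K)$, which is $\un{\Com}(I,K)$ by the equivalence $\phi$ of~(\ref{eq:creff-X^*}). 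Tracing through, this chain of equivalences respects the $\Com$-comodule structures and realizes the adjunction unit, so the derived unit $\un{\Com}(I,-)\to\R\L(\un{\Com}(I,-))$ is an equivalence.

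\textbf{Bootstrapping.} The total left derived functor of $\L$ preserves homotopy colimits (being a left adjoint), and so does $\R$: since $\Gamma_{\leq n}$ is equivalent to a finite category and each $\widetilde{\Sigma^\infty(J_+)^{\smsh m}}$ is a finite cell spectrum, $\R(G)(m)$ is a finite homotopy limit built from mapping spectra out of finite spectra; mapping out of a finite spectrum commutes with homotopy colimits, and finite homotopy limits commute with homotopy colimits in $\spectra$. Thus the derived unit, a natural transformation of homotopy-colimit-preserving functors which is an equivalence on the generators, is an equivalence on all objects, so $\L$ induces a fully faithful functor on homotopy categories. Its essential image contains the functors $\Sigma^\infty(-)^{\smsh I}|_{\Gamma_{\leq n}}$, $1\le|I|\le n$, and these generate $[\Gamma_{\leq n},\spectra]$ (pointed functors) under homotopy colimits: under the standard equivalence between $[\Gamma_{\leq n},\spectra]$ and the $n$-excisive functors $\finbased\to\spectra$ they correspond to $\Sigma^\infty(-)^{\smsh I}$, and every $n$-excisive functor is built by homotopy colimits from its layers $D_kF\homeq(\der_kF\smsh(-)^{\smsh k})_{h\Sigma_k}$, each of which is a homotopy colimit of shifts of $\Sigma^\infty(-)^{\smsh k}$ for $k\le n$. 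Hence $\L$ induces an equivalence of homotopy categories, and a left Quillen functor doing so is a Quillen equivalence. Finally, a $\Com$-comodule is the same as a compatible system of its $n$-truncations and a pointed functor $\Gamma\to\spectra$ the same as a compatible system of pointed functors $\Gamma_{\leq n}\to\spectra$; the Quillen equivalences just constructed are compatible with these restrictions, and assemble to give the case $n=\infty$.

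\textbf{Main obstacle.} The genuinely delicate point is the compatibility claim at the end of the second paragraph: one must verify that the several identifications — the value of $\L$ on corepresentables, the passage between natural transformations over $\Gamma_{\leq n}$ and over $\finbased$, and the equivalence $\N[\Sigma^\infty(-)^{\smsh I}]\homeq\un{\Com}(I,-)$ — are mutually compatible with the $\Com$-comodule structures and, crucially, that the resulting equivalence $\R\L(\un{\Com}(I,-))\homeq\un{\Com}(I,-)$ is genuinely the derived adjunction unit and not merely some abstract equivalence; only then does the bootstrapping argument apply. The finiteness input needed for $\R$ to commute with homotopy colimits is the secondary technical point, and is exactly what forces the separate treatment of $n=\infty$.
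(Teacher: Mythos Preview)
Your argument is correct (with the caveat below on $n=\infty$) but takes a different route from the paper's. The paper's proof is more direct and elementary: it rests on the single combinatorial observation that, as a $\Com$-\emph{module} in the variable~$*$,
\[
  \Sigma^\infty(J_+)^{\smsh *}\;\cong\;\Wdge_{\emptyset\neq K\subseteq J}\un{\Com}(*,K),
\]
together with its companion $\Sigma^\infty\Hom_{\Gamma_{\leq n}}(J_+,I_+)\cong\Wdge_{\emptyset\neq K\subseteq J}\Sigma^\infty(I_+)^{\smsh K}$. From the first isomorphism one reads off $\L\N(J_+)\homeq\Wdge_{\emptyset\neq K\subseteq J}\N(K)$ for \emph{arbitrary} cofibrant $\N$, and from the second $\Wdge_{\emptyset\neq K\subseteq J}\R G(K)\homeq G(J_+)$ for arbitrary $G$. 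These two formulas immediately give that the derived counit is an equivalence and that $\L$ reflects equivalences; the triangle identity then forces the derived unit to be an equivalence. No bootstrapping, no appeal to Proposition~\ref{prop:F-eq} or to the Kan-extension property of $n$-excisive functors, and no case distinction on~$n$: the argument works uniformly for $1\le n\le\infty$.

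By contrast you verify the unit only on the corepresentables $\un{\Com}(I,-)$, identify $\R\L$ there via Proposition~\ref{prop:F-eq}, (\ref{eq:creff-X^*}), and the fact that $\Sigma^\infty Y^{\smsh K}$ is left Kan extended from $\Gamma_{\leq n}$, and then extend to all objects using that $\R$ commutes with homotopy colimits. This is fine for $n<\infty$, but your one-sentence assembly for $n=\infty$ is underspecified: restriction along $\Gamma_{\leq n}\hookrightarrow\Gamma$ does \emph{not} intertwine $\L_\infty$ with $\L_n$ on unbounded comodules, so ``compatible systems of truncations'' does not literally reduce the statement. What does work is the observation that $\R_\infty(G)(K)$ is the $K$th cross-effect of $G$, hence a \emph{finite} homotopy limit, so $\R_\infty$ preserves filtered homotopy colimits; writing an arbitrary $\N$ as the filtered colimit of its truncations $\N^{(m)}$ (extended by zero above degree $m$) then reduces the $\infty$-unit to the finite-$n$ case. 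You should make this explicit. The paper's decomposition avoids the issue entirely.
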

\begin{proof}
First note that for each $k$, the functor $\Gamma_{\leq n} \to \spectra$ given by
\[ J_+ \mapsto \widetilde{\Sigma^\infty (J_+)^{\smsh k}} \]
is cofibrant in the projective model structure. It follows that $\R$ preserves fibrations and trivial fibrations, so $(\L,\R)$ is a Quillen adjunction.

Next note that for any $J_+ \in \Gamma_{\leq n}$, there is an isomorphism of $\Com$-modules
\[ \Sigma^\infty (J_+)^{\smsh *} \isom \Wdge_{\emptyset \neq K \subseteq J} \un{\Com}(*,K). \]
This can be seen by identifying $(J_+)^{\smsh I}$ with the set of maps $I \to J$, with a disjoint basepoint. Each such map, at least when $I$ is nonempty, factors uniquely as $I \epi K \into J$ for some nonempty subset $K \subseteq J$. For any $\Com$-comodule $\N$, there is then a chain of natural equivalences (of functors $\Gamma_{\leq n} \to \spectra$) of the form
\begin{equation} \label{eq:pirashvili1} \L\N(J_+) \homeq \Wdge_{\emptyset \neq K \subseteq J} \N \homsmsh_{\Com} \un{\Com}(*,K) \homeq \Wdge_{\emptyset \neq K \subseteq J} \N(K). \end{equation}
It also follows from this that a morphism $\N \to \N'$ of $n$-truncated $\Com$-comodules is an equivalence if and only if the induced map
\[ \L\N \to \L\N' \]
is an objectwise equivalence.

Now note that there is also a natural isomorphism (of functors $\Gamma_{\leq n} \to \spectra$) of the form
\[ \Sigma^\infty \Hom_{\Gamma_{\leq n}}(J_+,I_+) \isom \Wdge_{\emptyset \neq K \subseteq J} \Sigma^\infty (I_+)^{\smsh K}. \]
This can be seen by noting that any pointed map $f: J_+ \to I_+$ that is not constant to the basepoint corresponds uniquely to a map $K \to I$ where $K = f^{-1}(I)$ is a non-empty subset of $J$. For any $G: \Gamma_{\leq n} \to \spectra$, there is then a chain of equivalences
\begin{equation} \label{eq:pirashvili2} \Wdge_{\emptyset \neq K \subseteq J} \R G(K) \homeq \Map_{I_+ \in \Gamma_{\leq n}}(\Sigma^\infty \Hom_{\Gamma_{\leq n}}(J_+,I_+),G(I_+)) \isom G(J_+). \end{equation}
Combining this with (\ref{eq:pirashvili1}) we see that the derived counit of the adjunction is an equivalence, that is
\[ \L \R G \weq G. \]
It then follows that for $\N \in \Comod_{\leq n}(\Com)$, we have an equivalence $\L \R \L \N \weq \L \N$ and so, by the triangle identity, the map
\[ \L \N \to \L \R \L \N \]
given by applying $\L$ to the derived unit of the adjunction, is also an equivalence. By our previous calculation this implies that the derived unit itself is an equivalence. Since the derived unit and counit are both equivalences, we have a Quillen equivalence.
\end{proof}

\begin{theorem} \label{thm:classification}
There is a diagram of functors, as follows, which commutes up to natural equivalence and in which each functor determines an equivalence of homotopy categories:
\begin{equation} \label{eq:classification} \begin{diagram} \dgARROWLENGTH=4em
  \node[2]{\Coalg_{\leq n}(\C_{\K\E_\bullet})} \\
  \node{\Comod_{\leq n}(\Com)} \arrow{ne,t}{\hocolim_L - \homsmsh_{\E_L} \un{\B}_{\E_L}} \arrow[2]{e,t}{- \smsh_{\Com} \widetilde{\Sigma^\infty X^{\smsh *}}} \arrow{se,b}{\L}
    \node[2]{[\finbased,\spectra]_{n-\mathsf{exc}}} \arrow{nw,t}{\d} \\
  \node[2]{[\Gamma_{\leq n},\spectra]}  \arrow{ne,b}{\mathsf{LKan}}
\end{diagram} \end{equation}
in which
\begin{itemize}
  \item $\Coalg_{\leq n}(\C_{\K\E_\bullet})$ is the category of $n$-truncated $\C_{\K\E_\bullet}$-coalgebras;
  \item $\Comod_{\leq n}(\Com)$ is the category of $n$-truncated $\Com$-comodules;
  \item $[\finbased,\spectra]_{n-\mathsf{exc}}$ is the category of $n$-excisive pointed simplicial functors $\finbased \to \spectra$.
  \item $[\Gamma_{\leq n},\spectra]$ is as in Theorem~\ref{thm:pirashvili};
\end{itemize}
and
\begin{itemize}
  \item $\L$ is the left adjoint of the Quillen equivalence in Theorem~\ref{thm:pirashvili};
  \item $\mathsf{LKan}$ denotes left Kan extension along the inclusion $\Gamma_{\leq n} \to \finbased$;
  \item $\d$ is as in Definition~\ref{def:dF}.
\end{itemize}
\end{theorem}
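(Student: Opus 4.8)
The plan is to assemble the diagram from pieces that have, for the most part, already been established, and then verify commutativity and the equivalence claims functor-by-functor. First I would observe that each of the three "outer" functors into $[\finbased,\spectra]_{n-\mathsf{exc}}$ has already been shown (or can be shown via earlier results) to be an equivalence of homotopy categories: the functor $\d$ on the upper right is Theorem~\ref{thm:equiv-topsp-poly} (restricted to $n$-excisive functors and $n$-truncated coalgebras); the horizontal functor $-\smsh_{\Com}\widetilde{\Sigma^\infty X^{\smsh *}}$ is the Dwyer--Rezk equivalence, whose essential surjectivity is the remaining half of Proposition~\ref{prop:F-eq} and needs a short argument (see below); and $\mathsf{LKan}$ along $\Gamma_{\leq n}\to\finbased$ is the standard fact that $n$-excisive functors are the left Kan extensions of their restrictions to pointed sets of cardinality $\leq n$. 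The functor $\L$ on the lower left is the left adjoint of the Quillen equivalence of Theorem~\ref{thm:pirashvili}, hence an equivalence on homotopy categories. Granting these, the diagonal functor $\hocolim_L -\homsmsh_{\E_L}\un{\B}_{\E_L}$ from $\Comod_{\leq n}(\Com)$ to $\Coalg_{\leq n}(\C_{\K\E_\bullet})$ will be an equivalence as soon as the diagram commutes, since it will then be identified with the composite of equivalences $\d\comp(-\smsh_{\Com}\widetilde{\Sigma^\infty X^{\smsh *}})$.

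Next I would verify the three triangles. For the triangle involving $\d$, $\hocolim_L -\homsmsh_{\E_L}\un{\B}_{\E_L}$, and $-\smsh_{\Com}\widetilde{\Sigma^\infty X^{\smsh *}}$: given an $n$-truncated $\Com$-comodule $\N$, write $F := \N\smsh_{\Com}\widetilde{\Sigma^\infty X^{\smsh *}}$. By Proposition~\ref{prop:F-eq} (together with the remark following it), $\N\homeq\N[F]$ as $\Com$-comodules, so $\d^L[F]\homeq\tilde\N[F]\homsmsh_{\E_L}\un{\B}_{\E_L}\homeq\N\homsmsh_{\E_L}\un{\B}_{\E_L}$ by Definition~\ref{def:dF-polynomial} and Lemma~\ref{lem:d-d-eq}, and taking $\hocolim_L$ gives the asserted equivalence of $\C_{\K\E_\bullet}$-coalgebras. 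Here one must check that this is an equivalence of coalgebras, not merely of symmetric sequences; this follows because the identification $\N\homeq\N[F]$ is an equivalence of $\Com$-comodules and all subsequent constructions ($\Sigma$-cofibrant replacement, $-\homsmsh_{\E_L}\un{\B}_{\E_L}$, the $\K\E_L$-module structure of Proposition~\ref{prop:N-indec}, and passage to $\hocolim_L$ à la Definition~\ref{def:hocolim-module-coalgebra}) are functorial in the $\Com$-comodule. For the lower triangle, commutativity of $\mathsf{LKan}\comp\L \homeq (-\smsh_{\Com}\widetilde{\Sigma^\infty X^{\smsh *}})$ amounts to comparing two left Kan extensions: $\L(\N)(J_+) = \N\smsh_{\Com}\widetilde{\Sigma^\infty(J_+)^{\smsh *}}$ and then left Kan extending in $J_+$ along $\Gamma_{\leq n}\to\finbased$ produces, for $X\in\finbased$, the coend $\int^{J_+}\Sigma^\infty\Hom_{\based}(J_+,X)\smsh\bigl(\N\smsh_{\Com}\widetilde{\Sigma^\infty(J_+)^{\smsh *}}\bigr)$; since $X\mapsto\widetilde{\Sigma^\infty X^{\smsh k}}$ is itself the left Kan extension of its restriction to $\Gamma_{\leq n}$ (for $n$-excisive purposes, the relevant values suffice), this coend is naturally equivalent to $\N\smsh_{\Com}\widetilde{\Sigma^\infty X^{\smsh *}}$. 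The outer triangle then commutes automatically, being the composite of the other two.

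It remains to supply essential surjectivity of $-\smsh_{\Com}\widetilde{\Sigma^\infty X^{\smsh *}}$ (equivalently, that every bounded $\Com$-comodule arises as $\N[F]$): given $n$-truncated $\N$, set $F := \N\smsh_{\Com}\widetilde{\Sigma^\infty X^{\smsh *}}$; then $F$ is $n$-excisive since the functor $X\mapsto\Sigma^\infty X^{\smsh k}$ is $k$-homogeneous and the coend is a finite (because $\N$ is $n$-truncated) homotopy colimit of such, and Lemma~\ref{lem:creff} plus the computation $\N[\Sigma^\infty X^{\smsh k}]\homeq\un{\Com}(k,*)$ from the proof of Proposition~\ref{prop:topsp-creff} give $\N[F]\homeq\N\smsh_{\Com}\un{\Com}(*,*)\homeq\N$ by an extra-degeneracy argument identical to the one in the proof of Proposition~\ref{prop:F-eq}.

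The main obstacle I anticipate is \textbf{keeping track of the coalgebra structures}, not the underlying symmetric sequences: the subtle point, flagged already in Remark~\ref{rem:B}, is that $\un{\B}_{\E_L}$ carries genuinely non-trivial and intertwined $\E_L$- and $\K\E_L$-module structures, so one must be careful that the equivalence $\hocolim_L\N\homsmsh_{\E_L}\un{\B}_{\E_L}\homeq\d[F]$ is compatible with the $\C_{\K\E_\bullet}$-coalgebra structures on both sides (on the left via Definitions~\ref{def:dF-polynomial}--\ref{def:hocolim-module-coalgebra}, on the right via the action of $\C_{\K\E_\bullet}$ coming from Definition~\ref{def:dF} and Theorem~\ref{thm:topsp}). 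This is handled by the naturality of every intermediate construction in the $\Com$-comodule $\N$, but it is the one place where the verification is more than routine.
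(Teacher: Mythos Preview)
Your overall strategy matches the paper's: verify that $\d$ and $\L$ are equivalences by citing Theorem~\ref{thm:equiv-topsp-poly} and Theorem~\ref{thm:pirashvili}, prove the horizontal functor is an equivalence by supplementing Proposition~\ref{prop:F-eq} with the ``unit'' direction (your extra-degeneracy argument using $\N[\Sigma^\infty X^{\smsh k}]\homeq\un{\Com}(k,*)$ is exactly what the paper does), and then deduce that the remaining diagonal is an equivalence from commutativity of the top triangle. Your concern about coalgebra structures in the top triangle is somewhat overstated: the paper dispatches this in one line, observing that the top triangle commutes ``by the construction of $\d$,'' since for polynomial $F$ the object $\d[F]$ is \emph{defined} (via Definition~\ref{def:dF-polynomial} and Lemma~\ref{lem:d-d-eq}) as $\hocolim_L \tilde{\N}[F]\homsmsh_{\E_L}\un{\B}_{\E_L}$ with its coalgebra structure.

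The genuine difference is in the bottom triangle. You assume as a ``standard fact'' that $\mathsf{LKan}$ is an equivalence (i.e.\ that $n$-excisive functors are left Kan extended from $\Gamma_{\leq n}$), and then invoke the same fact again to identify $\mathsf{LKan}(\widetilde{\Sigma^\infty(-)^{\smsh k}})$ with $\widetilde{\Sigma^\infty X^{\smsh k}}$ in your direct coend computation. The paper instead \emph{derives} this fact: it replaces $\mathsf{LKan}$ by its right adjoint $\mathsf{res}$ (restriction to $\Gamma_{\leq n}$), observes that the triangle with $\mathsf{res}$ commutes tautologically since $\L(\N)(J_+)$ is literally the value of the horizontal functor at $J_+$, concludes that $\mathsf{res}$ is an equivalence because the other two sides are, and then uses the adjunction $\mathsf{LKan}\dashv\mathsf{res}$ to see that $\mathsf{LKan}$ is the inverse equivalence, whence the original bottom triangle commutes. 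This is both more self-contained and avoids the mild circularity in your argument, where the ``standard fact'' you need is essentially a special case of what is being proved.
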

\begin{proof}
Theorem~\ref{thm:equiv-topsp-poly} implies that $\d$ determines an equivalence of homotopy theories. The top triangle commutes up to natural equivalence by the construction of $\d$. We turn then to the horizontal functor in diagram (\ref{eq:classification}).

We have already shown in Proposition~\ref{prop:F-eq} that the evaluation map
\[ \Nat_{Y \in \finbased}(\widetilde{\Sigma^\infty Y^{\smsh *}},FY) \homsmsh_{\Com} \widetilde{\Sigma^\infty X^{\smsh *}} \to F(X) \]
is an equivalence when $F$ is polynomial. We now show that the unit map
\[ \N \to \Nat_{Y \in \finbased}(\widetilde{\Sigma^\infty Y^{\smsh *}}, \N \homsmsh_{\Com} \widetilde{\Sigma^\infty Y^{\smsh *}}) \]
is an equivalence when $\N$ is a cofibrant $\Com$-comodule. To see this, note that by Corollary~\ref{cor:N-hocolim}, it it sufficient to show that, for each $r$, the map
\[ \N(r) \to \N \homsmsh_{\Com} \Nat_{Y \in \finbased}(\widetilde{\Sigma^\infty Y^{\smsh r}},\widetilde{\Sigma^\infty Y^{\smsh *}}). \]
induced by the identity on $\widetilde{\Sigma^\infty Y^{\smsh *}}$, is an equivalence. But by (\ref{eq:creff-X^*}) there is an equivalence
\[ \Nat_{Y \in \finbased}(\widetilde{\Sigma^\infty Y^{\smsh r}},\widetilde{\Sigma^\infty Y^{\smsh *}}) \homeq \un{\Com}(*,r) \]
and the map
\[ \N(r) \to \N \homsmsh_{\Com} \un{\Com}(*,r) \]
is an equivalence because the right-hand side is a simplicial object with extra degeneracies. It now follows that the horizontal map in (\ref{eq:classification}) induces an equivalence of homotopy theories. It then also follows that the top-left map induces an equivalence.

By Theorem~\ref{thm:pirashvili}, $\L$ also induces an equivalence, so it only remains to show that the bottom triangle commutes up to natural equivalence. Notice that we have a commutative diagram
\[ \begin{diagram} \dgARROWLENGTH=4em
  \node{\Comod_{\leq n}(\Com)} \arrow[2]{e,t}{- \smsh_{\Com} \widetilde{\Sigma^\infty X^{\smsh *}}} \arrow{se,b}{- \smsh_{\Com} \widetilde{\Sigma^\infty (J_+)^{\smsh *}}}
    \node[2]{[\finbased,\spectra]_{n-\mathsf{exc}}} \arrow{sw,b}{\mathsf{res}} \\
  \node[2]{[\Gamma_{\leq n},\spectra]}
\end{diagram} \]
where $\mathsf{res}$ is restriction to the subcategory $\Gamma_{\leq n} \subseteq \finbased$. Since the other two functors induce equivalences, it follows that $\mathsf{res}$ does too. Since $\mathsf{LKan}$ is left adjoint to $\mathsf{res}$, it induces the inverse equivalence to $\mathsf{res}$ on the homotopy category and hence the bottom triangle in the original diagram commutes up to natural equivalence.
\end{proof}

\subsection{Classification of analytic functors from based spaces to spectra} \label{sec:topsp-ana}

One of the main advantages of using $\C_{\K\E_\bullet}$-coalgebras to classify polynomial functors from based spaces to spectra (over the other categories appearing in Theorem~\ref{thm:classification}) is that this approach generalizes, to some extent, to non-polynomial functors.

Here we consider functors that are `analytic at the one-point space $*$' in the sense described below. This condition is weaker than Goodwillie's notion of analyticity since it only concerns the values of a functor in a `neighbourhood' of a $*$, that is, only on highly connected spaces.

\begin{definition} \label{def:analytic}
We say that a homotopy functor $F: \finbased \to \spectra$ \emph{satisfies condition $E^*_n(c,\kappa)$} if: for any strongly cocartesian $(n+1)$-cube $\mathcal{X} : \mathcal{P}(S) \to \finbased$ such that for any $s \in S$, $\mathcal{X}(\emptyset) \to \mathcal{X}(s)$ is a $k_s$-connected map between $\kappa$-connected spaces, the cube $F(\mathcal{X})$ is $(-c+\sum k_s)$-cartesian.

We say that $F: \finbased \to \spectra$ is \emph{$\rho$-analytic at $*$} if there is a constant $q$ such that $F$ satisfies
\[ E^*_n(n\rho-q,\rho+1) \]
for all $n \geq 1$. We say that $F$ is \emph{analytic at $*$} if it is $\rho$-analytic at $*$ for some number $\rho$.
\end{definition}

\begin{remark}
These conditions should be compared to those of Goodwillie \cite[4.1]{goodwillie:1991}. Since a map between $\kappa$-connected spaces is $\kappa$-connected, it is easy to see that Goodwillie's condition $E_n(c,\kappa)$ implies our $E^*_n(c,\kappa)$. It follows that a functor that is $\rho$-analytic in the sense of Goodwillie is, in particular, $\rho$-analytic at $*$. More generally, we can say that $F$ is $\rho$-analytic at $X$, for some based space $X$, if it satisfies the analyticity condition on cubes of spaces that are $(\rho+1)$-connected over $X$.
\end{remark}

\begin{definition} \label{def:symseq-analytic}
Let $\A$ be a symmetric sequence of spectra. We say that $\A$ is \emph{$\rho$-analytic} if there is a constant $c$ such that $\A(n)$ is $(-\rho n + c)$-connected for all $n$. We say $\A$ is \emph{analytic} if it is $\rho$-analytic for some $\rho$.
\end{definition}

\begin{lemma} \label{lem:ana1}
Let $F: \finbased \to \spectra$ be a functor that is $\rho$-analytic at $*$. Then the symmetric sequence $\der_*F$ is $\rho$-analytic in the sense of Definition~\ref{def:symseq-analytic}.
\end{lemma}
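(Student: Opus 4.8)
The plan is to connect the two notions of analyticity via the cross-effects, using the equivalence $\der_nF \homeq \hocolim_L \Sigma^{-nL}\creff_nF(S^L,\dots,S^L)$ of (\ref{eq:creff}) together with the connectivity estimates that follow from the hypothesis $E^*_n(n\rho - q, \rho + 1)$. First I would recall that the $n$-th cross-effect $\creff_nF(X_1,\dots,X_n)$ is the total homotopy fibre of the $n$-cube $S \mapsto F\left(\Wdge_{i \notin S} X_i\right)$. When we set $X_1 = \dots = X_n = S^L$, each of the relevant maps $\Wdge_{i \notin S} S^L \to \Wdge_{i \notin S'} S^L$ (for $S \subset S'$) collapses some wedge summands, and the cube obtained by applying $F$ is (up to reindexing) a strongly cocartesian cube in which each initial edge $\mathcal{X}(\emptyset) \to \mathcal{X}(s)$ is a map between $(L-1)$-connected spaces. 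Provided $L - 1 \geq \rho + 1$, i.e. $L \geq \rho + 2$, the analyticity-at-$*$ condition $E^*_n$ applies and tells us that this cube is $(-(n\rho - q) + \sum_s k_s)$-cartesian; here each $k_s$ is the connectivity of the corresponding collapse map. Since collapsing a wedge of copies of $S^L$ is (roughly) $(2L-1)$-connected — the cofibre being a wedge of copies of $S^L$ smashed with something, hence $(L-1)$-connected, so the map is $(L-1)$-connected at worst, but one can do better — I would extract a bound of the form: the total homotopy fibre $\creff_nF(S^L,\dots,S^L)$ is $(c_0 + nL - n\rho + q)$-connected for some constant $c_0$ independent of $n$ and $L$ (once $L$ is large enough relative to $\rho$).

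Next I would desuspend: $\Sigma^{-nL}\creff_nF(S^L,\dots,S^L)$ is then $(c_0 - n\rho + q - nL + nL)$-connected... more carefully, if $\creff_nF(S^L,\dots,S^L)$ is $m$-connected then $\Sigma^{-nL}$ of it is $(m - nL)$-connected, so we get connectivity at least $c_0 + nL - n\rho + q - nL = c_0 - n\rho + q$, a bound of the form $-n\rho + c$ with $c = c_0 + q$ independent of both $n$ and $L$. Crucially this estimate is uniform in $L$ (for $L$ sufficiently large, say $L \geq \rho + 2$), which is exactly what is needed because $\der_nF$ is the homotopy colimit over $L$ of these terms and a filtered homotopy colimit of spectra that are all at least $(-n\rho + c)$-connected is again at least $(-n\rho + c)$-connected. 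For the finitely many small values of $L < \rho + 2$ the terms only contribute through maps into the colimit, so they do not affect the connectivity of the colimit (alternatively, one enlarges the constant $c$ to absorb any finite correction). Hence $\der_nF$ is $(-\rho n + c)$-connected for all $n$, which is precisely the assertion that $\der_*F$ is $\rho$-analytic in the sense of Definition~\ref{def:symseq-analytic}.

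The main obstacle I anticipate is pinning down the connectivity estimate for the total homotopy fibre in the first paragraph in a form that is genuinely uniform in $n$ — that is, verifying that the constant $c_0$ really can be chosen independent of $n$. This requires combining the $E^*_n$ estimate (whose conclusion is $(-c + \sum k_s)$-cartesian, with $n$ summands $k_s$) with the fact that each $k_s$, the connectivity of the relevant wedge-collapse map $\Wdge S^L \to \Wdge S^L$, grows like $L$; the $n$ factors of $k_s \approx 2L - 1$ in the sum then cancel against the $\Sigma^{-nL}$ desuspension, leaving a bound linear in $n$ with slope $-\rho$. One must be slightly careful that the reindexing of the cross-effect cube as a strongly cocartesian cube in the sense of Definition~\ref{def:analytic} is legitimate and that the spaces involved are $(\rho+1)$-connected; for $\creff_n$ evaluated at $S^L$ with $L \geq \rho + 2$ this is automatic since $S^L$ is $(L-1)$-connected. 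A routine bookkeeping of these connectivities, together with the stability of connectivity bounds under filtered homotopy colimits, then completes the argument.
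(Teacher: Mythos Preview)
Your strategy is exactly the paper's: apply the relevant $E^*$-condition to the strongly cocartesian cube underlying the (co-)cross-effect at $S^L$, read off a cartesian estimate, pass to the total fibre, desuspend by $nL$, and take the homotopy colimit over $L$. The paper carries this out using the \emph{co}-cross-effect cube $T \mapsto F\bigl(\Wdge_{i\in T} S^L\bigr)$, whose initial maps are simply $* \to S^L$; these are $(L-1)$-connected maps between $(\rho+1)$-connected spaces once $L \ge \rho+2$, so one applies $E^*_{n-1}\bigl((n-1)\rho - q,\, \rho+1\bigr)$ (note: $E^*_{n-1}$, not $E^*_n$, since this is an $n$-cube) and obtains that $\creff^n F(S^L,\dots,S^L)$ is $(q-(n-1)\rho+n(L-1)+n-1)$-connected, hence $\Sigma^{-nL}\creff_nF(S^L,\dots,S^L)$ is $(-n\rho+\rho+q-1)$-connected, uniformly in $L$.

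Your version has two concrete errors you should fix. First, you invoke $E^*_n$ rather than $E^*_{n-1}$; this only shifts the constant but is an index slip. Second, and more seriously, your proposed connectivity $k_s \approx 2L-1$ for the collapse maps is wrong: the cofibre of $\Wdge_n S^L \to \Wdge_{n-1} S^L$ is $S^{L+1}$, so the map is $L$-connected, not $(2L-1)$-connected. With $k_s \approx 2L-1$ the sum $\sum k_s \approx 2nL$ would not cancel against the $nL$-fold desuspension and you would get a bound tending to $+\infty$ with $L$, which is absurd. With the correct value $k_s = L$ (or $L-1$ in the paper's setup) the arithmetic goes through exactly as you outline and yields the constant $c = \rho + q - 1$. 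The cleaner route is simply to follow the paper and use the co-cross-effect cube, where the initial maps $* \to S^L$ have transparently computable connectivity.
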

\begin{proof}
For each $n \geq 2$, we apply the condition $E^*_{n-1}((n-1)\rho-q,\rho+1)$ to the strongly-cocartesian $n$-cube with initial maps $* \to S^L$ for $L \geq \rho+2$. These maps are $(L-1)$-connected and so we deduce that the cube $T \mapsto F( \Wdge_{T} S^L )$ is $(q-(n-1)\rho+n(L-1))$-cartesian. It follows that the total homotopy cofibre of this cube of spectra, which is precisely the \ord{n} co-cross-effect $\creff^nF(S^L,\dots,S^L)$, is $(q-(n-1)\rho+n(L-1)+n-1)$-connected.

Since the cross-effect is equivalent to the co-cross-effect, we deduce that the desuspended cross-effect
\[ \Sigma^{-nL}\creff_nF(S^L,\dots,S^L) \]
is $(-n\rho + \rho + q - 1)$-connected. Taking the homotopy colimit as $L \to \infty$, we deduce that $\der_nF$ has this same connectivity, and so the symmetric sequence $\der_*F$ is $\rho$-analytic (with the constant $c$ in Definition~\ref{def:symseq-analytic} equal to $\rho + q - 1$).
\end{proof}

\begin{lemma} \label{lem:ana2}
Consider a sequence of functors
\begin{equation} \label{eq:seqFn} \dots \to F_n \to F_{n-1} \to \dots \to F_1 \end{equation}
that looks like a Taylor tower in the sense it induces equivalences $P_n(F_{n+1}) \weq P_n(F_n) \homeq F_n$ for each $n$. Suppose that the symmetric sequence $\{\der_n(F_n)\}$ is $\rho$-analytic for some $\rho \geq 0$. Then
\[ F := \holim_n F_n \]
is $\rho$-analytic at $*$, and the sequence (\ref{eq:seqFn}) is equivalent to the Taylor tower of $F$.
\end{lemma}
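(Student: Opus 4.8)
The plan is to prove the two assertions of Lemma~\ref{lem:ana2} in turn: first that $F := \holim_n F_n$ is $\rho$-analytic at $*$, and second that the given sequence recovers the Taylor tower of $F$. Both follow from control on the connectivity of the maps $F \to F_n$ together with the hypotheses on $\{\der_n(F_n)\}$.

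First I would establish the key connectivity estimate: the natural map $F \to F_n$ is highly connected \emph{on $\rho$-connected spaces}, with connectivity growing with $n$. Since $P_n(F_{n+1}) \weq F_n$ for each $n$, the sequence $(F_n)$ behaves like a Taylor tower, so the homotopy fibre of $F_{n} \to F_{n-1}$ agrees (on the relevant highly connected inputs) with the $n$-th layer $D_n$, which is built from $\der_n(F_n)$. The $\rho$-analyticity of the symmetric sequence $\{\der_n(F_n)\}$ — say $\der_n(F_n)$ is $(-\rho n + c)$-connected — translates, via the formula $D_nG(X) \homeq (\der_nG \smsh X^{\smsh n})_{h\Sigma_n}$, into a statement that $D_nF_n(X)$ is roughly $(-\rho n + c + n\cdot\mathrm{conn}(X))$-connected. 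For $X$ that is $(\rho+1)$-connected the exponent $-\rho n + c + n(\rho+1) = n + c$ grows linearly in $n$, so the tower $(F_n)$ converges on $\rho$-connected spaces and $\holim_n F_n \to F_n$ has connectivity tending to infinity there. This is exactly the kind of estimate that drives Goodwillie's proof that analyticity is preserved under passage to Taylor towers, and I would follow that template (cf. \cite[Sec.~1,4]{goodwillie:1991}), being careful to track the connectivity only on highly connected spaces so as to land in the condition $E^*_n(n\rho - q, \rho+1)$ of Definition~\ref{def:analytic} rather than the stronger unrestricted $E_n$.

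Granting the estimate, the analyticity claim is a cube-counting argument: given a strongly cocartesian $(n+1)$-cube $\mathcal{X}$ whose initial maps $\mathcal{X}(\emptyset)\to\mathcal{X}(s)$ are $k_s$-connected between $(\rho+1)$-connected spaces, I would compare $F(\mathcal{X})$ with $F_m(\mathcal{X})$ for large $m$. Each $F_m$ is $m$-excisive, hence $P_mF_m = F_m$, and an $m$-excisive functor automatically satisfies a cartesianness estimate on such a cube coming from its layers $D_j F_m$, $1 \le j \le m$, each of which contributes connectivity controlled by $\der_j(F_j)$ and the $k_s$; summing these geometric-type estimates gives that $F_m(\mathcal{X})$ is $(-q + \sum_s k_s)$-cartesian for a constant $q$ independent of the cube (this is where the uniform constant $c$ in $\rho$-analyticity of the symmetric sequence is used). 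Since $F \to F_m$ becomes arbitrarily highly connected on the $(\rho+1)$-connected vertices of $\mathcal{X}$, the cube $F(\mathcal{X})$ inherits the same estimate up to a bounded loss, which is precisely $E^*_n(n\rho - q', \rho+1)$ for a suitable $q'$. This yields that $F$ is $\rho$-analytic at $*$.

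For the second assertion — that (\ref{eq:seqFn}) is the Taylor tower of $F$ — I would show $P_n F \homeq F_n$. There is a canonical map $P_nF \to P_nF_n \homeq F_n$ induced by $F \to F_n$; since $P_n$ preserves $n$-excisive functors and $F_n$ is already $n$-excisive this is a map of $n$-excisive functors, and I must check it is an equivalence on $\rho$-connected spaces (which is all the condition ``Taylor tower'' requires here, given $F$ is only analytic at $*$). Applying $P_n$ to the tower $F \to \dots \to F_{n+1} \to F_n$, the hypothesis $P_n(F_{n+1}) \weq F_n$ together with the convergence established above — the maps $F \to F_{n+k}$ are $(n+1)$-connected on $\rho$-connected spaces for $k$ large, and $P_n$ of an $(n+1)$-connected map of homotopy functors is an equivalence by the Goodwillie estimates for $P_n$ — shows $P_nF \weq P_nF_{n+k} \weq \dots \weq P_n F_{n+1} \weq F_n$. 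The compatibility of these equivalences with the structure maps $F_n \to F_{n-1}$ is formal from naturality of $P_n$. The main obstacle I anticipate is purely bookkeeping: getting the connectivity constants to line up \emph{uniformly in $n$} so that the restricted analyticity condition $E^*_n$ comes out with a single linear bound $n\rho - q$, and making sure all estimates are stated only over highly connected inputs (so one never needs Goodwillie's full analyticity, only analyticity at $*$); the homotopical content beyond that is standard Goodwillie-calculus convergence.
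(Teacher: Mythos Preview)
Your plan is essentially the same argument as the paper's: both proofs feed the connectivity hypothesis on $\der_n(F_n)$ into the formula $D_m(X)\simeq(\A_m\smsh X^{\smsh m})_{h\Sigma_m}$ to get uniform cartesianness estimates on the cubes $D_m(\mathcal{X})$, induct along the fibre sequences $D_m\to F_m\to F_{m-1}$ to get the same estimate for each $F_m(\mathcal{X})$, and pass to the homotopy limit; for the Taylor-tower identification both use Goodwillie's ``agree to order $n$'' criterion \cite[1.2,~1.6]{goodwillie:2003} via the connectivity of $F\to F_n$ on highly connected inputs. The paper is slightly more direct in that it never singles out a large finite stage $F_m$ to compare with, but simply notes that the uniform bound on the $F_m(\mathcal{X})$ survives (with loss of $1$) to $\holim_m F_m(\mathcal{X})$; your detour through a fixed large $m$ is harmless but unnecessary.

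One point to tighten: your phrase ``$P_n$ of an $(n+1)$-connected map of homotopy functors is an equivalence'' is not correct as stated. A fixed connectivity bound is not enough; what you need (and what your own estimate actually gives) is that $F(X)\to F_n(X)$ is roughly $[(k-\rho)(n+1)+c]$-connected on $k$-connected $X$, i.e.\ the functors agree to order $n$ in Goodwillie's sense, and it is \emph{that} growing estimate which forces $P_nF\weq P_nF_n$. Once you state it this way, your argument and the paper's coincide.
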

\begin{proof}
Let us write $D_n$ for the homotopy fibre of the natural transformation $F_n \to F_{n-1}$. Then we have
\[ D_n(X) \homeq (\A_n \smsh X^{\smsh n})_{h\Sigma_n} \]
where $\A_n := \der_n(F_n)$. Choose a constant $c$ such that $\A_n$ is $(-\rho n + c)$-connected for all $n$.

To show that $F$ is $\rho$-analytic at $*$, consider a strongly cocartesian $(n+1)$-cube $\mathcal{X}$ of $(\rho+1)$-connected based spaces with each initial map $\mathcal{X}(\emptyset) \to \mathcal{X}(s)$ being $k_s$-connected. In particular, then, the cube $(\Sigma^{-\rho} \Sigma^\infty \mathcal{X})$ is a strongly-cocartesian cube of $1$-connected spectra with each initial map being $(-\rho + k_s)$-connected. We can apply a version of \cite[4.4]{goodwillie:1991} for cubes of spectra to deduce that, for each $m$, the cube $(\Sigma^{-\rho m} \Sigma^\infty X^{\smsh m})$ is $(-\rho(n+1) + \sum k_s)$-cartesian.

Since $\A_m$ is $(-\rho m + c)$-connected, it therefore follows that the cube $D_m(\mathcal{X})$ is
\[ (\rho m - \rho m + c -\rho(n+1) + \sum k_s) = (-\rho n - \rho + c + \sum k_s) \]
-cartesian. By induction on $m$, using the fibre sequences $D_m \to F_m \to F_{m-1}$ we deduce that each cube $F_m(\mathcal{X})$ is $(- \rho n - \rho + c + \sum k_s)$-cartesian, and so the homotopy limit $F(\mathcal{X})$ is $(- \rho n - \rho + c - 1 + \sum k_s)$-cartesian. This verifies that $F$ is $\rho$-analytic at $*$ with the number $q$ in Definition~\ref{def:analytic} equal to $c - \rho - 1$.

Now suppose that $X$ is a $k$-connected based space. Then $D_n(X) = (\A_n \smsh X^{\smsh n})_{h\Sigma_n}$ is $[(k-\rho)n+c]$-connected. As long as $k \geq \rho$, it follows that the map $F(X) \to F_n(X)$ is $[(k-\rho)(n+1)+c-1]$-connected, for each $n$. This means that $F$ and $F_n$ agree to order $n$ in the sense of Goodwillie \cite[1.2]{goodwillie:2003} from which it follows by \cite[1.6]{goodwillie:2003} that $P_nF \homeq P_nF_n \homeq F_n$.
\end{proof}

\begin{corollary} \label{cor:analytic}
If $F: \finbased \to \spectra$ is analytic at $*$, then $P_\infty F := \holim_n P_nF$ is analytic at $*$ and the map $F \to P_\infty F$ induces an equivalence of Taylor towers.
\end{corollary}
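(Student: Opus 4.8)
The plan is to deduce this directly from Lemmas~\ref{lem:ana1} and~\ref{lem:ana2}, applied to the sequence of functors $F_n := P_nF$. First I would check that the Goodwillie tower
\[ \dots \to P_nF \to P_{n-1}F \to \dots \to P_1F \]
satisfies the hypotheses of Lemma~\ref{lem:ana2}. This amounts to two standard properties of the tower: that the canonical maps induce equivalences $P_n(P_{n+1}F) \weq P_n(P_nF) \homeq P_nF$, and that $n$-excisive approximation does not change the \ord{$n$} derivative, i.e.\ $\der_n(P_nF) \homeq \der_nF$ for each $n$. Both are immediate from the construction of $P_n$ together with the fact that $\der_n$ depends only on $P_n$.

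Next, since $F$ is analytic at $*$ it is $\rho$-analytic at $*$ for some $\rho \geq 0$, so Lemma~\ref{lem:ana1} tells us that the symmetric sequence $\der_*F$ is $\rho$-analytic in the sense of Definition~\ref{def:symseq-analytic}. Combined with the identification $\der_n(P_nF) \homeq \der_nF$, this shows that the symmetric sequence $\{\der_n(F_n)\}$ appearing in Lemma~\ref{lem:ana2} is $\rho$-analytic. Lemma~\ref{lem:ana2} then yields at once that $P_\infty F = \holim_n P_nF$ is $\rho$-analytic at $*$, and that the sequence $\{P_nF\}$ is equivalent to the Taylor tower of $P_\infty F$; in particular the projection $P_\infty F \to P_mF$ identifies $P_mF$ with $P_m(P_\infty F)$ for each $m$.

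Finally, to see that the canonical map $F \to P_\infty F$ induces an equivalence of Taylor towers, I would apply $P_m$ to the factorization $F \to P_\infty F \to P_mF$ of the natural map $F \to P_mF$: this produces maps $P_mF \to P_m(P_\infty F) \to P_m(P_mF) \homeq P_mF$ whose composite is the identity and whose second arrow is the equivalence just described, forcing the first arrow to be an equivalence as well. The only genuine input here is Lemma~\ref{lem:ana2} (which itself rests on the stable cube estimates of~\cite{goodwillie:1991}); everything else is formal, so I do not expect a real obstacle. The one point to keep track of is which map the equivalence $P_mF \homeq P_m(P_\infty F)$ coming out of Lemma~\ref{lem:ana2} is induced by, so that the factorization argument in the last step goes through cleanly.
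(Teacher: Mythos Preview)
Your proposal is correct and matches the paper's intended argument: the corollary is stated without proof precisely because it follows immediately from Lemmas~\ref{lem:ana1} and~\ref{lem:ana2} applied to the Taylor tower $F_n = P_nF$, exactly as you outline. The factorization argument you give for identifying $P_m(F \to P_\infty F)$ with the equivalence coming from Lemma~\ref{lem:ana2} is the right way to pin down the last step, and your caution about tracking which map realizes $P_m(P_\infty F)\homeq P_mF$ is well placed but causes no difficulty, since the proof of Lemma~\ref{lem:ana2} shows it is induced by the projection $P_\infty F\to P_mF$.
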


\begin{definition} \label{def:strongly-analytic}
Let us say that a functor $F: \finbased \to \spectra$ is \emph{strongly-analytic at $*$} if $F$ is analytic at $*$ and the map $F \to P_\infty F := \holim P_nF$ is an equivalence. It follows from Corollary~\ref{cor:analytic} that, for any $F: \finbased \to \spectra$ that is analytic at $*$, the functor $P_\infty F$ is strongly-analytic at $*$, and the map $F \to P_\infty F$ determines an equivalence of Taylor towers. We can therefore think of the category of functors strongly-analytic at $*$ as a model for the category of all functors that are analytic at $*$ up to equivalence of Taylor towers.

We write $[\finbased,\spectra]_{\mathsf{an}(*)}^{\mathsf{h}}$ for the subcategory of the homotopy category $[\finbased,\spectra]^{\mathsf{h}}$ consisting of the functors that are strongly-analytic at $*$.
\end{definition}

Now consider the restriction of $\d: [\finbased,\spectra] \to \Coalg(\C_{\K\E_\bullet})$ to strongly-analytic functors. Firstly, the argument of Theorem~\ref{thm:equiv-topsp-poly} implies that $\d$ determines weak equivalences
\[ \Nat_{\finbased}(F,G) \weq \widetilde{\Map}_{\C_{\K\E_\bullet}}(\d[F],\d[G]) \]
for any $F,G \in [\finbased,\spectra]$ such that $G \homeq P_\infty G$. It follows that $\d$ is a fully-faithful embedding of $[\finbased,\spectra]_{\mathsf{an}(*)}^{\mathsf{h}}$ into the homotopy category of $\C_{\K\E_\bullet}$-coalgebras. We have not been able to explicitly identify the image of this embedding. Instead, however, by considering the `pro-truncated' version of the homotopy theory of $\C_{\K\E_\bullet}$-coalgebras, as constructed in Definition~\ref{def:truncated-category-coalgebras}, we have the following result.

\begin{theorem} \label{thm:equiv-topsp-ana}
The functor $\d$ sets up an equivalence
\[ [\finbased,\spectra]_{\mathsf{an}(*)}^{\mathsf{h}} \homeq \Coalg^{\mathsf{t}}_{\mathsf{an}}(\C_{\K\E_\bullet})^{\mathsf{h}} \]
between the homotopy category of functors $\finbased \to \spectra$ that are strongly-analytic at $*$, and the pro-truncated homotopy category of analytic $\C_{\K\E_\bullet}$-coalgebras. Moreover, for cofibrant $F,G \in [\finbased,\spectra]_{\mathsf{an}(*)}$, we have equivalences
\[ \Nat_{\finbased}(F,G) \weq \widetilde{\Map}_{\C_{\K\E_\bullet}}(\d[F],\d[G]) \homeq \widetilde{\Map}^{\mathsf{t}}_{\C_{\K\E_\bullet}}(\d[F],\d[G]). \]
\end{theorem}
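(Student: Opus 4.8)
The plan is to bootstrap from the polynomial classification of Theorem~\ref{thm:equiv-topsp-poly} using the analyticity estimates of Lemmas~\ref{lem:ana1} and~\ref{lem:ana2}, following the pattern of the corresponding argument in \cite{arone/ching:2014}. All functors may be assumed cofibrant, replacing $F$ by $\c F$ where needed. As recorded just before the statement, the proof of Theorem~\ref{thm:equiv-topsp-poly} goes through to give a natural weak equivalence
\[ \Nat_{\finbased}(F,G) \weq \widetilde{\Map}_{\C_{\K\E_\bullet}}(\d[F],\d[G]) \]
whenever $G$ is strongly-analytic at $*$ (the equivalence $P_\infty G \homeq G$ now playing the role of the totalization appearing in the polynomial case). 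To identify this spectrum with the pro-truncated mapping spectrum, I would write $G \homeq \holim_n P_n G$ and use that $\Nat_{\finbased}(F,-)$ preserves homotopy limits and that $\Nat_{\finbased}(F,P_n G) \homeq \Nat_{\finbased}(P_n F, P_n G)$ (because $P_n G$ is $n$-excisive), so that
\[ \Nat_{\finbased}(F,G) \homeq \holim_n \Nat_{\finbased}(P_n F, P_n G) \homeq \holim_n \widetilde{\Map}_{\C_{\K\E_\bullet}}(\d[P_n F],\d[P_n G]), \]
the last equivalence being Theorem~\ref{thm:equiv-topsp-poly} for the $n$-excisive functors $P_n F, P_n G$. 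By Proposition~\ref{prop:dF} and the fact that $\C_{\K\E_\bullet}$ preserves truncations, $\d[P_n F]$ is equivalent, as a $\C_{\K\E_\bullet}$-coalgebra, to the truncation $\d[F]_{\leq n}$ (and similarly for $G$); moreover, since $\C_{\K\E_\bullet}$ preserves truncations, the spectrum $\widetilde{\Map}_{\C_{\K\E_\bullet}}(-,\d[G]_{\leq n})$ depends only on the $n$-truncation of its first argument, by the observation used in Proposition~\ref{prop:pro-truncated-coalgebras}. Hence the right-hand side above is equivalent to $\holim_n \widetilde{\Map}_{\C_{\K\E_\bullet}}(\d[F],\d[G]_{\leq n}) = \widetilde{\Map}^{\mathsf{t}}_{\C_{\K\E_\bullet}}(\d[F],\d[G])$ by Definition~\ref{def:truncated-category-coalgebras}. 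Comparing with the first weak equivalence yields all the equivalences of the ``moreover'' clause; in particular $\d$ is fully faithful on $[\finbased,\spectra]_{\mathsf{an}(*)}^{\mathsf{h}}$, and by Lemma~\ref{lem:ana1} its values $\d[F] \homeq \der_* F$ are analytic symmetric sequences, so $\d$ factors through $\Coalg^{\mathsf{t}}_{\mathsf{an}}(\C_{\K\E_\bullet})^{\mathsf{h}}$.

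For essential surjectivity, let $\A$ be an analytic $\C_{\K\E_\bullet}$-coalgebra; after enlarging $\rho$ we may assume $\A$ is $\rho$-analytic with $\rho \geq 0$. Each truncation $\A_{\leq n}$ is a bounded coalgebra, so by (the proof of) Theorem~\ref{thm:equiv-topsp-poly} the functor $F_n := \Tot(\Phi\C_{\K\E_\bullet}^{\bullet}\A_{\leq n})$ is $n$-excisive with $\d[\c F_n] \homeq \A_{\leq n}$ as $\C_{\K\E_\bullet}$-coalgebras; since this construction is strictly functorial in $\A_{\leq n}$, the truncation maps assemble into an honest tower $\dots \to F_n \to F_{n-1} \to \dots \to F_1$. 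This tower ``looks like a Taylor tower'' in the sense of Lemma~\ref{lem:ana2}: under $\d$ the map $P_{n-1}F_n \to P_{n-1}F_{n-1} \homeq F_{n-1}$ corresponds to the identity of $\A_{\leq(n-1)}$, using $\d[P_{n-1}F_n] \homeq \d[\c F_n]_{\leq(n-1)} \homeq \A_{\leq(n-1)} \homeq \d[\c F_{n-1}]$ together with full faithfulness of $\d$ on polynomial functors. Since $\der_n(F_n) \homeq \d[\c F_n](n) \homeq \A(n)$, the symmetric sequence $\{\der_n(F_n)\}_n$ is equivalent to $\A$ and hence $\rho$-analytic, so Lemma~\ref{lem:ana2} gives that $F := \holim_n F_n$ is $\rho$-analytic at $*$ with $F_n \homeq P_n F$; in particular $F \homeq \holim_n P_n F = P_\infty F$, so $F$ is strongly-analytic at $*$. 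Finally, the structure maps $F \to P_n F \homeq F_n$ induce a compatible family of $\C_{\K\E_\bullet}$-coalgebra maps $\d[\c F] \to \d[\c F_n] \homeq \A_{\leq n}$, that is, a pro-truncated derived morphism $\d[\c F] \to \A$ whose underlying map of symmetric sequences is the natural equivalence $\der_* F \homeq \A$. By Proposition~\ref{prop:pro-truncated-equivalence} it is an isomorphism in $\Coalg^{\mathsf{t}}_{\mathsf{an}}(\C_{\K\E_\bullet})^{\mathsf{h}}$, so $\A$ lies in the essential image of $\d$; together with the fully faithful part, this proves the theorem.

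The step I expect to be the main obstacle is the comparison between $\widetilde{\Map}_{\C_{\K\E_\bullet}}$ and $\widetilde{\Map}^{\mathsf{t}}_{\C_{\K\E_\bullet}}$. The reason for introducing the pro-truncated homotopy category at all is that $\C_{\K\E_\bullet}$, being a sequential homotopy colimit over $L$, is not expected to commute with the homotopy limit $\holim_n$ along a Taylor tower; so the two mapping spectra need not agree for arbitrary coalgebras, and the argument above establishes their agreement only for coalgebras of the form $\d[F]$, and there only via the intermediary $\Nat_{\finbased}(F,G)$. Closely tied to this, and also requiring care, is checking that $\d[P_n F]$ is equivalent to $\d[F]_{\leq n}$ as a $\C_{\K\E_\bullet}$-coalgebra rather than merely as a symmetric sequence; this rests on the fact (Definition~\ref{def:truncated-category-coalgebras}) that $\C_{\K\E_\bullet}$ preserves truncations, so that a truncation of a coalgebra carries a unique compatible coalgebra structure.
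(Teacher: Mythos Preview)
Your proposal is correct and follows essentially the same approach as the paper: deduce full faithfulness from the polynomial case via $G \homeq \holim_n P_nG$, then for essential surjectivity set $F = \holim_n F_{\A_{\leq n}}$, invoke Lemma~\ref{lem:ana2}, and produce a pro-truncated derived morphism $\d[\c F] \to \A$ whose underlying map is an equivalence. The paper's argument for the mapping-spectrum comparison is slightly more streamlined (it goes directly from $\Nat(F,G) \homeq \holim_n \Nat(F,P_nG)$ to the corresponding statement for $\widetilde{\Map}_{\C_{\K\E_\bullet}}$ without detouring through $P_nF$), and it spells out the pro-truncated derived morphism more explicitly as the maps $f_{k,n}: \Delta^k_+ \smsh \d[\c F] \to \d[\c F_{\A_{\leq n}}] \to \C_{\K\E_\bullet}^k(\A_{\leq n})$, but these are expository differences rather than substantive ones.
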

\begin{proof}
For two functors $F,G$ strongly-analytic at $*$ we have already noted that there is an equivalence of spectra
\[ \Nat_{\finbased}(F,G) \weq \widetilde{\Map}_{\C_{\K\E_\bullet}}(\d[F],\d[G]). \]
Since the map
\[ \Nat_{\finbased}(F,G) \to \holim_n \Nat_{\finbased}(F,P_nG) \]
is an equivalence, so too is the map
\[ \widetilde{\Map}_{\C_{\K\E_\bullet}}(\d[F],\d[G]) \to \holim_n \widetilde{\Map}_{\C_{\K\E_\bullet}}(\d[F],\d[G]_{\leq n}) = \widetilde{\Map^{\mathsf{t}}}_{\C_{\K\E_\bullet}}(\d[F],\d[G]). \]
So, between $\C_{\K\E_\bullet}$-coalgebras of the form $\d[G]$ where $G$ is strongly-analytic at $*$, pro-truncated mapping spectra are equivalent to ordinary mapping spectra. In particular, then, the functor $\d$ is an embedding of the homotopy category of strongly-analytic functors into the pro-truncated homotopy category of analytic $\C_{\K\E_\bullet}$-coalgebras.

To see that this embedding is an equivalence, take an arbitrary analytic $\C_{\K\E_\bullet}$-coalgebra $\A$. Let $F$ be the functor given by
\[ F := \holim_n F_{\A_{\leq n}} \]
where $F_{\A_{\leq n}}$ is as in the proof Theorem~\ref{thm:topsp}. Then $F$ is strongly-analytic at $*$ by Lemma \ref{lem:ana2}.

We claim that there is a derived equivalence (in the pro-truncated sense of Definition~\ref{def:truncated-map-coalgebras}) of $\C_{\K\E_\bullet}$-coalgebras $f: \d[\c F] \to \A$. This consists of the maps
\[ f_{k,n}: \Delta^k_+ \smsh \d[\c \holim_n F_{\A_{\leq n}}]  \to \Delta^k_+ \smsh \d[\c F_{\A_{\leq n}}] \to \C_{\K\E_\bullet}^k(\A_{\leq n}) \]
constructed by projecting from the homotopy limit, together with the corresponding maps from the proof of Theorem~\ref{thm:topsp}. It is sufficient then to show that the composite
\[ f_0 : \d[\c \holim_n F_{\A_{\leq n}}] \to \holim_n \d[\c F_{\A_{\leq n}}] \to \holim_n \A_{\leq n} \homeq \A \]
is an equivalence. The first map is an equivalence by Lemma~\ref{lem:ana2}, and the second map is an equivalence by Theorem~\ref{thm:topsp}, so the required result follows.
\end{proof}

\begin{examples}
Under the equivalence of Theorem~\ref{thm:equiv-topsp-ana}, we can identify the analytic functors with split Taylor tower with those $\C_{\K\E_\bullet}$-coalgebras $\A$ whose structure is induced via the map of comonads (of symmetric sequences)
\[ I_{\symseq} = \C_{\one} = \C_{\K\E_0} \to \hocolim \C_{\K\E_L} = \C_{\K\E_\bullet}. \]
More generally, it is possible to characterize those functors whose corresponding $\C_{\K\E_\bullet}$-coalgebra is induced via the map of comonads
\[ \C_{\K\E_L} \to \C_{\K\E_\bullet} \]
for some fixed $L$, i.e. those functors whose derivatives possess a $\K\E_L$-module structure from which the Taylor tower can be constructed. We leave this analysis to a future paper.
\end{examples}

\section{Functors from spectra to spectra} \label{sec:spsp}

We now turn to pointed simplicial functors $\finspec \to \spectra$, where $\finspec$ is the category of finite cell spectra. We denote the category of such functors by $[\finspec,\spectra]$. As with the spaces to spectra case there is an inverse sequence of operads that acts on the sequence of partially-stabilized cross-effects of a functor $F \in [\finspec,\spectra]$. We start by constructing this sequence.

\subsection{Desuspensions of the commutative operad}

Underlying the sequence of operads we are interested in is a notion of `desuspension' for operads of spectra. This uses a certain cooperad $\mathbb{S}$, in the category of based spaces, whose terms are homeomorphic to spheres. The structure maps for this cooperad are homeomorphisms and so $\mathbb{S}$ is also an operad in a canonical way. This operad is isomorphic to the operad $S_\infty$ of \cite{arone/kankaanrinta:2014}.

\begin{definition} \label{def:spheres}
For a nonempty finite set $I$ we set
\[ \mathbb{R}^I_0 := \{t \in \mathbb{R}^I \; | \; \min_{i \in I} t_i = 0 \}. \]
We then write
\[ \mathbb{S}(I) := (\mathbb{R}^I_0)^+ \]
This is the based space given by the one-point compactification of $\mathbb{R}^I_0$. Note that $\mathbb{S}(I)$ is homeomorphic to $S^{|I|-1}$. The permutation action of the symmetric group $\Sigma_I$ on $\mathbb{R}^I$ restricts to an action on $\mathbb{R}^I_0$ and hence on $\mathbb{S}(I)$. This makes $\mathbb{S}$ into a symmetric sequence of based spaces.
\end{definition}

\begin{definition} \label{def:sphere-cooperad}
We construct a cooperad structure on the symmetric sequence $\mathbb{S}$ as follows. It arises from a cooperad structure on the unbased spaces $\mathbb{R}^n_0$ of Definition \ref{def:spheres}. For a surjection of nonempty finite sets $\alpha: I \epi J$, writing $I_j := \alpha^{-1}(j)$, there is a continuous map
\[ d_{\alpha}: \mathbb{R}^I_0 \to \mathbb{R}^J_0 \times \prod_{j \in J} \mathbb{R}^{I_j}_0 \]
given by
\[ d_{\alpha} \left((t_i)_{i \in I} \right) := \left( (v_j)_{j \in J}, \{(u_{j,i})_{i \in I_j}\}_{j \in J} \right) \]
where
\[ v_j := \min_{i \in I_j} t_i \]
and
\[ u_{j,i} := t_i - v_j, \quad \text{for $i \in I_j$}. \]
The condition that the minimum of the $t_i$ is zero implies that the minimum of the $v_j$ is zero. For each $j \in J$, we clearly have $\min_{i \in I_j} u_{j,i} = 0$.

The functions $d_\alpha$ extend to the one-point compactifications to give maps
\[ d_{\alpha}: \mathbb{S}(I) \to \mathbb{S}(J) \smsh \Smsh_{j \in J} \mathbb{S}(I_j). \]
\end{definition}

\begin{lemma} \label{lem:sphere-cooperad}
Each map $d_{\alpha}$ of Definition~\ref{def:sphere-cooperad} is a homeomorphism, and together they make $\mathbb{S}$ into a reduced cooperad of based spaces. (The inverses $d_{\alpha}^{-1}$ thus make $\mathbb{S}$ into a reduced operad.)
\end{lemma}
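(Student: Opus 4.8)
The plan is to verify directly that each $d_\alpha$ is a homeomorphism and then check the cooperad axioms by explicit computation, since everything in sight is given by genuinely elementary formulas (taking coordinatewise minima and subtracting them off). The claim is purely point-set topological once we observe that the one-point compactifications involved are of locally compact Hausdorff spaces and that $d_\alpha$ is a proper bijection on the underlying Euclidean pieces.

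\textbf{Step 1: each $d_\alpha$ is a homeomorphism.} First I would exhibit the inverse of $d_\alpha$ on the level of the unbased spaces. Given $\left((v_j)_{j \in J}, \{(u_{j,i})_{i \in I_j}\}_{j \in J}\right)$ with $\min_j v_j = 0$ and $\min_{i \in I_j} u_{j,i} = 0$ for each $j$, set $t_i := v_{\alpha(i)} + u_{\alpha(i),i}$. One checks $\min_{i \in I} t_i = \min_j (v_j + \min_{i \in I_j} u_{j,i}) = \min_j v_j = 0$, so this lands in $\mathbb{R}^I_0$, and the two composites are visibly the identity. Both $d_\alpha$ and its inverse are restrictions of polynomial (indeed piecewise-linear) maps, hence continuous. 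Since $d_\alpha$ is a continuous bijection with continuous inverse between the locally compact Hausdorff spaces $\mathbb{R}^I_0$ and $\mathbb{R}^J_0 \times \prod_j \mathbb{R}^{I_j}_0$, it is a homeomorphism, and therefore extends to a homeomorphism of one-point compactifications $\mathbb{S}(I) \to \mathbb{S}(J) \smsh \Smsh_{j \in J} \mathbb{S}(I_j)$ (using that the one-point compactification of a finite product of locally compact Hausdorff spaces is the smash product of their one-point compactifications). This also shows the maps are $\Sigma$-equivariant in the appropriate sense, as permuting the $t_i$ within a block $I_j$ and permuting the blocks is tracked faithfully by the formula.

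\textbf{Step 2: cooperad axioms.} Next I would check counitality and coassociativity. Counitality: for the identity surjection $I \xrightarrow{=} I$ all $v_j = t_j$ (singleton blocks) so $d_{\mathrm{id}}$ is the canonical identification $\mathbb{R}^I_0 \cong \mathbb{R}^I_0 \times \prod_{i}\mathbb{R}^{\{i\}}_0$, using $\mathbb{R}^{\{i\}}_0 = \{0\}$ so $\mathbb{S}(\{i\}) = S^0$; similarly for the surjection $I \epi *$. Coassociativity is the assertion that for a composite of surjections $I \xrightarrow{\alpha} J \xrightarrow{\beta} K$ the two evident ways of iterating $d$ agree. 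Both amount to the identity: the coordinatewise minimum over $I_k := (\beta\alpha)^{-1}(k)$ equals the minimum over $j \in \beta^{-1}(k)$ of the minima over $I_j$, and the resulting ``remainders'' decompose accordingly. This is a short calculation with nested minima; I would do it on the underlying spaces $\mathbb{R}^n_0$ and then pass to compactifications, invoking that passage to one-point compactifications is functorial for proper maps and sends products to smash products.

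\textbf{Main obstacle.} None of the steps is deep; the only thing requiring a little care is the bookkeeping in coassociativity — making sure the identifications $\mathbb{S}(J) \smsh \Smsh_{j} \mathbb{S}(I_j)$ are manipulated compatibly with the associativity isomorphisms of $\smsh$ and with the reindexing of blocks under composition of surjections. I expect that to be the part where one must be most careful to keep the notation honest, but it is a routine verification of the kind handled by a diagram chase together with the nested-minimum identity above. The statement that the inverse maps $d_\alpha^{-1}$ then assemble into a reduced operad of based spaces is immediate: reversing all the structure maps of a reduced cooperad whose structure maps are isomorphisms yields a reduced operad, since the operad associativity/unit diagrams are exactly the inverses of the cooperad coassociativity/counit diagrams.
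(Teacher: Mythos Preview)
Your proof is correct and follows essentially the same approach as the paper: both exhibit the explicit inverse $t_i := v_{\alpha(i)} + u_{\alpha(i),i}$ to establish that $d_\alpha$ is a homeomorphism. The paper in fact leaves the unit and coassociativity checks to the reader, so your Step~2 sketch (via the nested-minimum identity) is more detailed than what the paper provides.
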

\begin{proof}
We define an inverse to $d_{\alpha}$ on non-basepoints by
\[ d_{\alpha}^{-1} \left( (v_j)_{j \in J} , \{(u_{j,i})_{i \in I_j}\}_{j \in J} \right) := (t_i)_{i \in I} \]
where
\[ t_i := u_{\alpha(i),i} + v_{\alpha(i)}. \]
The map $d_{\alpha}^{-1}$ is continuous and inverse to $d_{\alpha}$. We leave the reader to check the unit and associativity conditions for $\mathbb{S}$ to be a cooperad.
\end{proof}

\begin{definition}
Let $\A$ be a symmetric sequence of spectra. We define the \emph{desuspension} of $\A$ to be the symmetric sequence $\S^{-1}\A$ given by
\[ (\S^{-1}\A)(n) := \Map(\mathbb{S}(n),\A(n)) \]
with the diagonal $\Sigma_n$-action. Here $\Map(-,-)$ denotes the cotensoring of $\spectra$ over based spaces.

Now suppose that $\P$ is an operad of spectra. Then we make $\S^{-1}\P$ into an operad of spectra by convolving the operad structure on $\P$ with the cooperad structure on $\mathbb{S}$ in the following way. For a surjection $\alpha: I \epi J$ we have structure map
\[ \begin{split} \Map(\mathbb{S}(J),\P(J)) \smsh \Smsh_{j \in J} \Map(\mathbb{S}(I_j),\P(I_j)) &\to \Map(\mathbb{S}(J) \smsh \Smsh_{j \in J} \mathbb{S}(I_j), \P(J) \smsh \Smsh_{j \in J} \P(I_j)) \\ &\to \Map(\mathbb{S}(I),\P(I)) \end{split} \]
where the first map is the canonical smash product of mapping spectra and the second employs the cooperad structure on $\mathbb{S}$ and the operad structure on $\P$. We refer to $\S^{-1}\P$ with this structure as the \emph{(operadic) desupension} of $\P$.

We can iterate the desuspension process to get operads $\S^{-L}\P$ given by
\[ \S^{-L}\P(I) \isom \Map(\mathbb{S}(I)^{\smsh L},\P(I)). \]
\end{definition}

\begin{definition}
For each nonempty finite set $I$, we define a map
\[ \eta_I: S^0 \to \mathbb{S}(I) \]
by sending the non-basepoint in $S^0$ to the point in $\mathbb{S}(I)$ given by the origin $0 \in \mathbb{R}^I_0$.
\end{definition}

\begin{lemma} \label{lem:map-sphere-cooperad}
The maps $\eta_I$ together form a morphism of cooperads (of based spaces)
\[ \eta: \mathbbm{Com}_+^c \to \mathbb{S} \]
where $\mathbbm{Com}_+^c$ is the commutative cooperad in based spaces (given by $S^0$ in each term with trivial $\Sigma_n$-actions and homeomorphisms as structure maps).
\end{lemma}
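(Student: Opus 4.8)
The plan is to verify directly that the maps $\eta_I \colon S^0 \to \mathbb{S}(I)$ are compatible with the cooperad structure maps on $\mathbbm{Com}_+^c$ and on $\mathbb{S}$, since the equivariance (with respect to $\Sigma_I$) is immediate: the origin $0 \in \mathbb{R}^I_0$ is fixed by every permutation of the coordinates, so each $\eta_I$ is $\Sigma_I$-equivariant with respect to the trivial action on $S^0$. The only substantive point is the compatibility with the comultiplication, and this reduces to an elementary computation with the explicit formula for $d_\alpha$ from Definition~\ref{def:sphere-cooperad}.

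Concretely, fix a surjection $\alpha \colon I \epi J$ with $I_j = \alpha^{-1}(j)$. We must check that the square
\[ \begin{diagram}
  \node{\mathbbm{Com}_+^c(I)} \arrow{e,t}{\eta_I} \arrow{s}
    \node{\mathbb{S}(I)} \arrow{s,r}{d_\alpha} \\
  \node{\mathbbm{Com}_+^c(J) \smsh \Smsh_{j \in J} \mathbbm{Com}_+^c(I_j)} \arrow{e}
    \node{\mathbb{S}(J) \smsh \Smsh_{j \in J} \mathbb{S}(I_j)}
\end{diagram} \]
commutes, where the left-hand vertical map is the (identity) comultiplication of the commutative cooperad $S^0 \isom S^0 \smsh \Smsh_{j \in J} S^0$ and the bottom horizontal map is $\eta_J \smsh \Smsh_{j \in J} \eta_{I_j}$. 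Chasing the non-basepoint of $S^0$: going right then down sends it to $d_\alpha(0)$; by the formula for $d_\alpha$, with all $t_i = 0$ we get $v_j = \min_{i \in I_j} 0 = 0$ for each $j \in J$ and $u_{j,i} = t_i - v_j = 0$ for each $i \in I_j$, so $d_\alpha(0) = (0, \{0\}_{j \in J})$, i.e. the point of $\mathbb{S}(J) \smsh \Smsh_{j \in J} \mathbb{S}(I_j)$ whose coordinates are all origins. Going down then right sends the non-basepoint to $(\eta_J \smsh \Smsh_j \eta_{I_j})$ applied to the corresponding point of $S^0 \smsh \Smsh_j S^0$, which by definition of the $\eta$'s is exactly $(0, \{0\}_{j \in J})$. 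So the two composites agree on the non-basepoint, and both send the basepoint to the basepoint; hence the square commutes.

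The unit condition is the statement that the composites built from $\eta$ and the unit maps of the two cooperads agree, which is trivial since both unit objects are $S^0$ with the identity structure. Having verified equivariance, compatibility with comultiplication (for every surjection $\alpha$), and compatibility with units, we conclude that $\eta = \{\eta_I\}$ is a morphism of reduced cooperads of based spaces. I do not anticipate any real obstacle here: the only thing to watch is that one uses the correct formula for $d_\alpha$ and notes that the minimum of a tuple of zeros is zero, so that the `shift by the minimum' operation defining $d_\alpha$ acts trivially on the all-zeros point. One could also phrase the proof more conceptually by observing that $\eta_I$ is the one-point compactification of the inclusion of the origin $\{0\} \into \mathbb{R}^I_0$, and that the maps $d_\alpha$, which are built from the coordinate translations $t_i \mapsto t_i - v_{\alpha(i)}$ and the diagonal-type splitting $\mathbb{R}^I_0 \to \mathbb{R}^J_0 \times \prod_j \mathbb{R}^{I_j}_0$, carry $0$ to $(0, \{0\}_j)$; this is the unbased-space shadow of the claimed cooperad morphism, and passing to one-point compactifications and then to based spaces gives $\eta$.
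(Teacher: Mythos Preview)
Your proof is correct and follows essentially the same approach as the paper: the paper's proof simply notes that it suffices to check $d_\alpha(0) = (0,\{0\}_{j \in J})$ from the definition of $d_\alpha$, which is exactly the computation you carry out (with the additional, trivially-verified equivariance and unit conditions made explicit).
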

\begin{proof}
It is sufficient to check that for each surjection $\alpha: I \epi J$, we have
\[ d_{\alpha}(0) = \left(0, \{0\}_{j \in J} \right) \]
which follows from the definition of $d_{\alpha}$.
\end{proof}

\begin{definition} \label{def:desuspension-sequence}
For any operad $\P$ of spectra, the map $\eta$ of Lemma \ref{lem:map-sphere-cooperad} induces a natural map of operads
\[ \eta: \S^{-1}\P \to \P \]
Iterating this construction, we get, for each operad $P$ of spectra, an inverse sequence
\[ \S^{-\bullet}\P : \quad \dots \to \S^{-2}\P \to \S^{-1}\P \to \P. \]
\end{definition}

The example we are concerned with in this paper is the sequence of desuspensions of $\P = \Com$, the commutative operad of spectra.

\begin{definition} \label{def:sphere-comonad}
We write $\S^{-L}$ for a (functorial) cofibrant replacement (in the projective model structure on operads) of $\S^{-L}\Com$. Applying this cofibrant replacement to the sequence in Definition~\ref{def:desuspension-sequence} gives us an inverse sequence of operads
\[ \S_\bullet: \quad \dots \to \S^{-2} \to \S^{-1} \to \S^{0} \weq \Com. \]
Associated to this sequence, by the construction of Definition~\ref{def:inverse-operad-comonad}, is a cooperad $\C_{\S_\bullet}$ on the category of symmetric sequences given by
\[ \C_{\S_\bullet}(\A)(k) := \hocolim_{L} \C_{\S^{-L}}(\A)(k) = \hocolim_{L} \prod_{n} \left[ \prod_{\un{n} \epi \un{k}} \Map(\S^{-L}(n_1) \smsh \dots \smsh \S^{-L}(n_k), \A(n)) \right]^{\Sigma_n}. \]
\end{definition}

\subsection{Models for the derivatives of a functor from spectra to spectra}

Our next task is to construct models for the partially-stabilized cross-effects of a pointed simplicial functor $F \in [\finspec,\spectra]$ that admit the structure of a $\S^{-L}$-module. From these we obtain models for the derivatives of $F$ that form a $\C_{\S_\bullet}$-coalgebra. We focus first on the representable functors.

\begin{definition} \label{def:rep-spsp}
Let $X \in \finspec$. The \emph{functor represented by $X$} is the pointed simplicial functor $R_X: \finspec \to \spectra$ given by
\[ R_X(Y) := \Sigma^\infty \Hom_{\spectra}(X,Y). \]
\end{definition}

A significant difference from the case $[\finbased,\spectra]$ is that there are simple models for the cross-effects of the representable functors from spectra to spectra.

\begin{lemma} \label{lem:creff-spsp}
The \ord{n} cross-effect of $R_X$ is given by
\[ \creff_nR_X(Y_1,\dots,Y_n) \homeq \Sigma^\infty \Hom_{\spectra}(X,Y_1) \smsh \dots \smsh \Hom_{\spectra}(X,Y_n). \]
\end{lemma}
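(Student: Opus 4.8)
The plan is to pass to the co-cross-effect $\creff^n R_X$ --- which is naturally equivalent to $\creff_n R_X$ by Definition~\ref{def:creff} --- and to exploit the fact that the suspension spectrum of a finite product splits.

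\emph{Step 1: identify the defining cube.} By Definition~\ref{def:creff}, $\creff^n R_X(Y_1,\dots,Y_n)$ is the total homotopy cofibre of the $n$-cube $S \mapsto R_X(\Wdge_{i \in S} Y_i)$ indexed by subsets $S \subseteq \un{n}$, with edges the maps induced by the wedge inclusions. Since $X$ is a finite cell spectrum, hence cofibrant, and every object of $\spectra$ is fibrant, the simplicial-set-valued functor $\Hom_{\spectra}(X,-)$ is homotopically meaningful; it carries products to products on the nose, and a finite coproduct in $\spectra$ is equivalent to the corresponding product. Writing $Z_i := \Hom_{\spectra}(X,Y_i)$, we therefore obtain natural equivalences $R_X(\Wdge_{i \in S} Y_i) \homeq \Sigma^\infty \prod_{i \in S} Z_i$, compatibly across the whole cube; under them the edge attached to an inclusion $S \subseteq S'$ becomes the map $\prod_{i \in S} Z_i \to \prod_{i \in S'} Z_i$ that is the identity on the factors indexed by $S$ and the inclusion of the basepoint on the remaining factors (because the composite of a wedge inclusion with the retraction onto a new summand is null).

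\emph{Step 2: split and collapse.} I would then apply the natural stable splitting $\Sigma^\infty\bigl(\prod_{i \in \un{n}} A_i\bigr) \homeq \Wdge_{\emptyset \neq T \subseteq \un{n}} \Sigma^\infty\bigl(\Smsh_{i \in T} A_i\bigr)$, taking $A_i = Z_i$ when $i \in S$ and $A_i = \ast$ when $i \notin S$; the summand indexed by $T$ is $\Sigma^\infty\bigl(\Smsh_{i \in T} Z_i\bigr)$ when $T \subseteq S$ and is trivial otherwise. Thus the $n$-cube of spectra decomposes, naturally in $S$, as a finite wedge over nonempty $T \subseteq \un{n}$ of subcubes $\mathcal{X}_T$, and since total homotopy cofibre commutes with finite wedges it suffices to evaluate it on each $\mathcal{X}_T$. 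If $T \subsetneq \un{n}$, choose $j \notin T$: then $\mathcal{X}_T$ is constant in the $j$-th cube direction (adjoining $j$ to a subset does not change whether it contains $T$), so its total cofibre vanishes. For $T = \un{n}$ the cube $\mathcal{X}_{\un{n}}$ equals $\Sigma^\infty(Z_1 \smsh \dots \smsh Z_n)$ at the terminal vertex $S = \un{n}$ and is trivial at every other vertex, so its total homotopy cofibre is $\Sigma^\infty(Z_1 \smsh \dots \smsh Z_n)$. Combining, $\creff_n R_X(Y_1,\dots,Y_n) \homeq \creff^n R_X(Y_1,\dots,Y_n) \homeq \Sigma^\infty \Hom_{\spectra}(X,Y_1) \smsh \dots \smsh \Hom_{\spectra}(X,Y_n)$.

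The step I expect to require the most care is the first one: verifying that the equivalences $R_X(\Wdge_{i \in S} Y_i) \homeq \Sigma^\infty \prod_{i \in S} Z_i$ are genuinely natural in $S$ throughout the cube --- so that the levelwise splitting of Step 2 assembles into a splitting of cubes --- and pinning down the cube maps as ``identity on the old coordinates, basepoint on the new ones''. Once that bookkeeping is in place the rest is the routine observation that a cube constant in some coordinate direction has vanishing total (co)fibre. One could instead run the argument with the cross-effect (total fibre) and the dual splitting, but the total-cofibre formulation meshes more cleanly with $\Sigma^\infty$.
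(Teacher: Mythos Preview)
Your proof is correct and follows essentially the same route as the paper: pass to the co-cross-effect, replace finite wedges of spectra by products so that $R_X(\Wdge_{i\in S}Y_i)\homeq \Sigma^\infty\prod_{i\in S}Z_i$, and then compute the total homotopy cofibre of the resulting product cube. The only difference is in this last step: the paper commutes $\Sigma^\infty$ past the total homotopy cofibre and then cites the standard fact that $\thocofib_{S\subseteq\un{n}}\prod_{i\in S}Z_i \homeq Z_1\smsh\dots\smsh Z_n$, whereas you keep $\Sigma^\infty$ inside and instead invoke the stable splitting of a product to decompose the cube as a wedge of subcubes $\mathcal{X}_T$. Your argument is a perfectly good way to \emph{prove} the fact the paper cites, so the two are really the same computation at different levels of detail; the paper's version is shorter, yours is more self-contained.
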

\begin{proof}
Finite coproducts of spectra are equivalent to the corresponding products. Therefore, the co-cross-effect (and hence the desired cross-effect) is given by the total homotopy cofibre
\[ \thocofib_{S \subseteq \un{n}} \left\{ \Sigma^\infty \Hom_{\spectra} \left( X, \prod_{i \in S} Y_i \right) \right\} \]
which is equivalent to
\[  \Sigma^\infty \thocofib_{S \subseteq \un{n}} \left\{ \prod_{i \in S} \Hom_{\spectra} \left( X, Y_i \right) \right\}. \]
The total homotopy cofibre of the cube of simplicial sets whose terms are $\prod_{i \in S} Z_i$ is equivalent to the smash product $Z_1 \smsh \dots \smsh Z_n$. We thus obtain the required formula for the cross-effect.
\end{proof}

\begin{corollary} \label{cor:der-hocolim-spsp}
The partially-stabilized cross-effects of $R_X$ are therefore given by
\[ \Sigma^{-nL}\creff_n(R_X)(S^L,\dots,S^L) \homeq \Sigma^{-nL} \Sigma^\infty \Hom_{\spectra}(X,S^L)^{\smsh n} \]
with the stabilization maps of (\ref{eq:creff-map}) given by the maps
\[ \Map \left( S^{nL}, \Sigma^\infty \Hom_{\spectra}(X,S^L)^{\smsh n} \right) \to \Map \left( S^{n(L+1)}, \Sigma^\infty \Hom_{\spectra}(X,S^{L+1})^{\smsh n} \right) \]
induced by the $n$-fold smash power of the canonical map
\[ S^1 \smsh \Sigma^\infty \Hom_{\spectra}(X,S^L) \to \Sigma^\infty \Hom_{\spectra}(X,S^{L+1}). \]
\end{corollary}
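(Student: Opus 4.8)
The plan is to read off both claims directly from Lemma~\ref{lem:creff-spsp} and Lemma~\ref{lem:creff-maps}, with no genuine computation left to do. For the first formula I would substitute $Y_1 = \dots = Y_n = S^L$ into the expression for $\creff_n R_X$ supplied by Lemma~\ref{lem:creff-spsp} and then apply $\Sigma^{-nL} = \Map(S^{nL},-)$ to both sides; this is immediate.

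For the description of the stabilization maps, the key observation is that the model
\[ G(Y_1,\dots,Y_n) := \Sigma^\infty \Hom_{\spectra}(X,Y_1) \smsh \dots \smsh \Sigma^\infty \Hom_{\spectra}(X,Y_n) \]
for $\creff_n R_X$ is pointed simplicial in each variable: the tensoring map in the $i$-th variable is obtained by applying $\Sigma^\infty$ to the assembly map $A \smsh \Hom_{\spectra}(X,Y_i) \to \Hom_{\spectra}(X, A \smsh Y_i)$ and smashing with the identity on the remaining factors. With this model in hand, Lemma~\ref{lem:creff-maps} identifies the map (\ref{eq:creff-map}), for the step from $L$ to $L+1$, up to equivalence with the composite of the tensoring maps $t_{S^1}$ taken in all $n$ variables. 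Since $G$ is a smash product over the $n$ variables and these tensoring maps act factorwise, that composite is exactly the $n$-fold smash power of the single canonical (assembly) map
\[ S^1 \smsh \Sigma^\infty \Hom_{\spectra}(X,S^L) \to \Sigma^\infty \Hom_{\spectra}(X,S^{L+1}). \]
Rewriting $\Sigma^{-nL}(-)$ and $\Sigma^{-n(L+1)}(-)$ as $\Map(S^{nL},-)$ and $\Map(S^{n(L+1)},-)$ throughout then yields the displayed description.

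The only point requiring a little care — and the main, mild, obstacle — is checking that the chain of equivalences proving Lemma~\ref{lem:creff-spsp} (finite coproducts $\homeq$ finite products, passing $\Sigma^\infty$ through the total cofibre, and the identification of the total cofibre of a cube of simplicial sets with a smash product) can be run through functors and natural transformations that are pointed simplicial in each variable, so that the resulting equivalence intertwines the tensoring map of the total-cofibre model with the tensoring map of the smash-product model $G$ above. This is routine, as each step is a natural, simplicially enriched construction, but it is precisely what licenses applying Lemma~\ref{lem:creff-maps} to the tractable model $G$ rather than to the total-cofibre model. Once that compatibility is granted, the corollary follows with no further work.
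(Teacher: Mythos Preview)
Your proposal is correct and follows essentially the same approach as the paper: apply Lemma~\ref{lem:creff-spsp} for the formula, note that the smash-product model is pointed simplicial in each variable, and invoke Lemma~\ref{lem:creff-maps} to identify the stabilization map with the $n$-fold smash power of the canonical assembly map. The compatibility check you flag is exactly the content of the paper's one-line assertion that ``the models for $\creff_nR_X$ in Lemma~\ref{lem:creff-spsp} are pointed simplicial in each variable''; the paper treats this as evident and you are right that it is routine.
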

\begin{proof}
The models for $\creff_nR_X$ in Lemma~\ref{lem:creff-spsp} are pointed simplicial in each variable and so, by Lemma~\ref{lem:creff-maps}, the maps (\ref{eq:creff-map}) are given by the relevant tensoring maps which are as shown.
\end{proof}

We now construct models for these partially-stabilized cross-effects that admit actions of the operads $\S^{-L}$. The key is to identify the desuspension spheres with terms of the cooperad $\S$.

\begin{definition} \label{def:creff-module}
For $X \in \finspec$ and a non-negative integer $L$, we define a symmetric sequence $\un{\d}^L[R_X]$ by
\[ \un{\d}^L[R_X](I) := \Map \left( (S^1 \smsh \mathbb{S}(I))^{\smsh L}, \Sigma^\infty \Hom_{\spectra}(X,S^L)^{\smsh I} \right). \]
We then make the symmetric sequence $\un{\d}^L[R_X]$ into a right $\S^{-L}$-module by combining the cooperad structure on $\mathbb{S}$ with the right $\Com$-module structure on $\Sigma^\infty \Hom_{\spectra}(X,S^L)$ described in Definition~\ref{def:SigmainftyX-Com}. Explicitly, for a surjection $\alpha: I \epi J$, we have a map
\[ \nu_\alpha: \un{\d}^L[R_X](J) \smsh \Smsh_{j \in J} \S^{-L}(I_j) \to \un{\d}^L[R_X](I) \]
given by the composite
\[ \begin{split} &\Map \left( (S^1)^{\smsh L} \smsh \mathbb{S}(J)^{\smsh L}, \Sigma^\infty \Hom_{\spectra}(X,S^L)^{\smsh J} \right) \smsh \Smsh_{j \in J} \Map \left(\mathbb{S}(I_j)^{\smsh L},\Com(I_j) \right) \\
    &\to \Map \left( (S^1)^{\smsh L} \smsh \mathbb{S}(J)^{\smsh L} \smsh \Smsh_{j \in J} \mathbb{S}(I_j)^{\smsh L}, \Sigma^\infty \Hom_{\spectra}(X,S^L)^{\smsh J} \smsh \Smsh_{j \in J} \Com(I_j) \right) \\
    &\to \Map \left( (S^1)^{\smsh L} \smsh \mathbb{S}(I)^{\smsh L}, \Sigma^\infty \Hom_{\spectra}(X,S^L)^{\smsh I} \right)
\end{split} \]
where the first map is the canonical smash product of mapping spectra, and the second is induced by the $L$-fold smash power of the cooperad structure map
\[ d_{\alpha} : \mathbb{S}(I) \to \mathbb{S}(J) \smsh \Smsh_{j \in J} \mathbb{S}(I_j) \]
and the right $\Com$-module structure map
\[ \Delta_\alpha: \Sigma^\infty \Hom_{\spectra}(X,S^L)^{\smsh J} \to \Sigma^\infty \Hom_{\spectra}(X,S^L)^{\smsh I} \]
associated to $\alpha$.
\end{definition}

\begin{lemma} \label{lem:dL-RX}
We have natural equivalences of symmetric sequences
\[ \Sigma^{-*L}\creff_*(R_X)(S^L,\dots,S^L) \homeq \un{\d}^L[R_X]. \]
\end{lemma}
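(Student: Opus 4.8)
The plan is to reduce the claimed equivalence to an explicit $\Sigma$-equivariant homeomorphism of spheres, and then invoke functoriality of the cotensoring $\Map(-,-)$. First I would recall from Corollary~\ref{cor:der-hocolim-spsp} (which itself rests on Lemmas~\ref{lem:creff-spsp} and~\ref{lem:creff-maps}) that the partially-stabilized cross-effect in arity $n$ is given, naturally in $X$, by
\[ \Sigma^{-nL}\creff_n(R_X)(S^L,\dots,S^L) \homeq \Map\!\left( S^{nL}, \Sigma^\infty\Hom_{\spectra}(X,S^L)^{\smsh n} \right), \]
where the source sphere is $S^{nL} = (S^n)^{\smsh L} = \bigl( (\mathbb{R}^n)^{\oplus L}\bigr)^+$ with $\Sigma_n$ acting by permuting the coordinates within each summand $\mathbb{R}^n$ — the action coming from permuting the $n$ variables of the cross-effect as in Lemma~\ref{lem:creff-maps}. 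Comparing with Definition~\ref{def:creff-module}, the targets of the two mapping spectra agree on the nose, so it suffices to produce, naturally in the finite set $I \in \Sigma$, a $\Sigma_I$-equivariant homeomorphism
\[ \phi_I^{\smsh L}\colon (S^1 \smsh \mathbb{S}(I))^{\smsh L} \xrightarrow{\ \cong\ } \bigl( (\mathbb{R}^I)^{\oplus L}\bigr)^+ , \]
since then $\Map(\phi_I^{\smsh L}, \mathrm{id})$ assembles into the desired natural equivalence of symmetric sequences.

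It is enough to treat $L = 1$, i.e. to exhibit $\phi_I\colon S^1 \smsh \mathbb{S}(I) \cong (\mathbb{R}^I)^+$. For this I would use the map $\psi_I\colon \mathbb{R} \times \mathbb{R}^I_0 \to \mathbb{R}^I$ given by $\psi_I(s,u) := s\cdot\mathbf{1} + u$, with $\mathbf{1} = (1,\dots,1)$; it is continuous with continuous inverse $t \mapsto \bigl(\min_{i}t_i,\; t - (\min_{i}t_i)\mathbf{1}\bigr)$, the minimum of the coordinates of $t - (\min_i t_i)\mathbf{1}$ being $0$ so the output does lie in $\mathbb{R}\times\mathbb{R}^I_0$. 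Since every $\sigma \in \Sigma_I$ fixes $\mathbf{1}$, the homeomorphism $\psi_I$ is $\Sigma_I$-equivariant for the permutation actions on $\mathbb{R}^I$ and $\mathbb{R}^I_0$ and the trivial action on the $\mathbb{R}$-factor, and it is manifestly natural in $I \in \Sigma$. Passing to one-point compactifications, and using that $(A\times B)^+ \cong A^+ \smsh B^+$ for locally compact Hausdorff $A,B$, gives the $\Sigma_I$-equivariant natural homeomorphism $\phi_I = (\psi_I)^+\colon S^1\smsh\mathbb{S}(I) \cong (\mathbb{R}^I)^+$. Smashing $L$ copies, the $\Sigma_I$-action on $(S^1\smsh\mathbb{S}(I))^{\smsh L}$ — trivial on each $S^1$, diagonal permutation on the copies of $\mathbb{S}(I)$ — goes over to the permutation action on $\bigl((\mathbb{R}^I)^{\oplus L}\bigr)^+$ described above.

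The remaining verifications are formal. Naturality in $X$ is immediate, since $X$ enters only through the (untouched) target $\Sigma^\infty\Hom_{\spectra}(X,S^L)^{\smsh n}$, and $\Sigma_I$-equivariance of $\phi_I^{\smsh L}$ plus its naturality in $I$ make $\Map(\phi_I^{\smsh L},\mathrm{id})$ a morphism of symmetric sequences. The one point that genuinely needs care — and the step I would treat as the main obstacle — is matching the two $\Sigma_n$-equivariant structures on the source sphere: on the cross-effect side the action is induced by permuting the $n$ suspension coordinates via Lemma~\ref{lem:creff-maps}, while on the $\un{\d}^L$ side it is built into the $\Sigma_I$-action on $\mathbb{S}(I)$ from Definition~\ref{def:spheres}. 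Unwinding both descriptions shows that each is the permutation representation $\mathbb{R}^n\otimes\mathbb{R}^L$, which is exactly what makes $\phi_I^{\smsh L}$ equivariant; I would record this identification explicitly to avoid any sign- or indexing-level discrepancy.
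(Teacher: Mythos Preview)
Your proposal is correct and takes essentially the same approach as the paper: the paper's own proof is the single line ``This follows from Corollary~\ref{cor:der-hocolim-spsp}'', and the explicit $\Sigma_I$-equivariant homeomorphism $\psi_I\colon \mathbb{R}\times\mathbb{R}^I_0 \to \mathbb{R}^I$, $(s,u)\mapsto s\cdot\mathbf{1}+u$, that you supply is exactly the map the paper records a few lines later as $h_I$ in Definition~\ref{def:homeo}. You have simply unpacked the one-line proof.
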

\begin{proof}
This follows from Corollary~\ref{cor:der-hocolim-spsp}
\end{proof}

Our next task is to provide models for the maps between the desuspended cross-effects that respect these operad actions. This is somewhat more delicate than one might hope since the `obvious' choice does not quite work.

\begin{definition} \label{def:homeo}
There are homeomorphisms
\[ h_I: \mathbb{R} \times \mathbb{R}^I_0 \arrow{e,t}{\isom} \mathbb{R}^I \]
given by
\[ (t,(t_i)_{i \in I}) \mapsto (t+t_i)_{i \in I} \]
which extend to homeomorphisms (which we also call $h_I$):
\[ S^1 \smsh \mathbb{S}(I) \arrow{e,t}{\isom} (S^1)^{\smsh I} \]
where we identify $(S^1)^{\smsh I}$ with the one-point compactification of $\mathbb{R}^I$. The following diagram of homeomorphisms then commutes
\begin{equation} \label{eq:diagram-homeo}
\begin{diagram}
  \node{\mathbb{R} \times \mathbb{R}^J_0 \times \prod_{j \in J} \mathbb{R}^{I_j}_0} \arrow{e,t}{d_\alpha^{-1}} \arrow{s,l}{h_J}
    \node{\mathbb{R} \times \mathbb{R}^I_0} \arrow{s,r}{h_I} \\
  \node{\mathbb{R}^J \times \prod_{j \in J} \mathbb{R}^{I_j}_0} \arrow{e,t}{\prod h_{I_j}}
    \node{\mathbb{R}^I}
\end{diagram}
\end{equation}
\end{definition}

\begin{definition} \label{def:mL}
We construct a map of symmetric sequences
\[ \un{m}_L: \un{\d}^L[R_X] \to \un{\d}^{L+1}[R_X] \]
as follows. For a positive integer $n$ the required map
\[ \un{m}_L(n): \Map((S^1 \smsh \mathbb{S}(n))^{\smsh L}, \Sigma^\infty \Hom_{\spectra}(X,S^L)^{\smsh n}) \to \Map((S^1 \smsh \mathbb{S}(n))^{\smsh L+1}, \Sigma^\infty \Hom_{\spectra}(X,S^{L+1})^{\smsh n}) \]
is adjoint to a composite of the homeomorphism, given by applying $h_n$ to the final copy of $S^1 \smsh \mathbb{S}(n)$,
\[ (S^1 \smsh \mathbb{S}(n))^{\smsh (L+1)} \arrow{e,t}{\isom} (S^1 \smsh \mathbb{S}(n))^{\smsh L} \smsh S^n \]
and the $n$-fold smash power of the canonical map
\[ S^1 \smsh \Sigma^\infty \Hom_{\spectra}(X,S^L) \to \Sigma^\infty \Hom_{\spectra}(X,S^{L+1}). \]
\end{definition}

\begin{lemma} \label{lem:hocolim-m}
The homotopy colimit of the sequence
\[ \dgTEXTARROWLENGTH=3em\un{\d}^0[R_X] \arrow{e,t}{\un{m}_0} \un{\d}^1[R_X] \arrow{e,t}{\un{m}_1} \un{\d}^2[R_X] \arrow{e,t}{\un{m}_2} \dots \]
is naturally (in $X$) equivalent to the symmetric sequence $\der_*(R_X)$ of derivatives of the representable functor $R_X$.
\end{lemma}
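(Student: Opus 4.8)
The plan is to compare the homotopy colimit of the sequence $\un{\d}^\bullet[R_X]$ with the homotopy colimit computing the derivatives via partially-stabilized cross-effects, namely (\ref{eq:creff}). By Lemma~\ref{lem:dL-RX} we already have, for each fixed $L$, a natural equivalence of symmetric sequences
\[ \Sigma^{-*L}\creff_*(R_X)(S^L,\dots,S^L) \homeq \un{\d}^L[R_X]. \]
So the only thing that remains is to check that under these equivalences the structure maps $\un{m}_L$ of Definition~\ref{def:mL} correspond, up to homotopy, to the stabilization maps (\ref{eq:creff-map}) described explicitly for $R_X$ in Corollary~\ref{cor:der-hocolim-spsp}. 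Once that is established, passing to the homotopy colimit over $L$ and applying (\ref{eq:creff}) gives the claimed equivalence $\hocolim_L \un{\d}^L[R_X] \homeq \der_*(R_X)$, and naturality in $X$ is automatic since every construction in sight (the mapping spectra, the spheres $\mathbb{S}(I)$, the canonical maps $S^1 \smsh \Sigma^\infty\Hom_{\spectra}(X,S^L) \to \Sigma^\infty\Hom_{\spectra}(X,S^{L+1})$) is natural in $X$.

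First I would unwind both maps in the $n$-th term. The map (\ref{eq:creff-map}) for $R_X$ is, by Corollary~\ref{cor:der-hocolim-spsp}, the map
\[ \Map\bigl(S^{nL}, \Sigma^\infty\Hom_{\spectra}(X,S^L)^{\smsh n}\bigr) \to \Map\bigl(S^{n(L+1)}, \Sigma^\infty\Hom_{\spectra}(X,S^{L+1})^{\smsh n}\bigr) \]
adjoint to the composite of the canonical identification $S^{n(L+1)} \isom S^{nL} \smsh S^n$ with the $n$-fold smash power of the structure map $S^1 \smsh \Sigma^\infty\Hom_{\spectra}(X,S^L) \to \Sigma^\infty\Hom_{\spectra}(X,S^{L+1})$. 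The map $\un{m}_L(n)$ of Definition~\ref{def:mL} is formally the same, except that the source and target of the $\Map$ are written as $(S^1 \smsh \mathbb{S}(n))^{\smsh L}$ and $(S^1 \smsh \mathbb{S}(n))^{\smsh(L+1)}$, and the ``shift by one suspension coordinate'' is implemented via the homeomorphism $h_n \colon S^1 \smsh \mathbb{S}(n) \xrightarrow{\isom} (S^1)^{\smsh n}$ of Definition~\ref{def:homeo}, applied to the last copy. So the two maps literally agree once one fixes identifications $S^{nL} \isom (S^1 \smsh \mathbb{S}(n))^{\smsh L}$ compatibly in $L$; the only subtlety is that one must use the $h_n$'s consistently. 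The identification $(S^1 \smsh \mathbb{S}(n))^{\smsh L} \isom S^{nL}$ is obtained by iterating $h_n$, and Definition~\ref{def:mL} is engineered so that $\un{m}_L$ adds the new suspension coordinate ``at the front'' of the extra $\mathbb{S}(n)$-factor while $h_n$ turns it into a genuine $S^n$; this is exactly what (\ref{eq:creff-map}) does. Hence the square
\[ \begin{diagram}
  \node{\Sigma^{-*L}\creff_*(R_X)(S^L,\dots,S^L)} \arrow{s} \arrow{e,t}{\sim} \node{\un{\d}^L[R_X]} \arrow{s,r}{\un{m}_L} \\
  \node{\Sigma^{-*(L+1)}\creff_*(R_X)(S^{L+1},\dots,S^{L+1})} \arrow{e,t}{\sim} \node{\un{\d}^{L+1}[R_X]}
\end{diagram} \]
commutes up to homotopy for each $L$.

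The main obstacle I anticipate is the bookkeeping in this last commutativity check: one has to be careful about which $S^1$ in $(S^1 \smsh \mathbb{S}(n))^{\smsh(L+1)}$ is identified with the ``new'' suspension coordinate and in what order the various $h_n$ homeomorphisms and the structure maps $S^1 \smsh \Sigma^\infty\Hom_{\spectra}(X,S^L) \to \Sigma^\infty\Hom_{\spectra}(X,S^{L+1})$ are composed, since a priori there are several plausible conventions differing by a permutation of smash factors. The relevant point is that all of these conventions differ only by permutations of suspension coordinates, which act by degree $\pm 1$ and are homotopic to the identity after smashing with enough spheres — but one should note that the equivalences of Lemma~\ref{lem:dL-RX} were chosen precisely via the $h_n$'s, so in fact the diagram commutes strictly, not just up to homotopy, once the definitions are laid side by side. (This is the reason Definition~\ref{def:mL} uses the homeomorphism $h_n$ rather than the more naive shuffle map; the remark after Definition~\ref{def:mL} that ``the obvious choice does not quite work'' refers exactly to this.) Given the strict compatibility, passing to $\hocolim_L$ and invoking (\ref{eq:creff}) completes the proof.
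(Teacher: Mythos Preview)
Your proposal is correct and follows the same approach as the paper: identify each $\un{\d}^L[R_X]$ with the partially-stabilized cross-effects via Lemma~\ref{lem:dL-RX}/Corollary~\ref{cor:der-hocolim-spsp}, check that $\un{m}_L$ matches the stabilization map (\ref{eq:creff-map}), and pass to the homotopy colimit using (\ref{eq:creff}). The paper's proof is the one-line version of exactly this.

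One small correction to your parenthetical: the comment that ``the obvious choice does not quite work'' appears \emph{before} Definition~\ref{def:homeo} and refers to making the maps compatible with the $\S^{-(L+1)}$-module structures, not to the commutativity of your square. Indeed, the map $\un{m}_L$ as defined is \emph{not} a module map (this is precisely what necessitates Proposition~\ref{prop:bar-map}); the use of $h_n$ is what makes $\un{m}_L$ visibly model the stabilization map, but it does not fix the module-compatibility issue. This does not affect the validity of your argument for the lemma at hand.
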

\begin{proof}
It follows from the Definition of $\un{m}_L$, and Corollary~\ref{cor:der-hocolim-spsp}, that $\un{m}_L$ models the stabilization map
\[ \Sigma^{-*L}\creff_*(R_X)(S^L,\dots,S^L) \to \Sigma^{-*(L+1)}\creff_*(R_X)(S^{L+1},\dots,S^{L+1}) \]
of (\ref{eq:creff-map}). The claim follows.
\end{proof}

Unfortunately the map $\un{m}_L$ is not a morphism of $\S^{-(L+1)}$-modules so we cannot apply our general theory to produce a $\C_{\S_\bullet}$-coalgebra structure on the above homotopy colimit. However, it is possible to extend $\un{m}_L$ to a morphism of $\S^{-(L+1)}$-modules
\[ \un{m}'_L : \B(\un{\d}^L[R_X],\S^{-(L+1)},\S^{-(L+1)}) \to \un{\d}^{L+1}[R_X] \]
where the left-hand side is the standard bar resolution of $\un{\d}^L[R_X]$ as a $\S^{-(L+1)}$-module.

Recall that for any operad $\P$ and $\P$-module $\M$, the bar resolution is a weak equivalence of $\P$-modules
\[ p: \B(\M,\P,\P) \weq \M. \]
The map $p$ has a section
\[ s: \M \weq \B(\M,\P,\P) \]
that is a morphism of symmetric sequences, but not of $\P$-modules. The map $s$ is given by the composite
\[ \M \arrow{e,t}{\eta} \M \circ \P \to \B(\M,\P,\P) \]
of the unit map for the operad $\P$ with the inclusion of the $0$-simplices into the bar construction. We then have the following proposition.

\begin{proposition} \label{prop:bar-map}
There is a natural morphism of $\S^{-(L+1)}$-modules
\[ \un{m}'_L : \B(\un{\d}^L[R_X],\S^{-(L+1)},\S^{-(L+1)}) \to \un{\d}^{L+1}[R_X] \]
such that the following diagram commutes
\[ \begin{diagram}
  \node{\B(\un{\d}^L[R_X],\S^{-(L+1)},\S^{-(L+1)})} \arrow{se,t}{\un{m}'_L} \\
  \node{\un{\d}^L[R_X]} \arrow{n,lr}{s}{\sim} \arrow{e,b}{\un{m}_L} \node{\un{\d}^{L+1}[R_X]}
\end{diagram} \]
\end{proposition}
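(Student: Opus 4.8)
The plan is to build $\un{m}'_L$ by extending the non-equivariant map $\un{m}_L$ of Definition~\ref{def:mL} across the bar resolution, using the operad structure on $\S^{-(L+1)}$ to supply the coherence that $\un{m}_L$ by itself lacks. The key observation is that a morphism of $\S^{-(L+1)}$-modules out of the bar construction $\B(\un{\d}^L[R_X],\S^{-(L+1)},\S^{-(L+1)})$ is exactly the data of a simplicial map whose $r$-simplices send
\[ \un{\d}^L[R_X] \circ (\S^{-(L+1)})^{\circ r} \circ \S^{-(L+1)} \to \un{\d}^{L+1}[R_X], \]
compatibly with the face and degeneracy maps, and such that the last variable acts via the $\S^{-(L+1)}$-module structure on the target. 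So it suffices to specify a compatible family of such maps; by the usual adjunction between the bar construction and its geometric realization, it is enough to give the $0$-simplices together with the $\S^{-(L+1)}$-action on the target, provided we check that the two induced maps on $1$-simplices agree. Concretely, I would define the $0$-simplex map to be the composite
\[ \un{\d}^L[R_X] \circ \S^{-(L+1)} \xrightarrow{\un{m}_L \circ \mathrm{id}} \un{\d}^{L+1}[R_X] \circ \S^{-(L+1)} \xrightarrow{\nu} \un{\d}^{L+1}[R_X] \]
where $\nu$ is the $\S^{-(L+1)}$-module structure map of Definition~\ref{def:creff-module}. The map $s$ factors through the $0$-simplices together with the operad unit $S \to \S^{-(L+1)}(1)$, so commutativity of the triangle in the statement is immediate from unitality of the module action.

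**Key steps.** First, I would make the identifications explicit: unwinding Definition~\ref{def:creff-module} and Definition~\ref{def:mL}, the only structure in play is (a) the cooperad structure maps $d_\alpha$ on $\mathbb{S}$, (b) the homeomorphisms $h_I$ of Definition~\ref{def:homeo}, and (c) the right $\Com$-module structure $\Delta_\alpha$ on $\Sigma^\infty\Hom_{\spectra}(X,S^L)$. Second, I would write down the $r$-simplex map as the $(L{+}1)$-fold convolution of the bar-simplex data of $\S^{-(L+1)}$ against the appropriate sphere-cooperad maps, exactly as the module structure $\nu_\alpha$ was defined but now with $r$ composable layers of $\S^{-(L+1)}$, inserting one final copy of $S^1 \smsh \mathbb{S}$ (compactified via $h$) precisely where $\un{m}_L$ does. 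Third, I would verify the simplicial identities; the face maps $d_i$ for $0 < i < r$ and $d_r$ use operad composition and the module action, so those compatibilities reduce to associativity of the convolved structure, while $d_0$ on the $0$-and-$1$-simplices is where we must check that $\un{m}_L$ interacts correctly with a single instance of operad composition. This last check is the heart of the matter and is precisely where diagram~(\ref{eq:diagram-homeo}) is used: the issue is that $\un{m}_L$ inserts the extra suspension coordinate via $h_n$ on the whole set $\un{n}$, whereas the module action inserts it via $h_J$ followed by $d_\alpha^{-1}$, and the commuting square~(\ref{eq:diagram-homeo}) says exactly that these two ways of distributing the $S^1$ coordinate agree after applying $\prod_j h_{I_j}$.

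**Main obstacle.** The genuine difficulty — the reason $\un{m}_L$ is not already a module map — is the interaction of the extra $S^1$ smash factor with operad composition, and this is exactly what diagram~(\ref{eq:diagram-homeo}) resolves; so the main obstacle is bookkeeping rather than conceptual. I would expect the hard part to be organizing the $r$-simplex maps so that \emph{all} the simplicial identities hold simultaneously and naturally in $X$, without the combinatorics of the nested surjections $I \epi J \epi \cdots$ and the $(L{+}1)$ suspension coordinates becoming unmanageable. One clean way to control this, which I would pursue, is to observe that $\un{\d}^L[R_X]$ is itself obtained by convolving the $\Com$-module $\Sigma^\infty\Hom_{\spectra}(X,S^L)^{\smsh *}$ with the symmetric sequence $(S^1 \smsh \mathbb{S})^{\smsh L}$, so that the entire construction — including the bar resolution and the maps $\un{m}_L$, $\un{m}'_L$ — is the image under a single "convolve with $(S^1\smsh\mathbb{S})^{\smsh\bullet}$" operation applied to an evident map of $\Com$-modules coming from the canonical stabilization $S^1 \smsh \Sigma^\infty\Hom_{\spectra}(X,S^L) \to \Sigma^\infty\Hom_{\spectra}(X,S^{L+1})$. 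With that reformulation the simplicial identities for $\un{m}'_L$ follow formally from those for the underlying $\Com$-module map together with the cooperad axioms for $\mathbb{S}$ (Lemma~\ref{lem:sphere-cooperad}) and the compatibility~(\ref{eq:diagram-homeo}), and the triangle in the statement is, as noted, immediate from unitality.
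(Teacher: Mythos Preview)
Your proposal has a genuine gap at the point you yourself flag as the ``heart of the matter.'' You claim that specifying the $0$-simplex map
\[
f_0 \colon \un{\d}^L[R_X] \circ \S^{-(L+1)} \xrightarrow{\un{m}_L \circ 1} \un{\d}^{L+1}[R_X] \circ \S^{-(L+1)} \xrightarrow{\nu} \un{\d}^{L+1}[R_X]
\]
and checking that the two $1$-simplex compositions agree is enough to produce a map out of the realized bar construction. That would be true only if you were producing a simplicial map to the \emph{constant} simplicial object $\un{\d}^{L+1}[R_X]$, in which case the condition is $f_0 d_0 = f_0 d_1$ on $\un{\d}^L[R_X] \circ \S^{-(L+1)} \circ \S^{-(L+1)}$. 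Unwinding, this condition is exactly
\[
\nu_{L+1} \circ (\un{m}_L \circ 1) \;=\; \un{m}_L \circ \nu_L,
\]
i.e.\ that $\un{m}_L$ is a strict $\S^{-(L+1)}$-module map. It is not, and diagram~(\ref{eq:diagram-homeo}) does not make it one: that square only tells you how $h_I$ and the cooperad map $d_\alpha$ interact on the new $(L{+}1)$-st sphere coordinate, but the pulled-back $\S^{-(L+1)}$-action on $\un{\d}^L[R_X]$ collapses that coordinate via the unit map $\eta\colon S^0 \to \mathbb{S}$ (sending everything but the origin to the basepoint), whereas the action on $\un{\d}^{L+1}[R_X]$ uses the full homeomorphism $d_\alpha$ on it. Those differ on the nose; at best they agree up to homotopy.

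What the paper actually does is supply that homotopy explicitly, together with all higher coherences. The maps
\[
r^N(t,u_0,\dots,u_N) = \bigl(t_0 u_0,\dots,(t_0+\cdots+t_j)u_j,\dots,u_N,\;(t_1+\cdots+t_N)u_0,\dots,0\bigr)
\]
of Definition~\ref{def:homotopy} depend nontrivially on the barycentric coordinate $t \in \Delta^N$: they use scalar multiplication to interpolate between placing each $u_j$ in the ``old'' $L$ copies of $\mathbb{S}$ or the ``new'' $(L{+}1)$-st copy. The maps $\un{m}^\lambda_L$ built from $r^N$ in Definition~\ref{def:m-lambda} are therefore not maps from the $N$-simplices alone but from $\Delta^N_+$ smashed with them, and the compatibilities with faces and degeneracies (Lemma~\ref{lem:homotopy}, Lemma~\ref{lem:bar-map}) are what glue these into a single map out of the geometric realization. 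Your convolution reformulation in the last paragraph cannot sidestep this: the obstruction is not bookkeeping but the need for an explicit contracting homotopy of the sphere cooperad compatible with its decomposition maps, and that is precisely what the formula for $r^N$ provides.
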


The construction of the map $\un{m}'_L$ is quite involved and we defer the proof of Proposition~\ref{prop:bar-map} to Section~\ref{sec:bar-map}.

We now use the maps $\un{m}'_L$ to construct a sequence of $\S^{-L}$-modules whose actions do commute with the operad maps $\S^{-(L+1)} \to \S^{-L}$.

\begin{definition} \label{def:d'L}
For $X \in \finspec$, we construct the following diagram
\[ \begin{diagram}
  \node[3]{\B(\un{\d}^2[R_X],\S^{-3},\S^{-3})} \arrow{e,t}{\un{m}'_2} \arrow{s,lr}{\sim}{p} \node{\dots} \\
  \node[2]{\B(\un{\d}^1[R_X],\S^{-2},\S^{-2})} \arrow{e,t}{\un{m}'_1} \arrow{s,lr}{\sim}{p} \node{\un{\d}^2[R_X]} \arrow[2]{s,l}{\sim} \\
  \node{\B(\un{\d}^0[R_X],\S^{-1},\S^{-1})} \arrow{e,t}{\un{m}'_0} \arrow{s,lr}{\sim}{p} \node{\un{\d}^1[R_X]} \arrow{s,l}{\sim} \\
  \node{\un{\d}^0[R_X]} \arrow{e,b}{\un{\un{m}}_0} \node{\un{\un{\d}}^1[R_X]} \arrow{e,b}{\un{\un{m}}_1} \node{\un{\un{\d}}^2[R_X]} \arrow{e,b}{\un{\un{m}}_2} \node{\dots}
\end{diagram} \]
so that each rectangle is a homotopy pushout in the category of $\S^{-L}$-modules for respective values of $L$. We thus obtain a sequence
\[ \dgTEXTARROWLENGTH=2.5em \un{\d}^0[R_X] = \un{\un{\d}}^0[R_X] \arrow{e,t}{\un{\un{m}}_0} \un{\un{\d}}^1[R_X] \arrow{e,t}{\un{\un{m}}_1} \un{\un{\d}}^2[R_X] \arrow{e,t}{\un{\un{m}}_2} \dots \]
such that $\un{\un{\d}}^L[R_X]$ is a $\S^{-L}$-module, and such that the map
\[ \un{\un{m}}_L: \un{\un{\d}}^L[R_X] \to \un{\un{\d}}^{L+1}[R_X] \]
is a morphism of $\S^{-(L+1)}$-modules that is equivalent (in the homotopy category) to $\un{m}_L$. In other words $\un{\un{\d}}^\bullet[R_X]$ is a module over the pro-operad $\S_\bullet$.

Finally, by applying the construction of Definition~\ref{def:cofcoalg}, we obtain a diagram
\begin{equation} \label{eq:d-seq} \begin{diagram}
  \node{\d^0[R_X]} \arrow{e,t}{m_0} \arrow{s,l}{\sim} \node{\d^1[R_X]} \arrow{e,t}{m_1} \arrow{s,l}{\sim} \node{\d^2[R_X]} \arrow{e,t}{m_2} \arrow{s,l}{\sim} \node{\dots} \\
  \node{\un{\un{\d}}^0[R_X]} \arrow{e,t}{\un{\un{m}}_0} \node{\un{\un{\d}}^1[R_X]} \arrow{e,t}{\un{\un{m}}_1} \node{\un{\un{\d}}^2[R_X]} \arrow{e,t}{\un{\un{m}}_2} \node{\dots}
\end{diagram}
\end{equation}
in which each vertical map is an equivalence of $\S^{-L}$-modules for the appropriate $L$, and each $\d^L[R_X]$ is $\Sigma$-cofibrant.
\end{definition}

We are now in a position to choose models for the derivatives of representable functors as $\C_{\S_\bullet}$-coalgebras.

\begin{definition} \label{def:d-RX}
For $X \in \finspec$, we define
\[ \d[R_X] := \hocolim_{L} \d^L[R_X]. \]
As in Definition~\ref{def:hocolim-module-coalgebra}, this symmetric sequence has the structure of a $\C_{\S_\bullet}$-coalgebra, where $\C_{\S_\bullet}$ is the comonad of Definition~\ref{def:sphere-comonad}. Moreover, $\d[R_X]$ is $\Sigma$-cofibrant and the functor $X \mapsto \d[R_X]$ is simplicially-enriched.
\end{definition}

\begin{lemma} \label{lem:d-RX}
The $\C_{\S_\bullet}$-coalgebra $\d[R_X]$ is naturally equivalent to the symmetric sequence $\der_*(R_X)$ of derivatives of the representable functor $R_X = \Sigma^\infty \Hom_{\spectra}(X,-)$.
\end{lemma}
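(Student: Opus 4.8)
The plan is to trace the equivalence backwards through the zigzag of constructions that produced $\d[R_X]$ from the explicit cross-effect models, using at each stage that the relevant maps are weak equivalences of symmetric sequences (even when they are not strict maps of modules over a single operad). First I would recall that, by Definition~\ref{def:d-RX}, $\d[R_X] = \hocolim_L \d^L[R_X]$, and that each vertical map $\d^L[R_X] \weq \un{\un{\d}}^L[R_X]$ in diagram~(\ref{eq:d-seq}) is a weak equivalence of symmetric sequences (indeed of $\S^{-L}$-modules). Since homotopy colimits of symmetric sequences are computed termwise and preserve termwise weak equivalences, it suffices to show that $\hocolim_L \un{\un{\d}}^L[R_X] \homeq \der_*(R_X)$.

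Next I would peel off the layer introduced in Definition~\ref{def:d'L}. There, each $\un{\un{\d}}^L[R_X]$ fits into a ladder of homotopy pushouts in which $\un{\un{\d}}^L[R_X]$ receives a weak equivalence from $\B(\un{\d}^{L-1}[R_X],\S^{-L},\S^{-L})$, and the bar resolution map $p$ is a weak equivalence $\B(\un{\d}^{L-1}[R_X],\S^{-L},\S^{-L}) \weq \un{\d}^{L-1}[R_X]$. Chasing the homotopy commutative ladder, the map $\un{\un{m}}_L$ is, in the homotopy category of symmetric sequences, identified with $\un{m}_L$ (this is asserted in Definition~\ref{def:d'L} and follows from Proposition~\ref{prop:bar-map}: $\un{m}'_L \circ s \homeq \un{m}_L$ and $s$ is a section of the equivalence $p$). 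Hence the sequence $\{\un{\un{\d}}^L[R_X], \un{\un{m}}_L\}$ is levelwise weakly equivalent, compatibly with the transition maps up to homotopy, to the sequence $\{\un{\d}^L[R_X], \un{m}_L\}$. Since a map of sequential diagrams of symmetric sequences that is a levelwise weak equivalence induces a weak equivalence on homotopy colimits (one can rigidify, e.g.\ by replacing the homotopy-commuting square by a genuine one using the cylinder, or simply invoke that $\hocolim$ of a sequential diagram only depends on the zigzag up to homotopy), we get
\[ \hocolim_L \un{\un{\d}}^L[R_X] \homeq \hocolim_L \un{\d}^L[R_X]. \]

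Finally I would invoke Lemma~\ref{lem:hocolim-m}, which states precisely that $\hocolim_L \un{\d}^L[R_X]$, formed along the maps $\un{m}_L$, is naturally equivalent to $\der_*(R_X)$. Combining the three displayed equivalences gives the natural equivalence $\d[R_X] \homeq \der_*(R_X)$ of symmetric sequences, completing the proof. The $\C_{\S_\bullet}$-coalgebra structure on $\d[R_X]$ is not part of the assertion of this lemma (only the underlying symmetric sequence is compared), so nothing further is needed.

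The main obstacle is the second step: carefully justifying that the homotopy-commutativity of the ladder in Definition~\ref{def:d'L} (together with the identification $\un{m}'_L \circ s \homeq \un{m}_L$ from Proposition~\ref{prop:bar-map}) is enough to conclude that the two sequential diagrams have equivalent homotopy colimits. One must be a little careful because the squares in the ladder commute only up to homotopy and the transition maps change operads; the cleanest route is to note that all spectra in sight are fibrant, all the symmetric sequences are $\Sigma$-cofibrant by construction, and then to replace the diagram by an equivalent strictly commutative one (e.g.\ using mapping cylinders on the transition maps), after which termwise equivalence of sequential diagrams gives equivalence of homotopy colimits by the usual cofinality/telescope argument. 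Everything else is bookkeeping using results already established: Lemma~\ref{lem:hocolim-m}, Proposition~\ref{prop:bar-map}, and the fact that $\hocolim$ over $L$ is termwise and homotopy-invariant.
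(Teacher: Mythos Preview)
Your proposal is correct and follows essentially the same approach as the paper's proof, which is a one-line appeal to Proposition~\ref{prop:bar-map} and Lemma~\ref{lem:hocolim-m}; you have simply unpacked the chain of equivalences in more detail. One small remark: the ``main obstacle'' you flag is milder than you suggest, since $\un{m}'_L \circ s = \un{m}_L$ holds strictly by Proposition~\ref{prop:bar-map} and $p\circ s = \mathrm{id}$ strictly, so once the homotopy pushouts are modeled as strict pushouts the relevant ladder commutes on the nose and no rigidification is needed.
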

\begin{proof}
By Proposition~\ref{prop:bar-map}, the homotopy colimits of the horizontal sequences in (\ref{eq:d-seq}) are equivalent to the homotopy colimit in Lemma~\ref{lem:hocolim-m}.
\end{proof}

Having established a $\C_{\S_\bullet}$-coalgebra structure on the derivatives of the representable functors, we left Kan extend to all pointed simplicial $F: \finspec \to \spectra$.

\begin{definition} \label{def:dF-spsp}
For an arbitrary pointed simplicial functor $F: \finspec \to \spectra$, we define $\d^L[F]$ by the enriched coend
\[ \d^L[F] := F(X) \smsh_{X \in \finspec} \d^L[R_X] \]
over the simplicial category $\finspec$, and
\[ \d[F] := F(X) \smsh_{X \in \finspec} \d[R_X] \isom \hocolim_{L} \d^L[F]. \]
The diagram $\d^\bullet[F]$ is a module over the pro-operad $\S_\bullet$ and its homotopy colimit $\d[F]$ is therefore a $\C_{\S_\bullet}$-coalgebra.
\end{definition}

\begin{proposition} \label{prop:dF-spsp}
For $F \in [\finspec,\spectra]$ cofibrant, we have natural equivalences of symmetric sequences
\[ \Sigma^{-*L}\creff_*F(S^L,\dots,S^L) \weq \d^L[F] \]
and, taking the homotopy colimit as $L \to \infty$, an equivalence
\[ \der_*(F) \weq \d[F]. \]
\end{proposition}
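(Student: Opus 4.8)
The plan is to follow closely the strategy used in the proof of Proposition~\ref{prop:dF}, reducing first to the representable functors $R_X$ and then passing to a general (cofibrant) $F$ via the enriched coend $F(X) \smsh_{X \in \finspec} -$.

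First I would assemble, for each $X \in \finspec$, a natural zigzag of equivalences of symmetric sequences connecting $\Sigma^{-*L}\creff_*(R_X)(S^L,\dots,S^L)$ to $\d^L[R_X]$. Lemma~\ref{lem:dL-RX} provides an equivalence with $\un{\d}^L[R_X]$, and the ladder of homotopy pushouts in Definition~\ref{def:d'L}, together with the $\Sigma$-cofibrant replacement supplied by the construction of Definition~\ref{def:cofcoalg}, provides equivalences $\un{\d}^L[R_X] \homeq \un{\un{\d}}^L[R_X] \homeq \d^L[R_X]$. Since the intermediate terms $\creff_*(R_X)$ need not be $\Sigma$-cofibrant, I would interpose a functorial $\Sigma$-cofibrant replacement $\widetilde{\creff}_*(R_X)$ of the cross-effects, so that the entire zigzag runs through diagrams $(\finspec)^{op} \to \symseq$ that are objectwise $\Sigma$-cofibrant and natural in $X$. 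One also checks that these equivalences are compatible with the maps $\un{m}_L$, $\un{\un{m}}_L$ and $m_L$ that exhibit $\d^\bullet[R_X]$ as a module over the pro-operad $\S_\bullet$, which follows from Lemma~\ref{lem:hocolim-m} and Proposition~\ref{prop:bar-map}.

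Next I would apply the enriched coend $F(X) \smsh_{X \in \finspec} -$ to this zigzag. Because $F$ is cofibrant in the projective model structure, this functor preserves weak equivalences between objectwise-$\Sigma$-cofibrant diagrams $(\finspec)^{op} \to \symseq$, so we obtain a zigzag of equivalences $F(X) \smsh_X \Sigma^{-*L}\widetilde{\creff}_*(R_X)(S^L,\dots,S^L) \homeq \d^L[F]$. It then remains to identify the left-hand side with $\Sigma^{-*L}\creff_*F(S^L,\dots,S^L)$: the strict coend agrees with the corresponding homotopy coend since $F$ is cofibrant; cross-effects commute with homotopy colimits and hence with this homotopy coend; and by the dual Yoneda Lemma $F(X) \smsh_X R_X \isom F$. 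This yields the first claimed equivalence, naturally in $F$. Taking the homotopy colimit over $L$, and using that $\der_*F \homeq \hocolim_L \Sigma^{-*L}\creff_*F(S^L,\dots,S^L)$ by (\ref{eq:creff}) together with the fact that the homotopy colimit over $L$ commutes with the coend, gives the equivalence $\der_*(F) \weq \d[F]$.

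The only genuinely delicate point is the bookkeeping required to keep every diagram in sight objectwise $\Sigma$-cofibrant, so that the strict coend $F(X) \smsh_X -$ computes the derived coend — exactly the issue flagged in the remark after Definition~\ref{def:dF-spsp}. Everything else is formal, given Lemma~\ref{lem:dL-RX}, Lemma~\ref{lem:hocolim-m}, Proposition~\ref{prop:bar-map}, and the constructions of Definitions~\ref{def:d'L} and~\ref{def:dF-spsp}.
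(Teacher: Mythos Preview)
Your proposal is correct and follows essentially the same approach as the paper's own proof, which is a one-line reference to Lemmas~\ref{lem:dL-RX} and \ref{lem:d-RX} together with the fact that cross-effects commute with homotopy colimits; you have simply unpacked this in the style of the proof of Proposition~\ref{prop:dF}. One small slip: the remark about the strict coend requiring $F$ cofibrant appears after Definition~\ref{def:dF}, not after Definition~\ref{def:dF-spsp}.
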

\begin{proof}
Since taking cross-effects commutes with homotopy colimits, this follows from Lemmas~\ref{lem:dL-RX} and \ref{lem:d-RX}.
\end{proof}

Note that we have a stronger result than in the case of functors from spaces to spectra: the terms $\d^L[F]$ are equivalent to the partially-stabilized cross-effects without any analyticity condition on $F$.

\subsection{Classification of polynomial functors from spectra to spectra}

We now turn to the classification of polynomial functors from spectra to spectra. The structure of this section is similar to the corresponding section for functors from spaces to spectra. The main result is that the comonad $\C_{\S_\bullet}$ is equivalent to the comonad $\C$ constructed in \cite{arone/ching:2014} which is known to act on the derivatives of such a functor, and from which the Taylor tower can be recovered.

\begin{definition} \label{def:comonad-C}
The functor $\d: [\finspec,\spectra] \to \symseq$ is simplicially-enriched and has a simplicial right adjoint $\Phi: \symseq \to [\finspec,\spectra]$ given by
\[ \Phi(\A)(X) := \Map_{\Sigma}(\d[R_X],\A). \]
We then define a comonad $\C$ on $\symseq$ by
\[ \C = \d\c\Phi \]
where $\c$ is a comonad cofibrant replacement functor for $[\finspec,\spectra]$. We obtain a map of comonads $\theta: \C \to \C_{\S_\bullet}$ given by the composites
\[ \theta_{\A}: \C(\A) \to \C_{\S_\bullet}(\C(\A)) \to \C_{\S_\bullet}(\A) \]
in which the first map is given by the $\C_{\S_\bullet}$-coalgebra structure on $\d[\c\Phi\A]$, and the second by the counit for $\C$.
\end{definition}

\begin{theorem} \label{thm:spsp}
For any symmetric sequence $\A$, the comonad map
\[ \theta_{\A} : \C(\A) \to \C_{\S_\bullet}(\A) \]
is a weak equivalence.
\end{theorem}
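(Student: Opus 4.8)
The plan is to mirror, as closely as possible, the proof of Theorem~\ref{thm:topsp}, replacing the Koszul-duality input for $\based \to \spectra$ with the direct computation of cross-effects of representable functors available here (Lemma~\ref{lem:creff-spsp} and Corollary~\ref{cor:der-hocolim-spsp}). First I would reduce to the case of a bounded symmetric sequence $\A$, since the general case follows by passing to truncations $\A_{\leq n}$, taking the homotopy limit over $n$ (which commutes with $\C_{\S^{-L}}$ because each $\C_{\S^{-L}}$ is a cofree-module comonad and hence commutes with homotopy limits), and taking the homotopy colimit over $L$; the same argument as at the end of the proof of Theorem~\ref{thm:topsp} then upgrades the bounded case to the general one, using Proposition~\ref{prop:dF-spsp} to identify $\hocolim_L \holim_n \d^L[\c\Phi\A_{\leq n}]$ with $\C_{\S_\bullet}(\A)$.

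For bounded $\A$, the key is to produce natural zigzags of equivalences of $\S^{-L}$-modules
\[ \d^L[\c\Phi\A] \homeq \C_{\S^{-L}}(\A) \]
compatible with the pro-operad maps $\S^{-(L+1)} \to \S^{-L}$ on one side and the comonad maps $\C_{\S^{-L}} \to \C_{\S^{-(L+1)}}$ on the other. I would start by computing $\d^L[\c\Phi\A]$. By Definition~\ref{def:dF-spsp}, $\d^L[\Phi\A] = \Phi\A(X) \smsh_{X} \d^L[R_X] = \Map_\Sigma(\d[R_X],\A) \smsh_X \d^L[R_X]$; using the dual Yoneda lemma together with the simplicial enrichment of $X \mapsto \d^L[R_X]$ and the fact that $\d[R_X] = \hocolim_{L'} \d^{L'}[R_X]$, this coend should collapse to something expressible directly in terms of the $\S^{-L}$-module structure on $\un{\d}^L[R_X]$ from Definition~\ref{def:creff-module}. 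Concretely, I expect $\d^L[\c\Phi\A](k)$ to be equivalent to
\[ \prod_{n}\left[\prod_{\un{n}\epi\un{k}} \Map\!\bigl(\mathbb{S}(n_1)^{\smsh L}\smsh\cdots\smsh\mathbb{S}(n_k)^{\smsh L},\A(n)\bigr)\right]^{\Sigma_n}, \]
after cancelling the $(S^1\smsh\mathbb{S}(n))^{\smsh L}$ against $\Sigma^\infty\Hom_\spectra(X,S^L)^{\smsh n}$ in the coend; this is precisely $\C_{\S^{-L}\Com}(\A)(k)$ by Definition~\ref{def:sphere-comonad}, and hence equivalent to $\C_{\S^{-L}}(\A)(k)$ since $\S^{-L}$ is a cofibrant replacement of $\S^{-L}\Com$ and $\A$ is bounded so only finitely many factors contribute. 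The cooperad structure on $\mathbb{S}$ used in Definition~\ref{def:creff-module} is exactly what makes this identification one of $\S^{-L}$-modules, and the homeomorphisms $h_I$ of Definition~\ref{def:homeo} (which underlie the maps $\un{m}_L$, $\un{\un{m}}_L$) are compatible with the maps $\eta$ of Lemma~\ref{lem:map-sphere-cooperad} that induce $\S^{-(L+1)}\to\S^{-L}$, giving the required compatibility across $L$.

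Once these zigzags $\zeta^L \colon \d^L[\c\Phi\A] \homeq \C_{\S^{-L}}(\A)$ are in hand, taking $\hocolim_L$ yields a zigzag of equivalences of $\C_{\S_\bullet}$-coalgebras $\zeta\colon \C(\A)\homeq\C_{\S_\bullet}(\A)$, and the final diagram-chase is identical to the one closing the proof of Theorem~\ref{thm:topsp}: one checks that $\zeta$ intertwines the two coalgebra-structure maps (this is what makes the left-hand square commute, since $\zeta$ is a coalgebra equivalence) and that $\epsilon_{\C_{\S_\bullet}}\circ\zeta = \epsilon_\C$ up to homotopy (comparing the description of the counit of $\C = \d\c\Phi$ from \cite{arone/ching:2014} with that of $\C_{\S_\bullet}$ in Definition~\ref{def:comonad-operad}), whence $\theta$ is the composite of the top row of that diagram and equals the identity of the bottom row up to the equivalence $\zeta$, so $\theta$ is an equivalence. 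The main obstacle I anticipate is the bookkeeping in the coend-collapse identifying $\d^L[\c\Phi\A]$ with $\C_{\S^{-L}}(\A)$ as $\S^{-L}$-modules: one must be careful that the enriched coend over $\finspec$ really does cancel the sphere coordinates correctly (the relevant computation is the $\spectra \to \spectra$ analogue of the chain $\N[\Phi\A]\homeq\A$ from the proof of Theorem~\ref{thm:topsp}, which here is cleaner because $\creff_*R_X$ has the explicit model of Lemma~\ref{lem:creff-spsp}), and that replacing $\S^{-L}\Com$ by its cofibrant model $\S^{-L}$ does not disturb the identification — this uses boundedness of $\A$ so that the products in $\C_{\S^{-L}}(\A)$ are finite and the cofibrant-replacement equivalence $\S^{-L}\weq\S^{-L}\Com$ induces an equivalence on $\C_{(-)}(\A)$. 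A secondary technical point, as in the spaces case, is that $F\mapsto\un{\un{\d}}[F]$ (built via the bar-resolution maps of Proposition~\ref{prop:bar-map}) is not obviously spectrally enriched, so one should work throughout with the enriched model $\d[R_X]$ of Definition~\ref{def:d-RX} and the left Kan extension of Definition~\ref{def:dF-spsp}, exactly as was done in Section~\ref{sec:topsp}.
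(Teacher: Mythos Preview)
Your overall architecture matches the paper exactly: reduce to bounded $\A$, build levelwise equivalences $\zeta^L\colon \d^L[\c\Phi\A]\homeq \C_{\S^{-L}}(\A)$ of $\S^{-L}$-modules compatible with the pro-operad maps, take $\hocolim_L$, and finish with the same counit diagram-chase as in Theorem~\ref{thm:topsp}. That scaffolding is correct and the paper states this explicitly.

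Where your plan goes off the rails is the construction of $\zeta^L$. You propose to evaluate the coend $\Phi\A(X)\smsh_X \d^L[R_X] = \Map_\Sigma(\d[R_X],\A)\smsh_X \d^L[R_X]$ directly via ``dual Yoneda''. But $\Map_\Sigma(\d[R_X],\A)$ is not corepresentable in $X$ in any way that lets Yoneda fire---$\d[R_X]$ is a $\hocolim_{L'}$ of $\d^{L'}[R_X]$, none of which is of the form $\Hom(X,-)$---so the ``cancelling the spheres'' step has no mechanism behind it. The paper instead takes a more concrete route: reduce (since both sides respect finite products) to $\A$ concentrated in a single degree $n$, invoke the explicit identification $\Phi\A(X)\homeq(\A(n)\smsh X^{\smsh n})^{h\Sigma_n}$ from \cite[5.11]{arone/ching:2014}, and then use that $\d^L$ commutes with homotopy fixed points and with smashing by a fixed spectrum to reduce further to $F(X)=X^{\smsh n}$. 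At that point the claim becomes that the $\S^{-L}$-module structure map on $\d^L[\c(X^{\smsh n})]$ is an equivalence, which after unwinding Definition~\ref{def:creff-module} comes down to
\[ \c(X^{\smsh n})\smsh_{X\in\finspec}\Sigma^\infty\Hom_\spectra(X,S^L)^{\smsh k}\;\weq\;\prod_{\un{n}\epi\un{k}} S^{nL}, \]
with the component at $\alpha$ given by the diagonal $\Delta_\alpha$ followed by evaluation. The paper verifies this by identifying both sides with $\thocofib_{I\subseteq\un{k}}\bigl(\prod_I S^L\bigr)^{\smsh n}$, using enriched co-Yoneda on the left and a direct total-cofibre computation on the right. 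This is the actual content of the argument; your sketch names the right target but the method you propose for reaching it does not work as stated.
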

\begin{proof}
We follow a similar approach to the proof of Theorem~\ref{thm:topsp}. The main step is to construct, when $\A$ is bounded, equivalences of $\S^{-L}$-modules of the form
\[ \zeta^L: \d^L[\c\Phi\A] \homeq \C_{\S^{-L}}(\A) \]
that commute with the maps $\d^L \to \d^{L+1}$ on the left-hand side, and that induced by the operad morphism $\S^{-(L+1)} \to \S^{-L}$ on the right-hand side. The remainder of the proof of \ref{thm:topsp} then applies in the same way to deduce this result.

Since both sides of the required equivalence commute with products, we can reduce to the case that $\A$ is concentrated in a single term, say the \ord{$n$}. Recall that by \cite[5.11]{arone/ching:2014} there is then a natural equivalence
\[ \Phi\A(X) \homeq (\A(n) \smsh X^{\smsh n})^{h\Sigma_n}. \]
Then there is a $\Sigma_n$-equivariant equivalence
\[ \d^L[\c\Phi\A](n) \homeq \Sigma^{-nL}\creff_n(\Phi\A)(S^L,\dots,S^L) \homeq \Sigma^{-nL} \left[ \Wdge_{\un{n} \epi \un{n}} \A(n) \smsh (S^L)^{\smsh n} \right]^{h\Sigma_n} \homeq \A(n). \]
It is therefore enough that there is an equivalence of $\S^{-L}$-modules
\[ \d^L[\c\Phi\A] \homeq \C_{\S^{-L}}(\d^L[\c\Phi\A(n)]) \]
or alternatively that the $\S^{-L}$-module structure map
\[ \d^L[\c\Phi\A](k) \to \left[ \prod_{\un{n} \epi \un{k}} \Map(\S^{-L}(n_1) \smsh \dots \smsh \S^{-L}(n_k),\d^L[\c\Phi\A](n)) \right]^{\Sigma_n} \]
is an equivalence for each $k$.

Since the functor $\d^L$ (which computes partially-stabilized cross-effects) commutes both with taking homotopy fixed points and with smashing with a fixed spectrum, it is then sufficient to show that the $\S^{-L}$-action map
\[ \d^L[\c(X^{\smsh n})](k) \to \left[ \prod_{\un{n} \epi \un{k}} \Map(\S^{-L}(n_1) \smsh \dots \smsh \S^{-L}(n_k),\d^L[\c(X^{\smsh n})](n)) \right]^{\Sigma_n} \]
is an equivalence.

Recall that we have
\[ \d^L[\c(X^{\smsh n})](k) \homeq \c(X^{\smsh n}) \smsh_{X \in \finspec} \Map((S^1 \smsh \S(k))^{\smsh L}, \Sigma^\infty \Hom_{\spectra}(X,S^L)^{\smsh k}). \]
Considering the definition of the $\S^{-L}$-module structure here, and consolidating the sphere factors on one side, it is sufficient to show that there is an equivalence
\begin{equation} \label{eq:c-map} \c(X^{\smsh n}) \smsh_{X \in \finspec} \Sigma^\infty \Hom_{\spectra}(X,S^L)^{\smsh k} \weq \prod_{\un{n} \epi \un{k}} S^{nL} \end{equation}
where the component corresponding to a surjection $\alpha: \un{n} \epi \un{k}$ is given by the composite
\[ \c(X^{\smsh n}) \smsh_{X \in \finspec} \Sigma^\infty \Hom_{\spectra}(X,S^L)^{\smsh k} \arrow{e,t}{\Delta_\alpha} X^{\smsh n} \smsh_{X \in \finspec} \Sigma^\infty \Hom_{\spectra}(X,S^L)^{\smsh n} \arrow{e} (S^L)^{\smsh n} \isom S^{nL} \]
where the first map is given by the diagonal map associated to the surjection $\alpha$, and the second is the canonical evaluation map. To see that (\ref{eq:c-map}) is an equivalence, consider the commutative diagram
\[ \begin{diagram}
  \node{\thocofib_{I \subseteq \un{k}} X^{\smsh n} \smsh_{X \in \finspec} \Sigma^\infty \Hom_{\spectra}\left(X, \prod_{I} S^L \right)} \arrow{s,l}{\sim} \arrow{e,t}{\sim}
    \node{\thocofib_{I \subseteq \un{k}} \left( \prod_{I} S^L \right)^{\smsh n}} \arrow{s,r}{\sim} \\
  \node{X^{\smsh n} \smsh_{X \in \finspec} \Sigma^\infty \Hom_{\spectra}(X,S^L)^{\smsh k}} \arrow{e}
    \node{\prod_{\un{n} \epi \un{k}} S^{nL}}
\end{diagram} \]
where the top map is an equivalence by the enriched co-Yoneda Lemma, the left-hand map is an equivalence because for any based space $Y$, the smash power $Y^{\smsh k}$ is equivalent to the total homotopy cofibre of the $k$-cube with terms $\prod_{I} Y$ for $I \subseteq \un{k}$, and the right-hand map is an equivalence by direct calculation.
\end{proof}

\begin{theorem} \label{thm:equiv-spsp-poly}
The functor $\d$ induces an equivalence
\[ [\finspec,\spectra]^{\mathsf{h}}_{\mathsf{poly}} \homeq \Coalg_{\mathsf{b}}(\S_\bullet)^{\mathsf{h}} \]
between the homotopy category of polynomial pointed simplicial functors $\finspec \to \spectra$ and the homotopy category of bounded $\C_{\S_\bullet}$-coalgebras of Definition~\ref{def:derived-map-coalgebras}. Moreover, for cofibrant polynomial functors $F,G \in [\finspec,\spectra]$, we have equivalences of simplicial mapping objects
\begin{equation} \label{eq:nat-map-spsp} \Nat_{\finspec}(F,G) \homeq \widetilde{\Map}_{\C_{\S_\bullet}}(\d[F],\d[G]). \end{equation}
\end{theorem}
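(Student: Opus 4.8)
The plan is to mirror the proof of Theorem~\ref{thm:equiv-topsp-poly} almost verbatim, with Theorem~\ref{thm:spsp} playing the role there played by Theorem~\ref{thm:topsp}, and with the ingredients quoted from \cite{arone/ching:2014} replaced by their counterparts for functors $\finspec \to \spectra$ (the methods of \cite{arone/ching:2014} apply uniformly to functors between based spaces and spectra, and the $\spectra$-to-$\spectra$ case is treated in Section~5 of that paper). First I would establish the mapping-object equivalence~(\ref{eq:nat-map-spsp}). The comonad $\C = \d\c\Phi$ of Definition~\ref{def:comonad-C} is exactly the one produced by the general machinery of \cite{arone/ching:2014} applied to the adjunction $(\d,\Phi)$, so the homotopy theory of cofibrant polynomial functors $\finspec \to \spectra$ embeds fully faithfully into the homotopy theory of bounded $\C$-coalgebras, with a natural equivalence $\Nat_{\finspec}(F,G) \weq \widetilde{\Hom}_{\C}(\d[F],\d[G])$. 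Post-composing with the comonad map $\theta\colon \C \to \C_{\S_\bullet}$ and invoking Theorem~\ref{thm:spsp} (which asserts that $\theta$ is an objectwise weak equivalence) gives a natural equivalence $\Nat_{\finspec}(F,G) \weq \widetilde{\Hom}_{\C_{\S_\bullet}}(\d[F],\d[G])$; since $\C_{\S_\bullet}$ is $\spectra$-enriched (Definition~\ref{def:sphere-comonad}), this identifies the non-negative homotopy groups with those of $\widetilde{\Map}_{\C_{\S_\bullet}}(\d[F],\d[G])$, and applying the same comparison to the desuspensions $\Sigma^{-k}G$ upgrades it to the full equivalence~(\ref{eq:nat-map-spsp}), exactly as in Theorem~\ref{thm:equiv-topsp-poly}. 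Boundedness of $\d[F]$ for polynomial $F$ follows from Proposition~\ref{prop:dF-spsp} together with the vanishing of $\creff_nF$ for $n$ above the polynomial degree of $F$.

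It then remains to prove essential surjectivity. Given a bounded, say $N$-truncated, $\C_{\S_\bullet}$-coalgebra $\A$, I would set
\[ F_{\A}(X) := \widetilde{\Map}_{\C_{\S_\bullet}}(\d[R_X],\A) \isom \Tot(\Phi\C_{\S_\bullet}^{\bullet}\A). \]
Because $\C_{\S_\bullet}$ preserves $N$-truncation, each $\C_{\S_\bullet}^r\A$ is $N$-truncated, hence each $\Phi\C_{\S_\bullet}^r\A$ is $N$-excisive, and so the (fat) totalization $F_{\A}$ is $N$-excisive. Using that $\d$ is simplicial, the projections off the totalization followed by the counit of $\C = \d\c\Phi$ assemble into a derived $\C_{\S_\bullet}$-coalgebra morphism $f\colon \d[\c F_{\A}] \to \A$ with components
\[ f_r\colon \Delta^r_+ \smsh \d[\c F_{\A}] \longrightarrow \d\c\Phi\C_{\S_\bullet}^r\A \longrightarrow \C_{\S_\bullet}^r\A . \]
By \cite[1.16]{arone/ching:2014} it suffices to check that the underlying map $f_0$ is a weak equivalence of symmetric sequences; writing
\[ f_0\colon \d[\c\Tot(\Phi\C_{\S_\bullet}^{\bullet}\A)] \longrightarrow \Tot(\d\c\Phi\C_{\S_\bullet}^{\bullet}\A) \weq \Tot(\C_{\S_\bullet}^{\bullet+1}\A) \weq \A , \]
where the middle map is Theorem~\ref{thm:spsp} applied levelwise and the last is the coaugmentation equivalence coming from the extra codegeneracies, the claim reduces to showing that $\d$ commutes with the totalization of the cosimplicial object $\Phi\C_{\S_\bullet}^{\bullet}\A$. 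This is proved exactly as in the based-spaces case, using the equivalence $\theta\colon \d\c\Phi \weq \C_{\S_\bullet}$ of Theorem~\ref{thm:spsp}.

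The one point that needs genuine attention is transferring the cited ingredients of \cite{arone/ching:2014} to the category $[\finspec,\spectra]$ --- that $\Phi$ carries $N$-truncated symmetric sequences to $N$-excisive functors, that $\d$ commutes with the relevant $\Tot$, and the identification $\widetilde{\Hom}_{\C}(\d[F],\d[G]) \simeq \Nat_{\finspec}(F,G)$ --- together with keeping careful track of whether each comparison lives at the level of mapping spaces or of mapping spectra, since $\C$ is only simplicially enriched whereas $\C_{\S_\bullet}$ is $\spectra$-enriched. I do not expect any new geometric input to be required: all the work specific to the sphere pro-operad has already been carried out in Proposition~\ref{prop:dF-spsp} and Theorem~\ref{thm:spsp}, so the present statement is essentially a formal consequence of those together with the general coalgebra machinery of \cite{arone/ching:2014}.
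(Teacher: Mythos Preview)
Your proposal is correct and follows exactly the approach the paper takes: the paper's own proof is the single sentence ``The proof is entirely analogous to that of Theorem~\ref{thm:equiv-topsp-poly}, using Theorem~\ref{thm:spsp} instead of Theorem~\ref{thm:topsp},'' and you have faithfully unpacked what that analogy entails. The only minor slip is writing $\Nat_{\finspec}(F,G) \weq \widetilde{\Hom}_{\C}(\d[F],\d[G])$ without an $\Omega^\infty$ on the left (since $\widetilde{\Hom}_{\C}$ is a mapping \emph{space}), but you clearly understand this since you immediately invoke the desuspension argument to pass from non-negative homotopy groups to all of them.
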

\begin{proof}
The proof is entirely analogous to that of Theorem~\ref{thm:equiv-topsp-poly}, using Theorem~\ref{thm:spsp} instead of Theorem~\ref{thm:topsp}.
\end{proof}

\subsection{Classification of analytic functors from spectra to spectra}

As in the case of functors from spaces to spectra, the classification result of the previous section extends naturally to analytic functors in the following way.

\begin{definition}
The notions of \emph{analytic at $*$} and \emph{strongly-analytic at $*$} from Definitions~\ref{def:analytic} and \ref{def:strongly-analytic} generalize immediately to functors from spectra to spectra. We therefore have a category $[\finspec,\spectra]_{\mathsf{an}(*)}$ of pointed simplicial functors $F: \finspec \to \spectra$ that are strongly-analytic at $*$, and an associated homotopy category $[\finspec,\spectra]_{\mathsf{an}(*)}^{\mathsf{h}}$.
\end{definition}

\begin{theorem} \label{thm:equiv-spsp-ana}
The functor $\d$ sets up an equivalence
\[ [\finspec,\spectra]_{\mathsf{an}(*)}^{\mathsf{h}} \homeq \Coalg^{\mathsf{t}}_{\mathsf{an}}(\C_{\S_\bullet})^{\mathsf{h}} \]
between the homotopy category of functors $\finspec \to \spectra$ that are strongly-analytic at $*$ and the pro-truncated homotopy category of analytic $\C_{\S_\bullet}$-coalgebras. Moreover, for cofibrant $F,G \in [\finspec,\spectra]_{\mathsf{an}(*)}$ we have equivalences
\[ \Nat_{\finspec}(F,G) \weq \widetilde{\Map}_{\C_{\S_\bullet}}(\d[F],\d[G]) \weq \widetilde{\Map^{\mathsf{t}}}_{\C_{\S_\bullet}}(\d[F],\d[G]). \]
\end{theorem}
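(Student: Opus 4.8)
The plan is to transcribe the proof of Theorem~\ref{thm:equiv-topsp-ana}, replacing Theorem~\ref{thm:topsp} by Theorem~\ref{thm:spsp}, Theorem~\ref{thm:equiv-topsp-poly} by Theorem~\ref{thm:equiv-spsp-poly}, and the analyticity results of Section~\ref{sec:topsp-ana} (Lemmas~\ref{lem:ana1} and \ref{lem:ana2}, Corollary~\ref{cor:analytic}) by their evident analogues for functors $\finspec \to \spectra$. One first checks that $\C_{\S_\bullet}$ preserves truncations (if $\A$ is $N$-truncated then $\C_{\S_\bullet}(\A)(k) = *$ for $k > N$, since the relevant products then involve only trivial mapping spectra), so that Definitions~\ref{def:truncated-map-coalgebras} and \ref{def:truncated-category-coalgebras} apply and $\Coalg^{\mathsf{t}}_{\mathsf{an}}(\C_{\S_\bullet})^{\mathsf{h}}$ makes sense.

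\emph{Full faithfulness and the mapping-spectrum statement.} The argument proving Theorem~\ref{thm:equiv-spsp-poly} shows that for cofibrant $F,G \in [\finspec,\spectra]$ with $G$ polynomial there is an equivalence $\Nat_{\finspec}(F,G) \weq \widetilde{\Map}_{\C_{\S_\bullet}}(\d[F],\d[G])$; the same holds when $G$ is merely strongly-analytic at $*$, since then $G \homeq \holim_n P_n G$, each $P_n G$ is polynomial, $\d[P_n G] \homeq \d[G]_{\leq n}$, and both sides commute with the homotopy limit over $n$ (the right-hand side because $\widetilde{\Map}_{\C_{\S_\bullet}}$ is a totalization of mapping spectra into $\C_{\S_\bullet}^\bullet(\d[G]) \homeq \holim_n \C_{\S_\bullet}^\bullet(\d[G]_{\leq n})$). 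This gives the first equivalence claimed in the theorem and shows that $\d$ is homotopically fully faithful on strongly-analytic functors. Moreover, since $\Nat_{\finspec}(F,G) \to \holim_n \Nat_{\finspec}(F,P_n G)$ is an equivalence, the previous step forces
\[ \widetilde{\Map}_{\C_{\S_\bullet}}(\d[F],\d[G]) \weq \holim_n \widetilde{\Map}_{\C_{\S_\bullet}}(\d[F],\d[G]_{\leq n}) = \widetilde{\Map^{\mathsf{t}}}_{\C_{\S_\bullet}}(\d[F],\d[G]) \]
for $G$ strongly-analytic at $*$, which is the second equivalence in the theorem and shows that $\d$ embeds $[\finspec,\spectra]_{\mathsf{an}(*)}^{\mathsf{h}}$ fully faithfully into $\Coalg^{\mathsf{t}}_{\mathsf{an}}(\C_{\S_\bullet})^{\mathsf{h}}$.

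\emph{Essential surjectivity.} Given an analytic $\C_{\S_\bullet}$-coalgebra $\A$, set $F := \holim_n F_{\A_{\leq n}}$, where $F_{\A_{\leq n}}(X) := \widetilde{\Map}_{\C_{\S_\bullet}}(\d[R_X],\A_{\leq n})$ is the functor constructed in the proof of Theorem~\ref{thm:equiv-spsp-poly}. For $\A$ $N$-truncated, $F_{\A_{\leq n}}$ is $n$-excisive (by the spectral analogue of \cite[3.10]{arone/ching:2014}) and $\d[\c F_{\A_{\leq n}}] \homeq \A_{\leq n}$ as $\C_{\S_\bullet}$-coalgebras by Theorem~\ref{thm:spsp}; hence $\{F_{\A_{\leq n}}\}$ is a diagram that looks like a Taylor tower with associated symmetric sequence $\A$, and the spectral analogue of Lemma~\ref{lem:ana2} shows that $F$ is strongly-analytic at $*$ with Taylor tower $\{F_{\A_{\leq n}}\}$. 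A pro-truncated derived morphism $f \colon \d[\c F] \to \A$ is then assembled exactly as in the proof of Theorem~\ref{thm:equiv-topsp-ana}, from the maps $f_{k,n} \colon \Delta^k_+ \smsh \d[\c F] \to \Delta^k_+ \smsh \d[\c F_{\A_{\leq n}}] \to \C_{\S_\bullet}^k(\A_{\leq n})$ obtained by projecting out of the homotopy limit; its underlying map $f_0$ factors as $\d[\c \holim_n F_{\A_{\leq n}}] \to \holim_n \d[\c F_{\A_{\leq n}}] \to \holim_n \A_{\leq n} \homeq \A$, whose first arrow is an equivalence by the analogue of Lemma~\ref{lem:ana2} and whose second is an equivalence by Theorem~\ref{thm:spsp}. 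By Proposition~\ref{prop:pro-truncated-equivalence}, $f$ is an isomorphism in $\Coalg^{\mathsf{t}}_{\mathsf{an}}(\C_{\S_\bullet})^{\mathsf{h}}$, so $\d$ is essentially surjective and the proof is complete.

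\emph{Main obstacle.} The only content beyond transcription is verifying that Section~\ref{sec:topsp-ana} transfers to $[\finspec,\spectra]$: the connectivity estimates underlying the analogues of Lemmas~\ref{lem:ana1} and \ref{lem:ana2} must be reread for strongly cocartesian cubes of spectra (where they are if anything simpler, since one works stably throughout), and one needs the identification $\der_*(P_n F) \homeq (\der_* F)_{\leq n}$ together with the $n$-excisiveness of the cobar-type functors $F_{\A_{\leq n}}$ in the spectral setting. All of these hold but must be invoked in the correct stable form; everything else is formal given Theorems~\ref{thm:spsp} and \ref{thm:equiv-spsp-poly}.
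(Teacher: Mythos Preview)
Your proposal is correct and follows exactly the approach the paper takes: the paper's own proof consists of the single sentence ``This is analogous to the proof of Theorem~\ref{thm:equiv-topsp-ana},'' and you have faithfully spelled out that transcription, including the invocation of Theorem~\ref{thm:spsp} in place of Theorem~\ref{thm:topsp} and the spectral analogues of Lemmas~\ref{lem:ana1} and \ref{lem:ana2}.
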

\begin{proof}
This is analogous to the proof of Theorem~\ref{thm:equiv-topsp-ana}.
\end{proof}

\begin{example} \label{ex:SO}
The derivatives of the functor $\Sigma^\infty \Omega^\infty: \spectra \to \spectra$ are given by
\[ \der_*(\Sigma^\infty \Omega^\infty) \homeq \Com. \]
The symmetric sequence $\Com$ has a canonical right $\Com$-module structure coming directly from the operad structure, and hence $\Com$ is a $\C_{\Com} = \C_{\S^0}$-coalgebra. The comonad map
\[ \C_{\S^0} \to \hocolim_{L \to \infty} \C_{\S^{-L}} = \C_{\S_\bullet} \]
then endows $\Com$ with a $\C_{\S_\bullet}$-coalgebra structure. It follows from \cite[5.19]{arone/ching:2014} that this structure encodes the Taylor tower of $\Sigma^\infty \Omega^\infty$.
\end{example}

\section{Proof of Proposition~\ref{prop:bar-map}} \label{sec:bar-map}

The goal of this section is to prove Proposition~\ref{prop:bar-map}, that is, to construct a map
\begin{equation} \label{eq:bar-map} \un{m}'_L : \B(\un{\d}^L[R_X],\S^{-(L+1)},\S^{-(L+1)}) \to \un{\d}^{L+1}[R_X] \end{equation}
where the source is the standard simplicial bar resolution of $\un{\d}^L[R_X]$ as a $\S^{-(L+1)}$-module.

We construct the map $\un{m}'_L$ by defining, for each non-negative integer $N$, a map of $\S^{-(L+1)}$-modules
\[ \un{m}^N_L : \Delta^N_+ \smsh \un{\d}^L[R_X] \circ \underbrace{\S^{-(L+1)} \circ \dots \circ \S^{-(L+1)}}_{\text{$N+1$ terms}} \to \un{\d}^{L+1}[R_X]. \]
We then show that these maps commute with the face and degeneracy maps in the simplicial bar construction and the coface and codegeneracy maps on the simplexes $\Delta^N$.

The $(N+1)$-fold composition product in the source of the map $\un{m}^N_L$ can be expressed as a wedge product of terms indexed by sequences of partitions. Specifically, consider a sequence of surjections
\[ \dgTEXTARROWLENGTH=2.5em \lambda: I = I^{(N+1)} \arrow{e,t,A}{\alpha_n} I^{(N)} \arrow{e,t,A}{\alpha_{n-1}} \cdots \arrow{e,t,A}{\alpha_1} I^{(1)} \arrow{e,t,A}{\alpha_0} I^{(0)} = J \]
between two finite sets $I$ and $J$ and recall that for $j \in I^{(k)}$ we are writing $I^{(k+1)}_j := \alpha_k^{-1}(j)$. Associated to each such $\lambda$, the required map $\un{m}^N_L$ has a component of the form
\[ \un{m}^\lambda_L: \Delta^N_+ \smsh \un{\d}^L(R_X)(J) \smsh \Smsh_{k = 0}^{N} \Smsh_{j \in I^{(k)}} \S^{-(L+1)}(I^{(k+1)}_j) \to \un{\d}^{L+1}(R_X)(I) \]
which we will now describe.

\begin{definition} \label{def:homotopy}
The maps $\un{m}^\lambda_L$ are built from coherent higher homotopies of the following form. Let $V_0,\dots,V_N$ be finite dimensional real vector spaces and, for each $i = 0,\dots,N$, suppose $U_i \subseteq V_i$ is a subset that is closed under non-negative scalar multiplication. Then we define continuous functions
\[ r^N: \Delta^N \times U_0 \times \dots \times U_N \to U_0 \times \dots \times U_N \times U_0 \times \dots \times U_N \]
by identifying a point in the simplex $\Delta^N$ as a sequence $t = (t_0,\dots,t_N)$, with all $t_j \geq 0$ and $\sum_{j = 0}^{N} t_j = 1$, and then by setting
\[ r^N(t, u_0, \dots, u_N) \]
to be equal to
\[ \left( t_0u_0, \dots, (t_0+\dots+t_j)u_j, \dots, u_N, (t_1+\dots+t_N)u_0, \dots, (t_{j+1}+\dots+t_N)u_j, \dots, t_Nu_{N-1}, 0 \right). \]
Though the $U_N$ term does not really feature in the homotopy itself, we include it to make the interaction with coface and codegeneracy maps more clear. Those interactions are described by the following result.
\end{definition}

\begin{lemma} \label{lem:homotopy}
(a) Let $U_0,\dots,U_N$ be as in Definition~\ref{def:homotopy}. For an integer $1 \leq j \leq N$, the following diagram commutes
\[ \begin{diagram}
  \node{\Delta^{N-1} \times U_0 \times \dots \times (U_{j-1} \times U_j) \times \dots \times U_N} \arrow{e,t}{r^{N-1}} \arrow{s,l}{\delta^j}
    \node{U_0 \times \dots \times U_N \times U_0 \times \dots \times U_N} \arrow{s,r}{\isom} \\
  \node{\Delta^N \times U_0 \times \dots \times U_N} \arrow{e,t}{r^N}
    \node{U_0 \times \dots \times U_N \times U_0 \times \dots \times U_N}
\end{diagram} \]
where $\delta^j: \Delta^{N-1} \to \Delta^N$ is the coface map given by
\[ (t_0,\dots,t_{N-1}) \mapsto (t_0,\dots,t_{j-1},0,t_j,\dots,t_{N-1}). \]
(b) For $j = 0$, the following diagram commutes
\[ \begin{diagram}
  \node{\Delta^{N-1} \times U_1 \times \dots \times U_N} \arrow{e,t}{r^{N-1}} \arrow{s,l}{\delta^0 \times \iota}
    \node{U_1 \times \dots \times U_N \times U_1 \times \dots \times U_N} \arrow{s,r}{\iota \times \iota} \\
  \node{\Delta^N \times U_0 \times U_1 \times \dots \times U_N} \arrow{e,t}{r^N}
    \node{U_0 \times U_1 \times \dots \times U_N \times U_0 \times U_1 \times \dots \times U_N}
\end{diagram} \]
where $\iota: U_1 \times \dots \times U_N \to U_0 \times U_1 \times \dots \times U_N$ is the inclusion $(u_1,\dots,u_N) \mapsto (0,u_1,\dots,u_N)$.

(c) For an integer $0 \leq i \leq N$, the following diagram commutes
\[ \begin{diagram}
  \node{\Delta^{N+1} \times U_0 \times \dots \times U_{i-1} \times \mathbf{0} \times U_i \times \cdots} \arrow{s,l}{\sigma^i} \arrow{e,t}{r^{N+1}}
    \node{U_0 \times \dots \times \mathbf{0} \times \dots \times U_0 \times \dots \times \mathbf{0} \times \dots \times U_N} \arrow{s,r}{\isom} \\
  \node{\Delta^N \times U_0 \times \dots \times U_N} \arrow{e,t}{r^N}
    \node{U_0 \times \dots \times U_N \times U_0 \times \dots \times U_N}
\end{diagram} \]
where $\mathbf{0}$ is a zero-dimensional vector space and $\sigma^i: \Delta^{N+1} \to \Delta^N$ is the codegeneracy map given by
\[ (t_0,\dots, t_{N+1}) \mapsto (t_0,\dots,t_{i-1},t_i+t_{i+1},t_{i+2},\dots,t_{N+1}). \]
\end{lemma}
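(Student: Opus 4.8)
The plan is to prove all three identities by direct verification from the explicit formula for $r^N$ given in Definition~\ref{def:homotopy}. Both composites in each square land in a product of copies of the $U_i$ (and of the grouped or padded factors), so it suffices to check agreement coordinate by coordinate. Moreover $r^N$ splits naturally into a ``first block'' whose $p$-th coordinate is $(t_0+\dots+t_p)u_p$ and a ``second block'' whose $p$-th coordinate is $(t_{p+1}+\dots+t_N)u_p$; I will treat these two blocks separately since the argument for each is essentially the same.

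First I would do part (a). Writing $s=\delta^j(t)=(t_0,\dots,t_{j-1},0,t_j,\dots,t_{N-1})$, the key point is that because the inserted entry is $0$ we have $s_0+\dots+s_{j-1}=s_0+\dots+s_j=t_0+\dots+t_{j-1}$, so the first-block coordinates of $r^N(\delta^j(t),u_0,\dots,u_N)$ at positions $j-1$ and $j$ are $(t_0+\dots+t_{j-1})u_{j-1}$ and $(t_0+\dots+t_{j-1})u_j$ --- exactly the two components of the single first-block coordinate $(t_0+\dots+t_{j-1})(u_{j-1},u_j)$ of $r^{N-1}$ attached to the grouped factor $U_{j-1}\times U_j$, which is what the right-hand isomorphism records. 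For $p\notin\{j-1,j\}$ the partial sum $s_0+\dots+s_p$ equals the matching partial sum for $r^{N-1}$ after the evident reindexing, and the second block is handled identically using the complementary sums $s_{p+1}+\dots+s_N$.

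Part (b) is the same computation specialized to $j=0$: here $\delta^0$ prepends a $0$ to the barycentric coordinates and $\iota$ prepends the zero vector, so the $0$-th coordinate of $r^N$ in each block is $s_0\cdot 0=0$, matching the padding built into $\iota\times\iota$, while the remaining coordinates reproduce $r^{N-1}$ after shifting indices by one. For part (c) I would observe that inserting the zero-dimensional factor $\mathbf{0}$ at position $i$ forces the $i$-th input vector to be $0$, so the $i$-th coordinate of $r^{N+1}$ vanishes in both blocks and is precisely what the right-hand isomorphism deletes; it then remains to check that replacing the pair $(t_i,t_{i+1})$ by its sum, as $\sigma^i$ does, turns the remaining partial sums of $r^{N+1}$ into the partial sums of $r^N$ at the corresponding positions, which is a one-line calculation.

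I do not expect a genuine obstacle here: the content is entirely bookkeeping. The only place where care is needed is tracking how positions in the domain of $r^{N-1}$ (or $r^{N+1}$) correspond to positions in the domain of $r^N$, and matching this with the rearrangement implemented by the vertical isomorphisms on the right of each square. These three compatibilities are exactly what will be needed in the next step of the proof of Proposition~\ref{prop:bar-map} to show that the maps $\un{m}^N_L$ respect the coface, codegeneracy, face and degeneracy maps, and hence assemble into the desired map $\un{m}'_L$ on the bar resolution.
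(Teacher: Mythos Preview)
Your proposal is correct and takes essentially the same approach as the paper, which simply states that each identity is easily checked directly from the definitions. Your coordinate-by-coordinate verification is exactly the direct check the paper has in mind, with more detail spelled out than the paper provides.
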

\begin{proof}
Each of these is easily checked directly from the definitions.
\end{proof}

\begin{definition} \label{def:m-lambda}
Now let $I,J$ be nonempty finite sets and suppose that
\[ \dgTEXTARROWLENGTH=2.5em \lambda: I = I^{(N+1)} \arrow{e,t,A}{\alpha_N} I^{(N)} \arrow{e,t,A}{\alpha_{N-1}} \dots \arrow{e,t,A}{\alpha_{1}} I^{(1)} \arrow{e,t,A}{\alpha_0} I^{(0)} = J \]
is a sequence of surjections (which determines a sequence of partitions of $I$). We apply the construction of Definition~\ref{def:homotopy} to the sets
\[ U_k := \prod_{j \in I^{(k)}} \mathbb{R}^{I^{(k+1)}_j}_0 \]
which are subsets of the vector spaces $V_k = \prod_{j \in I^{(k)}} \mathbb{R}^{I^{(k+1)}_j}$ that are closed under non-negative scalar multiplication. This gives us a map
\[ r^{\lambda}: \Delta^N \times U_0 \times \dots \times U_N \to U_0 \times \dots \times U_N \times U_0 \times \dots \times U_N. \]

Homeomorphisms of the form $d_{\alpha}^{-1}$ (of Definition~\ref{def:sphere-cooperad}) and $h_I$ (of Definition~\ref{def:homeo}) combine to give a homeomorphism
\begin{equation} \label{eq:U-homeo} \dgTEXTARROWLENGTH=2.5em h^\lambda: \mathbb{R}^J \times U_0 \times \dots \times U_N \arrow{e,t}{h_J^{-1}} \mathbb{R} \times \mathbb{R}^J_0 \times U_0 \times \dots \times U_N \arrow{e,t}{\prod d_{\alpha_i}^{-1}} \mathbb{R} \times \mathbb{R}^I_0 \arrow{e,t}{h_I} \mathbb{R}^I. \end{equation}
We can define maps
\[ \tilde{r}^{\lambda} : \Delta^N \times \mathbb{R}^I \to U_0 \times \dots \times U_N \times \mathbb{R}^I \]
such that the following diagram commutes
\[ \begin{diagram}
  \node{\Delta^N \times \mathbb{R}^J \times U_0 \times \dots \times U_N} \arrow{e,t}{r^{\lambda}} \arrow{s,lr}{\isom}{h^{\lambda}}
    \node{\mathbb{R}^J \times U_0 \times \dots \times U_N \times U_0 \times \dots \times U_N} \arrow{s,lr}{\isom}{h^{\lambda}} \\
  \node{\Delta^N \times \mathbb{R}^I} \arrow{e,t}{\tilde{r}^{\lambda}}
    \node{\mathbb{R}^I \times U_0 \times \dots \times U_N}
\end{diagram} \]
The map $\tilde{r}^{\lambda}$ extends to one-point compactifications to give a map
\[ s^\lambda: \Delta^N_+ \smsh S^I \to \left( \Smsh_{k = 0}^{N} \Smsh_{j \in I^{(k)}} \mathbb{S}(I^{(k+1)}_j) \right) \smsh S^I. \]
Taking the one-point compactification of the map $h^{\lambda}$ of (\ref{eq:U-homeo}) we get a homeomorphism
\[ S^J \smsh \Smsh_{k = 0}^{N} \Smsh_{j \in I^{(k)}} \mathbb{S}(I^{(k+1)}_j) \isom S^I \]
which combined with the maps $s^{\lambda}$ gives us maps
\[ t^{\lambda}_L: \Delta^N_+ \smsh (S^I)^{\smsh(L+1)} \to (S^J)^{\smsh L} \smsh \left( \Smsh_{k = 0}^{N} \Smsh_{j \in I^{(k)}} \mathbb{S}(I^{(k+1)}_j)^{\smsh (L+1)} \right) \smsh S^I. \]
We now combine the map $t^\lambda_L$ with the composite
\[ \begin{split} S^I \smsh \Sigma^\infty \Hom_{\spectra}(X,S^L)^{\smsh J} &\to S^I \smsh \Sigma^\infty \Hom_{\spectra}(X,S^L)^{\smsh I} \\
    &\isom (S^1 \smsh \Sigma^\infty \Hom_{\spectra}(X,S^L))^{\smsh I} \\
    &\to \Sigma^\infty \Hom_{\spectra}(X,S^{L+1})^{\smsh I}
\end{split} \]
of: (i) the map induced by the diagonal on $\Hom_{\spectra}(X,S^L)$ associated to the composed surjection $\alpha_N \circ \dots \circ \alpha_0 : I \epi J$; (ii) the canonical twist map for the symmetric monoidal smash product; and (iii) the tensoring map for $\Sigma^\infty \Hom_{\spectra}(X,-)$. Applying suitable adjoints these determine the required map
\[ \un{m}^\lambda_L: \Delta^N_+ \smsh \un{\d}^L[R_X](J) \smsh \Smsh_{k = 0}^{N} \Smsh_{j \in I^{(k)}} \S^{-(L+1)}(I^{(k+1)}_j) \to \un{\d}^{L+1}[R_X](I). \]
\end{definition}

We now have to check that the maps $\un{m}^{\lambda}_L$ respect the face and degeneracy maps in the bar construction and hence determine the required map (\ref{eq:bar-map}).

\begin{lemma} \label{lem:bar-map}
Let $I,J$ be nonempty finite sets and suppose that as before $\lambda$ is a sequence of surjections
\[ \dgTEXTARROWLENGTH=2.5em I = I^{(N+1)} \arrow{e,t,A}{\alpha_N} \dots \arrow{e,t,A}{\alpha_0} I^{(0)} = J. \]
(a) For an integer $1 \leq r \leq N$, let $d_r(\lambda)$ denote the sequence of surjections $(\alpha_0,\dots,\alpha_{r-1} \circ \alpha_r, \dots, \alpha_N)$. Also write
\[ \tilde{I}^{(r)}_j := (\alpha_{r-1} \circ \alpha_r)^{-1}(j). \]
Then the following diagram commutes:
\[ \begin{diagram}
  \node{\Delta^{N-1}_+ \smsh \un{\d}^L[R_X](J) \smsh \dots \smsh \Smsh_{j \in I^{(r-1)}} \S^{-(L+1)}(\tilde{I}^{(r)}_j) \smsh \dots}
    \arrow{se,t}{\un{m}^{d_r(\lambda)}_L} \\
  \node{\Delta^{N-1}_+ \smsh \un{\d}^L[R_X](J) \smsh \dots \smsh \Smsh_{j \in I^{(r-1)}} \S^{-(L+1)}(I^{(r)}_j) \smsh \Smsh_{j \in I^{(r)}} \S^{-(L+1)}(I^{(r+1)}_j) \smsh \dots}
    \arrow{n,l}{\mu} \arrow{s,l}{\delta^j}
  \node{\un{\d}^{L+1}[R_X](I)} \\
  \node{\Delta^N_+ \smsh \un{\d}^L[R_X](J) \smsh \dots \smsh \Smsh_{j \in I^{(r-1)}} \S^{-(L+1)}(I^{(r)}_j) \smsh \Smsh_{j \in I^{(r)}} \S^{-(L+1)}(I^{(r+1)}_j) \smsh \dots}
    \arrow{ne,b}{\un{m}_L^\lambda}
\end{diagram} \]
where
\[ \mu : \Smsh_{j \in I^{(r-1)}} \S^{-(L+1)}(I^{(r)}_j) \smsh \Smsh_{j \in I^{(r)}} \S^{-(L+1)}(I^{(r+1)}_j) \to \Smsh_{j \in I^{(r-1)}} \S^{-(L+1)}(\tilde{I}^{(r)}_j) \]
is the smash product of operad composition maps for $\S^{-(L+1)}$, and $\delta^j$ is induced by the corresponding coface map $\Delta^{N-1} \to \Delta^N$.

(b) For the case $r = 0$, let $d_0(\lambda)$ denote the sequence $(\alpha_1,\dots,\alpha_N)$. Then the following diagram commutes:
\[ \begin{diagram}
  \node{\Delta^{N-1}_+ \smsh \un{\d}^L[R_X](I^{1}) \smsh \Smsh_{k = 1}^{N} \Smsh_{j \in I^{(k)}} \S^{-(L+1)}(I^{(k+1)}_j)}
    \arrow{se,t}{\un{m}_L^{d_0(\lambda)}} \\
  \node{\Delta^{N-1}_+ \smsh \un{\d}^L[R_X](J) \smsh \Smsh_{k = 0}^{N} \Smsh_{j \in I^{(k)}} \S^{-(L+1)}(I^{(k+1)}_j)}
    \arrow{n,l}{\nu} \arrow{s,l}{\delta^0}
  \node{\un{\d}^{L+1}[R_X](I)} \\
  \node{\Delta^N_+ \smsh \un{\d}^L[R_X](J) \smsh \Smsh_{k = 0}^{N} \Smsh_{j \in I^{(k)}} \S^{-(L+1)}(I^{(k+1)}_j)}
    \arrow{ne,b}{\un{m}_L^{\lambda}}
\end{diagram} \]
where
\[ \nu : \un{\d}^L[R_X](J) \smsh \Smsh_{j \in I^{(0)}} \S^{-(L+1)}(I^{(1)}_j) \to \un{\d}^L[R_X](I^{(1)}) \]
is the $\S^{-(L+1)}$-module structure map on $\un{\d}^L[R_X]$ (pulled back from the $\S^{-L}$-module structure of Definition~\ref{def:creff-module} along the operad map $\eta: \S^{-(L+1)} \to \S^{-L}$).

(c) For an integer $0 \leq r \leq N$, let $s_r(\lambda)$ denote the sequence $(\alpha_0,\dots,\alpha_{r-1},1_{I^{(r)}},\alpha_r,\dots,\alpha_N)$. Then the following diagram commutes:
\[ \begin{diagram}
  \node{\Delta^{N+1}_+ \smsh \un{\d}^L[R_X](J) \smsh \Smsh_{k=0}^{N} \Smsh_{j \in I^{(k)}} \S^{-(L+1)}(I^{(k+1)}_j) \smsh \Smsh_{j \in I^{(r)}} \S^{-(L+1)}(1)}
    \arrow{se,t}{\un{m}_L^{s_i(\lambda)}} \\
  \node{\Delta^{N+1}_+ \smsh \un{\d}^L[R_X](J) \smsh \Smsh_{k = 0}^{N} \Smsh_{j \in I^{(k)}} \S^{-(L+1)}(I^{(k+1)}_j)}
    \arrow{n,l}{\eta} \arrow{s,l}{\sigma^r}
  \node{\un{\d}^{L+1}[R_X](I)} \\
  \node{\Delta^N_+ \smsh \un{\d}^L[R_X](J) \smsh \Smsh_{k = 0}^{N} \Smsh_{j \in I^{(k)}} \S^{-(L+1)}(I^{(k+1)}_j)}
    \arrow{ne,b}{\un{m}_L^{\lambda}}
\end{diagram} \]
where $\eta: S^0 \weq \S^{-(L+1)}(1)$ is the unit map for the operad $\S^{-(L+1)}$, and $\sigma^r$ is induced by the corresponding codegeneracy map $\Delta^{N+1} \to \Delta^N$.
\end{lemma}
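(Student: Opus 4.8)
The three displayed diagrams assert precisely that the maps $\un{m}^\lambda_L$ of Definition~\ref{def:m-lambda} are compatible with the inner face maps $d_r$ ($1\le r\le N$), the outer face map $d_0$, and the degeneracies $s_r$ of the simplicial bar construction $\B(\un{\d}^L[R_X],\S^{-(L+1)},\S^{-(L+1)})$. Granting the lemma, the $\un{m}^\lambda_L$ assemble into a morphism from the simplicial symmetric sequence $\B(\un{\d}^L[R_X],\S^{-(L+1)},\S^{-(L+1)})$ to the constant simplicial object on $\un{\d}^{L+1}[R_X]$, whose geometric realization is the desired map $\un{m}'_L$; that $\un{m}'_L$ is a morphism of $\S^{-(L+1)}$-modules follows from an analogous, simplex-free compatibility with the right $\S^{-(L+1)}$-action on the innermost factor of the bar construction (which amounts to composing an extra surjection into $\alpha_N$ and is handled exactly as in part~(a)), and the triangle of Proposition~\ref{prop:bar-map} commutes because the section $s$ picks out the component of $\un{m}^\lambda_L$ indexed by the length-zero sequence $\lambda=(1_J)$, which unwinds directly to $\un{m}_L$ of Definition~\ref{def:mL}. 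So the content to prove is the commutativity of the three diagrams.

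The approach is to observe that, up to the relevant adjunctions, each $\un{m}^\lambda_L$ is the composite of four ingredients: (i) the one-point compactification $s^\lambda$ of the homotopy $\tilde r^\lambda$, which is $r^N$ of Definition~\ref{def:homotopy} conjugated by the homeomorphism $h^\lambda$ of (\ref{eq:U-homeo}), applied with the cones $U_k\subseteq V_k=\prod_{j\in I^{(k)}}\mathbb{R}^{I^{(k+1)}_j}$; (ii) the diagonal map on $\Sigma^\infty\Hom_{\spectra}(X,S^L)^{\smsh *}$ associated to the composed surjection $I\epi J$; (iii) the symmetric-monoidal twist; and (iv) the tensoring map for $\Sigma^\infty\Hom_{\spectra}(X,-)$. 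For each of the three diagrams one tracks the effect of the relevant bar operation on each of these four ingredients. Ingredient~(i) is controlled by the corresponding part of Lemma~\ref{lem:homotopy}: part~(a) of that lemma for the inner faces, part~(b) for $d_0$, part~(c) for the degeneracies; in each case the vector-space bookkeeping matches because composing or inserting a surjection $\alpha_k$ alters the factor $U_k$ according to exactly the pattern appearing in Lemma~\ref{lem:homotopy}. The other ingredients are controlled by coassociativity and counitality of the cooperad $\mathbb{S}$ (Lemma~\ref{lem:sphere-cooperad}) together with the compatibility square (\ref{eq:diagram-homeo}) of Definition~\ref{def:homeo}, which govern how the homeomorphisms $h^\lambda$ and the smash factors $\mathbb{S}(I^{(k+1)}_j)^{\smsh(L+1)}$ transform; by the fact that $\Sigma^\infty\Hom_{\spectra}(X,S^L)^{\smsh *}$ is a $\Com$-module (Definition~\ref{def:SigmainftyX-Com}), i.e. associativity and unitality of the diagonal maps $\Delta_\alpha$; and by naturality of the twist and tensoring maps.

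Concretely: for part~(a), the operad composition in $\S^{-(L+1)}\homeq\S^{-(L+1)}\Com$ is by construction the convolution of the cooperad maps $d_\alpha$ of $\mathbb{S}$ with the trivial operad structure of $\Com$, so the map $\mu$ applied to the $r$-th and $(r{-}1)$-th bar factors amounts geometrically to iterating $d_{\alpha_r}$ and $d_{\alpha_{r-1}}$, which by coassociativity of $\mathbb{S}$ equals $d_{\alpha_{r-1}\circ\alpha_r}$; reconciling the simplex coordinates is Lemma~\ref{lem:homotopy}(a), and the $\Com$-factor contributes nothing. For part~(b), the module structure map $\nu$ on $\un{\d}^L[R_X]$ (Definition~\ref{def:creff-module}) is built from $d_{\alpha_0}$ and the diagonal $\Delta_{\alpha_0}$; pushing $\Delta_{\alpha_0}$ through, re-indexing the remaining sphere factors, and applying Lemma~\ref{lem:homotopy}(b) gives the claim, using $\Com$-module associativity to merge $\Delta_{\alpha_0}$ with the diagonal for $I\epi I^{(1)}$. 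For part~(c), inserting the identity surjection $1_{I^{(r)}}$ forces each $\mathbb{R}^{I^{(r+1)}_j}_0$ to collapse to $\mathbb{R}^{\{j\}}_0=\mathbf{0}$, so $U_r$ becomes zero-dimensional; under $\eta\colon\mathbbm{Com}_+^c\to\mathbb{S}$ of Lemma~\ref{lem:map-sphere-cooperad} the operad unit $S^0\to\S^{-(L+1)}(1)$ corresponds to the inclusion of the origin, which is exactly the hypothesis of Lemma~\ref{lem:homotopy}(c), with the codegeneracy $\sigma^r$ matching on the simplex.

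I expect the main obstacle to be organizational rather than conceptual. One must keep scrupulous track of the ordering of the many smash factors, so that the twist maps land in the right places and the homeomorphisms $t^\lambda_L$ and $t^{d_r(\lambda)}_L$ (and similarly for $d_0$ and $s_r$) genuinely agree after a face or degeneracy; and one must be careful about the distinction between the strict desuspension $\S^{-(L+1)}\Com$, for which operad composition is literally the convolution with the $d_\alpha$, and its cofibrant replacement $\S^{-(L+1)}$ used in the bar construction — the cleanest route is to perform the verification with the strict model and then transfer along the cofibrant-replacement map. The place where an error is most likely to creep in is the iterated form of the compatibility square (\ref{eq:diagram-homeo}), i.e. propagating $h_I$ versus $\prod_j h_{I_j}$ through the whole tower $\lambda$ of surjections, so I would carry out that diagram chase first and in full, and treat the rest as bookkeeping around it.
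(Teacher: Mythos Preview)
Your approach is correct and matches the paper's: the paper's proof is the single sentence ``Working through the definitions of the maps $\un{m}_L^\lambda$, and the relevant operad and module structure maps, each of these claims follows from the corresponding part of Lemma~\ref{lem:homotopy},'' and you have spelled out exactly that reduction in detail. One remark: your first paragraph is really a sketch of Proposition~\ref{prop:bar-map1} rather than of the lemma itself, and the claim there that the $\S^{-(L+1)}$-module compatibility ``is handled exactly as in part~(a)'' is not quite how the paper does it --- the paper instead observes directly that in $r^N$ the last factor $U_N$ maps by the identity into the first copy and by zero into the second --- but this is outside the scope of the lemma you were asked to prove.
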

\begin{proof}
Working through the definitions of the maps $\un{m}_L^\lambda$, and the relevant operad and module structure maps, each of these claims follows from the corresponding part of Lemma \ref{lem:homotopy}.
\end{proof}

The following result is then a restatement of the required Proposition~\ref{prop:bar-map}.

\begin{proposition} \label{prop:bar-map1}
The maps $\un{m}_L^\lambda$ of Definition~\ref{def:m-lambda} together define a morphism of $\S^{-(L+1)}$-modules
\[ \un{m}'_L: \B(\un{\d}^L[R_X],\S^{-(L+1)},\S^{-(L+1)}) \to \un{\d}^{L+1}[R_X] \]
such that the composite
\[ \dgTEXTARROWLENGTH=2em \un{\d}^L[R_X] \arrow{e,tb}{s}{\sim} \B(\un{\d}^L[R_X],\S^{-(L+1)},\S^{-(L+1)}) \arrow{e,t}{\un{m}'_L} \un{\d}^{L+1}[R_X] \]
is equal to the map $\un{m}_L$ of Definition~\ref{def:mL}.
\end{proposition}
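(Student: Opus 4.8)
The plan is to assemble the component maps $\un{m}_L^\lambda$ of Definition~\ref{def:m-lambda} into a single map out of the geometric realization of the bar construction, verify that this map respects the right $\S^{-(L+1)}$-action, and then identify its restriction along the section $s$ with the map $\un{m}_L$ of Definition~\ref{def:mL}.

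First recall that the realization $\B(\un{\d}^L[R_X],\S^{-(L+1)},\S^{-(L+1)})$ is the coend over the simplicial indexing category of the simplicial spectrum whose spectrum of $N$-simplices splits, according to the chain of surjections, as a wedge over $\lambda$ of the terms $\un{\d}^L[R_X](J)\smsh\Smsh_{k=0}^{N}\Smsh_{j\in I^{(k)}}\S^{-(L+1)}(I^{(k+1)}_j)$. A map out of this realization is therefore the same thing as a compatible family of maps $\un{m}_L^N := \bigvee_\lambda \un{m}_L^\lambda$, $N\ge 0$, commuting on the one side with the face and degeneracy operators of the bar construction and on the other with the coface and codegeneracy maps of the cosimplicial object $N\mapsto\Delta^N_+$. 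These compatibilities are exactly the content of Lemma~\ref{lem:bar-map}: parts (a) and (b) treat the face maps — operad composition of two adjacent $\S^{-(L+1)}$-blocks, and the $\S^{-(L+1)}$-module action on $\un{\d}^L[R_X]$, respectively — matched against the cofaces $\delta^j$ on $\Delta^N$, while part (c) treats the degeneracies against the codegeneracies $\sigma^r$. Each of these in turn reduces, after unwinding the definitions of the homeomorphisms $d_\alpha^{\pm 1}$ and $h_I$ and of the relevant operad, module and cooperad structure maps, to the corresponding coherence identity for the homotopies $r^N$ recorded in Lemma~\ref{lem:homotopy}. Since the degeneracies are included, the $\un{m}_L^\lambda$ glue to a well-defined map $\un{m}'_L$ on the genuine realization.

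That $\un{m}'_L$ is a morphism of $\S^{-(L+1)}$-modules is the same kind of check. The right $\S^{-(L+1)}$-action on $\B(\un{\d}^L[R_X],\S^{-(L+1)},\S^{-(L+1)})$ is obtained by attaching a new surjection at the $I=I^{(N+1)}$ end of a chain $\lambda$ and composing the newly attached operad block into the outermost block $k=N$ — formally the same operation governed by Lemma~\ref{lem:bar-map}(a), now performed at the bottom of the chain rather than in the interior. Compatibility of the $\un{m}_L^\lambda$ with it, and with the right $\S^{-(L+1)}$-action on the target $\un{\d}^{L+1}[R_X]$ of Definition~\ref{def:creff-module}, follows once more from Lemma~\ref{lem:homotopy}(a) together with the associativity of the diagonal maps and of the tensoring map for $\Sigma^\infty\Hom_{\spectra}(X,-)$.

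Finally, the section $s\colon \un{\d}^L[R_X]\to\B(\un{\d}^L[R_X],\S^{-(L+1)},\S^{-(L+1)})$ is the inclusion of the $0$-simplices precomposed with the operad unit $\eta\colon S^0\to\S^{-(L+1)}(1)$, so that $\un{m}'_L\circ s$ is the component of $\un{m}_L^0$ indexed by the identity chain $\lambda=\mathrm{id}_I$ with all operad inputs equal to the unit. In this degenerate case $\Delta^0$ is a point, each $\mathbb{R}^{\{i\}}_0$ is the single point $\{0\}$, so the homotopy $r^0$ and the factors $\mathbb{S}(\{i\})$ are trivial, and the remaining composite involving the diagonal (trivial since $\alpha_0=\mathrm{id}_I$) and the tensoring map reduces, after the evident identifications via the homeomorphisms $h_I$ of Definition~\ref{def:homeo}, to the $|I|$-fold smash power of the canonical map $S^1\smsh\Sigma^\infty\Hom_{\spectra}(X,S^L)\to\Sigma^\infty\Hom_{\spectra}(X,S^{L+1})$. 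Comparing with Definition~\ref{def:mL} shows that this is $\un{m}_L$, as required. There is no conceptual obstacle in any of this; the entire difficulty is bookkeeping — tracking the chains of surjections, the $L+1$ smash powers of sphere coordinates, the simplex variables, and the interplay between operad-composition and module-action structure maps — and in checking that the right-module compatibility, which is not literally one of the three cases of Lemma~\ref{lem:bar-map}, does follow by the same mechanism.
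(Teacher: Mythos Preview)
Your assembly of $\un{m}'_L$ from the $\un{m}^\lambda_L$ via Lemma~\ref{lem:bar-map}, and your identification of $\un{m}'_L\circ s$ with $\un{m}_L$ in the $N=0$, $\lambda=\mathrm{id}_I$ case, are correct and match the paper.

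The gap is in your argument that $\un{m}'_L$ is a map of right $\S^{-(L+1)}$-modules. You describe the right action correctly (compose a new surjection $\beta$ into the outermost block $k=N$), but then try to invoke Lemma~\ref{lem:bar-map}(a) and Lemma~\ref{lem:homotopy}(a). Those lemmas govern compatibility with \emph{face} maps and therefore always involve a coface $\delta^j:\Delta^{N-1}\to\Delta^N$; the right module action does not change the simplicial level, so there is no coface in play and these lemmas do not apply. Your closing sentence concedes this (``not literally one of the three cases''), but the assertion that it ``follows by the same mechanism'' is not substantiated, and I do not see how to make the reduction to Lemma~\ref{lem:homotopy}(a) go through.

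The paper's argument for this step is different and more direct. The point is that in the formula for $r^N$ in Definition~\ref{def:homotopy}, the last coordinate $u_N\in U_N$ is sent to $(u_N,0)$: identity into the first copy of $U_N$, zero into the second, regardless of $t\in\Delta^N$. Tracing through Definition~\ref{def:m-lambda}, this says exactly that $\un{m}^\lambda_L$ factors as the module structure map $\nu_{L+1}$ of Definition~\ref{def:creff-module} for the surjection $\alpha_N$ (applied using the block $k=N$) composed with a map that does not touch that last block. From this factorization the right-module compatibility is immediate: acting by $\beta$ on the source composes $\beta$ into the $k=N$ block, and by associativity of the $\S^{-(L+1)}$-action on $\un{\d}^{L+1}[R_X]$ this matches $\nu_{L+1,\beta}\circ\un{m}^\lambda_L$. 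So the missing ingredient is not another case of Lemma~\ref{lem:homotopy} but the specific observation about how $r^N$ treats $U_N$.
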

\begin{proof}
The existence of a map $\un{m}'_L$ follows from Lemma~\ref{lem:bar-map}. To see this is a morphism of $\S^{-(L+1)}$-modules, it is sufficient to show that each map $m^N_L$ is a morphism of $\S^{-(L+1)}$-modules. Let $\lambda = (\alpha_0,\dots,\alpha_N)$ be a sequence of surjections of finite sets
\[ \dgTEXTARROWLENGTH=2.5em I = I^{(N+1)} \arrow{e,t,A}{\alpha_N} \cdots \arrow{e,t,A}{\alpha_0} I^{(0)} = J \]
and let $\lambda' = (\alpha_0,\dots,\alpha_{N-1})$. Then it is sufficient to show that the following diagram commutes
\[ \begin{diagram}
  \node[2]{\un{\d}^{L+1}[R_X](I^{(N)}) \smsh \Smsh_{j \in I^{(N+1)}} \S^{-(L+1)}(I^{(N+1)}_j)} \arrow{s,r}{\nu_{L+1}} \\
  \node{\un{\d}^L[R_X](J) \smsh \Smsh_{k = 0}^{N} \Smsh_{j \in I^{(k)}} \S^{-(L+1)}(I^{(k+1)}_j)} \arrow{e,t}{\un{m}^\lambda_L} \arrow{ne,t}{\un{m}^{\lambda'}_L}
    \node{\un{\d}^{L+1}[R_X](I)}
\end{diagram} \]
where $\nu_{L+1}$ is the $\S^{-(L+1)}$-module structure map for $\un{\d}^{L+1}[R_X]$ (from Definition~\ref{def:creff-module}) associated to the surjection $\alpha_N$. This follows from the fact that in the map $r^N$ of Definition~\ref{def:homotopy}, the term $U_N$ is mapped by the identity into the first copy of $U_N$ in the target, and by zero into the second copy.

Finally, the composite $\un{m}'_L \circ s$ is equal to the composite
\[ \dgTEXTARROWLENGTH=2em \un{\d}^L[R_X](I) \arrow{e,t}{\eta} \un{\d}^L[R_X](I) \smsh \Smsh_{i \in I} \S^{-(L+1)}(1) \arrow{e,t}{\un{m}_L^\lambda} \un{\d}^{L+1}[R_X](I) \]
where $\lambda$ is the sequence consisting just of the identity map $I \arrow{e,t}{=} I$. Following through the definitions we see that this is precisely $\un{m}_L$.
\end{proof}

\bibliographystyle{amsplain}
\bibliography{mcching}

\end{document}